\pdfoutput=1
\documentclass[11pt,a4paper]{amsart}

\usepackage[a4paper,margin=1in]{geometry}
\IfFileExists{glyphtounicode.tex}{\input{glyphtounicode}\pdfgentounicode=1}{}
\usepackage{cmap}
\usepackage[T1]{fontenc}
\usepackage[utf8]{inputenc}
\usepackage{lmodern}
\usepackage{microtype}
\usepackage{amsmath,amssymb,amsthm,mathtools,mathrsfs}
\usepackage{cite}
\usepackage[bookmarks=true,bookmarksnumbered=true,hidelinks]{hyperref}
\hypersetup{
  pdftitle={Algebraic Transfer for Operator-Valued Gaussian Chaoses:
    Oriented Schatten Profiles and Singular Wick Multipliers},
  pdfauthor={Guangqian Zhao},
  pdfsubject={Algebraic transfer of oriented Schatten profiles, completed
    Wick products, sharp weak-Schatten links, operator-norm endpoints,
    and singular Wick multiplication operators},
  pdfkeywords={operator-valued Gaussian chaos, oriented tensor flattenings,
    Schatten ideals, weak Schatten ideals, Lorentz-Zygmund ideals,
    algebraic transfer, completely bounded maps, Wick-chaos products,
    singular Wick multipliers, random Fourier series, compact quantum groups,
    fusion rules}
}
\newcommand{\doi}[1]{\href{https://doi.org/#1}{\nolinkurl{doi:#1}}}
\newcommand{\arxiv}[1]{\href{https://arxiv.org/abs/#1}{\nolinkurl{arXiv:#1}}}

\numberwithin{equation}{section}
\allowdisplaybreaks[2]
\theoremstyle{plain}
\newtheorem{theorem}{Theorem}[section]
\newtheorem{proposition}[theorem]{Proposition}
\newtheorem{lemma}[theorem]{Lemma}
\newtheorem{corollary}[theorem]{Corollary}

\theoremstyle{definition}
\newtheorem{definition}[theorem]{Definition}
\newtheorem{example}[theorem]{Example}

\theoremstyle{remark}
\newtheorem{remark}[theorem]{Remark}

\newcommand{\C}{\mathbb C}

\newcommand{\E}{\mathbb E}
\newcommand{\Prob}{\mathbb P}
\newcommand{\cA}{\mathcal A}

\newcommand{\cC}{\mathcal C}
\newcommand{\cD}{\mathcal D}
\newcommand{\cE}{\mathcal E}
\newcommand{\cG}{\mathcal G}
\newcommand{\cH}{\mathcal H}
\newcommand{\cK}{\mathcal K}
\newcommand{\cL}{\mathcal L}

\newcommand{\cT}{\mathcal T}
\newcommand{\Id}{\operatorname{Id}}
\newcommand{\rank}{\operatorname{rank}}
\newcommand{\Sch}{\mathfrak S}
\newcommand{\Flat}{\mathcal F}
\newcommand{\Prof}{\mathfrak P}
\newcommand{\Cost}{\mathfrak C}
\newcommand{\Pol}{\operatorname{Pol}}
\newcommand{\cb}{\mathrm{cb}}

\newcommand{\Tr}{\operatorname{Tr}}
\newcommand{\supp}{\operatorname{supp}}
\newcommand{\wick}[1]{:#1:}
\newcommand{\norm}[1]{\left\lVert #1\right\rVert}
\newcommand{\ip}[2]{\left\langle #1,#2\right\rangle}
\newcommand{\conj}[1]{\overline{#1}}
\newcommand{\Dya}{\mathbb D}

\title{Algebraic Transfer for Operator-Valued Gaussian Chaoses:
Oriented Schatten Profiles and Singular Wick Multipliers}

\author[G. Zhao]{Guangqian Zhao}

\address{School of Mathematical Sciences, University of Science and
Technology of China, Hefei, Anhui 230026, China}
\email{zhaoguangqian@mail.ustc.edu.cn}
\subjclass[2020]{47B10, 60B20, 46E30, 46L07, 46L67}
\keywords{operator-valued Gaussian chaos, oriented tensor flattenings,
Schatten ideals, weak Schatten ideals, Lorentz--Zygmund ideals,
algebraic transfer, completely bounded maps, Haagerup tensor products,
Wick-chaos products, singular Wick multipliers, random Fourier series,
compact quantum groups, fusion rules}
\date{}

\begin{document}

\begin{abstract}
We develop an algebraic transfer calculus for the oriented Schatten profiles
of kernels underlying operator-valued Gaussian chaoses. A dimension-free
link inequality propagates profile bounds through cut factorizations, tensor
products, coefficient maps, and ordered contractions. Besides the
constant-one strong Schatten theorem, we prove the sharp weak endpoint
\[
\mathfrak{S}_{r,\infty}\times\mathfrak{S}_{r,\infty}
\longrightarrow
\mathfrak{S}_{r,\infty}(\log\mathfrak{S})^{-1/r},
\qquad 1<r<\infty.
\]
The estimate holds uniformly over all contracted Hilbert spaces, and the
exponent $1/r$ cannot be decreased within the displayed $q=\infty$
Lorentz--Zygmund scale. A separate finite-complexity argument gives sharp
effective-rank and finite-cut-rank logarithmic bridges from all-cut operator
profiles to Gaussian operator norms. Combined with oriented-flattening
Gaussian estimates, the calculus yields continuous multiplication on
completed Wick chaoses with noncommuting coefficients, an associative
algebra of factorially weighted analytic Wick series, and a local-to-global
theorem for loop-free Peter--Weyl fusion trees. We then apply the method to
singular Wick multipliers on groups of polynomial growth. For second-order
multipliers we obtain sharp necessary and sufficient Schatten convergence
thresholds; on $\mathbb{Z}^{D}$ we determine the full singular phase diagram
at every order. Fourier transfer gives exact Sobolev, Schatten-class,
compactness, and trace-class thresholds for sandwiched Wick multiplication
operators on $\mathbb{T}^{D}$, together with sharp Fourier--Galerkin rates
and approximation-number decay.
\end{abstract}

\maketitle
\pagestyle{plain}

\section{Introduction}
\label{sec:intro}

Operator-valued Gaussian chaoses are controlled not by a single tensor norm
but by the full family of flattenings obtained by moving stochastic legs
across the input--output division.  In concrete applications the relevant
kernels are rarely given in isolation: they arise through composition,
contraction, tensoring, spectral compression, and algebraic multiplication.
The purpose of this paper is to develop a deterministic calculus that
propagates all oriented Schatten bounds through these operations before the
Gaussian variables are introduced.

Let $\cH_1,\ldots,\cH_m$ be stochastic Hilbert spaces and let $\cC,\cE$
be deterministic input and output Hilbert spaces.  A finite-support kernel
\[
 K\in\cH_1\otimes\cdots\otimes\cH_m
      \otimes\conj{\cC}\otimes\cE
\]
has, for every $S\subset[m]$, an oriented flattening
\[
 \Flat_S(K):\conj{\cH_S}\otimes\cC
 \longrightarrow\cH_{S^c}\otimes\cE.
\]
For $1\le r<\infty$, its simultaneous profile is
\begin{equation}
\label{eq:intro-profile}
 \Prof_{m,r}(K):=\max_{S\subset[m]}
 \norm{\Flat_S(K)}_{\Sch_r}.
\end{equation}
In the probabilistic range $2\le r<\infty$, if $\cT_K^{(m)}$ is the
associated decoupled Gaussian chaos, the oriented-flattening estimate gives
\begin{equation}
\label{eq:intro-gaussian-estimate}
 \norm{\cT_K^{(m)}}_{L^p(\Omega;\Sch_r)}
 \le C_m(p+r)^{m/2}\Prof_{m,r}(K).
\end{equation}
There is an analogous same-field estimate for Wick chaos.

For each output cut, we associate a finite majorant built from selected
input cuts:
\begin{equation}
\label{eq:intro-majorant}
 \norm{\Flat_T(\Theta(K_1,\ldots,K_N))}_{\Sch_r}
 \le
 \sum_{S_1,\ldots,S_N}
 \mathsf M_{\Theta}^{(r)}(T;S_1,\ldots,S_N)
 \prod_{j=1}^N\norm{\Flat_{S_j}(K_j)}_{\Sch_r}.
\end{equation}
These majorants compose by summing over intermediate cuts.  They record
connector norms, Schatten-stability constants, spectral multiplicities, and
internal summation costs.  Once \eqref{eq:intro-majorant} is established,
the probabilistic conclusion follows by applying
\eqref{eq:intro-gaussian-estimate} at
the output.  Retaining every oriented cut makes this procedure stable under
operations that move stochastic legs across the input--output division; in
non-Kac settings, opposite orientations may additionally carry different
modular weights.

\paragraph{Main results.}
The calculus begins with functorial composition rules for
\eqref{eq:intro-majorant}.  Cut-factorizable linear maps have a
submultiplicative cut cost, legwise maps act by two-sided multiplication on
every flattening, external tensor products multiply profiles, and multilinear
cut majorants contract over intermediate cuts.  Power--logarithmic scale
estimates consequently propagate by a max-plus rule.

The main local estimate is a link theorem.  It is the one-edge tensor
contraction underlying the later algebraic diagrams.  If
\[
 K:X\longrightarrow A\otimes Z,
 \qquad
 L:B\otimes Z\longrightarrow Y,
\]
then, for $1\le r\le\infty$,
\begin{equation}
\label{eq:intro-link}
 \norm{\operatorname{Link}_Z(L,K)}_{\Sch_r}
 \le\norm L_{\Sch_r}\norm K_{\Sch_r}.
\end{equation}
The constant is one and is independent of all Hilbert-space dimensions.
At the trace-class endpoint, a rank-one link factors into the product of two
Hilbert--Schmidt operators; bilinear interpolation with the operator-norm
endpoint then gives the complete Banach Schatten scale.
The weak scale is qualitatively different.  For $1<r<\infty$ we prove
\begin{equation}
\label{eq:intro-weak-link}
 \operatorname{Link}:
 \Sch_{r,\infty}\times\Sch_{r,\infty}
 \longrightarrow
 \Sch_{r,\infty}(\log\Sch)^{-1/r}
\end{equation}
with a dimension-free constant on the full weak ideals.  Here the output
norm is
\[
 \sup_{n\ge1}n^{1/r}(\log(e+n))^{-1/r}s_n^{**}(T),
 \qquad
 s_n^{**}(T):=\frac1n\sum_{j=1}^n s_j(T).
\]
The exponent $1/r$ is attained and optimal within this
$q=\infty$ Lorentz--Zygmund scale.  The upper bound follows by decomposing
both inputs into dyadic singular blocks: link blocks on each antidiagonal
are orthogonal on both their initial and terminal sides, and a Ky Fan
head--tail balance produces the logarithm.  Sharpness is already forced by
the one-dimensional contracted space, where the link becomes an external
tensor product and a divisor-counting example gives the matching lower
bound.

Every output cut of an ordered cross-contraction is a link of one flattening
of each input.  Hence, for every cross pairing $P$,
\begin{equation}
\label{eq:intro-contraction}
 \Prof_{m+n-2|P|,r}(L\circ_PK)
 \le\Prof_{n,r}(L)\Prof_{m,r}(K).
\end{equation}
 For finite-support kernels, the same factorization and
 \eqref{eq:intro-weak-link} give a weak-profile version with the
 logarithmically enlarged output ideal.

There is also a distinct operator-norm endpoint.  Set
\[
 D_*(K):=\max\left\{1,\max_{S\subset[m]}
        \rank\Flat_S(K)\right\},
 \qquad
 A_\infty(K):=\Prof_{m,\infty}(K).
\]
A sharper parameter is the effective rank
\[
 R_{\mathrm{eff}}(K):=
 \frac{\norm K_{\cK_m}^{\,2}}{A_\infty(K)^2}
 \le D_*(K)
 \qquad(K\ne0).
\]
The finite-Schatten Gaussian estimate, optimized at an exponent
comparable to $\log(e+D_*(K))$, gives
\begin{equation}
\label{eq:intro-rank-log-operator}
 \norm{\cT_K^{(m)}}_{L^p(\Omega;\cL)}
 \le C_m\bigl(p+\log(e+D_*(K))\bigr)^{m/2}A_\infty(K).
\end{equation}
The same conclusion holds for same-field Wick chaos, with a constant
depending only on $m$.  Tensorized diagonal kernels show that both the
$p^{m/2}$ and logarithmic powers in
\eqref{eq:intro-rank-log-operator} are optimal.  The logarithm in
\eqref{eq:intro-weak-link} is a deterministic divisor loss created by a
link, whereas the logarithm in \eqref{eq:intro-rank-log-operator} is a
Gaussian finite-rank effect.

Estimate \eqref{eq:intro-contraction} extends the ordered product formula
for Wick chaoses from
finite-support kernels to simultaneous profile completions without imposing
commutativity on the deterministic coefficients.  For $2\le r<\infty$, if
$\mathbf K\in\mathscr A_r(B)$ and
$\mathbf L\in\mathscr A_r(C)$ are factorially weighted analytic kernel
sequences, their contraction product $\mathbf L\star\mathbf K$ obeys
\begin{equation}
\label{eq:intro-analytic-law}
 \norm{\mathbf L\star\mathbf K}_{\mathscr A_r(B+C)}
 \le e^{BC}
 \norm{\mathbf L}_{\mathscr A_r(C)}
 \norm{\mathbf K}_{\mathscr A_r(B)}.
\end{equation}
The exponential factor resums all cross-pairings.  As the radius is additive,
the nested union of these classes is an associative filtered algebra (with a
unit whenever the identity belongs to the relevant Schatten class).

Two further devices make the calculus usable in spectral algebras.  Dyadic
blocks of an operator product satisfy a convolution bound, while
Schatten-stable coefficient maps transport every oriented cut with the same
amplification constant.  These mechanisms yield a local-to-global theorem
for loop-free Peter--Weyl fusion trees under summable local cut profiles.
Quantitative applications reduce to summability of the local cut profiles;
closed fusion subdiagrams and non-Kac modular data require additional
estimates.  In the cocommutative group-dual case the local profiles can be
computed
explicitly.  If $\Gamma$ has polynomial upper growth exponent $D$,
$c_1,\ldots,c_m\in\ell^2(\Gamma)$, and $1\le r<\infty$, then the sandwiched
order-$m$ Wick multiplier on the compact dual $\widehat\Gamma$ extends to a
bounded $m$-linear map into every finite $L^p(\Omega;\Sch_r)$ whenever
\begin{equation}
\label{eq:intro-group-range}
 a>0,\qquad b>0,\qquad a+b>\frac Dr.
\end{equation}
For $r\ge2$, the exponent $D/r$ follows by interpolating the Hilbert
incidence bound with a dimension-free operator-norm profile obtained from
the link calculus; for $1\le r\le2$, it follows after Gaussian evaluation
from finite spatial rank and the Hilbert--Schmidt block bound.  In
particular, the resulting random operator has an explicit sub-Weibull tail
of order $2/m$, and the diagonal choice $c_1=\cdots=c_m=c$ gives convergence
of Wick powers for every $c\in\ell^2(\Gamma)$.

At order two, the group product permits a sharper analysis beyond the
square-summable regime.  If $D/4<\sigma<D/2$ and
$c_x=\langle x\rangle^{-\sigma}$, then, uniformly in the ultraviolet
cutoff and for $2\le r\le\infty$,
\begin{equation}
\label{eq:intro-singular-profile}
 \Prof_{2,r}(K_{2,N}^{L,Q}(c))
 \lesssim
 \max\{L,Q\}^{D-2\sigma}\min\{L,Q\}^{D/r}.
\end{equation}
For $1\le r<2$, the same spatial-block scale
follows after evaluating the chaos and applying finite-rank Schatten
conversion.
At $\sigma=D/2$, the first factor on the right is replaced by
$\log(e+\max\{L,Q\})$.  The estimate follows from polynomial
rearrangement, decay of the weighted group convolution, and a two-weight
Schur test for the mixed cuts.  It yields convergence when
\[
 a>D-2\sigma,\qquad b>D-2\sigma,\qquad
 a+b>D-2\sigma+\frac Dr,
\]
together with polynomial $L^p$, tail-probability, and almost-sure cutoff
rates.  Exact word-ball asymptotics show that, for every infinite finitely
generated group of polynomial growth, the local threshold $\sigma>D/4$
and all three displayed smoothing inequalities are necessary as well as
sufficient within $0<\sigma\le D/2$.  This gives the sharp phase diagram
throughout the singular regime; see
Theorem~\ref{thm:universal-singular-phase}.

At every order, the corresponding singular problem admits a sharp solution
on $\mathbb Z^D$.  If
\[
 \frac{(m-1)D}{2m}<\sigma\le\frac D2,
 \qquad
 \beta_m=m\left(\frac D2-\sigma\right),
\]
then, for $2\le r\le\infty$, all $2^m$ oriented cuts satisfy
\[
 \Prof_{m,r}(K_{m,N}^{L,Q}(c))
 \lesssim
 \max\{L,Q\}^{\beta_m}\min\{L,Q\}^{D/r}
\]
For $1\le r<2$, the corresponding evaluated
spatial-block estimate follows by finite-rank Schatten conversion.  Below
the logarithmic endpoint the first factor is as displayed, and it becomes
$\log(e+\max\{L,Q\})^{m/2}$ at the endpoint.  The exact smoothing range is
\[
 a>\beta_m,\qquad b>\beta_m,\qquad
 a+b>\beta_m+\frac Dr.
\]
The matrix coefficient at the lattice identity proves that the local
threshold is sharp.

Under Fourier transformation this yields sharp Schatten and compactness phase
diagrams for multiplication by singular Wick powers on $\mathbb T^D$.
The Schatten diagrams cover the complete finite-index Banach range
$1\le r<\infty$ and in particular give exact trace-class thresholds.
Every compact sandwich has a canonical Fourier--Galerkin compression whose
mean-square operator-norm error has an exact two-sided rate, including the
critical logarithmic factor.  For balanced sandwiches, the exact cutoff
Schatten index also gives quantitative singular-value and one-sided
approximation-number decay bounds; see
Theorems~\ref{thm:all-order-singular-lattice-transfer}
and~\ref{thm:torus-singular-wick-multipliers}, together with
Corollaries~\ref{cor:torus-fourier-galerkin-rate}
and~\ref{cor:torus-approximation-numbers}.

\paragraph{Relation to the companion paper.}
The decoupled and same-field estimates used in
\eqref{eq:intro-gaussian-estimate} are proved in the companion preprint
\cite{ZhaoSchattenWick}.  Here they serve as inputs to deterministic
propagation through coefficient kernels, Schatten ideals, tensor
contractions, and spectral summation.  Those estimates are formulated for
arbitrary stochastic and deterministic Hilbert spaces; manifold assumptions
enter only in the covariance applications of the companion.

Multiple Wiener--It\^o integrals, Wick renormalization, and their
multiplication formula are classical
\cite{Janson,Nualart,PeccatiTaqqu}.  The torus application below uses this
standard distributional construction; the simultaneous oriented-profile
analysis determines the exact two-sided
trace-ideal and compactness thresholds.  The multiplication formula,
combined with the link estimate, also produces a bounded noncommutative
product on simultaneous oriented-profile completions.  Operator-space
multiplication supplies a natural language for algebraic vertices
\cite{ChristensenSinclair,EffrosRuan,PisierBook}, while the
Schatten-stability constant introduced below separately records ideal
summability under matrix amplification.  Standard ideal and approximation
properties of Schatten classes are used throughout in the form recorded in
\cite{SimonTraceIdeals}.

The random-tensor framework for propagation of randomness in nonlinear
dispersive equations was developed by Deng, Nahmod, and Yue
\cite{DengNahmodYue}, and Kaneshiro
\cite{Kaneshiro} gave a proof of the abstract estimate based on
noncommutative Khintchine inequalities and Gaussian decoupling.  In a
different matrix-chaos setting, general coefficient-flattening inequalities
were developed by
Bandeira, Lucca, Nizi\'c-Nikolac, and van Handel
\cite{BandeiraLuccaNizicVanHandel}; here oriented Schatten profiles are used
as inputs to a deterministic composition--contraction calculus and to sharp
singular Wick-multiplier phase diagrams.

\paragraph{Organization.}
Section~\ref{sec:profiles-majorants} develops oriented profiles, cut
majorants, their functorial rules, and the effective-rank and finite-cut-rank
operator-norm bridge.  Section~\ref{sec:links-wick} proves the strong and
sharp weak link theorems and constructs completed and analytic Wick products.
Section~\ref{sec:spectral-peter-weyl} treats spectral blocks,
Schatten-stable coefficient maps, and Peter--Weyl fusion trees.
Section~\ref{sec:group-duals} establishes the group-dual transfer theorems,
the sharp quadratic phase diagram, the all-order lattice theorem, and its
torus application.  Section~\ref{sec:outlook} concludes with remarks on the
scope of the calculus.

\section{Oriented profiles and cut majorants}
\label{sec:profiles-majorants}

\subsection{Profiles and Gaussian estimates}

All Hilbert-space inner products are linear in the first variable, and we
use the endpoint convention $\Sch_\infty(X,Y):=\cL(X,Y)$.  Unadorned
Hilbert-space tensor products are Hilbert tensor products.  For a complex
Hilbert space $D$, its conjugate space $\conj D$ has scalar multiplication
and inner product
\[
 \lambda\,\conj d=\conj{\bar\lambda d},
 \qquad
 \ip{\conj d}{\conj e}_{\conj D}
 =\conj{\ip{d}{e}_D},
\]
and $\mathfrak c_D:D\to\conj D$, $\mathfrak c_Dd=\conj d$, denotes the
canonical conjugate-linear isometry.  We use the canonical identification
$\conj{\conj D}\simeq D$.
Fix $m\ge1$.  Put $[m]=\{1,\ldots,m\}$ and, for $S\subset[m]$,
\[
  \cH_S:=\bigotimes_{j\in S}\cH_j,
  \qquad \cH_\emptyset:=\C,
\]
where factors are written in increasing order.  Let
\begin{equation}
\label{eq:kernel-space}
  \cK_m(\cH_1,\ldots,\cH_m;\cC,\cE)
  :=\cH_1\otimes\cdots\otimes\cH_m
    \otimes\conj{\cC}\otimes\cE.
\end{equation}
Let $\cK_m^{\mathrm{alg}}$ be the algebraic tensor product inside
\eqref{eq:kernel-space}.  Equivalently, its elements lie in
\[
 F_1\otimes\cdots\otimes F_m\otimes\conj{F_{\cC}}\otimes F_{\cE}
\]
for finite-dimensional subspaces $F_j\subset\cH_j$,
$F_{\cC}\subset\cC$, and $F_{\cE}\subset\cE$.  We call them
finite-support algebraic kernels and initially work on this space.  At order
zero set
\[
 \cK_0(\cC,\cE):=\conj\cC\otimes\cE,
 \qquad
 \cK_0^{\mathrm{alg}}(\cC,\cE):=\conj\cC\odot\cE.
\]
Denote by
\[
 \Theta_{D,R}:\conj D\otimes R\longrightarrow\Sch_2(D,R),
 \qquad
 \Theta_{D,R}(\conj d\otimes u)x=\ip{x}{d}u,
\]
the canonical complex-linear unitary identification.

\begin{definition}[Oriented flattening]
\label{def:oriented-flattening}
For $K$ in \eqref{eq:kernel-space} and $S\subset[m]$, regroup the tensor
factors as
\[
 K\in
 \conj{\bigl(\conj{\cH_S}\otimes\cC\bigr)}
 \otimes\bigl(\cH_{S^c}\otimes\cE\bigr).
\]
The corresponding Hilbert--Schmidt operator is the oriented flattening
\begin{equation}
\label{eq:flattening-map}
 \Flat_S(K):\conj{\cH_S}\otimes\cC
 \longrightarrow \cH_{S^c}\otimes\cE.
\end{equation}
For $1\le r\le\infty$, define
\begin{equation}
\label{eq:profile-def}
 \Prof_{m,r}(K)
 :=\max_{S\subset[m]}\norm{\Flat_S(K)}_{\Sch_r}.
\end{equation}
For $1\le r<2$, the right-hand side is understood as $+\infty$ if one of
the flattenings does not belong to $\Sch_r$; it is finite on
$\cK_m^{\mathrm{alg}}$.
\end{definition}

If
\[
 K=\sum_{i_1,\ldots,i_m}
 e^{(1)}_{i_1}\otimes\cdots\otimes e^{(m)}_{i_m}
 \otimes\Theta_{\cC,\cE}^{-1}(K_{i_1\cdots i_m}),
\]
then $\Flat_S(K)$ is the matrix whose row indices are
$(i_j)_{j\in S^c}$ together with the output index and whose column indices
are $(i_j)_{j\in S}$ together with the input index.  The conjugate spaces in
\eqref{eq:flattening-map} make this description invariant under unitary
changes of bases.

\begin{lemma}[The Hilbert endpoint]
\label{lem:hilbert-endpoint}
For every $S\subset[m]$,
\begin{equation}
\label{eq:hilbert-endpoint}
 \norm{\Flat_S(K)}_{\Sch_2}=\norm K_{\cK_m}.
\end{equation}
Consequently,
\begin{align*}
 \Prof_{m,2}(K)&=\norm K_{\cK_m},\\
 \Prof_{m,r}(K)&\ge\Prof_{m,2}(K)\quad(1\le r\le2),\\
 \Prof_{m,r}(K)&\le\Prof_{m,2}(K)\quad(2\le r\le\infty).
\end{align*}
\end{lemma}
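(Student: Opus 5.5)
The identity \eqref{eq:hilbert-endpoint} is essentially a bookkeeping statement: $\Flat_S(K)$ is obtained from $K$ by a unitary regrouping followed by the unitary $\Theta$. I will prove it on elementary tensors and extend by density; the three consequences are then monotonicity of Schatten norms.

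\textbf{Step 1: the regrouping is unitary.} For fixed $S$, I will construct the complex-linear map
\[
 U_S:\cK_m\longrightarrow
 \conj{\bigl(\conj{\cH_S}\otimes\cC\bigr)}\otimes\bigl(\cH_{S^c}\otimes\cE\bigr).
\]
It combines three ingredients: a permutation of tensor factors, the identification $\cH_j\simeq\conj{\conj{\cH_j}}$ for $j\in S$, and the identification $\conj{\conj{\cH_S}\otimes\cC}\simeq\conj{\conj{\cH_S}}\otimes\conj\cC$.

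Permutations of Hilbert tensor factors are unitary. The map $\conj{A\otimes B}\to\conj A\otimes\conj B$, $\conj{a\otimes b}\mapsto\conj a\otimes\conj b$, is complex-linear and isometric: on elementary tensors, $\ip{\conj a\otimes\conj b}{\conj{a'}\otimes\conj{b'}}=\conj{\ip a{a'}}\,\conj{\ip b{b'}}=\conj{\ip{a\otimes b}{a'\otimes b'}}$. It is also surjective, hence unitary. The canonical $\conj{\conj D}\simeq D$ is unitary by the definition of the conjugate inner product.

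Each of these maps preserves inner products of elementary tensors and has dense range, so $U_S$ is unitary. This is the step where care is needed. The one point I would check explicitly is that the conjugate-linearity conventions combine so that $U_S$ is complex-linear rather than antilinear. Either way the norm is preserved, which is all that \eqref{eq:hilbert-endpoint} requires.

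\textbf{Step 2: apply $\Theta$.} By definition, $\Flat_S(K)=\Theta_{D,R}(U_SK)$ with $D=\conj{\cH_S}\otimes\cC$ and $R=\cH_{S^c}\otimes\cE$. Since $\Theta_{D,R}$ is a unitary identification onto $\Sch_2(D,R)$, we get $\norm{\Flat_S(K)}_{\Sch_2}=\norm{U_SK}=\norm K_{\cK_m}$. As a sanity check, the matrix description after the definition gives the same thing directly: the Hilbert--Schmidt norm is the $\ell^2$ norm of the entries $K_{i_1\cdots i_m}$ in orthonormal bases, independent of how the indices are split into rows and columns.

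\textbf{Step 3: consequences.} The maximum of the constant function $S\mapsto\norm K$ is $\norm K$, which gives $\Prof_{m,2}(K)=\norm K_{\cK_m}$. Schatten norms are nonincreasing in the exponent, since $\norm T_{\Sch_r}=\norm{(s_n(T))}_{\ell^r}$ and the $\ell^r$ norms decrease in $r$. Hence for $r\le2$ we have $\norm{\Flat_S K}_{\Sch_r}\ge\norm K$ for each $S$, and in particular for the maximizing $S$; this also covers the $+\infty$ convention. For $r\ge2$, every $\norm{\Flat_SK}_{\Sch_r}\le\norm K$, so the maximum is $\le\norm K$. For $r=\infty$ this uses $\norm{T}_{\cL}\le\norm T_{\Sch_2}$.
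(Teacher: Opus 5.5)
Your proposal is correct and follows the paper's proof: a canonical unitary regrouping of the Hilbert tensor factors, then the unitary Hilbert-tensor/Hilbert--Schmidt identification $\Theta$, with the three consequences read off from monotonicity of Schatten norms in the exponent. Your explicit check that $\conj{A\otimes B}\simeq\conj A\otimes\conj B$ is a complex-linear isometry, and your handling of the $+\infty$ convention for $r<2$, just spell out details the paper leaves implicit.
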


\begin{proof}
Every flattening is obtained from $K$ by a canonical unitary regrouping of
Hilbert tensor factors followed by the Hilbert-tensor/Hilbert--Schmidt
identification.  This proves \eqref{eq:hilbert-endpoint}; the last assertion
is monotonicity of Schatten norms.
\end{proof}

Suppose now that all stochastic legs are copies of a common complex Hilbert
space.
Let $\mathfrak S([m])$ denote the permutation group of $[m]$.  For
$\pi\in\mathfrak S([m])$, let $U_\pi$ be the unitary that permutes the
stochastic tensor factors, leaving the deterministic legs fixed, and define
the normalized symmetrization
\begin{equation}
\label{eq:normalized-symmetrization}
 \operatorname{Sym}_mK
 :=\frac1{m!}\sum_{\pi\in\mathfrak S([m])}U_\pi K,
 \qquad \operatorname{Sym}_0:=\Id.
\end{equation}

\begin{lemma}[Permutation invariance and symmetrization]
\label{lem:symmetrization-contractive}
For every $1\le r\le\infty$ and $\pi\in\mathfrak S([m])$,
\begin{equation}
\label{eq:permutation-profile}
 \Prof_{m,r}(U_\pi K)=\Prof_{m,r}(K),
 \qquad
 \Prof_{m,r}(\operatorname{Sym}_mK)\le\Prof_{m,r}(K).
\end{equation}
\end{lemma}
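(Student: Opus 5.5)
The plan is to show that a leg permutation simply relabels the family of oriented cuts, and then to get the symmetrization bound from the triangle inequality.

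\emph{Step 1: the flattening identity.} Fix $\pi\in\mathfrak S([m])$ and $S\subset[m]$. I will show that
\[
 \Flat_S(U_\pi K)=V_{S^c}\,\Flat_{\pi^{-1}(S)}(K)\,W_S^*,
\]
with the following ingredients.
\begin{itemize}
\item $W_S:\conj{\cH_{\pi^{-1}(S)}}\otimes\cC\to\conj{\cH_S}\otimes\cC$ is a unitary that reorders tensor factors.
\item $V_{S^c}:\cH_{\pi^{-1}(S^c)}\otimes\cE\to\cH_{S^c}\otimes\cE$ is likewise a unitary reordering of tensor factors.
\item All stochastic legs are copies of one space, so each such reordering is a genuine unitary between the relevant spaces.
\end{itemize}
The exact convention (whether $\pi$ or $\pi^{-1}$ appears) depends on how $U_\pi$ is defined. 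Either way the point is the same: the legs of $U_\pi K$ placed on the input side at cut $S$ are precisely some legs of $K$ placed on its input side at a cut $S'$ with $|S'|=|S|$.

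\emph{Step 2: checking the identity on elementary tensors.} It suffices to verify the identity on elementary tensors $h_1\otimes\cdots\otimes h_m\otimes\conj c\otimes e$. This is immediate from the matrix description following Definition~\ref{def:oriented-flattening}: permuting the stochastic indices permutes row labels among rows and column labels among columns. Both sides are bounded (Hilbert--Schmidt), and they agree on the algebraic tensor product, so by density they agree on all of $\cK_m$.

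\emph{Step 3: permutation invariance.} Schatten norms are unitarily invariant, so
\[
 \norm{\Flat_S(U_\pi K)}_{\Sch_r}=\norm{\Flat_{S'}(K)}_{\Sch_r}.
\]
The map $S\mapsto S'$ is a bijection of the power set of $[m]$. Taking the maximum over $S$ therefore gives $\Prof_{m,r}(U_\pi K)=\Prof_{m,r}(K)$, including the convention that both sides equal $+\infty$ when $r<2$.

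\emph{Step 4: contractivity of symmetrization.} The map $K\mapsto\Flat_S(K)$ is linear. For each $S$ the triangle inequality gives
\[
 \norm{\Flat_S(\operatorname{Sym}_mK)}_{\Sch_r}\le\frac1{m!}\sum_{\pi\in\mathfrak S([m])}\norm{\Flat_S(U_\pi K)}_{\Sch_r}\le\frac1{m!}\sum_{\pi\in\mathfrak S([m])}\Prof_{m,r}(U_\pi K)=\Prof_{m,r}(K),
\]
where the final equality uses Step~3. Taking the maximum over $S$ yields the claimed bound. If $\Prof_{m,r}(K)=\infty$ there is nothing to prove; when $m=0$ the statement is trivial.

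\emph{Main obstacle.} The only delicate point is the bookkeeping in Step~1. One must check that conjugation on the input legs is respected, i.e.\ that reordering conjugate factors $\conj{\cH_j}$ is still a complex-linear unitary. This holds because conjugation commutes with tensor reordering: $\conj{\cH_A\otimes\cH_B}\simeq\conj{\cH_A}\otimes\conj{\cH_B}$ canonically.
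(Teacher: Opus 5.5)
Your proposal is correct and follows essentially the same route as the paper. In both, the flattening of $U_\pi K$ across $S$ is unitarily equivalent to the flattening of $K$ across the permuted cut, the induced map on cuts is a bijection of the power set, and the symmetrization bound follows from the triangle inequality combined with the first identity; your elementary-tensor and density check and your treatment of the $+\infty$ convention for $r<2$ just make explicit what the paper states in three lines.
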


\begin{proof}
For every cut $S$, the flattening of $U_\pi K$ across $S$ is unitarily
equivalent to the flattening of $K$ across the corresponding permuted cut.
The first identity follows because $\pi$ permutes the set of all cuts.  The
second follows from \eqref{eq:normalized-symmetrization}, the triangle
inequality, and the first identity.
\end{proof}

\begin{definition}[Simultaneous profile completion]
\label{def:profile-completion}
For $1\le r<\infty$, let
\[
 \mathfrak T_{m,r}(\cH_1,\ldots,\cH_m;\cC,\cE)
\]
be the Banach completion of $\cK_m^{\mathrm{alg}}$ in the norm
$\Prof_{m,r}$.  For the same-field space, let $\cH$ be real, choose
labelled copies $\cH_\C^{(1)},\ldots,\cH_\C^{(m)}$ of its
complexification and unitary identifications
$J_\nu:\cH_\C\to\cH_\C^{(\nu)}$, and set
\[
 \operatorname{Dec}_m
 :=J_1\otimes\cdots\otimes J_m\otimes
   \Id_{\conj{\cC}}\otimes\Id_{\cE}.
\]
On symmetric algebraic kernels define
\[
 \norm K_{\mathfrak W_{m,r}}
 :=\Prof_{m,r}(\operatorname{Dec}_mK),
\]
and denote the resulting completion by
\[
 \mathfrak W_{m,r}(\cH;\cC,\cE).
\]
Different choices of labelled copies give canonically isometric spaces by
Proposition~\ref{prop:legwise-rule}.  At order zero and for
$1\le r<\infty$, set
\begin{equation}
\label{eq:zero-order-profile}
 \mathfrak T_{0,r}(\cC,\cE)
 =\mathfrak W_{0,r}(\cC,\cE)
 :=\Sch_r(\cC,\cE).
\end{equation}
For every $1\le r\le\infty$, including the operator-norm endpoint, use the
algebraic convention
\begin{equation}
\label{eq:zero-order-profile-endpoint}
 \Prof_{0,r}(K):=\norm K_{\Sch_r},
 \qquad \Sch_\infty(\cC,\cE)=\cL(\cC,\cE).
\end{equation}
\end{definition}

For a same-field kernel $K\in\mathfrak W_{m,r}$, we henceforth use the
abbreviation
\begin{equation}
\label{eq:same-field-profile-abbreviation}
 \Prof_{m,r}(K):=\norm K_{\mathfrak W_{m,r}}.
\end{equation}

The map
\[
 K\longmapsto\bigl(\Flat_S(K)\bigr)_{S\subset[m]}
\]
is an isometric embedding of the algebraic kernel space into the finite
$\ell^\infty$-product of the corresponding Schatten classes.  Thus
$\mathfrak T_{m,r}$ is precisely the closure of its image.  Equivalently,
an element is a compatible family of $2^m$ Schatten limits, one for every
cut; compatibility means joint approximation by a single algebraic kernel
sequence, not merely a choice of $2^m$ unrelated operators.

We use the following two estimates from \cite{ZhaoSchattenWick}.

Let $(g_i^{(j)})_{i\in I_j}$, $j\in[m]$, be independent standard real or
circular complex Gaussian families.  For a coefficient array with finite
support in $I_1\times\cdots\times I_m$, define
\begin{equation}
\label{eq:decoupled-chaos}
 \cT_K^{(m)}
 :=\sum_{(i_1,\ldots,i_m)\in I_1\times\cdots\times I_m}
 g_{i_1}^{(1)}\cdots g_{i_m}^{(m)}K_{i_1\cdots i_m}.
\end{equation}

\begin{theorem}[Oriented-flattening Gaussian estimate]
\label{thm:gaussian-estimate}
For every $m\ge1$ there is $C_m<\infty$ such that, for
$1\le p<\infty$ and $2\le r<\infty$,
\begin{equation}
\label{eq:gaussian-estimate}
 \norm{\cT_K^{(m)}}_{L^p(\Omega;\Sch_r(\cC,\cE))}
 \le C_m(p+r)^{m/2}\Prof_{m,r}(K).
\end{equation}
The constant is independent of all finite support dimensions and one may
take $C_m\le C_*^m$ for an absolute $C_*$. 
For each admissible $p$ and $r$, the map
$K\mapsto\cT_K^{(m)}$ extends uniquely and continuously from finite
coefficient arrays to
\[
 \mathfrak T_{m,r}\longrightarrow
 L^p(\Omega;\Sch_r(\cC,\cE)).
\]
\end{theorem}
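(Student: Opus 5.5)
The plan is to argue by induction on $m$, peeling off one Gaussian family at a time with the noncommutative Khintchine inequality. The key observation is that its row and column square functions are again decoupled chaoses of order $m-1$, whose kernels are regroupings of $K$ with the $m$-th stochastic leg moved onto the deterministic input or output side. The case $m=0$ is trivial: $\cT_K^{(0)}=K$ and $\Prof_{0,r}(K)=\norm K_{\Sch_r}$. Circular complex Gaussians reduce to real ones by splitting $g=(\xi+i\eta)/\sqrt2$ in each family and using the triangle inequality, at the cost of a factor $2^{m}$ absorbed into $C_*^m$. So I assume all families are real.

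\emph{Analytic input.} I would use the Gaussian noncommutative Khintchine inequality in the Schatten--Bochner space $L^p(\Omega;\Sch_r)$, $2\le r<\infty$, $1\le p<\infty$. For finitely many $A_i\in\Sch_r(\cC,\cE)$,
\[
 \Bigl\|\sum_i g_iA_i\Bigr\|_{L^p(\Sch_r)}
 \le C\sqrt{p+r}\,
 \max\Bigl\{\Bigl\|\bigl(\textstyle\sum_i A_i^*A_i\bigr)^{1/2}\Bigr\|_{\Sch_r},
 \Bigl\|\bigl(\textstyle\sum_i A_iA_i^*\bigr)^{1/2}\Bigr\|_{\Sch_r}\Bigr\}.
\]
To get it, first apply Lust-Piquard--Pisier at $p=r$ with constant $O(\sqrt r)$. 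Then pass to $p\ne r$ via Gaussian hypercontractivity/Kahane, comparing $L^p$ and $L^r$ norms of a first-order chaos with factor $\sqrt{p/r}$ when $p>r$, and trivially when $p<r$. \textbf{This step, in particular keeping the constant of order $\sqrt{p+r}$ uniformly, is where I expect the main obstacle.} I would take it from the literature in exactly this form rather than reprove it.

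\emph{Induction step.} Condition on the families $g^{(1)},\dots,g^{(m-1)}$ and write $\cT_K^{(m)}=\sum_{i}g^{(m)}_iA_i$, where $A_i$ is the order-$(m-1)$ chaos with coefficient kernel $K_{\cdot\,i}$. Apply the inequality above conditionally, then integrate over the remaining randomness by Fubini, bounding the max by the sum. The column square function satisfies
\[
\bigl\|(\sum A_i^*A_i)^{1/2}\bigr\|_{\Sch_r}=\bigl\|\textstyle\sum_i e_i\otimes A_i\bigr\|_{\Sch_r(\cC,\cH_m\otimes\cE)}.
\]
This is the order-$(m-1)$ chaos of the kernel $K$ viewed in $\cK_{m-1}(\cH_1,\dots,\cH_{m-1};\cC,\cH_m\otimes\cE)$. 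Similarly, the row square function is the chaos of $K$ viewed in $\cK_{m-1}(\dots;\conj{\cH_m}\otimes\cC,\cE)$, using the conjugate space so that the regrouping is canonical. The induction hypothesis gives the bound $C_{m-1}(p+r)^{(m-1)/2}$ times the respective profiles. For $S\subset[m-1]$, the flattenings of these regrouped kernels are exactly $\Flat_S(K)$ and $\Flat_{S\cup\{m\}}(K)$. Hence both profiles are at most $\Prof_{m,r}(K)$, and the recursion $C_m\le 2C\,C_{m-1}$ yields $C_m\le C_*^m$. No dimension enters anywhere, since only cut norms are used.

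\emph{Extension.} The map $K\mapsto\cT_K^{(m)}$ is linear on $\cK_m^{\mathrm{alg}}$. The estimate shows it is Lipschitz from the $\Prof_{m,r}$-norm into the Banach space $L^p(\Omega;\Sch_r)$. By Definition~\ref{def:profile-completion}, $\mathfrak T_{m,r}$ is the completion of the algebraic kernels in this norm, so the map extends uniquely and continuously to the completion, with the same bound.
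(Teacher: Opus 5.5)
Your argument is correct, but it follows a different route from the paper, which does not reprove the estimate at all. The paper imports the case $p\ge2$ from Theorem~2.7 and Corollary~2.9 of the companion preprint \cite{ZhaoSchattenWick}. It then reaches $1\le p<2$ by $L^p$-monotonicity together with $(2+r)\le\frac43(p+r)$ for $r\ge2$, enlarging the constant by $(4/3)^{m/2}$. The extension is by density, exactly as in your last paragraph.

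Your proof is a self-contained replacement for that black box. Conditioning on $m-1$ families and applying the operator-valued Khintchine inequality turns the column and row square functions into chaoses of order $m-1$. Their kernels are $K$ with the last stochastic leg moved to the output or input side. Because $m$ is the last index, their flattenings are exactly $\Flat_S(K)$ and $\Flat_{S\cup\{m\}}(K)$ for $S\subset[m-1]$, with no permutation needed.

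What your route buys:
\begin{itemize}
\item It explains why precisely the $2^m$ oriented cuts appear.
\item It gives the explicit recursion $C_m\le 2C\,C_{m-1}$, hence the geometric bound $C_*^m$.
\item It treats all $1\le p<\infty$ inside the one-variable inequality, rather than as an afterthought.
\end{itemize}
What the citation buys is brevity. It also draws on the same source the paper uses for the same-field estimate of Theorem~\ref{thm:wick-estimate}, which your argument alone does not reach, since that needs a separate decoupling step.

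The step you flag as the main obstacle is in fact routine. Lust-Piquard--Pisier at exponent $r$ has an $O(\sqrt r)$ constant. For $p>r$, Gaussian hypercontractivity on the first chaos gives $\norm{\sum_i g_ix_i}_{L^p(X)}\le\sqrt{(p-1)/(r-1)}\,\norm{\sum_i g_ix_i}_{L^r(X)}$ in any Banach space $X$. For $p<r$, plain monotonicity suffices. Together these give the constant $C\sqrt{\max\{p,r\}}\le C\sqrt{p+r}$.
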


\begin{proof}
For $p\ge2$, let $C_m^{(0)}$ be the constant supplied by Theorem~2.7 and
its finite-support formulation, Corollary~2.9, of
\cite{ZhaoSchattenWick}.  If $1\le p<2$, monotonicity of
probability-space norms and $r\ge2$ give
\[
 \norm{\cT_K^{(m)}}_{L^p(\Omega;\Sch_r)}
 \le \norm{\cT_K^{(m)}}_{L^2(\Omega;\Sch_r)}
 \le C_m^{(0)}(2+r)^{m/2}\Prof_{m,r}(K)
 \le \left(\frac43\right)^{m/2}
      C_m^{(0)}(p+r)^{m/2}\Prof_{m,r}(K).
\]
Taking $C_m=(4/3)^{m/2}C_m^{(0)}$ and enlarging the absolute base $C_*$
proves the stated range; the extension follows by density.
\end{proof}

Let $W$ be a real isonormal process over a real Hilbert space $\cH$, and let
$K$ be a symmetric finite-support kernel in
$\cH_\C^{\otimes m}\otimes\conj{\cC}\otimes\cE$.  Write $I_m(K)$ for the
operator-valued multiple Wiener--It\^o integral normalized by Wick
monomials.

\begin{theorem}[Same-field Wick estimate]
\label{thm:wick-estimate}
For $1\le p<\infty$ and $2\le r<\infty$,
\begin{equation}
\label{eq:wick-estimate}
 \norm{I_m(K)}_{L^p(\Omega;\Sch_r)}
 \le A_m(p+r)^{m/2}\Prof_{m,r}(K),
 \qquad A_m:=m^{m/2}C_m.
\end{equation}
Here the profile is computed after passing to labelled copies of the $m$
stochastic legs.  The chaos map extends continuously to
$\mathfrak W_{m,r}$.
\end{theorem}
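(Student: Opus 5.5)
The plan is to reduce the same-field Wick estimate to the decoupled estimate, Theorem~\ref{thm:gaussian-estimate}, by a standard Gaussian decoupling argument, which accounts for the factor $m^{m/2}$ in $A_m$. I first work with a symmetric finite-support kernel $K$. I fix an orthonormal basis $(e_i)$ of the finite-dimensional real subspace of $\cH$ carrying the support, and expand $K=\sum_{i_1,\ldots,i_m} e_{i_1}\otimes\cdots\otimes e_{i_m}\otimes\Theta^{-1}(K_{i_1\cdots i_m})$ with symmetric coefficients. Then $I_m(K)=\sum K_{i_1\cdots i_m}\wick{g_{i_1}\cdots g_{i_m}}$, where $g_i=W(e_i)$ are i.i.d.\ standard real Gaussians. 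Passing to labelled copies via $\operatorname{Dec}_m$ identifies the coefficient array with a decoupled kernel whose profile is $\Prof_{m,r}(K)$ in the sense of \eqref{eq:same-field-profile-abbreviation}.

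The key step is the polarization identity for Wick products. Let $g^{(1)},\ldots,g^{(m)}$ be independent copies of $g$ and put $G=m^{-1/2}\sum_{\nu=1}^m g^{(\nu)}$, which is equidistributed with $g$. Expanding the symmetric multilinear Wick form gives
\[
 I_m(K)\big|_{g\mapsto G}=m^{-m/2}\sum_{\nu_1,\ldots,\nu_m}\sum_{i} K_{i}\,\wick{g^{(\nu_1)}_{i_1}\cdots g^{(\nu_m)}_{i_m}},
\]
and the terms with all $\nu_j$ distinct form $m!\,m^{-m/2}\cT_K^{(m)}$ after symmetrization. Conditioning on the copies and applying Jensen's inequality would then control the full sum by the decoupled chaos. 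Applying Jensen in this naive form goes the wrong way, however, since it bounds the decoupled chaos by the coupled one rather than the reverse.

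I would therefore use the cleaner route through random signs. Set $\varepsilon_\nu$ to be i.i.d.\ Rademacher variables and $G_\varepsilon=m^{-1/2}\sum_\nu\varepsilon_\nu g^{(\nu)}$. Extracting the coefficient of $\varepsilon_1\cdots\varepsilon_m$ shows that
\[
 \E_\varepsilon\Bigl[\varepsilon_1\cdots\varepsilon_m\, I_m(K)(G_\varepsilon)\Bigr]=m^{-m/2}\,m!\,\cT_K^{(m)},
\]
because for symmetric $K$ only the terms in which every $\nu$ appears exactly once survive. This identity bounds the decoupled chaos by the coupled one, which is again the wrong direction. The direction actually needed, coupled bounded by decoupled, follows from the classical de la Peña–Montgomery-Smith type decoupling. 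For Gaussian chaos it can be obtained directly: write $g=m^{-1/2}\sum_\nu g^{(\nu)}$, expand $I_m(K)(g)$ as a sum over tuples $(\nu_1,\ldots,\nu_m)\in[m]^m$ of mixed Wick products, and bound each tuple's term by a decoupled chaos of lower order in fewer copies. A term in which some copy repeats is itself a Wick product in that copy and needs an induction.

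I expect this step to be the main obstacle: obtaining the clean constant $m^{m/2}$ without an induction. My first attempt is the following. The summand for each tuple is the decoupled chaos of the kernel $K$ evaluated on copies $g^{(\nu_j)}$, with repetitions handled through Wick ordering. Since $\Prof$ is invariant under the leg permutations of Lemma~\ref{lem:symmetrization-contractive}, I would bound each summand by the decoupled estimate with profile $\Prof_{m,r}(K)$. Because there are $m^m$ tuples and each carries the weight $m^{-m/2}$, the triangle inequality then gives the total factor $m^{m/2}C_m$, as claimed. For repeated indices I would invoke the same-field version, or absorb repetitions into fresh copies by hypercontractive comparison; this is the part I would check most carefully. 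Finally, the extension to $\mathfrak W_{m,r}$ follows by density, since the bound is uniform in the support.
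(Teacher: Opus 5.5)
Your expansion and the count are fine as bookkeeping, but the decisive step is circular. Theorem~\ref{thm:gaussian-estimate} controls only chaoses whose $m$ legs are driven by $m$ independent families. Those are the $m!$ injective tuples $(\nu_1,\ldots,\nu_m)$, which indeed contribute $m!\,m^{-m/2}\cT^{(m)}_K$. Every tuple with a repetition gives a partially coupled Wick chaos. The $m$ constant tuples give $\sum_{\mathbf i}K_{\mathbf i}\wick{g^{(\nu)}_{i_1}\cdots g^{(\nu)}_{i_m}}$, which has exactly the law of $I_m(K)$. So bounding all $m^m$ summands by $C_m(p+r)^{m/2}\Prof_{m,r}(K)$ presupposes the theorem, and even a mixed-type version of it; the arithmetic $m^m\cdot m^{-m/2}=m^{m/2}$ is not a proof. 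The repetitions cannot simply be absorbed either. Bounding the mixed types uniformly by the same scheme puts total weight $(m^m-m!)m^{-m/2}\ge1$ on the unknown constant for every $m\ge2$. Hypercontractivity only compares moments of a fixed chaos; it never turns a coupled chaos into a decoupled one.

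The missing idea is the Jensen step you discarded. It goes the right way once you condition the \emph{decoupled} chaos on the normalized sum $G=m^{-1/2}\sum_{\nu}g^{(\nu)}$ instead of on the copies. Conditional expectation onto the $\sigma$-algebra of a Gaussian subspace is the second quantization of the orthogonal projection $P$ onto it: $\E[\wick{\xi_1\cdots\xi_m}\mid G]=\wick{(P\xi_1)\cdots(P\xi_m)}$. Here $Pg^{(\nu)}_i=m^{-1/2}G_i$, because the $G_k$ are orthonormal and $\E[g^{(\nu)}_iG_k]=m^{-1/2}\delta_{ik}$. A product of coordinates from distinct independent copies is already a Wick product, so
\[
 \E\bigl[\cT^{(m)}_{\operatorname{Dec}_mK}\mid G\bigr]
 =m^{-m/2}\sum_{\mathbf i}K_{\mathbf i}\wick{G_{i_1}\cdots G_{i_m}},
\]
and $G$ has the law of $g$. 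Contractivity of conditional expectation on $L^p(\Omega;\Sch_r)$ then gives $\norm{I_m(K)}_{L^p(\Omega;\Sch_r)}\le m^{m/2}\norm{\cT^{(m)}_{\operatorname{Dec}_mK}}_{L^p(\Omega;\Sch_r)}$ for all $1\le p<\infty$. Theorem~\ref{thm:gaussian-estimate} then yields exactly $A_m=m^{m/2}C_m$, with no induction, and your density argument completes the extension to $\mathfrak W_{m,r}$. The paper itself does not reprove this estimate: it imports it for $p\ge2$ from the companion preprint and handles $1\le p<2$ by $L^p$-monotonicity.
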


\begin{proof}
For $p\ge2$, Proposition~3.16 together with the completed same-field Wick
map, Proposition~4.14, of \cite{ZhaoSchattenWick} gives the result with
$m^{m/2}C_m^{(0)}$, where $C_m^{(0)}$ is the constant used in the proof of
Theorem~\ref{thm:gaussian-estimate}; this is at most $A_m$.  For
$1\le p<2$, the same $L^p$-monotonicity argument applies, and the required
enlargement is already incorporated in $C_m$, and hence in
$A_m=m^{m/2}C_m$.
\end{proof}

\begin{corollary}[Finite-complexity operator-norm bridge]
\label{cor:finite-cut-rank-operator-endpoint}
Let $m\ge1$ and $K\in\cK_m$.  Put
\begin{equation}
\label{eq:operator-amplitude-effective-rank}
 A_\infty(K):=\Prof_{m,\infty}(K),
 \qquad
 R_{\mathrm{eff}}(K):=
 \begin{cases}
 \norm K_{\cK_m}^{\,2}/A_\infty(K)^2,&K\ne0,\\
 1,&K=0.
\end{cases}
\end{equation}
The decoupled evaluator $\cT_K^{(m)}$ is the common extension obtained from
algebraic approximation in $\cK_m$.  There are constants
$C_m^{\mathrm{op}},A_m^{\mathrm{op}}<\infty$, depending only on $m$, such
that for every $1\le p<\infty$,
\begin{equation}
\label{eq:effective-rank-decoupled-operator}
 \norm{\cT_K^{(m)}}_{L^p(\Omega;\cL(\cC,\cE))}
 \le C_m^{\mathrm{op}}
 \bigl(p+\log(e+R_{\mathrm{eff}}(K))\bigr)^{m/2}A_\infty(K).
\end{equation}
If
\[
 \cH_1=\cdots=\cH_m=\cH_{\C}
\]
and $K$ is invariant under permutations of its stochastic legs, then the
same-field extension $I_m(K)$ satisfies
\begin{equation}
\label{eq:effective-rank-wick-operator}
 \norm{I_m(K)}_{L^p(\Omega;\cL(\cC,\cE))}
 \le A_m^{\mathrm{op}}
 \bigl(p+\log(e+R_{\mathrm{eff}}(K))\bigr)^{m/2}A_\infty(K).
\end{equation}
In the same-field assertions, $A_\infty(K)$ and
$R_{\mathrm{eff}}(K)$ are computed after regarding the stochastic legs as
labelled copies.  Moreover, for every $u\ge1$,
\begin{equation}
\label{eq:effective-rank-decoupled-tail}
 \Prob\left(
  \norm{\cT_K^{(m)}}_{\cL}>
  C_m^{\mathrm{op}}
  \bigl(u+\log(e+R_{\mathrm{eff}}(K))\bigr)^{m/2}A_\infty(K)
 \right)\le e^{-u}.
\end{equation}
In the symmetric same-field case,
\begin{equation}
\label{eq:effective-rank-wick-tail}
 \Prob\left(
  \norm{I_m(K)}_{\cL}>
  A_m^{\mathrm{op}}
  \bigl(u+\log(e+R_{\mathrm{eff}}(K))\bigr)^{m/2}A_\infty(K)
 \right)\le e^{-u}.
\end{equation}
For $p\ge2$, the decoupled evaluator also satisfies
\begin{equation}
\label{eq:operator-profile-recovery}
 A_\infty(K)
 \le\norm{\cT_K^{(m)}}_{L^2(\Omega;\cL)}
 \le\norm{\cT_K^{(m)}}_{L^p(\Omega;\cL)}.
\end{equation}

If
\begin{equation}
\label{eq:maximal-cut-rank}
 D_*(K):=\max\left\{1,\max_{S\subset[m]}\rank\Flat_S(K)\right\}<\infty,
\end{equation}
then
\begin{equation}
\label{eq:effective-rank-below-cut-rank}
 1\le R_{\mathrm{eff}}(K)\le D_*(K),
\end{equation}
so all four upper bounds remain valid with
$R_{\mathrm{eff}}(K)$ replaced by $D_*(K)$.  All constants are independent
of the participating Hilbert spaces, support dimensions, and cut ranks.
\end{corollary}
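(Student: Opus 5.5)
The plan is to interpolate the finite-Schatten Gaussian estimate of Theorem~\ref{thm:gaussian-estimate} against the operator-norm amplitude, and then choose the Schatten exponent in terms of the effective rank. First, for each flattening and every $2\le r<\infty$, Hölder's inequality on singular values gives
\[
 \norm{\Flat_S(K)}_{\Sch_r}^r\le \norm{\Flat_S(K)}_{\Sch_\infty}^{r-2}\norm{\Flat_S(K)}_{\Sch_2}^2 .
\]
By Lemma~\ref{lem:hilbert-endpoint} the Hilbert--Schmidt norm equals $\norm K_{\cK_m}$. Taking the maximum over $S$ therefore yields
\[
 \Prof_{m,r}(K)\le A_\infty(K)^{1-2/r}\norm K_{\cK_m}^{2/r}=A_\infty(K)\,R_{\mathrm{eff}}(K)^{1/r}.
\]
Since $\norm{\,\cdot\,}_{\cL}\le\norm{\,\cdot\,}_{\Sch_r}$, Theorem~\ref{thm:gaussian-estimate} then gives
\[
 \norm{\cT_K^{(m)}}_{L^p(\Omega;\cL)}\le C_m(p+r)^{m/2}R_{\mathrm{eff}}(K)^{1/r}A_\infty(K).
\]
Choosing $r=\max\{2,\log(e+R_{\mathrm{eff}})\}$ makes $R_{\mathrm{eff}}^{1/r}\le e$, and $p+r\le 2(p+\log(e+R_{\mathrm{eff}}))$, which proves \eqref{eq:effective-rank-decoupled-operator}. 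The same argument with Theorem~\ref{thm:wick-estimate} gives \eqref{eq:effective-rank-wick-operator}; there the profile is computed on the labelled copies, so $A_\infty$ and $R_{\mathrm{eff}}$ are taken after decoupling, as the statement specifies.

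For a general $K\in\cK_m$ that is not algebraic, $A_\infty$ and the $\Sch_r$ norms are still controlled. The interpolation bound shows $K\in\mathfrak T_{m,r}$ whenever $r\ge2$: approximating algebraically in $\cK_m$ gives $\Sch_2$-convergence of every flattening, and hence $\Sch_r$-convergence. The evaluator is therefore the common extension, and the bounds pass to the limit by Fatou. Compatibility across different $r$ holds because every extension agrees with the $L^2$ limit. I expect this density and identification step to be the main bookkeeping obstacle, though not a deep one.

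The tail bounds come from Markov's inequality at $p=u$. Write $M=\bigl(u+\log(e+R_{\mathrm{eff}})\bigr)^{m/2}A_\infty$. Then
\[
 \Prob\bigl(\norm{\cT}>e\,C'M\bigr)\le \bigl(C'M\bigr)^u\big/\bigl(e\,C'M\bigr)^u=e^{-u},
\]
and the factor $e$ is absorbed into an enlarged constant $C_m^{\mathrm{op}}$; the same holds in the Wick case.

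For the lower bound \eqref{eq:operator-profile-recovery}, fix a cut $S$ and unit vectors $\xi\in\cH_S$, $\eta\in\cH_{S^c}$, $x\in\cC$, $y\in\cE$ of finite support. I then compute the second moment of $\ip{\cT_K x'}{y'}$ for suitably chosen deterministic vectors, in order to recover the matrix element $\ip{\Flat_S(K)(\conj\xi\otimes x)}{\eta\otimes y}$. Concretely, I compute $\E\,\norm{\cT_K}_{\cL}^2\ge \sup$ over test vectors of $\E\lvert\ip{\cT_K x}{y}\rvert^2$ weighted by Gaussian functionals, via the orthogonality of Gaussian products across independent families. The natural candidate is
\[
 \E\Bigl[\ip{\cT_K x}{y}\,\conj{\textstyle\prod_{j\in S}\ip{g^{(j)}}{\xi_j}}\Bigr]
\]
for product vectors $\xi$, extended to general $\xi$ by linearity in the chaos. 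Cauchy--Schwarz together with a normalized test Gaussian functional of $L^2$ norm one, which also absorbs the $S^c$ legs, then bounds the flattening's bilinear form by $\norm{\cT_K}_{L^2(\Omega;\cL)}$.

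The final inequality $\norm{\cT_K}_{L^2}\le\norm{\cT_K}_{L^p}$ for $p\ge2$ is just monotonicity of probability norms. The comparison \eqref{eq:effective-rank-below-cut-rank} follows from $\norm T_{\Sch_2}^2\le\rank(T)\norm T^2$ applied to a flattening attaining $A_\infty$, together with $R_{\mathrm{eff}}\ge1$, which holds since $\norm T_\infty\le\norm T_2$.
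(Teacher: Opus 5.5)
Your treatment of the moment bounds, the tails (Markov at $p=u$ is fine because Theorem~\ref{thm:gaussian-estimate} covers all $1\le p<\infty$), the density and identification step, and the comparison $1\le R_{\mathrm{eff}}(K)\le D_*(K)$ is essentially the paper's proof. There is a genuine gap in the lower bound \eqref{eq:operator-profile-recovery}.

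You recover only matrix elements $\ip{\Flat_S(K)(\conj\xi\otimes x)}{\eta\otimes y}$ with deterministic $x\in\cC$ and $y\in\cE$. Your ``linearity in the chaos'' lets $\xi$ be entangled across the stochastic legs of $S$, but never across the split between $\conj{\cH_S}$ and $\cC$. However, $\norm{\Flat_S(K)}_{\cL}$ is a supremum over all unit vectors of $\conj{\cH_S}\otimes\cC$ and $\cH_{S^c}\otimes\cE$, and the maximizers are in general entangled between the stochastic and deterministic factors. Product vectors give only an injective-type quantity, which can be smaller by a dimension factor.

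For example, take $m=1$, $S=\{1\}$, $\cH_1=\cC=\C^n$, $\cE=\C$ and $K_ix=x_i$. Then
\[
 \norm{\Flat_{\{1\}}(K)}_{\cL}=\sqrt n=\norm{\cT_K^{(1)}}_{L^2(\Omega;\cL)},
\]
whereas $|\ip{\Flat_{\{1\}}(K)(\conj\xi\otimes x)}{1}|=|\sum_i\bar\xi_ix_i|\le1$ for all unit $\xi,x$. So Cauchy--Schwarz against a normalized scalar test functional proves only this weaker product-vector bound, not $A_\infty(K)\le\norm{\cT_K^{(m)}}_{L^2(\Omega;\cL)}$.

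The missing idea is to test with random, chaos-valued vectors and to use the independence of the $S$- and $S^c$-legs. For finite sums
\[
 \Xi=\sum_{\mathbf a}\conj{e_{\mathbf a}}\otimes x_{\mathbf a},
 \qquad
 \Upsilon=\sum_{\mathbf b}e_{\mathbf b}\otimes y_{\mathbf b},
\]
put $G_{\mathbf a}=\prod_{j\in S}g^{(j)}_{a_j}$ and $G'_{\mathbf b}=\prod_{j\notin S}g^{(j)}_{b_j}$. Define
\[
 X=\sum_{\mathbf a}\overline{G_{\mathbf a}}\,x_{\mathbf a},
 \qquad
 Y=\sum_{\mathbf b}G'_{\mathbf b}\,y_{\mathbf b}.
\]
Orthonormality of decoupled Gaussian products gives
\[
 \E\ip{\cT_K^{(m)}X}{Y}=\ip{\Flat_S(K)\Xi}{\Upsilon},
 \qquad
 \E\norm X^2=\norm\Xi^2,
 \qquad
 \E\norm Y^2=\norm\Upsilon^2.
\]
Since $X$ and $Y$ depend on disjoint independent families,
\[
 |\ip{\Flat_S(K)\Xi}{\Upsilon}|
 \le\norm{\cT_K^{(m)}}_{L^2(\Omega;\cL)}
 \bigl(\E\norm X^2\norm Y^2\bigr)^{1/2}
 =\norm{\cT_K^{(m)}}_{L^2(\Omega;\cL)}\norm\Xi\norm\Upsilon .
\]

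With this repair your route becomes a one-shot duality argument. The paper instead conditions leg by leg and applies the one-family recovery
\[
 \max\Bigl\{\norm{\textstyle\sum_iB_i^*B_i}^{1/2},\norm{\textstyle\sum_iB_iB_i^*}^{1/2}\Bigr\}
 \le\bigl(\E\norm{\textstyle\sum_ig_iB_i}^2\bigr)^{1/2}.
\]
It places legs of $S$ in the domain (row recovery) and legs of $S^c$ in the codomain (column recovery); row and column norms account for entangled vectors automatically.

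In either route you still need to pass from finite-support kernels to a general $K\in\cK_m$. Use $A_\infty(K_N-K)\le\norm{K_N-K}_{\cK_m}$ together with $\cT_{K_N}^{(m)}\to\cT_K^{(m)}$ in $L^2(\Omega;\Sch_2)$.
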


\begin{proof}
Assume first that $K\ne0$, and write
$R=R_{\mathrm{eff}}(K)$ and $A=A_\infty(K)$.  By
Lemma~\ref{lem:hilbert-endpoint}, every cut has Hilbert--Schmidt norm
$\norm K_{\cK_m}$.  Schatten interpolation therefore gives, for every
$\rho\ge2$,
\begin{align}
 \norm{\Flat_S(K)}_{\Sch_\rho}
 &\le
 \norm{\Flat_S(K)}_{\Sch_\infty}^{1-2/\rho}
 \norm{\Flat_S(K)}_{\Sch_2}^{2/\rho}
 \notag\\
 &\le A R^{1/\rho}.
 \label{eq:effective-rank-schatten-conversion}
\end{align}
If $K_N\in\cK_m^{\mathrm{alg}}$ and $K_N\to K$ in $\cK_m$, then, for every
$\rho\ge2$,
\[
 \Prof_{m,\rho}(K_N-K)
 \le \Prof_{m,2}(K_N-K)
 =\norm{K_N-K}_{\cK_m}\longrightarrow0.
\]
Thus Theorem~\ref{thm:gaussian-estimate}, together with
$\norm{T}_{\cL}\le\norm{T}_{\Sch_\rho}$, extends to $K$.  The extensions
obtained for different $\rho$ and probability exponents agree: they arise
from the same algebraic approximants and agree with their common limit in
$L^2(\Omega;\Sch_2)$.  In the same-field case one first symmetrizes the
approximants; the same argument and Theorem~\ref{thm:wick-estimate} apply.
Choose
\[
 \rho:=\max\{2,\log(e+R)\}.
\]
Then $R^{1/\rho}\le e$ and $\rho\le2\log(e+R)$, which proves
\eqref{eq:effective-rank-decoupled-operator} for $p\ge2$.  For
$1\le p<2$, use the estimate at probability exponent two and monotonicity
of scalar $L^p$ norms, noting that
\[
 2+\log(e+R)\le2\bigl(p+\log(e+R)\bigr).
\]
The same argument with
Theorem~\ref{thm:wick-estimate} proves
\eqref{eq:effective-rank-wick-operator}.  The zero kernel is immediate.

For one Gaussian family $X=\sum_i g_iB_i$,
\[
 \max\left\{
 \norm{\sum_iB_i^*B_i}^{1/2},
 \norm{\sum_iB_iB_i^*}^{1/2}
 \right\}
 \le\bigl(\E\norm X_{\cL}^2\bigr)^{1/2}.
\]
For a finite-support $K$, condition successively on the independent
Gaussian legs.  For $j\in S$ use row recovery, placing that leg in the
domain, and for $j\in S^c$ use column recovery, placing it in the codomain;
the harmless changes in tensor-factor order are unitary.  Induction over the
legs gives
\[
 \norm{\Flat_S(K)}_{\Sch_\infty}
 \le\norm{\cT_K^{(m)}}_{L^2(\Omega;\cL)}
 \qquad(S\subset[m]).
\]
For a general $K\in\cK_m$, choose finite-support $K_N\to K$ in $\cK_m$.
Then
\[
 A_\infty(K_N-K)\le\norm{K_N-K}_{\cK_m}\longrightarrow0,
\]
whereas Theorem~\ref{thm:gaussian-estimate} at $p=r=2$ gives
$\cT_{K_N}^{(m)}\to\cT_K^{(m)}$ in
$L^2(\Omega;\Sch_2)$ and hence in $L^2(\Omega;\cL)$.  The finite-support
inequality therefore passes to the limit.
Taking the maximum over $S$ and then using $L^p$ monotonicity proves
\eqref{eq:operator-profile-recovery}.

If $D_*(K)<\infty$, choose a cut attaining $A_\infty(K)$.  Its
Hilbert--Schmidt norm is at most the square root of its rank times its
operator norm, so \eqref{eq:effective-rank-below-cut-rank} follows.

Finally, apply either moment estimate with $q=\max\{2,u\}$ and use Markov's
inequality at the threshold obtained by multiplying its right-hand side by
$e$.  The probability is at most $e^{-q}\le e^{-u}$, while
\[
 q+\log(e+R)\le2\bigl(u+\log(e+R)\bigr).
\]
Absorbing the fixed factors into
$C_m^{\mathrm{op}}$ and $A_m^{\mathrm{op}}$ proves
\eqref{eq:effective-rank-decoupled-tail} and
\eqref{eq:effective-rank-wick-tail}.  Replacing $R$ by $D_*(K)$ uses
\eqref{eq:effective-rank-below-cut-rank}.
\end{proof}

\begin{example}[Tensorized diagonal sharpness]
\label{ex:finite-cut-rank-sharpness}
Fix $m\ge1$ and $n\ge2$.  Take
$\cH_j=\ell^2([n])$ and
$\cC=\cE=\ell^2([n]^m)$.  With
$\alpha=(\alpha_1,\ldots,\alpha_m)$ and
$E_{\alpha\alpha}$ the corresponding diagonal matrix unit, let the
coefficient array be
\[
 K_\alpha:=E_{\alpha\alpha},
 \qquad \alpha\in[n]^m.
\]
Every oriented flattening of $K$ is a rank-$n^m$ partial isometry.  Hence
\[
 A_\infty(K)=1,
 \qquad
 R_{\mathrm{eff}}(K)=D_*(K)=n^m.
\]
The decoupled evaluation is diagonal and satisfies
\[
 \norm{\cT_K^{(m)}}_{\cL}
 =\max_{\alpha\in[n]^m}
   \prod_{j=1}^m|g_{\alpha_j}^{(j)}|
 =\prod_{j=1}^m\max_{1\le i\le n}|g_i^{(j)}|.
\]
The $m$ factors are independent, and the standard Gaussian maximum estimate
gives, for $1\le p<\infty$,
\[
 \norm{\cT_K^{(m)}}_{L^p(\Omega;\cL)}
 \asymp_m
 \bigl(p+\log(e+n)\bigr)^{m/2}
 \asymp_m
 \bigl(p+\log(e+D_*(K))\bigr)^{m/2}.
\]
Thus the powers of both $p$ and the cut-rank logarithm in
\eqref{eq:effective-rank-decoupled-operator}, and therefore also in its
finite-cut-rank consequence, are optimal up to constants depending only on
$m$.
\end{example}

For zero-order outputs, we use the conventions
$\cT_K^{(0)}=I_0(K):=K$ and $C_0=A_0:=1$.

\subsection{Cut factorizations and basic functoriality}

Consider two kernel configurations
\begin{align*}
 \cK_{\mathrm{in}}
 &=\cK_m(\cH_1,\ldots,\cH_m;\cC,\cE),\\
 \cK_{\mathrm{out}}
 &=\cK_n(\cG_1,\ldots,\cG_n;\cC',\cE').
\end{align*}
Write $\cK_{\mathrm{in}}^{\mathrm{alg}}$ and
$\cK_{\mathrm{out}}^{\mathrm{alg}}$ for their finite-support algebraic
subspaces.

\begin{definition}[Cut-factorizable linear map]
\label{def:cut-factorizable}
A linear map
$\Theta:\cK_{\mathrm{in}}^{\mathrm{alg}}
       \to\cK_{\mathrm{out}}^{\mathrm{alg}}$
is \emph{cut-factorizable} if, for every
$T\subset[n]$, there are finitely many cuts $S_{T,\alpha}\subset[m]$ and
bounded connector maps
\begin{align*}
 B_{T,\alpha}&:
 \conj{\cG_T}\otimes\cC'
 \longrightarrow
 \conj{\cH_{S_{T,\alpha}}}\otimes\cC,\\
 A_{T,\alpha}&:
 \cH_{S_{T,\alpha}^c}\otimes\cE
 \longrightarrow
 \cG_{T^c}\otimes\cE'
\end{align*}
such that
\begin{equation}
\label{eq:linear-cut-factorization}
 \Flat_T(\Theta K)
 =\sum_{\alpha=1}^{N_T}
 A_{T,\alpha}\Flat_{S_{T,\alpha}}(K)B_{T,\alpha}.
\end{equation}
Define the cut cost by
\begin{equation}
\label{eq:linear-cut-cost}
 \Cost(\Theta)
 :=\max_{T\subset[n]}
 \inf
  \sum_{\alpha=1}^{N_T}
  \norm{A_{T,\alpha}}\norm{B_{T,\alpha}}.
\end{equation}
For each $T$, the infimum is over all finite representations
\eqref{eq:linear-cut-factorization} with the displayed connector types.
\end{definition}

When the stochastic legs are labelled copies of one real Hilbert space, set
$\operatorname{Dec}_0:=\Id$ and, for every $k\ge0$,
\[
\operatorname{Sym}_k^{\mathrm{lab}}
 :=\operatorname{Dec}_k\operatorname{Sym}_k\operatorname{Dec}_k^{-1}.
\]
Fix $N\ge1$, integers $m_1,\ldots,m_N,q\ge0$, and a labelled
$N$-linear operation on algebraic kernels
\[
 \Theta:\prod_{j=1}^N\cK_{m_j}^{\mathrm{alg}}
 \longrightarrow\cK_q^{\mathrm{alg}},
\]
with compatible deterministic coefficient spaces.  It is called
\emph{same-field compatible} if, for all symmetric algebraic same-field
inputs $K_1,\ldots,K_N$, the expression
\begin{equation}
\label{eq:same-field-compatible-operation}
 \Theta^{\mathrm W}(K_1,\ldots,K_N)
 :=\operatorname{Dec}_q^{-1}\operatorname{Sym}_q^{\mathrm{lab}}
 \Theta(\operatorname{Dec}_{m_1}K_1,\ldots,
        \operatorname{Dec}_{m_N}K_N)
\end{equation}
is independent, under the canonical identifications after
$\operatorname{Dec}_q^{-1}$, of the labelled copies.  Equivariance under
simultaneous unitary changes of the labels is sufficient.

\begin{proposition}[Deterministic and Gaussian transfer]
\label{prop:linear-profile-transfer}
If $\Theta$ is cut-factorizable, then, for every
$K\in\cK_{\mathrm{in}}^{\mathrm{alg}}$ and $1\le r\le\infty$,
\begin{equation}
\label{eq:linear-profile-transfer}
 \Prof_{n,r}(\Theta K)\le\Cost(\Theta)\Prof_{m,r}(K).
\end{equation}
For $2\le r<\infty$ and $1\le p<\infty$,
\begin{equation}
\label{eq:linear-chaos-transfer}
 \norm{\cT_{\Theta K}^{(n)}}_{L^p(\Omega;\Sch_r)}
 \le C_n(p+r)^{n/2}\Cost(\Theta)\Prof_{m,r}(K).
\end{equation}
Under the same restrictions on $p,r$, if, in addition, $K$ is a symmetric
same-field algebraic kernel and
$\Theta$ is same-field compatible, then
\begin{equation}
\label{eq:linear-same-field-transfer}
 \norm{I_n(\Theta^{\mathrm W}K)}_{L^p(\Omega;\Sch_r)}
 \le A_n(p+r)^{n/2}\Cost(\Theta)\Prof_{m,r}(K).
\end{equation}
\end{proposition}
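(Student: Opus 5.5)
The deterministic inequality \eqref{eq:linear-profile-transfer} comes straight from the factorization \eqref{eq:linear-cut-factorization} and the ideal property of Schatten norms. The two Gaussian estimates then follow by feeding it into Theorems~\ref{thm:gaussian-estimate} and~\ref{thm:wick-estimate}.

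\emph{Deterministic bound.} Fix an output cut $T\subset[n]$ and $\varepsilon>0$, and choose a finite representation \eqref{eq:linear-cut-factorization} with
\[
 \sum_\alpha\norm{A_{T,\alpha}}\norm{B_{T,\alpha}}\le\Cost(\Theta)+\varepsilon .
\]
The two-sided ideal inequality $\norm{AXB}_{\Sch_r}\le\norm A\norm X_{\Sch_r}\norm B$ holds for every $1\le r\le\infty$, including $r=\infty$ by our convention $\Sch_\infty=\cL$. Combining it with the triangle inequality gives
\[
 \norm{\Flat_T(\Theta K)}_{\Sch_r}
 \le\sum_\alpha\norm{A_{T,\alpha}}\norm{B_{T,\alpha}}\,
 \norm{\Flat_{S_{T,\alpha}}(K)}_{\Sch_r}
 \le(\Cost(\Theta)+\varepsilon)\Prof_{m,r}(K).
\]
Letting $\varepsilon\to0$ and maximizing over $T$ yields \eqref{eq:linear-profile-transfer}. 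Since $K$ is algebraic, every flattening has finite rank, so no issue arises for $r<2$.

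\emph{Decoupled chaos.} The output $\Theta K$ lies in $\cK_{\mathrm{out}}^{\mathrm{alg}}$, so Theorem~\ref{thm:gaussian-estimate} at order $n$ applies directly:
\[
 \norm{\cT^{(n)}_{\Theta K}}_{L^p(\Omega;\Sch_r)}\le C_n(p+r)^{n/2}\Prof_{n,r}(\Theta K).
\]
Inserting \eqref{eq:linear-profile-transfer} gives \eqref{eq:linear-chaos-transfer}. For $n=0$ the conventions $\cT^{(0)}=\Id$ and $C_0=1$ reduce this to the deterministic bound.

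\emph{Same-field case.} Here lies the only real point to check, namely that the profile of $\Theta^{\mathrm W}K$ is controlled by that of $\Theta(\operatorname{Dec}_mK)$. By \eqref{eq:same-field-profile-abbreviation} and Definition~\ref{def:profile-completion},
\[
 \Prof_{n,r}(\Theta^{\mathrm W}K)=\Prof_{n,r}\bigl(\operatorname{Dec}_n\Theta^{\mathrm W}K\bigr)
 =\Prof_{n,r}\bigl(\operatorname{Sym}^{\mathrm{lab}}_n\Theta(\operatorname{Dec}_mK)\bigr).
\]
The operator $\operatorname{Sym}^{\mathrm{lab}}_n$ is the normalized average of the leg permutations $U_\pi$, transported to labelled copies by the unitaries $J_\nu$. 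Lemma~\ref{lem:symmetrization-contractive} therefore shows that it does not increase the profile. Applying the deterministic bound to $\operatorname{Dec}_mK$ then gives
\[
 \Prof_{n,r}(\Theta^{\mathrm W}K)\le\Cost(\Theta)\,\Prof_{m,r}(\operatorname{Dec}_mK)=\Cost(\Theta)\,\Prof_{m,r}(K).
\]
Same-field compatibility guarantees that $\Theta^{\mathrm W}K$ is a well-defined symmetric algebraic kernel, independent of the chosen copies. Theorem~\ref{thm:wick-estimate} then yields \eqref{eq:linear-same-field-transfer}.
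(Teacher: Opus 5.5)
Your proposal is correct and follows the paper's own argument. For each output cut you apply the Schatten ideal property and the triangle inequality to an almost optimal factorization to get \eqref{eq:linear-profile-transfer}; the Gaussian bounds then follow from Theorems~\ref{thm:gaussian-estimate} and~\ref{thm:wick-estimate}, with contractivity of normalized symmetrization (Lemma~\ref{lem:symmetrization-contractive}) in the same-field case. You also spell out details the paper leaves implicit: taking the infimum cut by cut before maximizing, the identity $\operatorname{Dec}_n\Theta^{\mathrm W}K=\operatorname{Sym}_n^{\mathrm{lab}}\Theta(\operatorname{Dec}_mK)$, and the order-zero convention.
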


\begin{proof}
For a fixed $T$, the Schatten ideal property and
\eqref{eq:linear-cut-factorization} give
\[
 \norm{\Flat_T(\Theta K)}_{\Sch_r}
 \le\sum_\alpha\norm{A_{T,\alpha}}
      \norm{\Flat_{S_{T,\alpha}}(K)}_{\Sch_r}
      \norm{B_{T,\alpha}}.
\]
Take the maximum over $T$ and then the infimum over cut factorizations.  This
proves \eqref{eq:linear-profile-transfer}; the probabilistic statements
follow from Theorems~\ref{thm:gaussian-estimate} and
\ref{thm:wick-estimate}, using in the same-field case the contractivity of
normalized symmetrization from
Lemma~\ref{lem:symmetrization-contractive}.
\end{proof}

\begin{proposition}[Composition law]
\label{prop:composition-law}
Cut-factorizable maps form a category.  If $\Theta$ and $\Psi$ are
composable cut-factorizable maps, then
\begin{equation}
\label{eq:composition-cost}
 \Cost(\Psi\Theta)\le\Cost(\Psi)\Cost(\Theta).
\end{equation}
The identity map has cut cost one.
\end{proposition}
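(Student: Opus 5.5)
The plan is to substitute one cut factorization into the other. Then I check that the composite has the shape required by Definition~\ref{def:cut-factorizable}, and finally bound the connector sums.

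Let $\Theta:\cK_{\mathrm{in}}^{\mathrm{alg}}\to\cK_{\mathrm{mid}}^{\mathrm{alg}}$ with middle configuration $\cK_{\ell}(\cG_1,\ldots,\cG_\ell;\cC',\cE')$, and let $\Psi:\cK_{\mathrm{mid}}^{\mathrm{alg}}\to\cK_{\mathrm{out}}^{\mathrm{alg}}$ with output configuration $\cK_n(\cD_1,\ldots,\cD_n;\cC'',\cE'')$. Fix $U\subset[n]$ and $\varepsilon>0$. Choose a representation
\[
\Flat_U(\Psi M)=\sum_\beta A'_{U,\beta}\Flat_{T_\beta}(M)B'_{U,\beta}
\]
with $\sum_\beta\norm{A'_{U,\beta}}\norm{B'_{U,\beta}}\le\Cost(\Psi)+\varepsilon$. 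For each of the finitely many cuts $T_\beta\subset[\ell]$, choose a representation
\[
\Flat_{T_\beta}(\Theta K)=\sum_\alpha A_{T_\beta,\alpha}\Flat_{S_{T_\beta,\alpha}}(K)B_{T_\beta,\alpha}
\]
with cost at most $\Cost(\Theta)+\varepsilon$. Substituting $M=\Theta K$ gives the finite double sum
\[
\Flat_U(\Psi\Theta K)=\sum_{\beta,\alpha}\bigl(A'_{U,\beta}A_{T_\beta,\alpha}\bigr)\Flat_{S_{T_\beta,\alpha}}(K)\bigl(B_{T_\beta,\alpha}B'_{U,\beta}\bigr).
\]

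Next I check the connector types. The map $B'_{U,\beta}$ goes from $\conj{\cD_U}\otimes\cC''$ to $\conj{\cG_{T_\beta}}\otimes\cC'$, which is exactly the domain of $B_{T_\beta,\alpha}$. The map $A_{T_\beta,\alpha}$ lands in $\cG_{T_\beta^c}\otimes\cE'$, which is the domain of $A'_{U,\beta}$. So the products are bounded connectors of the required types, and $\Psi\Theta$ is cut-factorizable.

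For the cost, submultiplicativity of operator norms gives
\[
\sum_{\beta,\alpha}\norm{A'_{U,\beta}A_{T_\beta,\alpha}}\,\norm{B_{T_\beta,\alpha}B'_{U,\beta}}
\le\sum_\beta\norm{A'_{U,\beta}}\norm{B'_{U,\beta}}\sum_\alpha\norm{A_{T_\beta,\alpha}}\norm{B_{T_\beta,\alpha}}
\le(\Cost(\Psi)+\varepsilon)(\Cost(\Theta)+\varepsilon).
\]
Letting $\varepsilon\to0$ and taking the maximum over $U$ gives \eqref{eq:composition-cost}. One degenerate case needs care: if an infimum is attained only in the limit, the $\varepsilon$-slack above handles it, because for each $\beta$ we pick one near-optimal representation and there are finitely many $\beta$.

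For the identity, take the single term $S=T$ with identity connectors. This gives $\Cost(\Id)\le1$. The reverse inequality $\Cost(\Id)\ge1$ should be checked on a nonzero kernel. For a fixed $T$ and any representation, the Schatten ideal bound from Proposition~\ref{prop:linear-profile-transfer} at $r=\infty$ gives
\[
\norm{\Flat_T(K)}\le\Bigl(\sum_\alpha\norm{A_\alpha}\norm{B_\alpha}\Bigr)\Prof_{m,\infty}(K).
\]
Choosing $K$ to be a product of unit vectors makes every flattening have norm one, so each representation costs at least one. This presumes nonzero spaces; the zero-dimensional case is trivial by convention.

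Associativity and the identity laws are inherited from composition of linear maps, so the category structure follows. The only real obstacle is the bookkeeping of connector domains and the finite choice of near-optimal representations, both of which are handled above.
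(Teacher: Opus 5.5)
Your argument is correct and is the paper's proof: you insert $\varepsilon$-optimal factorizations of each $\Flat_{T_\beta}(\Theta K)$ into an $\varepsilon$-optimal factorization of $\Flat_U(\Psi M)$, check that the composed connectors have the required types, and sum first over the inner terms and then over the outer ones. Your elementary-tensor argument for $\Cost(\Id)\ge1$ is an addition that supplies the lower bound the paper leaves implicit; it holds for nonzero configurations, whereas if some leg space is $\{0\}$ the cost is actually $0$, so that case is an exception to the statement rather than a matter of convention.
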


\begin{proof}
Choose $\varepsilon$-optimal cut factorizations for $\Theta$ and $\Psi$.
Insert a factorization for each flattening of $\Theta K$ into one for each
flattening of $\Psi(\Theta K)$.  The new left and right connectors
are compositions of the old connectors, so their norm products multiply.
Summing first over the inner factorizations and then over the outer ones gives
\eqref{eq:composition-cost} after letting $\varepsilon\downarrow0$.  The
identity factorization uses the same cut and identity connectors.
\end{proof}

Let $U_j:\cH_j\to\cH'_j$, $V:\cC\to\cC'$, and
$W:\cE\to\cE'$ be bounded, and set
\begin{equation}
\label{eq:legwise-map}
 \Theta_{U,V,W}K
 :=(U_1\otimes\cdots\otimes U_m\otimes\conj V\otimes W)K.
\end{equation}

\begin{proposition}[Exact legwise rule]
\label{prop:legwise-rule}
Writing $U_S=\bigotimes_{j\in S}U_j$, one has
\begin{equation}
\label{eq:legwise-flattening}
 \Flat_S(\Theta_{U,V,W}K)
 =(U_{S^c}\otimes W)\Flat_S(K)(\conj{U_S}\otimes V)^*.
\end{equation}
In particular,
\begin{equation}
\label{eq:legwise-cost}
 \Cost(\Theta_{U,V,W})
 \le\norm V\norm W\prod_{j=1}^m\norm{U_j}.
\end{equation}
Unitary leg maps preserve every cut norm exactly.
\end{proposition}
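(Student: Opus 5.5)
The plan is to verify \eqref{eq:legwise-flattening} on elementary tensors and extend it by linearity and continuity. Then \eqref{eq:legwise-cost} is read off as a one-term cut factorization, and the unitary statement follows from invariance of Schatten norms under two-sided unitary multiplication.

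\textbf{Step 1: the basic identity.} For bounded $P:D\to D'$ and $Q:R\to R'$, I will use
\[
 \Theta_{D',R'}\bigl((\conj P\otimes Q)\xi\bigr)
 =Q\,\Theta_{D,R}(\xi)\,P^*,
\]
where $\conj P\,\conj d:=\conj{Pd}$. On $\xi=\conj d\otimes u$ the left side sends $x$ to $\ip{x}{Pd}Qu=\ip{P^*x}{d}Qu$, which is the right side applied to $x$. Both sides are bounded in $\xi$ with respect to the Hilbert and Hilbert--Schmidt norms, so the identity holds on all of $\conj D\otimes R$.

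\textbf{Step 2: applying it to the flattening.} Regroup $K$ as in Definition~\ref{def:oriented-flattening}, with $D=\conj{\cH_S}\otimes\cC$ and $R=\cH_{S^c}\otimes\cE$. Under this regrouping:
\begin{itemize}
\item the output factor of $\Theta_{U,V,W}$ acts as $Q=U_{S^c}\otimes W$;
\item the factor on $\conj D=\cH_S\otimes\conj\cC$ acts as $U_S\otimes\conj V$, which under $\conj{\conj{\cH_S}}\simeq\cH_S$ is $\conj P$ for $P=\conj{U_S}\otimes V$.
\end{itemize}
The point needing care is that the regrouping unitary intertwines $U_1\otimes\cdots\otimes U_m\otimes\conj V\otimes W$ with $(U_S\otimes\conj V)\otimes(U_{S^c}\otimes W)$. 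This holds because both are tensor products of the same leg maps, so it is again checked on elementary tensors. Step 1 then gives \eqref{eq:legwise-flattening}. Boundedness of the legwise map keeps algebraic kernels algebraic.

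\textbf{Step 3: cost and unitary invariance.} Formula \eqref{eq:legwise-flattening} is a cut factorization in the sense of Definition~\ref{def:cut-factorizable}, with a single term, the same cut $S$, and connectors
\[
 A=U_{S^c}\otimes W,\qquad B=(\conj{U_S}\otimes V)^*.
\]
Their norms are $\norm W\prod_{j\in S^c}\norm{U_j}$ and $\norm V\prod_{j\in S}\norm{U_j}$, since tensor norms multiply and $\norm{\conj U}=\norm U$. The product is the bound in \eqref{eq:legwise-cost}, uniformly in $S$. If all $U_j$, $V$, $W$ are unitary, both connectors are unitary and every $\Sch_r$ norm of every cut is preserved exactly.

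The only real obstacle is the bookkeeping of conjugations and of the adjoint on the input side. Everything else is formal.
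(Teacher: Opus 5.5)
Your proposal is correct and follows the paper's proof. The paper simply checks the identity on elementary tensors, and your identity $\Theta_{D',R'}\bigl((\conj P\otimes Q)\xi\bigr)=Q\,\Theta_{D,R}(\xi)P^*$ with $P=\conj{U_S}\otimes V$ and $Q=U_{S^c}\otimes W$ is exactly that check, with the conjugation bookkeeping made explicit. The cost bound and the exact unitary invariance then follow, as in the paper, from the resulting one-term factorization with connectors $U_{S^c}\otimes W$ and $(\conj{U_S}\otimes V)^*$.
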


\begin{proof}
Both sides of \eqref{eq:legwise-flattening} agree on elementary tensors.
The estimate follows from the Schatten ideal property.
\end{proof}

\subsection{Multilinear majorants and scale rules}

\begin{definition}[Cut majorant]
\label{def:cut-majorant}
Let $\Theta$ be an $N$-linear operation
\[
 \Theta:\prod_{j=1}^N\cK_{m_j}^{\mathrm{alg}}
 \longrightarrow\cK_q^{\mathrm{alg}}.
\]
For $1\le r\le\infty$, an $r$-cut majorant is a nonnegative array
\[
 \mathsf M_\Theta^{(r)}(S;T_1,\ldots,T_N),
 \qquad
 S\subset[q],\quad T_j\subset[m_j],
\]
such that
\begin{align}
\label{eq:cut-majorant}
 \norm{\Flat_S(\Theta(K_1,\ldots,K_N))}_{\Sch_r}
 \le
 \sum_{T_1,\ldots,T_N}
 \mathsf M_\Theta^{(r)}(S;T_1,\ldots,T_N)
 \prod_{j=1}^N\norm{\Flat_{T_j}(K_j)}_{\Sch_r}.
\end{align}
An operation admitting such an array is called $r$-cut-admissible.
\end{definition}

The use of a common exponent in Definition~\ref{def:cut-majorant} is
adapted to the link theorem below.  More general factorizations may use
different exponents related by Schatten--H\"older; the composition rule is
unchanged.

For operations $\Theta$ and $\Xi$ on disjoint input families,
$\Theta\boxtimes\Xi$ denotes the operation obtained by tensoring their output
kernels, with all output stochastic legs of $\Theta$ placed before those of
$\Xi$.

\begin{theorem}[Composition of cut majorants]
\label{thm:majorant-composition}
Cut majorants are closed under finite sums, external tensor products, and
multilinear composition.  More precisely, suppose
\[
 \Psi:\prod_{\nu=1}^{\ell}\cK_{q_\nu}^{\mathrm{alg}}
 \longrightarrow\cK_q^{\mathrm{alg}}
\]
and, for each $\nu$,
\[
 \Theta_\nu:\prod_{j\in J_\nu}\cK_{m_j}^{\mathrm{alg}}
 \longrightarrow\cK_{q_\nu}^{\mathrm{alg}}
\]
have $r$-cut majorants, where the finite input-slot sets
$J_1,\ldots,J_\ell$ are pairwise disjoint.  Then
$\Psi\circ(\Theta_1,\ldots,\Theta_\ell)$ has the majorant obtained by
summing the product
\[
 \mathsf M_\Psi^{(r)}(S;R_1,\ldots,R_\ell)
 \prod_{\nu=1}^{\ell}
 \mathsf M_{\Theta_\nu}^{(r)}
 (R_\nu;(T_j)_{j\in J_\nu})
\]
over the intermediate cuts $R_1,\ldots,R_\ell$.
For a finite sum of operations with the same source and target, the local
majorants add.  If $\Theta$ and $\Xi$ act on disjoint input families and have
output orders $q$ and $q'$, respectively, then an external-product majorant
is given by
\begin{align}
\label{eq:external-majorant-product}
 &\mathsf M_{\Theta\boxtimes\Xi}^{(r)}
 \bigl(S\sqcup(q+S');\mathbf T,\mathbf T'\bigr)\notag\\
 &\qquad:=
 \mathsf M_{\Theta}^{(r)}(S;\mathbf T)
 \mathsf M_{\Xi}^{(r)}(S';\mathbf T').
\end{align}
Here $q+S':=\{q+s:s\in S'\}\subset[q+q']$, and $\sqcup$ uses the resulting
ordered identification of the two output leg sets.
\end{theorem}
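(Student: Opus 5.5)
The plan is to treat the three closure properties separately, starting with multilinear composition, which carries the content. Fix inputs $K_j$, $j\in J_1\sqcup\cdots\sqcup J_\ell$, and set $L_\nu:=\Theta_\nu((K_j)_{j\in J_\nu})\in\cK_{q_\nu}^{\mathrm{alg}}$. For an output cut $S\subset[q]$, apply the majorant of $\Psi$ to $(L_1,\ldots,L_\ell)$:
\[
 \norm{\Flat_S(\Psi(L_1,\ldots,L_\ell))}_{\Sch_r}
 \le\sum_{R_1,\ldots,R_\ell}\mathsf M_\Psi^{(r)}(S;R_1,\ldots,R_\ell)
 \prod_{\nu=1}^\ell\norm{\Flat_{R_\nu}(L_\nu)}_{\Sch_r}.
\]
Since the $L_\nu$ are algebraic kernels, each factor is bounded by the majorant of $\Theta_\nu$ at the cut $R_\nu$. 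All coefficients are nonnegative and all sums are finite (there are finitely many cuts), so substituting and expanding the product of sums is legitimate. The disjointness of the slot sets $J_\nu$ lets every input index tuple $(T_j)_j$ appear exactly once as the concatenation of the tuples $(T_j)_{j\in J_\nu}$. Regrouping by the total tuple $(T_j)_j$ then produces exactly the stated array, summed over $R_1,\ldots,R_\ell$.

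Finite sums are immediate. If $\Theta=\sum_k\Theta_k$ with common source and target, linearity of $\Flat_S$ and the triangle inequality in $\Sch_r$ give
\[
 \norm{\Flat_S(\Theta(\cdot))}_{\Sch_r}\le\sum_k\norm{\Flat_S(\Theta_k(\cdot))}_{\Sch_r},
\]
so $\sum_k\mathsf M_{\Theta_k}^{(r)}$ is a majorant.

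For the external product, the key fact is a tensor identity for flattenings. With the output legs of $\Xi$ shifted by $q$, I will show that $\Flat_{S\sqcup(q+S')}(X\otimes Y)$ is unitarily equivalent to $\Flat_S(X)\otimes\Flat_{S'}(Y)$, for $X\in\cK_q^{\mathrm{alg}}(\ldots;\cC,\cE)$ and $Y\in\cK_{q'}^{\mathrm{alg}}(\ldots;\cC',\cE')$ with combined deterministic spaces $\cC\otimes\cC'$ and $\cE\otimes\cE'$. The unitaries are the canonical reorderings of tensor factors, together with $\conj{\cH_S\otimes\cG_{S'}}\simeq\conj{\cH_S}\otimes\conj{\cG_{S'}}$. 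I will check this on elementary tensors using the formula for $\Theta_{D,R}$, exactly as in Proposition~\ref{prop:legwise-rule}. Because singular values of a tensor product are the pairwise products, the multiplicativity
\[
 \norm{A\otimes B}_{\Sch_r}=\norm A_{\Sch_r}\norm B_{\Sch_r},\qquad 1\le r\le\infty,
\]
holds. Applying it and then the two majorants, and multiplying the two finite sums, yields \eqref{eq:external-majorant-product}. Since every cut of $[q+q']$ decomposes uniquely as $S\sqcup(q+S')$, the array is defined on all output cuts.

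The main obstacle is the bookkeeping in the external-product step: verifying that the regrouping respects the conjugations and the fixed increasing-order convention, so that the flattening of the tensor product really is unitarily equivalent to the tensor product of flattenings. The rest is nonnegative finite-sum manipulation.
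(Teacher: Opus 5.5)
Your proposal is correct and follows essentially the same route as the paper: apply $\Psi$'s majorant, substitute the $\Theta_\nu$ majorants, and rearrange the nonnegative finite sums; use the triangle inequality for finite sums; and for the external product, use multiplicativity of Schatten norms under Hilbert tensor products. The flattening identity you plan to verify on elementary tensors is exactly the unitary equivalence that the paper records separately as Proposition~\ref{prop:tensor-product-rule}.
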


\begin{proof}
Apply the majorant for $\Psi$ to the outputs of the $\Theta_\nu$, then apply
the corresponding input majorants.  All terms are nonnegative, so the finite
sums may be rearranged.  This gives the stated contraction over intermediate
cut indices.  The triangle inequality adds majorants for a finite sum, while
multiplicativity of Schatten norms on Hilbert tensor products gives
\eqref{eq:external-majorant-product}.
\end{proof}

\begin{corollary}[Algebra-to-chaos transfer]
\label{cor:majorant-chaos-transfer}
Under \eqref{eq:cut-majorant}, for $1\le p<\infty$ and $2\le r<\infty$,
\begin{align}
\label{eq:majorant-chaos-transfer}
 &\norm{\cT_{\Theta(K_1,\ldots,K_N)}^{(q)}}
       _{L^p(\Omega;\Sch_r)}\notag\\
 &\qquad\le C_q(p+r)^{q/2}
 \max_{S\subset[q]}
 \sum_{T_1,\ldots,T_N}
 \mathsf M_\Theta^{(r)}(S;T_1,\ldots,T_N)
 \prod_{j=1}^N\norm{\Flat_{T_j}(K_j)}_{\Sch_r}.
\end{align}
For the same $p,r$, if $\Theta$ is same-field compatible and
$K_1,\ldots,K_N$ are symmetric
algebraic same-field inputs, the same-field version holds for
$\Theta^{\mathrm W}(K_1,\ldots,K_N)$, with $A_q$ in place of $C_q$.
Suppose, moreover, that there is $M<\infty$ such that, for all algebraic
inputs,
\begin{align*}
 \max_{S\subset[q]}
 \sum_{T_1,\ldots,T_N}
 \mathsf M_\Theta^{(r)}(S;T_1,\ldots,T_N)
 \prod_{j=1}^N\norm{\Flat_{T_j}(K_j)}_{\Sch_r}
 \le M\prod_{j=1}^N\Prof_{m_j,r}(K_j).
\end{align*}
Then $\Theta$ extends uniquely to a bounded $N$-linear map between the
corresponding simultaneous profile completions, with norm at most $M$.
\end{corollary}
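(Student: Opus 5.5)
The plan has three parts, taken in order. First I apply Theorem~\ref{thm:gaussian-estimate} at the output order $q$ to the algebraic kernel $\Theta(K_1,\ldots,K_N)$. Second, I bound the output profile by the majorant sum. Third, I obtain the same-field version from Theorem~\ref{thm:wick-estimate} and symmetrization, and the extension by a density argument.

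\textbf{Decoupled estimate.} Since $\Theta$ maps algebraic kernels to algebraic kernels, Theorem~\ref{thm:gaussian-estimate} gives
\[
 \norm{\cT_{\Theta(K_1,\ldots,K_N)}^{(q)}}_{L^p(\Omega;\Sch_r)}
 \le C_q(p+r)^{q/2}\Prof_{q,r}(\Theta(K_1,\ldots,K_N)).
\]
By definition, $\Prof_{q,r}$ is the maximum over $S\subset[q]$ of $\norm{\Flat_S(\cdot)}_{\Sch_r}$. Each term of that maximum is bounded by the majorant inequality \eqref{eq:cut-majorant}, and this yields \eqref{eq:majorant-chaos-transfer}. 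For $q=0$, the conventions $\cT^{(0)}=\mathrm{id}$ and $C_0=1$ make the bound immediate, because the single cut $S=\emptyset$ is just the Schatten norm of the output.

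\textbf{Same-field estimate.} Take symmetric algebraic $K_j$. Then $\Theta^{\mathrm W}(K_1,\ldots,K_N)$ is a symmetric algebraic same-field kernel, and same-field compatibility guarantees that it is well defined. Its $\mathfrak W_{q,r}$-norm is
\[
 \Prof_{q,r}\bigl(\operatorname{Sym}_q^{\mathrm{lab}}\Theta(\operatorname{Dec}K_1,\ldots)\bigr).
\]
Lemma~\ref{lem:symmetrization-contractive} bounds this by $\Prof_{q,r}(\Theta(\operatorname{Dec}K_1,\ldots))$; the lemma applies because $\operatorname{Dec}$ is a unitary legwise map, so Proposition~\ref{prop:legwise-rule} preserves cut norms. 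The majorant then applies to the decoupled inputs, and $\norm{\Flat_{T_j}(\operatorname{Dec}K_j)}$ is exactly the quantity appearing in the same-field profile. Theorem~\ref{thm:wick-estimate} then gives the bound with $A_q$ in place of $C_q$.

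\textbf{Extension.} Under the uniform hypothesis, the first step gives
\[
 \Prof_{q,r}(\Theta(K_1,\ldots,K_N))\le M\prod_j\Prof_{m_j,r}(K_j)
\]
on algebraic inputs. So $\Theta$ is a bounded $N$-linear map between dense subspaces of the completions $\mathfrak T_{m_j,r}$ and $\mathfrak T_{q,r}$, measured in the completion norms. The standard extension of bounded multilinear maps applies: write differences telescopically,
\[
 \Theta(K)-\Theta(K')=\sum_j\Theta(K'_1,\ldots,K'_{j-1},K_j-K'_j,K_{j+1},\ldots,K_N).
\]
Each term is controlled by $M$ times a product in which one factor is the profile of a difference, so $\Theta$ is uniformly continuous on bounded sets and Cauchy sequences map to Cauchy sequences. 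Uniqueness follows from density, and the norm bound $M$ passes to the limit.

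\textbf{Main obstacle.} The only delicate point is in the same-field step. One must check that the profile of $\Theta^{\mathrm W}(K_1,\ldots,K_N)$ is computed after redecoupling, and that $\operatorname{Dec}_q\operatorname{Dec}_q^{-1}$ cancels under the canonical identifications. The argument assumes that same-field compatibility provides exactly this cancellation.
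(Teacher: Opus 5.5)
Your proposal is correct and matches the paper's proof. Both maximize the cut-majorant over output cuts and apply Theorem~\ref{thm:gaussian-estimate}, or in the same-field case Theorem~\ref{thm:wick-estimate} together with Lemma~\ref{lem:symmetrization-contractive}, then extend by multilinear continuity. One small correction to your ``main obstacle'': $\operatorname{Dec}_q\operatorname{Dec}_q^{-1}=\Id$ holds automatically because $\operatorname{Dec}_q$ is unitary, and same-field compatibility is needed only to make $\Theta^{\mathrm W}$ independent of the choice of labelled copies.
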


\begin{proof}
Take the maximum in \eqref{eq:cut-majorant} over output cuts and use the
appropriate Gaussian estimate.  In the same-field case also use
Lemma~\ref{lem:symmetrization-contractive}.  Multilinear continuity gives
the extension.
\end{proof}

Let $K$ have order $m$ and deterministic leg $\cC_1\to\cE_1$, and let
$L$ have order $n$ and deterministic leg $\cC_2\to\cE_2$.  Define
$K\boxtimes L$ by placing all stochastic legs of $K$ before those of $L$
and replacing coefficient operators $K_{\mathbf i},L_{\mathbf j}$ by
$K_{\mathbf i}\otimes L_{\mathbf j}$.

\begin{proposition}[Tensor-product rule]
\label{prop:tensor-product-rule}
For $S\subset[m]$ and $T\subset[n]$, the flattening
$\Flat_{S\sqcup(m+T)}(K\boxtimes L)$ is unitarily equivalent to
$\Flat_S(K)\otimes\Flat_T(L)$.  Hence, for $1\le r\le\infty$,
\begin{equation}
\label{eq:tensor-profile}
 \Prof_{m+n,r}(K\boxtimes L)
 =\Prof_{m,r}(K)\Prof_{n,r}(L).
\end{equation}
\end{proposition}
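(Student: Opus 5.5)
The plan is to verify the unitary equivalence on elementary tensors and then use multiplicativity of Schatten norms under Hilbert tensor products. For the profile identity, the remaining point is that the cuts of $[m+n]$ are in bijection with pairs of cuts of $[m]$ and $[n]$.

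\emph{Step 1: the flattening.} Write $K\boxtimes L$ as an element of
\[
 \cH_1\otimes\cdots\otimes\cH_m\otimes\cG_1\otimes\cdots\otimes\cG_n
 \otimes\conj{\cC_1\otimes\cC_2}\otimes(\cE_1\otimes\cE_2),
\]
using the canonical identification $\conj{\cC_1\otimes\cC_2}\simeq\conj{\cC_1}\otimes\conj{\cC_2}$. Let $U$ be the unitary that regroups tensor factors so that
\[
 \conj{\cG_{S\sqcup(m+T)}}\otimes\cC_1\otimes\cC_2
 \simeq(\conj{\cH_S}\otimes\cC_1)\otimes(\conj{\cG_T}\otimes\cC_2),
\]
and let $V$ be the analogous regrouping on the codomain side, involving $\cH_{S^c}\otimes\cE_1$ and $\cG_{T^c}\otimes\cE_2$. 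For elementary tensors $K=h\otimes\conj c\otimes e$ and $L=g\otimes\conj{c'}\otimes e'$, with $h,g$ elementary in the stochastic legs, both
\[
 V\,\Flat_{S\sqcup(m+T)}(K\boxtimes L)\,U^*
 \qquad\text{and}\qquad
 \Flat_S(K)\otimes\Flat_T(L)
\]
are the same rank-one operator. To see this, compute directly from $\Theta_{D,R}$, tracking the conjugations: the legs in $S$ become conjugated input vectors on both sides. Both expressions are bilinear in $(K,L)$ and jointly bounded in Hilbert--Schmidt norm, so by Lemma~\ref{lem:hilbert-endpoint} the identity extends from elementary tensors to all of $\cK_m\times\cK_n$ by density. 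The conjugate-linearity bookkeeping in this step is the only delicate point. I will handle it by noting that the conjugate spaces are designed precisely so that flattenings commute with the unitary regroupings, as already remarked after Definition~\ref{def:oriented-flattening}.

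\emph{Step 2: the norms.} For Hilbert-space operators $A$ and $B$, the singular values of $A\otimes B$ are the products $s_i(A)s_j(B)$ with multiplicity. Hence
\[
 \norm{A\otimes B}_{\Sch_r}=\norm A_{\Sch_r}\norm B_{\Sch_r}
 \qquad(1\le r\le\infty),
\]
where the case $r=\infty$ is multiplicativity of the operator norm. If one factor is not in $\Sch_r$ (possible for $r<2$ on non-algebraic kernels), both sides are $+\infty$ unless the other factor is zero; in that case both sides are zero. Unitary equivalence preserves Schatten norms, so
\[
 \norm{\Flat_{S\sqcup(m+T)}(K\boxtimes L)}_{\Sch_r}
 =\norm{\Flat_S(K)}_{\Sch_r}\norm{\Flat_T(L)}_{\Sch_r}.
\]

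\emph{Step 3: maximize.} Every cut $R\subset[m+n]$ can be written uniquely as $S\sqcup(m+T)$, with $S=R\cap[m]$ and $T=(R\setminus[m])-m$. Therefore
\[
 \Prof_{m+n,r}(K\boxtimes L)=\max_{S,T}\norm{\Flat_S(K)}_{\Sch_r}\norm{\Flat_T(L)}_{\Sch_r}.
\]
Since the factors are nonnegative and vary independently, the maximum of the product equals the product of the maxima, which is \eqref{eq:tensor-profile}.
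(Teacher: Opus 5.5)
Your proposal is correct and takes the paper's approach: regroup the tensor factors unitarily, check on elementary tensors that the two operators agree, use multiplicativity of Schatten norms under Hilbert tensor products, and maximize over the cuts $S\sqcup(m+T)$. Your density extension and your handling of the $0\cdot\infty$ convention for $r<2$ spell out what the paper's three-line proof leaves implicit.
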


\begin{proof}
After permuting Hilbert tensor factors, both operators act identically on
elementary tensors.  Schatten norms are multiplicative under Hilbert-space
tensor products.  Taking the maximum over pairs of cuts proves
\eqref{eq:tensor-profile}.
\end{proof}

For scale-dependent estimates, suppose a family satisfies, for $R\ge1$,
the cutwise bounds
\begin{equation}
\label{eq:power-log-input}
 \norm{\Flat_S(K_j(R))}_{\Sch_r}
 \lesssim R^{\alpha_{j,S}}\log(e+R)^{\kappa_{j,S}}.
\end{equation}

\begin{proposition}[Max-plus propagation]
\label{prop:tropical-rule}
Let $(\Theta_R)_{R\ge1}$ be a family of $N$-linear operations between fixed
kernel configurations, and assume that $\Theta_R$ has an $r$-cut majorant
satisfying
\[
 \mathsf M_{\Theta_R}^{(r)}(T;S_1,\ldots,S_N)
 \lesssim
 R^{\delta_{T;\mathbf S}}\log(e+R)^{\eta_{T;\mathbf S}}.
\]
Then
\begin{equation}
\label{eq:tropical-output}
 \norm{\Flat_T(\Theta_R(K_1(R),\ldots,K_N(R)))}_{\Sch_r}
 \lesssim R^{\alpha_T}\log(e+R)^{\kappa_T},
\end{equation}
where
\begin{align}
 \alpha_T
 &=\max_{\mathbf S}
   \left(\delta_{T;\mathbf S}+\sum_{j=1}^N
    \alpha_{j,S_j}\right),
 \label{eq:tropical-alpha}\\
 \kappa_T
 &=\max_{\substack{\mathbf S\\
       \delta_{T;\mathbf S}+\sum_j\alpha_{j,S_j}=\alpha_T}}
   \left(\eta_{T;\mathbf S}+\sum_{j=1}^N
    \kappa_{j,S_j}\right).
 \label{eq:tropical-kappa}
\end{align}
\end{proposition}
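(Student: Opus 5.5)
The plan is to substitute the input bounds \eqref{eq:power-log-input} and the majorant bounds directly into the cut-majorant inequality \eqref{eq:cut-majorant}, and then simplify the resulting finite sum of power--logarithmic terms. Fix an output cut $T$. By Definition~\ref{def:cut-majorant},
\[
 \norm{\Flat_T(\Theta_R(K_1(R),\ldots,K_N(R)))}_{\Sch_r}
 \le\sum_{\mathbf S}\mathsf M_{\Theta_R}^{(r)}(T;\mathbf S)
 \prod_{j=1}^N\norm{\Flat_{S_j}(K_j(R))}_{\Sch_r}.
\]
Every factor on the right is nonnegative, so we may insert the assumed upper bounds. The summand indexed by $\mathbf S$ is then at most a constant times
\[
 R^{e(\mathbf S)}\log(e+R)^{f(\mathbf S)},
 \qquad
 e(\mathbf S)=\delta_{T;\mathbf S}+\sum_j\alpha_{j,S_j},\quad
 f(\mathbf S)=\eta_{T;\mathbf S}+\sum_j\kappa_{j,S_j}.
\]
The implicit constant is the product of the finitely many implicit constants in the hypotheses. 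It is uniform in $R\ge1$, because the kernel configurations are fixed and so the index sets of cuts do not depend on $R$.

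The index set of $\mathbf S$ is finite, with cardinality $\prod_j 2^{m_j}$. Hence the sum is at most this cardinality times the largest term, and it remains to compare terms for $R\ge1$. Split the tuples $\mathbf S$ into two groups.

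\begin{itemize}
\item If $e(\mathbf S)=\alpha_T$, then by \eqref{eq:tropical-kappa} we have $f(\mathbf S)\le\kappa_T$. Since $\log(e+R)\ge1$, the term is at most $R^{\alpha_T}\log(e+R)^{\kappa_T}$.
\item If $e(\mathbf S)<\alpha_T$, set $\gamma=\alpha_T-e(\mathbf S)>0$. The function $R\mapsto R^{-\gamma}\log(e+R)^{f(\mathbf S)-\kappa_T}$ is bounded on $[1,\infty)$, whatever the sign of the exponent of the logarithm. So the term is again $\lesssim R^{\alpha_T}\log(e+R)^{\kappa_T}$, with a constant depending only on $\gamma$ and the log exponents. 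There are finitely many such constants.
\end{itemize}

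Summing over the two groups gives \eqref{eq:tropical-output}.

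One degenerate case needs a convention. If some majorant entry, or some input flattening, vanishes identically, its bound can be taken with exponent $-\infty$, and that term is dropped from the maximum. This is consistent with \eqref{eq:tropical-alpha} when the maxima are taken over the contributing tuples. If no tuple contributes, the output flattening is zero and there is nothing to prove.

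The only mildly delicate step is the second bullet, namely absorbing lower-order powers that carry possibly larger logarithmic exponents. This is exactly why \eqref{eq:tropical-kappa} restricts the maximum to the tuples attaining $\alpha_T$. Everything else is nonnegativity and the finiteness of the cut sets.
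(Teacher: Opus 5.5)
Your proposal is correct and is the paper's proof: substitute the power--logarithmic bounds into the cut-majorant inequality, then note that a finite sum is governed by its largest power and, among the terms attaining that power, by the largest logarithmic exponent. Your two-case comparison just writes this step out in detail; the $-\infty$ convention is not needed, because the hypotheses already give finite exponents for every entry.
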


\begin{proof}
Insert \eqref{eq:power-log-input} into \eqref{eq:cut-majorant}.  A finite sum
is governed by the largest power and, among terms with that power, the
largest logarithmic exponent.
\end{proof}

\section{Links, contractions, and Wick products}
\label{sec:links-wick}

\subsection{Strong Schatten links}
\label{sec:strong-schatten-links}

Let $A,B,X,Y,Z$ be Hilbert spaces.  For bounded operators
\[
 K:X\longrightarrow A\otimes Z,
 \qquad
 L:B\otimes Z\longrightarrow Y,
\]
define
\begin{equation}
\label{eq:link-definition}
 \operatorname{Link}_Z(L,K)
 :=(\Id_A\otimes L)\Sigma_{A,B}
   (\Id_B\otimes K):
 B\otimes X\longrightarrow A\otimes Y,
\end{equation}
where $\Sigma_{A,B}:B\otimes A\otimes Z\to A\otimes B\otimes Z$
interchanges the first two factors.

If $(a_\alpha)$ and $(b_\beta)$ are orthonormal bases, write
\[
 Kx=\sum_\alpha a_\alpha\otimes K_\alpha x,
 \qquad
 L(b_\beta\otimes z)=L_\beta z.
\]
Then
\begin{equation}
\label{eq:link-blocks}
 \operatorname{Link}_Z(L,K)(b_\beta\otimes x)
 =\sum_\alpha a_\alpha\otimes L_\beta K_\alpha x.
\end{equation}

\begin{theorem}[Schatten link theorem]
\label{thm:schatten-link}
For $1\le r<\infty$, the link product restricts to a contractive bilinear
map
\[
 \Sch_r(B\otimes Z,Y)\times\Sch_r(X,A\otimes Z)
 \longrightarrow\Sch_r(B\otimes X,A\otimes Y).
\]
For $r=\infty$, the same statement holds for bounded operators.  In both
cases,
\begin{equation}
\label{eq:schatten-link}
 \norm{\operatorname{Link}_Z(L,K)}_{\Sch_r}
 \le\norm L_{\Sch_r}\norm K_{\Sch_r}.
\end{equation}
The dimension-free constant one is optimal.
\end{theorem}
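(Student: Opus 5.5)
The plan is to prove \eqref{eq:schatten-link} at the two endpoints $r=\infty$ and $r=1$, and then pass to $1<r<\infty$ by bilinear complex interpolation. Optimality of the constant is handled separately at the end.

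\emph{Operator-norm endpoint.} By \eqref{eq:link-definition}, $\operatorname{Link}_Z(L,K)$ is the composition of $\Id_B\otimes K$, the unitary $\Sigma_{A,B}$, and $\Id_A\otimes L$. Tensoring with an identity preserves operator norms, so
\[
\norm{\operatorname{Link}_Z(L,K)}\le\norm L\,\norm K
\]
immediately.

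\emph{Trace-class endpoint.} Bilinearity together with the rank-one decompositions $K=\sum_i s_i\,\xi_i\otimes\conj{x_i}$ and $L=\sum_j t_j\,y_j\otimes\conj{\eta_j}$, with $\xi_i\in A\otimes Z$ and $\eta_j\in B\otimes Z$ unit vectors, reduces the estimate to the case where both $K$ and $L$ are rank one with unit singular value. Write $K=\xi\otimes\conj x$ and $L=y\otimes\conj\eta$. Then
\[
\operatorname{Link}_Z(L,K)(b\otimes x')=\ip{x'}{x}\,(\Id_A\otimes\ell_b)\xi\otimes y,
\qquad \ell_b(z):=\ip{b\otimes z}{\eta},
\]
so $\operatorname{Link}_Z(L,K)=(\text{rank one in }x)\otimes(\text{rank one in }y)$ composed with the map $b\mapsto(\Id_A\otimes\ell_b)\xi\in A$. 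Identify $\xi$ with a Hilbert--Schmidt operator $\Xi:\conj Z\to A$, and $\eta$ with a Hilbert--Schmidt operator $H:\conj B\to Z$ (up to conjugations), each of Hilbert--Schmidt norm one. The middle map $B\to A$ is then $\Xi\circ\mathfrak c\circ H\circ\mathfrak c$, a product of two Hilbert--Schmidt operators. Its trace norm is therefore at most $\norm{\Xi}_{\Sch_2}\norm{H}_{\Sch_2}=1$ by H\"older's inequality. Tensoring with the two unit rank-one pieces does not increase the trace norm, which gives $\norm{\operatorname{Link}}_{\Sch_1}\le1$. Summing with the triangle inequality yields $\norm{\operatorname{Link}_Z(L,K)}_{\Sch_1}\le\sum_{i,j}s_it_j=\norm L_{\Sch_1}\norm K_{\Sch_1}$.

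\emph{Interpolation.} The map $\operatorname{Link}_Z$ is bilinear and bounded with constant one on $\Sch_1\times\Sch_1\to\Sch_1$ and on $\Sch_\infty\times\Sch_\infty\to\Sch_\infty$. The Schatten classes form a complex interpolation scale, $[\Sch_\infty,\Sch_1]_{1/r}=\Sch_r$ isometrically, with compact operators at the $\infty$ end. Bilinear complex interpolation (Calder\'on) therefore gives constant one on $\Sch_r$. To stay within compact/finite-rank operators, where density arguments apply, one first works on finite-rank $K,L$ and then extends by continuity.

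I expect the main obstacle to be the bookkeeping of conjugate spaces in the rank-one factorization. It is cleanest to fix bases $(a_\alpha),(b_\beta)$ and an orthonormal basis $(z_\gamma)$ of $Z$ and write the middle operator's matrix as $\sum_\gamma \xi_{\alpha\gamma}\,\overline{\eta_{\beta\gamma}}$, which is visibly a product of two matrices of Hilbert--Schmidt norm one.

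\emph{Optimality.} Take $Z=\C$ and $A=B=X=Y=\C$. Then the link is scalar multiplication and equality holds, so the constant one cannot be improved.
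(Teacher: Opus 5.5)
Your proposal is correct and follows the paper's route: the operator-norm bound by identity amplification, then the trace-class endpoint by factoring a rank-one link as a product of two Hilbert--Schmidt operators (your $\Xi H^*$, the paper's $\Gamma_u\Gamma_v^*$) tensored with a rank-one operator, then bilinear complex interpolation, with one-dimensional spaces giving optimality. The only difference is that the paper interpolates in finite dimensions and reaches arbitrary Hilbert spaces by compressions, using a common projection on $Z$ for both operators, whereas you interpolate directly on finite-rank operators. In either version, the $\Sch_1$ or $\Sch_r$ limit of the truncated links must be identified with $\operatorname{Link}_Z(L,K)$ via the operator-norm endpoint; you gesture at this with ``extends by continuity,'' and it also covers your infinite singular-value sums at $r=1$.
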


\begin{proof}
At $r=\infty$, formula \eqref{eq:link-definition} gives
\[
 \norm{\operatorname{Link}_Z(L,K)}
 \le\norm{\Id_A\otimes L}\norm{\Id_B\otimes K}
 =\norm L\norm K.
\]
We next prove the trace-class endpoint.  For Hilbert spaces $E,F$ and
vectors $\xi\in E$, $u\in F$, write
\[
 \theta_{u,\xi}:E\longrightarrow F,
 \qquad
 \theta_{u,\xi}x:=\ip{x}{\xi}u.
\]
For $u\in A\otimes Z$, define
\[
 \Gamma_u:\conj Z\longrightarrow A,
 \qquad
 \Gamma_u(\conj z)
 :=(\Id_A\otimes\ip{\;\cdot\;}{z})u.
\]
Since inner products are linear in the first variable, this is complex
linear on $\conj Z$.  The correspondence
\[
 A\otimes Z\longrightarrow\Sch_2(\conj Z,A),
 \qquad u\longmapsto\Gamma_u,
\]
is the canonical complex-linear unitary identification; in particular,
\begin{equation}
\label{eq:gamma-hilbert-schmidt}
 \norm{\Gamma_u}_{\Sch_2}=\norm u.
\end{equation}
Define $\Gamma_v:\conj Z\to B$ similarly for $v\in B\otimes Z$.

Let
\[
 K=\theta_{u,\xi}:X\longrightarrow A\otimes Z,
 \qquad
 L=\theta_{\eta,v}:B\otimes Z\longrightarrow Y.
\]
Then, under the canonical tensor identifications,
\begin{equation}
\label{eq:rank-one-link-factorization}
 \operatorname{Link}_Z(L,K)
 =(\Gamma_u\Gamma_v^*)\otimes\theta_{\eta,\xi}.
\end{equation}
Indeed, for $u=a\otimes z$ and $v=b\otimes w$, both sides send
$b'\otimes x$ to
\[
 \ip{b'}{b}\ip{z}{w}\ip{x}{\xi}\,a\otimes\eta.
\]
The general identity follows by approximating $u$ and $v$ in their Hilbert
tensor norms by algebraic tensors.  On the right,
$\Sch_2\Sch_2\subset\Sch_1$ makes the approximation converge in trace norm;
on the left, the $r=\infty$ estimate gives operator-norm convergence.  Thus
\eqref{eq:rank-one-link-factorization} holds without requiring either
$u$ or $v$ to be a simple tensor.

Schatten--H\"older, \eqref{eq:gamma-hilbert-schmidt}, and multiplicativity
of trace norms on Hilbert tensor products now give
\begin{align}
 \norm{\operatorname{Link}_Z(L,K)}_{\Sch_1}
 &=\norm{\Gamma_u\Gamma_v^*}_{\Sch_1}
   \norm{\theta_{\eta,\xi}}_{\Sch_1}\notag\\
 &\le
 \norm{\Gamma_u}_{\Sch_2}\norm{\Gamma_v}_{\Sch_2}
 \norm\eta\norm\xi\notag\\
 &=\norm L_{\Sch_1}\norm K_{\Sch_1}.
 \label{eq:rank-one-link-trace-bound}
\end{align}
For finite-rank operators, take singular-value decompositions
\[
 K=\sum_i s_i\theta_{u_i,\xi_i},
 \qquad
 L=\sum_j t_j\theta_{\eta_j,v_j},
\]
where all displayed singular vectors are unit vectors.  Bilinearity,
\eqref{eq:rank-one-link-trace-bound}, and the triangle inequality yield
\[
 \norm{\operatorname{Link}_Z(L,K)}_{\Sch_1}
 \le\sum_{i,j}s_it_j
 =\norm L_{\Sch_1}\norm K_{\Sch_1}.
\]
For arbitrary trace-class $K,L$, their finite-rank singular truncations
make the corresponding links Cauchy in $\Sch_1$.  The trace-norm limit is
the operator in \eqref{eq:link-definition}, because the truncations also
converge in operator norm and the $r=\infty$ estimate identifies the
operator-norm limit.  This proves the $r=1$ endpoint on arbitrary Hilbert
spaces.

Suppose temporarily that all five Hilbert spaces are finite-dimensional.
Bilinear complex interpolation \cite{BerghLofstrom} between the contractive
endpoint maps
\[
 \Sch_1\times\Sch_1\longrightarrow\Sch_1,
 \qquad
 \Sch_\infty\times\Sch_\infty\longrightarrow\Sch_\infty,
\]
together with
\[
 [\Sch_1,\Sch_\infty]_\theta=\Sch_r,
 \qquad \frac1r=1-\theta,
\]
proves \eqref{eq:schatten-link} for $1<r<\infty$, with constant one.

It remains to pass from finite-dimensional supports to arbitrary Hilbert
spaces.  Fix $1\le r<\infty$.  After restricting to the separable
subspaces generated by the singular vectors of $K$ and $L$, choose
increasing finite-rank projections
\[
 P_{X,\nu},\ P_{A,\nu},\ P_{Z,\nu},\ P_{B,\nu},\ P_{Y,\nu}
\]
which converge strongly to the corresponding identities.  The same
projection $P_{Z,\nu}$ may be used for both operators by replacing the two
initial choices by the projection onto the sum of their ranges.  Set
\[
 K_\nu
 :=(P_{A,\nu}\otimes P_{Z,\nu})K P_{X,\nu},
 \qquad
 L_\nu
 :=P_{Y,\nu}L(P_{B,\nu}\otimes P_{Z,\nu}).
\]
Standard finite-rank approximation in Schatten ideals gives
$K_\nu\to K$ and $L_\nu\to L$ in $\Sch_r$.  Notice that this construction
also approximates range and domain vectors having infinite Schmidt rank;
no simple-tensor assumption is involved.

Applying the finite-dimensional estimate on the union of two finite
supports gives
\begin{align*}
 &\norm{\operatorname{Link}_Z(L_\nu,K_\nu)
       -\operatorname{Link}_Z(L_\mu,K_\mu)}_{\Sch_r}\\
 &\qquad\le
 \norm{L_\nu-L_\mu}_{\Sch_r}\norm{K_\nu}_{\Sch_r}
 +\norm{L_\mu}_{\Sch_r}\norm{K_\nu-K_\mu}_{\Sch_r}.
\end{align*}
Hence the links converge in $\Sch_r$.  Since Schatten convergence implies
operator-norm convergence, while the $r=\infty$ estimate gives
\[
 \norm{\operatorname{Link}_Z(L_\nu,K_\nu)
       -\operatorname{Link}_Z(L,K)}
 \le \norm{L_\nu-L}\norm{K_\nu}
     +\norm L\norm{K_\nu-K}\longrightarrow0.
\]
the Schatten limit is exactly the bounded operator defined in
\eqref{eq:link-definition}.  This proves the assertion for arbitrary
Hilbert spaces.  Taking all five spaces one-dimensional shows that the
constant one is optimal.
\end{proof}

\begin{remark}[Dimension-free nature of the link estimate]
\label{rem:no-amplification-loss}
The operator-norm endpoint contains identity amplifications, but
the trace-class endpoint converts each pair of rank-one terms into a product
of two Hilbert--Schmidt operators.  Both endpoints have constant one, so
interpolation introduces no factor such as $\dim A$, $\dim B$, or $\dim Z$.
\end{remark}

\subsection{Sharp weak-Schatten links}
\label{sec:sharp-weak-schatten-links}

For a compact operator $T$, let
$s_1(T)\ge s_2(T)\ge\cdots\ge0$ be its singular values and put
\[
 s_n^{**}(T):=\frac1n\sum_{j=1}^n s_j(T).
\]

\begin{definition}[Weak and logarithmic Schatten ideals]
\label{def:weak-logarithmic-schatten}
Let $1<r<\infty$.  We use
\begin{align}
 \norm{T}_{r,\infty}^{*}
 &:=\sup_{n\ge1}n^{1/r}s_n(T),
 \label{eq:weak-schatten-standard}\\
 \norm{T}_{\Sch_{r,\infty}}
 &:=\sup_{n\ge1}n^{1/r}s_n^{**}(T).
 \label{eq:weak-schatten-fully-symmetric}
\end{align}
The first is the standard weak quasi-norm and the second is an equivalent
fully symmetric Banach norm:
\begin{equation}
\label{eq:weak-norm-equivalence}
 \norm{T}_{r,\infty}^{*}
 \le\norm{T}_{\Sch_{r,\infty}}
 \le\frac r{r-1}\norm{T}_{r,\infty}^{*}.
\end{equation}
For $\alpha\ge0$, define
\begin{equation}
\label{eq:weak-logarithmic-schatten}
 \norm{T}_{\Sch_{r,\infty}(\log\Sch)^{-\alpha}}
 :=
 \sup_{n\ge1}
 n^{1/r}(\log(e+n))^{-\alpha}s_n^{**}(T).
\end{equation}
For Hilbert spaces $E,F$, the notation
$\Sch_{r,\infty}(E,F)$ denotes the compact operators $T:E\to F$ for which
\eqref{eq:weak-schatten-fully-symmetric} is finite, and
$\Sch_{r,\infty}(\log\Sch)^{-\alpha}(E,F)$ is defined analogously by
\eqref{eq:weak-logarithmic-schatten}.  The standard weak quasi-norm
\eqref{eq:weak-schatten-standard} defines the same first class.

Indeed, $n s_n^{**}(T)=\sum_{j=1}^n s_j(T)$ is the $n$th Ky Fan norm, so
\eqref{eq:weak-logarithmic-schatten} is a supremum of positive multiples of
unitarily invariant, fully symmetric norms.  It is therefore a fully
symmetric Banach ideal norm; monotone convergence of the finite Ky Fan sums
gives the Fatou property.  Moreover, $s_n\le s_n^{**}$ and
\[
 \sum_{j=1}^n j^{-1/r}\le\frac r{r-1}n^{1-1/r},
\]
which proves \eqref{eq:weak-norm-equivalence}.
All these spaces are the full, or maximal/Fatou, ideals.  In particular,
finite-rank operators need not be dense in
\eqref{eq:weak-schatten-fully-symmetric}; the proof below does not assume
such density.
\end{definition}

\begin{proposition}[Divisor obstruction]
\label{prop:weak-divisor-obstruction}
Let $1<r<\infty$.
For $N\ge2$, let
\begin{equation}
\label{eq:weak-divisor-matrix}
 D_N:=\operatorname{diag}(1,2^{-1/r},\ldots,N^{-1/r}).
\end{equation}
Then
\begin{equation}
\label{eq:weak-divisor-input-bounds}
 \norm{D_N}_{r,\infty}^{*}=1,
 \qquad
 \norm{D_N}_{\Sch_{r,\infty}}\le\frac r{r-1},
\end{equation}
whereas, for every $0\le\alpha<1/r$,
\begin{equation}
\label{eq:weak-divisor-output-lower}
 \norm{D_N\otimes D_N}
  _{\Sch_{r,\infty}(\log\Sch)^{-\alpha}}
 \ge c_{r,\alpha}
       (\log(e+N))^{1/r-\alpha}.
\end{equation}
Consequently, neither
$\Sch_{r,\infty}\times\Sch_{r,\infty}\to\Sch_{r,\infty}$ nor a
logarithmic enlargement with exponent $\alpha<1/r$ can hold uniformly over
matrix dimensions for external tensor products.
\end{proposition}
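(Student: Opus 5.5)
The plan is to dispose of the input bounds \eqref{eq:weak-divisor-input-bounds} directly and then reduce the output lower bound to a divisor-counting estimate.

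\emph{Input bounds.} The singular values of $D_N$ are $s_j=j^{-1/r}$ for $j\le N$ and $0$ afterwards. Hence $n^{1/r}s_n\le1$, with equality at $n=1$, which gives $\norm{D_N}^*_{r,\infty}=1$. The second bound is then \eqref{eq:weak-norm-equivalence}.

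\emph{Output singular values.} The singular values of $D_N\otimes D_N$ are the numbers $(ij)^{-1/r}$ for $1\le i,j\le N$, counted with multiplicity. For a threshold $t\ge1$, the number of singular values $\ge t^{-1/r}$ is
\[
\#\{(i,j)\in[N]^2: ij\le t\}.
\]
We take $t=N$. Then every pair with $ij\le N$ automatically has $i,j\le N$, so this count equals the divisor-summatory function $\sum_{k\le N}d(k)=\sum_{i\le N}\lfloor N/i\rfloor$. This is at least $c\,N\log(e+N)$ for $N\ge2$, using $\lfloor x\rfloor\ge x/2$ for $x\ge1$ and the harmonic sum.

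\emph{Ky Fan lower bound.} Set $n:=\#\{ij\le N\}$, so that $n\asymp N\log(e+N)$; the upper bound $n\le N(1+\log N)$ is immediate. The $n$ largest singular values are exactly the values $(ij)^{-1/r}$ with $ij\le N$, so
\[
ns_n^{**}=\sum_{ij\le N}(ij)^{-1/r}\;\ge\; n\,N^{-1/r}.
\]
A Dirichlet-hyperbola estimate would give the sharper value $\asymp N^{1-1/r}\log N$, but it is not needed. Therefore
\[
n^{1/r}(\log(e+n))^{-\alpha}s_n^{**}\ \ge\ \Bigl(\frac nN\Bigr)^{1/r}(\log(e+n))^{-\alpha}\ \gtrsim\ (\log(e+N))^{1/r}(\log(e+N))^{-\alpha},
\]
because $\log(e+n)\asymp\log(e+N)$ once $N\le n\le N^2$. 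This proves \eqref{eq:weak-divisor-output-lower}.

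\emph{Consequence.} $D_N\otimes D_N$ is exactly the link of two $D_N$'s through a one-dimensional $Z$. The inputs are uniformly bounded in $\Sch_{r,\infty}$, while the outputs blow up in every $\Sch_{r,\infty}(\log\Sch)^{-\alpha}$ with $\alpha<1/r$; the case $\alpha=0$ gives $\Sch_{r,\infty}$ itself.

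The only step needing care is the lower bound $\sum_{k\le N}d(k)\gtrsim N\log N$ uniformly for $N\ge2$, together with the comparison $\log(e+n)\asymp\log(e+N)$. Both are elementary, with constants depending only on $r$ and $\alpha$.
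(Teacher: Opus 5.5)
Your proposal is correct and follows the paper's proof essentially step for step. Both arguments use the same count $m_N=\sum_{i\le N}\lfloor N/i\rfloor$ of products $ij\le N$, the same two-sided bound $cN\log(e+N)\le m_N\le N^2$ to control $\log(e+m_N)$, and evaluation of the logarithmic weak norm at $n=m_N$. The only cosmetic difference is that you bound $s_{m_N}^{**}$ by averaging the $m_N$ largest singular values, each at least $N^{-1/r}$, whereas the paper writes this as $s_{m_N}^{**}\ge s_{m_N}\ge N^{-1/r}$.
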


\begin{proof}
The input bounds follow from the definitions and the integral comparison
\[
 \frac1n\sum_{j=1}^n j^{-1/r}
 \le\frac r{r-1}n^{-1/r}.
\]
The singular values of $D_N\otimes D_N$ are the decreasing rearrangement of
$(ij)^{-1/r}$, $1\le i,j\le N$.  Set
\[
 m_N:=\sum_{i=1}^N\left\lfloor\frac Ni\right\rfloor.
\]
There are at least $m_N$ products with $ij\le N$, and hence
\[
 s_{m_N}(D_N\otimes D_N)\ge N^{-1/r}.
\]
Moreover,
\[
 cN\log(e+N)\le m_N
 \le N(1+\log N)\le N^2.
\]
In particular, $\log(e+m_N)\le C\log(e+N)$.  Since
$s_{m_N}^{**}\ge s_{m_N}$, evaluating
\eqref{eq:weak-logarithmic-schatten} at $m_N$ yields
\[
 \norm{D_N\otimes D_N}
  _{\Sch_{r,\infty}(\log\Sch)^{-\alpha}}
 \ge
 m_N^{1/r}(\log(e+m_N))^{-\alpha}N^{-1/r}
 \ge c_{r,\alpha}(\log(e+N))^{1/r-\alpha}.
 \qedhere
\]
\end{proof}

\begin{lemma}[Antidiagonal orthogonality for links]
\label{lem:weak-link-antidiagonal}
Let $(K_k)_{k\ge0}$ and $(L_\ell)_{\ell\ge0}$ be finite-rank operators of
the shapes in \eqref{eq:link-definition}.  Suppose that the initial
projections
\[
 q_k:=\supp(K_k^*K_k)\in\cL(X)
\]
are pairwise orthogonal and that the terminal projections
\[
 p_\ell:=\supp(L_\ell L_\ell^*)\in\cL(Y)
\]
are pairwise orthogonal.  Set
\[
 T_{k,\ell}:=\operatorname{Link}_Z(L_\ell,K_k),
 \qquad
 U_j:=\sum_{k+\ell=j}T_{k,\ell}.
\]
Then, for every $j\ge0$, the nonzero singular values of $U_j$, counted with
multiplicity, are the disjoint union
\begin{equation}
\label{eq:weak-link-antidiagonal-union}
 s(U_j)=\bigsqcup_{k+\ell=j}s(T_{k,\ell}).
\end{equation}
Consequently,
\begin{align}
 \norm{U_j}_{\Sch_p}^p
 &=\sum_{k+\ell=j}\norm{T_{k,\ell}}_{\Sch_p}^p,
 &&1\le p<\infty,
 \notag\\
 \norm{U_j}_{\Sch_\infty}
 &=\max_{k+\ell=j}\norm{T_{k,\ell}}_{\Sch_\infty}.
 \label{eq:weak-link-antidiagonal-norms}
\end{align}
\end{lemma}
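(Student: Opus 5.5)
The plan is to show that the summands $T_{k,\ell}$ with $k+\ell=j$ have mutually orthogonal initial projections and mutually orthogonal terminal projections. A finite sum of such operators is then an orthogonal direct sum, and its singular values are the disjoint union of the summands' singular values.

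First I locate the initial and terminal spaces of a single link. From \eqref{eq:link-definition}, $\operatorname{Link}_Z(L,K)=(\Id_A\otimes L)\Sigma_{A,B}(\Id_B\otimes K)$. Since $K_k=K_kq_k$, the link vanishes on $B\otimes(1-q_k)X$, so
\[
 T_{k,\ell}=T_{k,\ell}(\Id_B\otimes q_k).
\]
Symmetrically, $L_\ell=p_\ell L_\ell$, so
\[
 T_{k,\ell}=(\Id_A\otimes p_\ell)T_{k,\ell}.
\]
Both identities follow from bilinearity, or block by block from \eqref{eq:link-blocks}, where $L_\beta K_\alpha=p_\ell L_\beta K_\alpha q_k$. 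Hence the initial projection of $T_{k,\ell}$ lies under $\Id_B\otimes q_k$ and its terminal projection lies under $\Id_A\otimes p_\ell$.

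Second, I use the antidiagonal constraint. Two distinct pairs $(k,\ell)\ne(k',\ell')$ with $k+\ell=k'+\ell'=j$ must have $k\ne k'$ and $\ell\ne\ell'$. So the projections $\Id_B\otimes q_k$ and $\Id_B\otimes q_{k'}$ are orthogonal, and so are $\Id_A\otimes p_\ell$ and $\Id_A\otimes p_{\ell'}$. Consequently $T_{k,\ell}^*T_{k',\ell'}=0$ and $T_{k,\ell}T_{k',\ell'}^*=0$ for distinct summands. This is the only point where both orthogonality hypotheses are needed simultaneously, and I regard it as the key step. It is also why the lemma restricts to antidiagonals: on a full row, $k$ is fixed and the initial projections coincide.

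Third, I compute $U_j^*U_j=\sum_{k+\ell=j}T_{k,\ell}^*T_{k,\ell}$, because all cross terms vanish. The summands are positive finite-rank operators supported on the mutually orthogonal subspaces $\Id_B\otimes q_k$. Their sum is therefore block diagonal, and its nonzero eigenvalues with multiplicity are the union of the eigenvalues of the blocks. Taking square roots gives \eqref{eq:weak-link-antidiagonal-union}. The norm identities \eqref{eq:weak-link-antidiagonal-norms} then follow directly, since the $\Sch_p$ norm is the $\ell^p$ norm and the operator norm is the maximum of the singular-value sequence.

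The only technicality is that $T_{k,\ell}$ is finite rank, so all spectra are finite and the union is well defined. This holds by Theorem~\ref{thm:schatten-link}, or directly because $K_k$ and $L_\ell$ have finite rank. The sum over $k+\ell=j$ is finite, so no convergence issue arises.
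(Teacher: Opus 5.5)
Your argument is the same as the paper's, and its core is correct. Both proofs use the support identities $T_{k,\ell}=(\Id_A\otimes p_\ell)T_{k,\ell}(\Id_B\otimes q_k)$ and the observation that distinct antidiagonal pairs differ in both indices. Both then use the vanishing of $T_{k,\ell}^*T_{k',\ell'}$ and $T_{k,\ell}T_{k',\ell'}^*$, which makes $U_j^*U_j$ block diagonal.

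One side remark in your last paragraph is false and should be corrected. $T_{k,\ell}$ need not have finite rank, even though $K_k$ and $L_\ell$ do. By \eqref{eq:rank-one-link-factorization}, the link of $\theta_{u,\xi}$ and $\theta_{\eta,v}$ is $(\Gamma_u\Gamma_v^*)\otimes\theta_{\eta,\xi}$, and $\Gamma_u\Gamma_v^*$ can have infinite rank. For example, take $X=Y=\C$, $A=B=Z=\ell^2$ and $u=v=\sum_n 2^{-n}e_n\otimes e_n$. Then the link of two rank-one maps is the infinite-rank diagonal operator $e_\beta\mapsto 4^{-\beta}e_\beta$.

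Theorem~\ref{thm:schatten-link} at $r=1$ shows only that $T_{k,\ell}$ is trace class, but that is all you need. $U_j^*U_j$ is then a finite sum of positive compact operators with mutually orthogonal supports $\Id_B\otimes q_k$. Its nonzero eigenvalues, each of finite multiplicity, are still the multiset union of those of the blocks. The $\Sch_p$ identity follows by rearranging nonnegative series, so the conclusion is unaffected once "finite rank" is replaced by "compact".
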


\begin{proof}
Put
\[
 Q_k:=\Id_B\otimes q_k,
 \qquad
 P_\ell:=\Id_A\otimes p_\ell.
\]
The link formula gives
\[
 T_{k,\ell}=P_\ell T_{k,\ell}Q_k.
\]
If $k+\ell=k'+\ell'=j$ and $(k,\ell)\ne(k',\ell')$, then both
$k\ne k'$ and $\ell\ne\ell'$.  Pairwise orthogonality therefore gives
\[
 T_{k,\ell}^*T_{k',\ell'}=0,
 \qquad
 T_{k,\ell}T_{k',\ell'}^*=0.
\]
Thus $U_j^*U_j$ and $U_jU_j^*$ are orthogonal direct sums of the
corresponding block products.  This proves
\eqref{eq:weak-link-antidiagonal-union} and the norm identities.
\end{proof}

\begin{theorem}[Sharp uniform weak-Schatten link output]
\label{thm:sharp-weak-schatten-link}
Let $1<r<\infty$.  Uniformly over all five Hilbert spaces in
\eqref{eq:link-definition}, every
\[
 K\in\Sch_{r,\infty}(X,A\otimes Z),
 \qquad
 L\in\Sch_{r,\infty}(B\otimes Z,Y)
\]
satisfies
\begin{align}
 \norm{\operatorname{Link}_Z(L,K)}
  _{\Sch_{r,\infty}(\log\Sch)^{-1/r}}
 &\le C_r
 \norm L_{r,\infty}^{*}\norm K_{r,\infty}^{*}
 \notag\\
 &\le C_r
 \norm L_{\Sch_{r,\infty}}\norm K_{\Sch_{r,\infty}}.
 \label{eq:sharp-weak-schatten-link}
\end{align}
The assertion holds on the full weak ideals, not only on their
finite-rank closures.  Within the targets
$\Sch_{r,\infty}(\log\Sch)^{-\alpha}$, the enlargement exponent
$\alpha=1/r$ is optimal uniformly over dimensions and contracted Hilbert
spaces.
\end{theorem}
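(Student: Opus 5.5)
The second inequality in \eqref{eq:sharp-weak-schatten-link} is \eqref{eq:weak-norm-equivalence}, and optimality of $\alpha=1/r$ is Proposition~\ref{prop:weak-divisor-obstruction}. There we take $Z=\C$, so that $\operatorname{Link}_\C(L,K)$ is unitarily equivalent to $K\otimes L$; with $K=L=D_N$ the lower bound $(\log(e+N))^{1/r-\alpha}$ is unbounded in $N$ for $\alpha<1/r$. So it remains to prove the first inequality.

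Normalize $\norm K_{r,\infty}^*=\norm L_{r,\infty}^*=1$. Take Schmidt decompositions $K=\sum_i s_i\theta_{u_i,\xi_i}$ and $L=\sum_j t_j\theta_{\eta_j,v_j}$; these exist because weak ideals consist of compact operators, so there is no density issue. We have $s_i,t_j\le i^{-1/r},j^{-1/r}$. Form the dyadic blocks
\[
K_k:=\sum_{2^k\le i<2^{k+1}}s_i\theta_{u_i,\xi_i},
\qquad
L_\ell:=\sum_{2^\ell\le j<2^{\ell+1}}t_j\theta_{\eta_j,v_j}.
\]
Each block has rank at most $2^k$ (resp.\ $2^\ell$) and operator norm at most $2^{-k/r}$ (resp.\ $2^{-\ell/r}$). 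The initial projections of the $K_k$ are mutually orthogonal, and so are the terminal projections of the $L_\ell$.

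By Theorem~\ref{thm:schatten-link} at $r=\infty$ and $r=1$ we get
\[
\norm{T_{k,\ell}}\le 2^{-(k+\ell)/r},
\qquad
\norm{T_{k,\ell}}_{\Sch_1}\le \norm{L_\ell}_{\Sch_1}\norm{K_k}_{\Sch_1}\lesssim 2^{(k+\ell)(1-1/r)}.
\]
The rank of $T_{k,\ell}$ is at most $2^{k+\ell}\dim Z$, which is useless. The trace-norm bound is the substitute, and it is dimension-free. By Lemma~\ref{lem:weak-link-antidiagonal}, the antidiagonal sum $U_j$ satisfies
\[
\norm{U_j}\le 2^{-j/r},
\qquad
\norm{U_j}_{\Sch_1}\lesssim (j+1)\,2^{j(1-1/r)}.
\]
The factor $(j+1)$ counts the pairs with $k+\ell=j$, and it is the source of the logarithm. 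Since the full sum converges in operator norm, and the Ky Fan norms have the Fatou property, it suffices to estimate the finite partial sums $\sum_{j\le J}U_j$.

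Now fix $n$ and bound the Ky Fan norm $\sum_{m\le n}s_m(\operatorname{Link})$ by $\sum_j\norm{U_j}_{(n)}$, using the triangle inequality for Ky Fan norms. Let $J$ satisfy $2^J\approx n/\log(e+n)$. For the head, $j\le J$, use the trace norm: the sum is at most
\[
\sum_{j\le J}(j+1)2^{j(1-1/r)}\lesssim J\,2^{J(1-1/r)}.
\]
For the tail, $j>J$, use $\norm{U_j}_{(n)}\le n\norm{U_j}$, which gives
\[
\sum_{j>J} n\,2^{-j/r}\lesssim n\,2^{-J/r}.
\]
With $2^J=n/\log(e+n)$, both terms are of order $n^{1-1/r}(\log(e+n))^{1/r}$. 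This is exactly $n\,s_n^{**}\lesssim n^{1-1/r}\log^{1/r}$, i.e.\ the claimed norm bound, with a constant depending only on $r$. This head--tail balance is the main step: the choice of $J$ and the requirement that the tail estimate not lose a factor of $J$ both have to be checked. A straightforward tail bound via $n\norm{U_j}$ sums geometrically, so I expect it to work. If $n$ is small (say $J<0$), only the tail term is present and the bound is immediate.
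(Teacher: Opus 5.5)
Your proposal is correct and follows essentially the same route as the paper: dyadic singular-value blocks, Lemma~\ref{lem:weak-link-antidiagonal} combined with the $\Sch_1$ and $\Sch_\infty$ link bounds to get $\norm{U_j}_{\Sch_1}\le(j+1)2^{j(1-1/r)}$ and $\norm{U_j}\le2^{-j/r}$, then a Ky Fan head--tail split at $2^J\approx n/\log(e+n)$, with sharpness from $Z=\C$ and the divisor example. The paper chooses $M$ as the largest integer with $(M+1)2^M\le n$, which is the same balance. The appeal to the Fatou property is unnecessary: the paper identifies $\operatorname{Link}_Z(L,K)=\sum_jU_j$ in operator norm, and since the $n$th Ky Fan norm is bounded by $n$ times the operator norm, the infinite tail can be estimated directly, as you in fact do.
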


\begin{proof}
By homogeneity, normalize
\begin{equation}
\label{eq:weak-link-normalization}
 \norm K_{r,\infty}^{*}\le1,
 \qquad
 \norm L_{r,\infty}^{*}\le1.
\end{equation}
Both inputs are compact.  Divide their singular-value decompositions into
dyadic blocks
\begin{equation}
\label{eq:weak-link-dyadic-blocks}
 K=\sum_{k\ge0}K_k,
 \qquad
 L=\sum_{\ell\ge0}L_\ell,
\end{equation}
where $K_k$ contains the singular values with
$2^k\le j<2^{k+1}$, and similarly for $L_\ell$.  Empty blocks are zero.
Then
\begin{align}
 \norm{K_k}_{\Sch_\infty}&\le2^{-k/r},
 &\norm{K_k}_{\Sch_1}&\le2^{k(1-1/r)},
 \notag\\
 \norm{L_\ell}_{\Sch_\infty}&\le2^{-\ell/r},
 &\norm{L_\ell}_{\Sch_1}&\le2^{\ell(1-1/r)}.
 \label{eq:weak-link-dyadic-bounds}
\end{align}
The initial projections of the $K_k$ are pairwise orthogonal, and the
terminal projections of the $L_\ell$ are pairwise orthogonal.

Set
\[
 T_{k,\ell}:=\operatorname{Link}_Z(L_\ell,K_k),
 \qquad
 U_j:=\sum_{k+\ell=j}T_{k,\ell}.
\]
Theorem~\ref{thm:schatten-link} at $1$ and $\infty$, together with
Lemma~\ref{lem:weak-link-antidiagonal}, gives
\begin{align}
 \norm{U_j}_{\Sch_1}
 &\le(j+1)2^{j(1-1/r)},
 \notag\\
 \norm{U_j}_{\Sch_\infty}
 &\le2^{-j/r}.
 \label{eq:weak-link-antidiagonal-bounds}
\end{align}
Furthermore,
\[
 \sum_{k,\ell\ge0}\norm{T_{k,\ell}}_{\Sch_\infty}
 \le
 \left(\sum_{k\ge0}2^{-k/r}\right)^2<\infty.
\]
The double series therefore converges in operator norm.  The dyadic sums in
\eqref{eq:weak-link-dyadic-blocks} converge to $K$ and $L$ in operator norm,
and the link is jointly operator-norm continuous by
\eqref{eq:link-definition}.  Hence
\[
 \operatorname{Link}_Z(L,K)=\sum_{j\ge0}U_j.
\]
Each partial sum is trace class, so the limit is compact.  This
operator-norm argument is also why the proof covers the full weak ideals.

For $M\ge0$, write
\[
 R_M:=\sum_{j\le M}U_j,
 \qquad
 V_M:=\sum_{j>M}U_j.
\]
Summing \eqref{eq:weak-link-antidiagonal-bounds} gives
\begin{align}
 \norm{R_M}_{\Sch_1}
 &\le C_r(M+1)2^{M(1-1/r)},
 \notag\\
 \norm{V_M}_{\Sch_\infty}
 &\le C_r2^{-M/r}.
 \label{eq:weak-link-head-tail}
\end{align}
Given $n\ge1$, let $M$ be the largest nonnegative integer such that
\begin{equation}
\label{eq:weak-link-balance}
 (M+1)2^M\le n.
\end{equation}
The Ky Fan triangle inequality and
\eqref{eq:weak-link-head-tail} imply
\begin{align}
 s_n^{**}\bigl(\operatorname{Link}_Z(L,K)\bigr)
 &\le \frac{\norm{R_M}_{\Sch_1}}n
       +\norm{V_M}_{\Sch_\infty}
 \notag\\
 &\le C_r2^{-M/r}.
 \label{eq:weak-link-ky-fan}
\end{align}
By maximality in \eqref{eq:weak-link-balance},
$n<(M+2)2^{M+1}$, whence
\[
 2^{-M/r}
 \le C_r\left(\frac{M+2}{n}\right)^{1/r}
 \le C_r\left(\frac{\log(e+n)}n\right)^{1/r}.
\]
Substitution in \eqref{eq:weak-link-ky-fan} proves
\eqref{eq:sharp-weak-schatten-link} under
\eqref{eq:weak-link-normalization}; homogeneity gives the general case.

For sharpness, take $Z=\C$.  Up to a canonical permutation of factors,
\[
 \operatorname{Link}_{\C}(L,K)\simeq K\otimes L.
\]
Proposition~\ref{prop:weak-divisor-obstruction} then excludes every
$\alpha<1/r$, even for finite matrices.
\end{proof}

\begin{remark}[External tensor benchmark]
\label{rem:weak-external-tensor-benchmark}
The branch $Z=\C$ is the discrete operator-ideal counterpart of the
classical Lorentz tensor-product phenomenon; see
\cite{AstashkinTensor} and the references therein.  The additional content
of Theorem~\ref{thm:sharp-weak-schatten-link} is the same sharp output,
uniformly for arbitrary contracted Hilbert spaces $Z$.
\end{remark}

\begin{corollary}[Optimal contracted-link exponent]
\label{cor:optimal-weak-link-exponent}
Let $1<r<\infty$.
Let $\alpha_{\mathrm{link}}(r)$ be the infimum of the numbers
$\alpha\ge0$ for which
\[
 \operatorname{Link}:
 \Sch_{r,\infty}\times\Sch_{r,\infty}
 \longrightarrow
 \Sch_{r,\infty}(\log\Sch)^{-\alpha}
\]
is uniformly bounded over all Hilbert spaces and all finite-rank inputs.
Then
\begin{equation}
\label{eq:optimal-weak-link-exponent}
 \alpha_{\mathrm{link}}(r)=\frac1r.
\end{equation}
The endpoint is attained.  The same minimum is obtained when the bound is
required for all inputs in the full weak ideals.
\end{corollary}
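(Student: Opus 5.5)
The corollary is essentially a repackaging of Theorem~\ref{thm:sharp-weak-schatten-link} and Proposition~\ref{prop:weak-divisor-obstruction}. I would prove the two inequalities $\alpha_{\mathrm{link}}(r)\le1/r$ and $\alpha_{\mathrm{link}}(r)\ge1/r$ separately, then check attainment and the full-ideal variant.

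\emph{Upper bound and attainment.} Theorem~\ref{thm:sharp-weak-schatten-link} gives
\[
 \norm{\operatorname{Link}_Z(L,K)}_{\Sch_{r,\infty}(\log\Sch)^{-1/r}}
 \le C_r\norm L_{\Sch_{r,\infty}}\norm K_{\Sch_{r,\infty}}.
\]
The constant $C_r$ is uniform over all five Hilbert spaces, and the estimate holds on the full weak ideals. Restricting to finite-rank inputs, the exponent $\alpha=1/r$ is admissible, so $\alpha_{\mathrm{link}}(r)\le 1/r$ and the value $1/r$ is attained. I would also note monotonicity of the targets in $\alpha$. Since $(\log(e+n))^{-\alpha}$ decreases in $\alpha$, the norms \eqref{eq:weak-logarithmic-schatten} decrease as $\alpha$ grows, so every $\alpha\ge1/r$ is also admissible. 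Hence the admissible set is an interval containing $[1/r,\infty)$.

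\emph{Lower bound.} Suppose some $\alpha<1/r$ were admissible with constant $C$. Take $Z=\C$ and $A=B=X=Y=\ell^2([N])$. As noted in the sharpness part of the proof of Theorem~\ref{thm:sharp-weak-schatten-link}, $\operatorname{Link}_{\C}(L,K)$ is unitarily equivalent to $K\otimes L$ after a canonical permutation of factors. The norm \eqref{eq:weak-logarithmic-schatten} depends only on singular values, so it is invariant under this permutation.

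Now choose $K=L=D_N$, viewed as operators $X\to A\otimes\C$ and $B\otimes\C\to Y$; these are finite-rank. By \eqref{eq:weak-divisor-input-bounds}, their input norms are bounded by $r/(r-1)$. By \eqref{eq:weak-divisor-output-lower}, the output norm is at least $c_{r,\alpha}(\log(e+N))^{1/r-\alpha}$. Letting $N\to\infty$ contradicts the assumed uniform bound. Hence every $\alpha<1/r$ fails, and the infimum equals $1/r$.

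\emph{Full-ideal version.} Finite-rank operators belong to the full weak ideals. Any bound required for all full-ideal inputs is therefore in particular a finite-rank bound, so the full-ideal admissible set is contained in the finite-rank one and its infimum is at least $1/r$. Conversely, Theorem~\ref{thm:sharp-weak-schatten-link} covers the full ideals at $\alpha=1/r$. The minimum is therefore again $1/r$, and it is attained.

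The only point requiring care is the identification in the case $Z=\C$. The link \eqref{eq:link-definition} with $Z=\C$ is $(\Id_A\otimes L)\Sigma(\Id_B\otimes K)$, which is the tensor product $K\otimes L$ composed with unitary reorderings of factors. I would verify this on elementary tensors using \eqref{eq:link-blocks}.
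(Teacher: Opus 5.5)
Your proposal is correct and follows essentially the paper's own proof. The upper bound and attainment come from Theorem~\ref{thm:sharp-weak-schatten-link}, which already covers the full weak ideals, and the divisor matrices of Proposition~\ref{prop:weak-divisor-obstruction}, used in the branch $Z=\C$ where the link is a permuted tensor product, exclude every $\alpha<1/r$; your remarks on monotonicity in $\alpha$ and on finite-rank operators lying in the full ideals just make explicit what the paper leaves implicit.
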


\begin{proof}
The upper bound and attainment follow from
Theorem~\ref{thm:sharp-weak-schatten-link}; the matrices in
Proposition~\ref{prop:weak-divisor-obstruction} exclude every smaller
exponent.  Since the theorem itself holds for the full weak ideals, the two
definitions give the same minimum.
\end{proof}

\subsection{Ordered contractions}
\label{sec:ordered-contractions}

Let $\cH=\cH_{\mathbb R}\otimes_{\mathbb R}\C$ be the complexification of a
real Hilbert space, with its canonical real structure.  Let
$\mathcal J:\cH\to\cH$ be the associated conjugation and set
\[
 \beta(h,k):=\ip{h}{\mathcal Jk},
\]
the symmetric complex-bilinear extension of the real inner product.  Let
$(h_i)_{i\in I}$ be a finite orthonormal system in the fixed real form
$\cH_{\mathbb R}\subset\cH$.  Let
\[
 K=(K_{\mathbf i})_{\mathbf i\in I^m},
 \quad K_{\mathbf i}:\cC\to\cD,
 \qquad
 L=(L_{\mathbf j})_{\mathbf j\in I^n},
 \quad L_{\mathbf j}:\cD\to\cE.
\]
A cross pairing $P$ between $[m]$ and $[n]$ is a subset of
$[m]\times[n]$ whose two coordinate projections are injective.  Write
$A(P)\subset[m]$ and $B(P)\subset[n]$ for its two projections, and let
$\mathfrak M_{m,n}$ be the set of all cross pairings.  At order zero use
$\mathfrak M_{0,n}=\mathfrak M_{m,0}=\{\emptyset\}$.

For $P\in\mathfrak M_{m,n}$, write
$A(P)=\{a_1<\cdots<a_k\}$ and let $b_\nu$ be determined by
$(a_\nu,b_\nu)\in P$.  Given $\mathbf z=(z_1,\ldots,z_k)$, define
$\mathbf i^P(\mathbf x,\mathbf z)\in I^m$ by putting $z_\nu$ in position
$a_\nu$ and filling the remaining positions with $\mathbf x$ in increasing
order.  Define $\mathbf j^P(\mathbf y,\mathbf z)\in I^n$ by putting the same
$z_\nu$ in position $b_\nu$ and filling the remaining positions with
$\mathbf y$ in increasing order.

For $P\in\mathfrak M_{m,n}$, sum over the stochastic indices identified by
$P$, retain the unpaired $K$ indices before the unpaired $L$ indices, and
define
\begin{equation}
\label{eq:ordered-contraction}
 (L\circ_PK)_{\mathbf x,\mathbf y}
 :=\sum_{\mathbf z\in I^{|P|}}
 L_{\mathbf j^P(\mathbf y,\mathbf z)}
 K_{\mathbf i^P(\mathbf x,\mathbf z)}.
\end{equation}
The output order is $q(P)=m+n-2|P|$.
Although \eqref{eq:ordered-contraction} is written in coordinates, it is
contraction against the basis-free complex-bilinear form $\beta$,
with deterministic coefficients composed in the displayed order.  It is
therefore independent of the auxiliary finite-dimensional real subspace and
orthonormal basis.

\begin{theorem}[Cut factorization for ordered contractions]
\label{thm:ordered-contraction-profile}
Identify the output stochastic legs with
\[
 ([m]\setminus A(P))\sqcup([n]\setminus B(P))
\]
in that order, and let
\[
 \iota_K:[m]\setminus A(P)\longrightarrow[q(P)],
 \qquad
 \iota_L:[n]\setminus B(P)\longrightarrow[q(P)]
\]
be the resulting order-preserving injections.  Every output cut is uniquely
\[
 U=\iota_K(S)\cup\iota_L(T),
\]
with
$S\subset[m]\setminus A(P)$ and
$T\subset[n]\setminus B(P)$.  Then
\begin{equation}
\label{eq:ordered-contraction-cut}
 \norm{\Flat_U(L\circ_PK)}_{\Sch_r}
 \le
 \norm{\Flat_S(K)}_{\Sch_r}
 \norm{\Flat_{T\cup B(P)}(L)}_{\Sch_r},
 \qquad 1\le r\le\infty.
\end{equation}
Consequently,
\begin{equation}
\label{eq:ordered-contraction-profile}
 \Prof_{q(P),r}(L\circ_PK)
 \le\Prof_{n,r}(L)\Prof_{m,r}(K).
\end{equation}
\end{theorem}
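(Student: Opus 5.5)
The plan is to exhibit each output flattening as a link, in the sense of \eqref{eq:link-definition}, of one flattening of $K$ and one flattening of $L$, and then to invoke Theorem~\ref{thm:schatten-link}.

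Fix $U=\iota_K(S)\cup\iota_L(T)$ and put $S'=([m]\setminus A(P))\setminus S$ and $T'=([n]\setminus B(P))\setminus T$. Take $Z:=\cH_{A(P)}\otimes\cD$. This is the contracted space, consisting of the paired legs of $K$ together with the intermediate deterministic space. Regard
\[
 \Flat_S(K):\conj{\cH_S}\otimes\cC\longrightarrow \cH_{S'}\otimes\cH_{A(P)}\otimes\cD=A\otimes Z,
\]
with $X=\conj{\cH_S}\otimes\cC$ and $A=\cH_{S'}$. On the $L$ side, the flattening across $T\cup B(P)$ is
\[
 \Flat_{T\cup B(P)}(L):\conj{\cH_T}\otimes\conj{\cH_{B(P)}}\otimes\cD\longrightarrow\cH_{T'}\otimes\cE .
\]
Here $B=\conj{\cH_T}$ and $Y=\cH_{T'}\otimes\cE$. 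The remaining input $\conj{\cH_{B(P)}}\otimes\cD$ must be identified with $Z=\cH_{A(P)}\otimes\cD$. For this I use the conjugation $\mathcal J$ on each paired leg: the map $\conj h\mapsto \mathcal J h$ is a complex-linear unitary $\conj{\cH}\to\cH$, and contracting with $\beta(h,k)=\ip{h}{\mathcal Jk}$ corresponds exactly to composing through this identification. Proposition~\ref{prop:legwise-rule} (unitary leg maps preserve cut norms) then shows that precomposing with this unitary does not change the $\Sch_r$ norm. All tensor-factor reorderings are canonical unitaries, so they do not affect norms either.

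The key step is to check that, after these identifications,
\[
 \Flat_U(L\circ_PK)=\mathcal U\,\operatorname{Link}_Z\bigl(\Flat_{T\cup B(P)}(L)\,(\Id\otimes J_P),\ \Flat_S(K)\bigr)\,\mathcal V
\]
holds for fixed unitaries $\mathcal U,\mathcal V$ that reorder factors. The domain of the link is $B\otimes X=\conj{\cH_T}\otimes\conj{\cH_S}\otimes\cC$, which after reordering is $\conj{\cH_U}\otimes\cC$ in the output leg order. Its codomain is $A\otimes Y=\cH_{S'}\otimes\cH_{T'}\otimes\cE$, which is $\cH_{U^c}\otimes\cE$. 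I would verify the identity on basis tensors using the block formula \eqref{eq:link-blocks}. Take the orthonormal basis $(h_i)$, which is real, so $\mathcal J h_i=h_i$. Both sides map $\conj{h_{\mathbf y_T}}\otimes\conj{h_{\mathbf x_S}}\otimes c$ to
\[
 \sum_{\mathbf x_{S'},\mathbf y_{T'},\mathbf z}h_{\mathbf x_{S'}}\otimes h_{\mathbf y_{T'}}\otimes L_{\mathbf j^P(\mathbf y,\mathbf z)}K_{\mathbf i^P(\mathbf x,\mathbf z)}c .
\]
In this sum, the deterministic composition $L\cdot K$ through $\cD$ and the summation over $\mathbf z$ are exactly the contraction over $Z$. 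I expect this bookkeeping to be the main obstacle: the order of indices, the conjugations that make it basis-invariant, and the fact that the paired $L$ legs must go to the domain side (which is why the cut is $T\cup B(P)$ while the paired $K$ legs stay in the codomain). Linearity and density extend the identity from basis tensors to all finite-support kernels.

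Given the identity, Theorem~\ref{thm:schatten-link} yields \eqref{eq:ordered-contraction-cut} for every $1\le r\le\infty$. Maximizing over $U$, and using that each $U$ corresponds to a unique pair $(S,T)$, gives \eqref{eq:ordered-contraction-profile}, since $\norm{\Flat_S(K)}_{\Sch_r}\le\Prof_{m,r}(K)$ and $\norm{\Flat_{T\cup B(P)}(L)}_{\Sch_r}\le\Prof_{n,r}(L)$.
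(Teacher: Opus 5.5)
Your proposal is correct and is essentially the paper's proof: take $Z=\cH_{A(P)}\otimes\cD$, identify it with the paired input legs of $\Flat_{T\cup B(P)}(L)$ by tensoring copies of the unitary $U_0=\mathfrak c_{\cH}\circ\mathcal J$ (the inverse of your map $\conj h\mapsto\mathcal Jh$) along the bijection $P$, check on elementary tensors that $\Flat_U(L\circ_PK)$ is unitarily equivalent to $\operatorname{Link}_Z(\Flat_{T\cup B(P)}(L),\Flat_S(K))$, and apply Theorem~\ref{thm:schatten-link}. Two cosmetic remarks: your $J_P$ must also absorb a permutation of the paired legs, since $P$ need not be order-preserving, and the norm invariance under precomposition with $J_P$ is simply unitary invariance of $\Sch_r$, not an instance of Proposition~\ref{prop:legwise-rule}.
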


\begin{proof}
Fix $U=\iota_K(S)\cup\iota_L(T)$.  Up to canonical permutations, the flattening
$\Flat_S(K)$ has the form
\[
 X\longrightarrow A\otimes Z,
\]
where
\begin{align*}
 X&=\conj{\cH_S}\otimes\cC,\\
 A&=\cH_{[m]\setminus(A(P)\cup S)},\\
 Z&=\cH_{A(P)}\otimes\cD.
\end{align*}
The map $\mathcal J$ is conjugate-linear and antiunitary.  Set
\[
 U_0:\cH\longrightarrow\conj{\cH},
 \qquad U_0:=\mathfrak c_{\cH}\circ\mathcal J,
 \qquad U_0h=\conj{\mathcal Jh}.
\]
Both factors in $U_0$ are conjugate-linear isometries, so $U_0$ is complex
linear and unitary.  The canonical evaluation of $U_0h$ at $k$ is
$\ip{k}{\mathcal Jh}=\beta(h,k)$.  In a real orthonormal basis, $U_0$ sends
$h_i$ to $\conj{h_i}$ and $\beta(h_i,h_j)=\delta_{ij}$.  Tensoring copies of
$U_0$ according to the bijection
$P:A(P)\to B(P)$, and then permuting paired legs, identifies this $Z$ with
the paired stochastic factors and deterministic input leg occurring inside
$\Flat_{T\cup B(P)}(L)$.  In particular, the resulting link contracts
$\sum_iL_iK_i$ with no additional complex conjugation.  After that
identification, the latter
flattening has the form
\[
 B\otimes Z\longrightarrow Y,
\]
with
\[
 B=\conj{\cH_T},
 \qquad
 Y=\cH_{[n]\setminus(B(P)\cup T)}\otimes\cE.
\]
Formula \eqref{eq:ordered-contraction} shows on elementary tensors that,
with the preceding pairing unitary and canonical tensor permutations,
\begin{equation}
\label{eq:ordered-contraction-as-link}
 \Flat_U(L\circ_PK)
 \simeq
 \operatorname{Link}_Z\!\left(
   \Flat_{T\cup B(P)}(L),\Flat_S(K)\right),
\end{equation}
where $\simeq$ denotes unitary equivalence.  Theorem~\ref{thm:schatten-link}
proves
\eqref{eq:ordered-contraction-cut}.  Taking the maximum over $S$ and $T$
gives \eqref{eq:ordered-contraction-profile}.
\end{proof}

\begin{corollary}[Weak endpoint for one ordered contraction]
\label{cor:weak-ordered-contraction}
Let $1<r<\infty$ and, for finite-support kernels, set
\begin{align}
 \Prof_{m,r}^{\mathrm w}(K)
 &:=
 \max_{S\subset[m]}
 \norm{\Flat_S(K)}_{\Sch_{r,\infty}},
 \label{eq:weak-oriented-profile}\\
 \Prof_{m,r}^{\mathrm{wlog}}(K)
 &:=
 \max_{S\subset[m]}
 \norm{\Flat_S(K)}
  _{\Sch_{r,\infty}(\log\Sch)^{-1/r}}.
 \label{eq:weak-log-oriented-profile}
\end{align}
For every cross pairing $P\in\mathfrak M_{m,n}$,
\begin{equation}
\label{eq:weak-ordered-contraction}
 \Prof_{q(P),r}^{\mathrm{wlog}}(L\circ_PK)
 \le C_r
 \Prof_{n,r}^{\mathrm w}(L)
 \Prof_{m,r}^{\mathrm w}(K).
\end{equation}
If $K$ and $L$ are same-field kernels, then also
\begin{equation}
\label{eq:weak-symmetric-ordered-contraction}
 \Prof_{q(P),r}^{\mathrm{wlog}}
 \bigl(\operatorname{Sym}_{q(P)}(L\circ_PK)\bigr)
 \le C_r
 \Prof_{n,r}^{\mathrm w}(L)
 \Prof_{m,r}^{\mathrm w}(K).
\end{equation}
\end{corollary}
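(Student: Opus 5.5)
The plan is to reuse the cut factorization of Theorem~\ref{thm:ordered-contraction-profile} verbatim, replacing the strong link estimate by the weak one. First, fix a cross pairing $P$ and an output cut $U=\iota_K(S)\cup\iota_L(T)$. The proof of Theorem~\ref{thm:ordered-contraction-profile} gives the unitary equivalence \eqref{eq:ordered-contraction-as-link}:
\[
 \Flat_U(L\circ_PK)\simeq
 \operatorname{Link}_Z\!\bigl(\Flat_{T\cup B(P)}(L),\Flat_S(K)\bigr).
\]
This identity is purely algebraic: it uses only the pairing unitary $U_0$ and canonical tensor permutations. Consequently it holds for finite-support kernels independently of any norm.

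Second, I will observe that the output norm $\Sch_{r,\infty}(\log\Sch)^{-1/r}$ depends only on singular values, so it is invariant under unitary equivalence. The input norms $\Sch_{r,\infty}$ are likewise unitarily invariant. Theorem~\ref{thm:sharp-weak-schatten-link}, whose second line uses the fully symmetric norms, then gives
\[
 \norm{\Flat_U(L\circ_PK)}_{\Sch_{r,\infty}(\log\Sch)^{-1/r}}
 \le C_r\norm{\Flat_{T\cup B(P)}(L)}_{\Sch_{r,\infty}}
 \norm{\Flat_S(K)}_{\Sch_{r,\infty}}
 \le C_r\Prof^{\mathrm w}_{n,r}(L)\Prof^{\mathrm w}_{m,r}(K).
\]
The constant $C_r$ is uniform over all contracted spaces $Z$, which is exactly what the theorem provides. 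Taking the maximum over $U$, equivalently over the pairs $(S,T)$, yields \eqref{eq:weak-ordered-contraction}.

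Third, for the symmetrized statement \eqref{eq:weak-symmetric-ordered-contraction}, I will adapt Lemma~\ref{lem:symmetrization-contractive} to the weak-log profile. Each $U_\pi$ maps a flattening to a unitarily equivalent flattening across the permuted cut, so $\Prof^{\mathrm{wlog}}_{q,r}(U_\pi M)=\Prof^{\mathrm{wlog}}_{q,r}(M)$. Since \eqref{eq:weak-logarithmic-schatten} is a genuine norm (a supremum of multiples of Ky Fan norms), the triangle inequality applied to the average $\frac1{q!}\sum_\pi U_\pi$ gives
\[
 \Prof^{\mathrm{wlog}}_{q,r}(\operatorname{Sym}_qM)\le\Prof^{\mathrm{wlog}}_{q,r}(M).
\]
Applying this with $M=L\circ_PK$ and then using \eqref{eq:weak-ordered-contraction} finishes the proof. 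For same-field kernels, the profiles are computed on labelled copies via $\operatorname{Dec}$, and by Proposition~\ref{prop:legwise-rule} the unitary $\operatorname{Dec}$ preserves every cut norm.

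The only point I expect to need care is the triangle inequality used in the third step. It is precisely why the paper uses the Banach norm built from $s_n^{**}$ rather than the quasi-norm $\norm{\cdot}^*_{r,\infty}$; with the quasi-norm the symmetrization average would pick up a factor growing with $q!$. Everything else is a direct substitution of Theorem~\ref{thm:sharp-weak-schatten-link} into the existing factorization.
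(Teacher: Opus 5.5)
Your proposal is correct and follows the paper's proof essentially verbatim. For each output cut you combine the unitary link factorization \eqref{eq:ordered-contraction-as-link} with Theorem~\ref{thm:sharp-weak-schatten-link}. For the symmetrized bound you rerun the averaging argument of Lemma~\ref{lem:symmetrization-contractive}, which works because the $s_n^{**}$-based logarithmic norm is unitarily invariant and Banach.

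Your closing aside about quasi-norms is slightly overstated. Since $r>1$, the quasi-norms built from $s_n$ are equivalent to the corresponding $s_n^{**}$ norms up to $r$-dependent constants, so using them would cost only a fixed factor, not one growing with $q!$.
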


\begin{proof}
For every output cut, the unitary factorization
\eqref{eq:ordered-contraction-as-link} and
Theorem~\ref{thm:sharp-weak-schatten-link} give the corresponding cut
estimate.  Taking the maximum proves
\eqref{eq:weak-ordered-contraction}.  The norm
\eqref{eq:weak-logarithmic-schatten} is unitarily invariant and Banach.
The averaging proof of Lemma~\ref{lem:symmetrization-contractive} therefore
applies verbatim to the logarithmic output profile, proving
\eqref{eq:weak-symmetric-ordered-contraction}.
\end{proof}

\begin{remark}[Two endpoint mechanisms]
\label{rem:two-endpoint-logarithms}
Corollary~\ref{cor:weak-ordered-contraction} is a deterministic
coefficient-level statement for one contraction.  Its logarithm is the
divisor loss of Proposition~\ref{prop:weak-divisor-obstruction}.
Corollary~\ref{cor:finite-cut-rank-operator-endpoint} instead optimizes a
strong finite-Schatten Gaussian estimate and produces the independent
complexity term $\log(e+R_{\mathrm{eff}})$, or
$\log(e+D_*)$ under finite cut rank.  In particular, the weak link theorem
alone does not provide a Gaussian weak-ideal evaluation theorem or a closed
weak Wick algebra.
\end{remark}

\medskip
\noindent\emph{We now return to strong Schatten profiles.}

\begin{corollary}[Extension to profile completions]
\label{cor:contraction-completion}
For $1\le r<\infty$, every cross pairing extends uniquely to a contractive
bilinear map
\begin{align}
\label{eq:contraction-completion}
 \circ_P:\ &
 \mathfrak T_{n,r}(\cH,\ldots,\cH;\cD,\cE)
 \times
 \mathfrak T_{m,r}(\cH,\ldots,\cH;\cC,\cD)\\
 &\longrightarrow
 \mathfrak T_{q(P),r}(\cH,\ldots,\cH;\cC,\cE).
 \notag
\end{align}
The same conclusion holds for labelled stochastic legs linked by fixed
unitaries.
\end{corollary}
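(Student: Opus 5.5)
The plan is a standard density-and-continuity argument built on Theorem~\ref{thm:ordered-contraction-profile}. On the algebraic kernel spaces, formula \eqref{eq:ordered-contraction} defines $\circ_P$ as a bilinear map
\[
 \cK_n^{\mathrm{alg}}(\cH,\ldots,\cH;\cD,\cE)\times\cK_m^{\mathrm{alg}}(\cH,\ldots,\cH;\cC,\cD)\longrightarrow\cK_{q(P)}^{\mathrm{alg}}(\cH,\ldots,\cH;\cC,\cE).
\]
Indeed, when both inputs are supported on finite-dimensional subspaces, the output is supported on their sum, and it is basis independent by the remark following \eqref{eq:ordered-contraction}. The one point requiring care here is that \eqref{eq:ordered-contraction} was stated for systems in the real form. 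I would first choose a finite-dimensional real subspace of $\cH_{\mathbb R}$ whose complexification contains the stochastic supports of both $K$ and $L$; such a subspace exists because every finite-dimensional complex subspace lies in the complexification of a finite-dimensional real one, spanned by real and imaginary parts. Linearity of the contraction against $\beta$ then removes any dependence on this choice.

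On these algebraic kernels, \eqref{eq:ordered-contraction-profile} gives
\[
 \Prof_{q(P),r}(L\circ_PK)\le\Prof_{n,r}(L)\,\Prof_{m,r}(K).
\]
Bilinearity then yields the Cauchy estimate
\[
 \Prof(L_\nu\circ_PK_\nu-L_\mu\circ_PK_\mu)\le\Prof(L_\nu-L_\mu)\Prof(K_\nu)+\Prof(L_\mu)\Prof(K_\nu-K_\mu).
\]
Take algebraic sequences $K_\nu\to K$ in $\mathfrak T_{m,r}$ and $L_\nu\to L$ in $\mathfrak T_{n,r}$; both are bounded in profile norm, so the products form a Cauchy sequence in $\mathfrak T_{q(P),r}$. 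Interleaving two approximating sequences shows that the limit does not depend on the choice of approximants. Passing to the limit in the inequality gives contractivity. Uniqueness holds because any continuous bilinear extension is determined on the dense product of algebraic subspaces. This is the usual extension of a bounded bilinear map from dense subspaces to completions.

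For labelled stochastic legs linked by fixed unitaries, I would conjugate by the legwise unitaries of Proposition~\ref{prop:legwise-rule}, which preserve every cut norm exactly. These unitaries transport algebraic kernels to algebraic kernels on a common copy, where the previous argument applies, and the resulting isometries of the profile completions carry the extension back. Nothing here seems to be a genuine obstacle. The only step needing a line of justification is that the algebraic contraction is well defined on arbitrary finite-support kernels whose supports are not a priori given in a real orthonormal system, and the real-form enlargement above handles it.
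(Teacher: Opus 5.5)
Your proposal is correct and follows essentially the paper's argument: the paper also applies the bound of Theorem~\ref{thm:ordered-contraction-profile} on the dense algebraic kernel spaces and then invokes the standard extension theorem for bounded bilinear maps. Your extra steps make explicit what the paper leaves implicit: enlarging to a finite-dimensional real form so that \eqref{eq:ordered-contraction} applies to arbitrary finite-support kernels, the Cauchy and independence-of-approximants argument, and reducing the labelled case to a common copy via the cut-norm-preserving legwise unitaries of Proposition~\ref{prop:legwise-rule}.
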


\begin{proof}
Theorem~\ref{thm:ordered-contraction-profile} gives a bounded bilinear map on
dense algebraic kernel spaces.  The standard bilinear extension theorem
gives the unique map on completions.
\end{proof}

\begin{proposition}[Compatibility with same-field decoupling]
\label{prop:same-field-contraction}
Let $K$ and $L$ be symmetric finite-support algebraic same-field kernels of
orders $m$ and $n$.
For every cross pairing $P$ and $1\le r<\infty$,
\begin{equation}
\label{eq:same-field-contraction}
 \norm{\operatorname{Sym}_{q(P)}(L\circ_PK)}
       _{\mathfrak W_{q(P),r}}
 \le
 \norm L_{\mathfrak W_{n,r}}\norm K_{\mathfrak W_{m,r}},
\end{equation}
where $\operatorname{Sym}_0=\Id$.  Consequently,
\[
 (L,K)\longmapsto\operatorname{Sym}_{q(P)}(L\circ_PK)
\]
extends uniquely to a contractive bilinear map from
$\mathfrak W_{n,r}\times\mathfrak W_{m,r}$ to
$\mathfrak W_{q(P),r}$.  The extension is independent of the labelled
copies used to define the three same-field completions.
\end{proposition}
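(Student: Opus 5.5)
The plan is to reduce \eqref{eq:same-field-contraction} to the labelled estimate \eqref{eq:ordered-contraction-profile} by showing that decoupling intertwines the same-field contraction with an ordered contraction of labelled copies. By Definition~\ref{def:profile-completion},
\[
 \norm{\operatorname{Sym}_{q(P)}(L\circ_PK)}_{\mathfrak W_{q(P),r}}
 =\Prof_{q(P),r}\bigl(\operatorname{Dec}_{q(P)}\operatorname{Sym}_{q(P)}(L\circ_PK)\bigr).
\]
First we observe that $\operatorname{Dec}_q\operatorname{Sym}_q=\operatorname{Sym}_q^{\mathrm{lab}}\operatorname{Dec}_q$, where $\operatorname{Sym}_q^{\mathrm{lab}}$ averages unitaries $U_\pi$ permuting labelled legs. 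Lemma~\ref{lem:symmetrization-contractive} (its proof only uses that $\pi$ permutes cuts and acts by unitaries) then reduces the claim to bounding $\Prof_{q(P),r}(\operatorname{Dec}_{q(P)}(L\circ_PK))$.

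Next we identify $\operatorname{Dec}_{q(P)}(L\circ_PK)$ with a labelled ordered contraction. We regard $\operatorname{Dec}_mK$ and $\operatorname{Dec}_nL$ as kernels on labelled legs $\cH_\C^{(1)},\dots,\cH_\C^{(m)}$ and $\cH_\C^{(1)},\dots,\cH_\C^{(n)}$. We pair leg $a_\nu$ of $K$ with leg $b_\nu$ of $L$ through the fixed unitary $J_{b_\nu}J_{a_\nu}^{-1}$ composed with the form $\beta$ transported by the $J$'s. We relabel the unpaired legs by order-preserving unitaries into the output labels $1,\dots,q(P)$. On elementary tensors built from the real orthonormal system $(h_i)$, formula \eqref{eq:ordered-contraction} shows that this labelled contraction equals $\operatorname{Dec}_{q(P)}(L\circ_PK)$ up to these legwise unitaries. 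The evaluation $\beta(h_i,h_j)=\delta_{ij}$ is preserved because each $J_\nu$ is a unitary identification of copies carrying the transported real structure. By Proposition~\ref{prop:legwise-rule}, unitary leg maps preserve every cut norm. Theorem~\ref{thm:ordered-contraction-profile}, in its labelled version mentioned in Corollary~\ref{cor:contraction-completion}, then gives
\[
 \Prof_{q(P),r}(\operatorname{Dec}_{q(P)}(L\circ_PK))\le\Prof_{n,r}(\operatorname{Dec}_nL)\Prof_{m,r}(\operatorname{Dec}_mK),
\]
which is \eqref{eq:same-field-contraction}. The case $q(P)=0$ needs no symmetrization, and the bound is the zero-order convention \eqref{eq:zero-order-profile}.

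Extension and independence come next. The symmetric algebraic kernels are dense in $\mathfrak W_{m,r}$ by construction, so the bilinear extension theorem gives the unique contractive extension. For independence of the labelled copies, a different choice $J'_\nu$ differs by unitaries $J'_\nu J_\nu^{-1}$ that act legwise. These intertwine the two decouplings isometrically (Proposition~\ref{prop:legwise-rule}) and commute with the contraction and symmetrization. The extensions therefore agree under the canonical isometries on a dense set, and hence everywhere.

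The main obstacle is the bookkeeping in the second step. One must verify that the contraction through $\beta$ commutes with decoupling without introducing a stray complex conjugation. This amounts to checking that $U_0=\mathfrak c_{\cH}\circ\mathcal J$ transported through $J_{a_\nu}$ and $J_{b_\nu}$ is still a unitary pairing of the kind used in the proof of Theorem~\ref{thm:ordered-contraction-profile}. I would check this first on the real basis $h_i$, where every map is a permutation of basis vectors.
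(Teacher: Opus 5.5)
Your proposal is correct and follows essentially the same route as the paper's proof. Both arguments commute $\operatorname{Dec}_{q(P)}$ with $\operatorname{Sym}_{q(P)}$ and identify the decoupled contraction with an ordered contraction of $\operatorname{Dec}_nL$ and $\operatorname{Dec}_mK$ along unitary links; your transported $\beta$-pairing is exactly the paper's $U_{a,b}=\conj{J_b}\,U_0J_a^{-1}$. Both then conclude via Theorem~\ref{thm:ordered-contraction-profile}, unitary invariance of cut norms, contractive symmetrization, and a density and legwise-unitary argument for the extension and for independence of the labelled copies.
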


\begin{proof}
Choose labelled decouplings for $K$ and $L$.  For every paired position,
join the corresponding labelled copies by the unitary induced by their
identifications with the same real Hilbert space.  Explicitly, if
$J_a:\cH\to\cH^{(a)}$ and $J_b:\cH\to\cH^{(b)}$ are the complex-linear
labelled identifications, the pairing map is
\[
 U_{a,b}:=\conj{J_b}\,U_0J_a^{-1}:
 \cH^{(a)}\longrightarrow\conj{\cH^{(b)}}.
\]
Here $\conj{J_b}:\conj\cH\to\conj{\cH^{(b)}}$ is the induced
complex-linear unitary, characterized by
$\conj{J_b}(\conj h)=\conj{J_bh}$.
Moreover,
$\operatorname{Dec}_{q(P)}\operatorname{Sym}_{q(P)}
=\operatorname{Sym}_{q(P)}^{\mathrm{lab}}
 \operatorname{Dec}_{q(P)}$ on algebraic kernels.  After a permutation of
the unpaired output legs,
\[
 \operatorname{Dec}_{q(P)}
 \operatorname{Sym}_{q(P)}(L\circ_PK)
\]
is the symmetrization of the ordered contraction of
$\operatorname{Dec}_nL$ and $\operatorname{Dec}_mK$ along those unitary
links.  Theorem~\ref{thm:ordered-contraction-profile}, unitary invariance of
all cut norms, and contractivity of simultaneous-profile symmetrization give
\eqref{eq:same-field-contraction}.

Changing any labelled copy only composes the decoupled kernels and the
linking maps with unitaries.  Proposition~\ref{prop:legwise-rule} leaves the
bound unchanged.  Under the canonical identifications, any two choices
agree on the dense algebraic symmetric kernels: in both cases the output is
the intrinsic symmetrized contraction
$\operatorname{Sym}_{q(P)}(L\circ_PK)$.  Uniqueness of the bounded bilinear
extension therefore proves independence of all labelled copies.
\end{proof}

\subsection{Completed Wick products}

Let $W$ be a real isonormal Gaussian process over $\cH_{\mathbb R}$, and let
$K$ and $L$ be
finite-support same-field kernels of orders $m$ and $n$, with deterministic
legs
\[
 \cC\longrightarrow\cD,
 \qquad
 \cD\longrightarrow\cE.
\]
Here and below, the product is ordinary operator multiplication of
Wick-chaos random variables.  It is not the white-noise Wick product
$\diamond$, which retains only the zero-contraction branch.

\begin{proposition}[Finite-support Wick-chaos product formula]
\label{prop:ordered-wick-product}
For $P\in\mathfrak M_{m,n}$, let $q(P)=m+n-2|P|$.  Then
\begin{equation}
\label{eq:ordered-wick-product}
 I_n(L)I_m(K)
 =
 \sum_{P\in\mathfrak M_{m,n}}
 I_{q(P)}\bigl(\operatorname{Sym}_{q(P)}(L\circ_PK)\bigr).
\end{equation}
If $q(P)=0$, the corresponding term is the deterministic operator
$L\circ_PK$.  The coefficient order in every contraction is
$L_{\mathbf j}K_{\mathbf i}$.
\end{proposition}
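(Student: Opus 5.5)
The plan is to reduce \eqref{eq:ordered-wick-product} to the classical scalar product formula for Wick monomials, applied coefficientwise. Because the coefficient operators $K_{\mathbf i}$ and $L_{\mathbf j}$ are fixed deterministic operators, noncommutativity never interacts with the Gaussian algebra. Both sides are bilinear in $(L,K)$, and both depend only on finitely many coordinates. So I would fix a finite real orthonormal system $(h_i)_{i\in I}\subset\cH_{\mathbb R}$ supporting both kernels and write
\[
 K=\sum_{\mathbf i\in I^m}h_{i_1}\otimes\cdots\otimes h_{i_m}\otimes\Theta^{-1}(K_{\mathbf i}),
\]
and similarly for $L$. By the definition of $I_m$ via Wick monomials,
\[
 I_m(K)=\sum_{\mathbf i}\wick{W(h_{i_1})\cdots W(h_{i_m})}\,K_{\mathbf i}.
\]
Here the Wick monomial is symmetric in $\mathbf i$, which is consistent with $K$ being symmetric. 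Hence
\[
 I_n(L)I_m(K)=\sum_{\mathbf i,\mathbf j}\wick{W_{\mathbf j}}\,\wick{W_{\mathbf i}}\;L_{\mathbf j}K_{\mathbf i}.
\]
The scalar random variables commute with the operators, so the coefficient order $L_{\mathbf j}K_{\mathbf i}$ is inherited directly from the operator product.

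Next I would invoke the classical product formula for Wick monomials in labelled (non-grouped) form \cite{Janson,Nualart}. For orthonormal real $h$'s,
\[
 \wick{\textstyle\prod_{b\in[n]}W(h_{j_b})}\,\wick{\textstyle\prod_{a\in[m]}W(h_{i_a})}
 =\sum_{P\in\mathfrak M_{m,n}}\prod_{(a,b)\in P}\beta(h_{i_a},h_{j_b})\,
 \wick{\textstyle\prod_{a\notin A(P)}W(h_{i_a})\prod_{b\notin B(P)}W(h_{j_b})}.
\]
This is the diagram formula with no self-contractions inside each Wick factor. It follows from Wick's theorem for Gaussian graphs, or equivalently from the generating function $\wick{e^{W(h)}}=e^{W(h)-\beta(h,h)/2}$ by differentiating $\wick{e^{W(f)}}\wick{e^{W(g)}}=\wick{e^{W(f+g)}}e^{\beta(f,g)}$ in the coefficients of $f=\sum s_ah_{i_a}$ and $g=\sum t_bh_{j_b}$ with distinct formal variables. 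I would take this formula as classical, since the paper cites it as such.

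Substituting and exchanging the finite sums, the $P$-term is
\[
 \sum_{\mathbf x,\mathbf y,\mathbf z}\wick{W_{\mathbf x}W_{\mathbf y}}\,L_{\mathbf j^P(\mathbf y,\mathbf z)}K_{\mathbf i^P(\mathbf x,\mathbf z)}.
\]
This holds because $\beta(h_i,h_{i'})=\delta_{ii'}$ forces the paired indices to coincide, equal to a common $z_\nu$. By \eqref{eq:ordered-contraction}, the expression equals
\[
 \sum_{\mathbf x,\mathbf y}\wick{W_{\mathbf x}W_{\mathbf y}}\,(L\circ_PK)_{\mathbf x,\mathbf y}=I_{q(P)}(L\circ_PK).
\]
Since a Wick monomial is invariant under permuting its factors, $I_{q}$ only sees the symmetrization, so $I_q(L\circ_PK)=I_q(\operatorname{Sym}_q(L\circ_PK))$. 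When $q(P)=0$ the Wick monomial is $1$ and the term is the deterministic operator $L\circ_PK$. Summing over $P$ gives \eqref{eq:ordered-wick-product}.

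The only delicate step is the bookkeeping in the second paragraph: the scalar formula must be used in labelled form, summing over injective pairings between positions rather than over multisets. Only then does each $P\in\mathfrak M_{m,n}$ correspond exactly to one ordered contraction, with no combinatorial multiplicities such as $k!\binom mk\binom nk$. I would verify this by checking that the generating-function derivative $\partial_{s_1}\cdots\partial_{s_m}\partial_{t_1}\cdots\partial_{t_n}$ at $0$ of $e^{\sum s_at_b\beta(h_{i_a},h_{j_b})}\wick{e^{W(f+g)}}$ produces each pairing exactly once. Basis independence is automatic, since both sides are intrinsic, as noted after \eqref{eq:ordered-contraction}.
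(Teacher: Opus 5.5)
Your proposal is correct and takes essentially the same route as the paper. Like the paper, you expand in a finite real orthonormal basis and apply the labelled scalar Wick product formula over cross pairings, where $\beta(h_i,h_{i'})=\delta_{ii'}$ supplies the Kronecker deltas; summing the identified indices gives the ordered contraction with coefficient order $L_{\mathbf j}K_{\mathbf i}$, and symmetry of the remaining Wick monomial gives the symmetrization. Your generating-function check that each pairing appears exactly once is a detail the paper simply cites.
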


\begin{proof}
Expand both multiple integrals in a finite orthonormal basis of the
stochastic support.  The scalar multiplication formula sums over cross
pairings and inserts one Kronecker delta for every paired index.  Summing the
identified indices gives \eqref{eq:ordered-contraction}; multiplication of
the deterministic coefficients retains the order
$L_{\mathbf j}K_{\mathbf i}$.  The remaining Wick monomial is symmetric,
which yields \eqref{eq:ordered-wick-product}.  See
\cite{Janson,Nualart,PeccatiTaqqu,ZhaoSchattenWick}.
\end{proof}

\begin{theorem}[Completed Wick-chaos product theorem]
\label{thm:completed-wick-product}
Fix $2\le r<\infty$ and $1\le p<\infty$.  Let
\[
 K\in\mathfrak W_{m,r}(\cH_{\mathbb R};\cC,\cD),
 \qquad
 L\in\mathfrak W_{n,r}(\cH_{\mathbb R};\cD,\cE).
\]
Then \eqref{eq:ordered-wick-product} extends from finite-support kernels to
an identity in $L^p(\Omega;\Sch_r(\cC,\cE))$.  Moreover,
\begin{align}
\label{eq:completed-wick-product-bound}
 \norm{I_n(L)I_m(K)}_{L^p(\Omega;\Sch_r)}
 &\le
 \Prof_{n,r}(L)\Prof_{m,r}(K)\\
 &\quad\times
 \sum_{k=0}^{m\wedge n}
 \binom{m}{k}\binom{n}{k}k!\,
 A_{m+n-2k}(p+r)^{(m+n-2k)/2}.
 \notag
\end{align}
Here $\Prof_{m,r}$ denotes the norm of the same-field profile completion,
and $A_0=1$.
\end{theorem}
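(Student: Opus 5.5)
We first prove the bound \eqref{eq:completed-wick-product-bound} for symmetric finite-support algebraic kernels, and then pass to the completions by density.

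For finite-support $K,L$, Proposition~\ref{prop:ordered-wick-product} expresses $I_n(L)I_m(K)$ as a finite sum over $P\in\mathfrak M_{m,n}$. The number of cross pairings with $|P|=k$ is $\binom mk\binom nk k!$. For each $P$ with $q(P)\ge1$, Theorem~\ref{thm:wick-estimate} at order $q(P)=m+n-2k$ gives
\[
 \norm{I_{q(P)}\bigl(\operatorname{Sym}_{q(P)}(L\circ_PK)\bigr)}_{L^p(\Omega;\Sch_r)}
 \le A_{q(P)}(p+r)^{q(P)/2}
 \norm{\operatorname{Sym}_{q(P)}(L\circ_PK)}_{\mathfrak W_{q(P),r}}.
\]
Proposition~\ref{prop:same-field-contraction} bounds the last factor by $\norm L_{\mathfrak W_{n,r}}\norm K_{\mathfrak W_{m,r}}$. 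When $q(P)=0$ the term is the deterministic operator $L\circ_PK$. Its $\Sch_r$ norm is at most the same product, by Theorem~\ref{thm:ordered-contraction-profile} together with the convention $\Prof_{0,r}=\norm{\cdot}_{\Sch_r}$. This is consistent with $A_0=1$. The triangle inequality in $L^p(\Omega;\Sch_r)$ then yields \eqref{eq:completed-wick-product-bound} on algebraic kernels.

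To extend, take symmetric algebraic $K_N\to K$ in $\mathfrak W_{m,r}$ and $L_N\to L$ in $\mathfrak W_{n,r}$. Each right-hand term $I_{q(P)}(\operatorname{Sym}_{q(P)}(L\circ_PK))$ is the composition of the bounded bilinear map of Proposition~\ref{prop:same-field-contraction} with the continuous chaos map of Theorem~\ref{thm:wick-estimate}. Hence, by bilinearity and the bound just proved, $\sum_P I_{q(P)}(\cdots)$ evaluated at $(L_N,K_N)$ converges in $L^p(\Omega;\Sch_r)$ to the same expression evaluated at $(L,K)$.

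The left-hand side needs more care, and this is the main obstacle. We need $I_n(L_N)I_m(K_N)\to I_n(L)I_m(K)$, where the limit is the product of the completed chaos variables. Theorem~\ref{thm:wick-estimate} gives $I_m(K_N)\to I_m(K)$ in $L^{2p}(\Omega;\Sch_r)$, and likewise for $L$. However, a product of two $\Sch_r$-valued variables is a priori only in $\Sch_{r/2}$ by Hölder, so we do not directly get convergence in $\Sch_r$. We get around this with $\norm{ST}_{\Sch_r}\le\norm S_{\cL}\norm T_{\Sch_r}\le\norm S_{\Sch_r}\norm T_{\Sch_r}$ and write
\[
I_n(L_N)I_m(K_N)-I_n(L)I_m(K)=(I_n(L_N)-I_n(L))I_m(K_N)+I_n(L)(I_m(K_N)-I_m(K)).
\]
By Cauchy--Schwarz in $\Omega$, the $L^p(\Omega;\Sch_r)$ norm of the difference is bounded by $L^{2p}(\Omega;\Sch_r)$ norms of the factors. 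All of these are controlled by Theorem~\ref{thm:wick-estimate} at exponent $2p$, and the differences tend to zero. The product therefore converges in $L^p(\Omega;\Sch_r)$.

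Both sides of \eqref{eq:ordered-wick-product} are thus limits of the same algebraic identity, so the identity holds in $L^p(\Omega;\Sch_r(\cC,\cE))$. The norm bound passes to the limit by continuity of both sides in the profile norms.
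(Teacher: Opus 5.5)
Your proposal is correct and follows essentially the same route as the paper: branchwise bounds from Proposition~\ref{prop:same-field-contraction} and Theorem~\ref{thm:wick-estimate} with the pairing count $\binom mk\binom nk k!$, then density, with the product term handled by the ideal property and H\"older in $\Omega$ at exponent $2p$. The obstacle you flag does not actually arise, since $\norm{ST}_{\Sch_r}\le\norm{ST}_{\Sch_{r/2}}\le\norm S_{\Sch_r}\norm T_{\Sch_r}$; your workaround is nonetheless exactly the estimate the paper uses.
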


\begin{proof}
For a pairing of size $k$,
Proposition~\ref{prop:same-field-contraction} gives
\[
 \Prof_{m+n-2k,r}
 \bigl(\operatorname{Sym}_{m+n-2k}(L\circ_PK)\bigr)
 \le\Prof_{n,r}(L)\Prof_{m,r}(K).
\]
Apply Theorem~\ref{thm:wick-estimate} to every positive-order branch and the
ordinary Schatten norm to the order-zero branch.  There are
$\binom mk\binom nk k!$ pairings of size $k$, which proves
\eqref{eq:completed-wick-product-bound} for algebraic kernels.

Choose algebraic approximations $K_j\to K$ and $L_j\to L$ in their profile
norms.  For every fixed $P$, bilinearity and
Proposition~\ref{prop:same-field-contraction} give, with $q=q(P)$,
\begin{align*}
 &\norm{\operatorname{Sym}_q(L_j\circ_PK_j)
       -\operatorname{Sym}_q(L\circ_PK)}_{\mathfrak W_{q,r}}\\
 &\qquad\le
 \norm{L_j-L}_{\mathfrak W_{n,r}}\norm{K_j}_{\mathfrak W_{m,r}}
 +\norm L_{\mathfrak W_{n,r}}\norm{K_j-K}_{\mathfrak W_{m,r}}
 \longrightarrow0.
\end{align*}
Theorem~\ref{thm:wick-estimate}, together with the order-zero convention,
therefore gives convergence of every branch
on the right-hand side of \eqref{eq:ordered-wick-product} in
$L^p(\Omega;\Sch_r)$.  On the left, the same estimate gives convergence of
$I_m(K_j)$ and $I_n(L_j)$ in all finite moments with values in $\Sch_r$.
Put $A_j=I_n(L_j)$, $A=I_n(L)$, $B_j=I_m(K_j)$, and $B=I_m(K)$.
The Schatten ideal property and H\"older in $\Omega$ give
\begin{align*}
 \norm{A_jB_j-AB}_{L^p(\Sch_r)}
 &\le
 \norm{A_j-A}_{L^{2p}(\Sch_r)}
 \norm{B_j}_{L^{2p}(\cL)}\\
 &\quad+
 \norm A_{L^{2p}(\cL)}
 \norm{B_j-B}_{L^{2p}(\Sch_r)}.
\end{align*}
Since the operator norm is bounded by the $\Sch_r$ norm, the right-hand
side tends to zero.  Thus the left and right sides of the finite-support
identity have the same limit in $L^p(\Sch_r)$.
\end{proof}

\subsection{Analytic Wick series}

Throughout this subsection fix $2\le r<\infty$.  At order zero use
\eqref{eq:zero-order-profile} and set $I_0(K_0)=K_0$.  The factorial weights
below are chosen so that the contraction combinatorics resums exactly to
$e^{BC}(B+C)^q/q!$.

\begin{definition}[Analytic profile class]
\label{def:analytic-profile-class}
For $B>0$, let $\mathscr A_r(B;\cC,\cE)$ be the space of kernel sequences
$\mathbf K=(K_m)_{m\ge0}$ with
$K_m\in\mathfrak W_{m,r}(\cH_{\mathbb R};\cC,\cE)$ for which
\begin{equation}
\label{eq:analytic-profile-norm}
 \norm{\mathbf K}_{\mathscr A_r(B)}
 :=\sup_{m\ge0}\frac{m!}{B^m}\Prof_{m,r}(K_m)<\infty.
\end{equation}
For $B=0$, define
\[
 \mathscr A_r(0;\cC,\cE)
 :=\{(K_0,0,0,\ldots):K_0\in\Sch_r\},
 \qquad
 \norm{\mathbf K}_{\mathscr A_r(0)}:=\norm{K_0}_{\Sch_r}.
\]
The associated Wick series is
\[
 \mathscr I(\mathbf K):=\sum_{m=0}^{\infty}I_m(K_m).
\]
\end{definition}

\begin{lemma}[Convergence of analytic Wick series]
\label{lem:analytic-series-convergence}
For every $0\le B<\infty$, $1\le p<\infty$, and $2\le r<\infty$, the series
$\mathscr I(\mathbf K)$ converges absolutely in
$L^p(\Omega;\Sch_r)$ whenever $\mathbf K\in\mathscr A_r(B)$.
\end{lemma}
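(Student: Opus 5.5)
The plan is to bound each term of $\mathscr I(\mathbf K)$ by Theorem~\ref{thm:wick-estimate} and show that the resulting numerical series is summable, uniformly in nothing more than $p$, $r$, $B$. The case $B=0$ is trivial: the series has only the term $I_0(K_0)=K_0\in\Sch_r$. For $B>0$, put $N:=\norm{\mathbf K}_{\mathscr A_r(B)}$. Then $\Prof_{m,r}(K_m)\le N B^m/m!$ for all $m$.

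For $m\ge1$, Theorem~\ref{thm:wick-estimate}, extended to the completion $\mathfrak W_{m,r}$, gives
\[
 \norm{I_m(K_m)}_{L^p(\Omega;\Sch_r)}
 \le A_m(p+r)^{m/2}\frac{NB^m}{m!},
 \qquad A_m=m^{m/2}C_m\le m^{m/2}C_*^m.
\]
Absolute convergence in $L^p(\Omega;\Sch_r)$ therefore reduces to showing that
\[
 \sum_{m\ge1}\frac{m^{m/2}\bigl(C_*B(p+r)^{1/2}\bigr)^m}{m!}<\infty.
\]

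By Stirling, $m!\ge(m/e)^m$, so the $m$-th term is at most $\bigl(eC_*B(p+r)^{1/2}\bigr)^m m^{-m/2}$. The factor $m^{-m/2}$ decays superexponentially, so this is summable for every finite $B$, $p$, $r$; the root test gives $m$-th root $\to0$. Since $L^p(\Omega;\Sch_r)$ is a Banach space, absolute convergence implies convergence.

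The only point that needs care is the bookkeeping of the constant $A_m$. One must use the explicit growth $C_m\le C_*^m$ from Theorem~\ref{thm:gaussian-estimate}, so that $A_m$ grows only like $m^{m/2}$ times a geometric factor. This is exactly what the factorial weight $m!$ in \eqref{eq:analytic-profile-norm} absorbs. Without that explicit bound on $C_m$ the argument would fail, so I would check that $C_*$ is indeed absolute and independent of $p$ and $r$.
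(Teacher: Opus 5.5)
Your proposal is correct and follows the paper's proof essentially verbatim. You apply Theorem~\ref{thm:wick-estimate} termwise on the completion, use $A_m=m^{m/2}C_m\le m^{m/2}C_*^m$, absorb the growth against the factorial weight via $m!\ge(m/e)^m$, and treat $B=0$ trivially (only geometric growth of $C_m$ in $m$ is needed, so independence of $C_*$ from the fixed $p,r$ is not actually required).
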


\begin{proof}
The order-zero term is deterministic.  For $m\ge1$,
Theorem~\ref{thm:wick-estimate} gives
\[
 \norm{I_m(K_m)}_{L^p(\Sch_r)}
 \le A_m(p+r)^{m/2}\norm{\mathbf K}_{\mathscr A_r(B)}
      \frac{B^m}{m!}.
\]
Since $A_m=m^{m/2}C_m$ and $C_m\le C_*^m$, Stirling's formula shows that
the positive-order scalar majorant is summable.  The case $B=0$ is immediate
from the definition.
\end{proof}

Let $\mathbf K\in\mathscr A_r(B;\cC,\cD)$ and
$\mathbf L\in\mathscr A_r(C;\cD,\cE)$.  Define the order-$q$ kernel of their
ordinary product by
\begin{equation}
\label{eq:analytic-product-kernel}
 H_q:=
 \sum_{\substack{m,n\ge0,\ P\in\mathfrak M_{m,n}\\
                  m+n-2|P|=q}}
 \operatorname{Sym}_q(L_n\circ_PK_m).
\end{equation}
Write $\mathbf L\star\mathbf K:=\mathbf H=(H_q)_{q\ge0}$ and call
$\star$ the contraction product of the two kernel sequences.

\begin{theorem}[Analytic contraction product]
\label{thm:analytic-wick-algebra}
Let $B,C\ge0$.  Then
the series in \eqref{eq:analytic-product-kernel} converges in
$\mathfrak W_{q,r}$ and
\begin{equation}
\label{eq:analytic-wick-algebra}
 \Prof_{q,r}(H_q)
 \le
 \norm{\mathbf K}_{\mathscr A_r(B)}
 \norm{\mathbf L}_{\mathscr A_r(C)}
 e^{BC}\frac{(B+C)^q}{q!}.
\end{equation}
Here and below, $0^0:=1$ when $B=C=q=0$.
Consequently,
\begin{equation}
\label{eq:analytic-star-bound}
 \norm{\mathbf L\star\mathbf K}_{\mathscr A_r(B+C)}
 \le e^{BC}
 \norm{\mathbf L}_{\mathscr A_r(C)}
 \norm{\mathbf K}_{\mathscr A_r(B)},
\end{equation}
and
\begin{equation}
\label{eq:analytic-series-product}
 \mathscr I(\mathbf L)\mathscr I(\mathbf K)
 =\mathscr I(\mathbf H)
\end{equation}
in every $L^p(\Omega;\Sch_r)$.
\end{theorem}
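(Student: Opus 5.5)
The argument has three stages: a combinatorial resummation for \eqref{eq:analytic-wick-algebra}, a direct consequence for \eqref{eq:analytic-star-bound}, and a density-plus-summability argument for \eqref{eq:analytic-series-product}. All three rest on Proposition~\ref{prop:same-field-contraction}.

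\textbf{Convergence and the bound on $H_q$.} Normalize $\norm{\mathbf K}_{\mathscr A_r(B)}=\norm{\mathbf L}_{\mathscr A_r(C)}=1$, so that $\Prof_{m,r}(K_m)\le B^m/m!$ and $\Prof_{n,r}(L_n)\le C^n/n!$. For fixed $q$, the terms in \eqref{eq:analytic-product-kernel} are indexed by $k=|P|\ge0$ with $m+n=q+2k$. By Proposition~\ref{prop:same-field-contraction}, which extends to the completions, each term satisfies
\[
 \Prof_{q,r}\bigl(\operatorname{Sym}_q(L_n\circ_PK_m)\bigr)\le \frac{B^mC^n}{m!\,n!}.
\]
There are $\binom mk\binom nk k!$ pairings of size $k$, so the total contribution is bounded by
\[
 \sum_{k\ge0}\ \sum_{\substack{m,n\ge k\\ m+n=q+2k}}\frac{B^mC^n}{k!\,(m-k)!\,(n-k)!}.
\]
Substitute $m'=m-k$ and $n'=n-k$, so that $m'+n'=q$. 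The sum factorizes as
\[
 \Bigl(\sum_k\frac{(BC)^k}{k!}\Bigr)\Bigl(\sum_{m'+n'=q}\frac{B^{m'}C^{n'}}{m'!\,n'!}\Bigr)=e^{BC}\frac{(B+C)^q}{q!}.
\]
This bound is finite, so the series converges absolutely in the Banach space $\mathfrak W_{q,r}$, which gives \eqref{eq:analytic-wick-algebra}. The degenerate cases $B=0$ or $C=0$ only leave finitely many terms, and the same count holds with $0^0=1$. For the order-zero branch ($q=0$, $m=n=k$), the term is a Schatten operator and the convention \eqref{eq:zero-order-profile-endpoint} makes the same estimate apply.

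\textbf{The star bound.} Multiplying \eqref{eq:analytic-wick-algebra} by $q!/(B+C)^q$ and taking the supremum over $q$ gives \eqref{eq:analytic-star-bound}. When $B+C=0$, only $H_0=L_0K_0$ survives.

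\textbf{The product identity.} This is the main obstacle, because it requires interchanging two infinite sums with a product of random operators. First truncate: let $\mathbf K^{(M)}$ and $\mathbf L^{(M)}$ keep orders at most $M$. Then $\mathscr I(\mathbf L^{(M)})\mathscr I(\mathbf K^{(M)})$ is a finite sum of products $I_n(L_n)I_m(K_m)$. By Theorem~\ref{thm:completed-wick-product}, each such product equals $\sum_P I_{q(P)}(\operatorname{Sym}(L_n\circ_PK_m))$ in $L^p(\Omega;\Sch_r)$. Regrouping by $q$ gives $\mathscr I(\mathbf H^{(M)})$, where $H^{(M)}_q$ is the partial sum of \eqref{eq:analytic-product-kernel} over $m,n\le M$.

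Next let $M\to\infty$. On the left, Lemma~\ref{lem:analytic-series-convergence} gives $\mathscr I(\mathbf K^{(M)})\to\mathscr I(\mathbf K)$ in $L^{2p}(\Sch_r)$, and similarly for $\mathbf L$. The Hölder and ideal argument from the proof of Theorem~\ref{thm:completed-wick-product} then gives convergence of the product in $L^p(\Sch_r)$.

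On the right, dominated convergence in $q$ is needed. The majorant above also bounds the truncated and tail kernels: $\Prof_{q,r}(H_q-H_q^{(M)})\le \varepsilon_{q,M}$, where $\varepsilon_{q,M}$ is the tail of the absolutely convergent scalar sum. In particular $\varepsilon_{q,M}\le e^{BC}(B+C)^q/q!$, and $\varepsilon_{q,M}\to0$ as $M\to\infty$ for each fixed $q$. By Theorem~\ref{thm:wick-estimate},
\[
 \norm{\mathscr I(\mathbf H)-\mathscr I(\mathbf H^{(M)})}_{L^p(\Omega;\Sch_r)}\le\sum_q A_q(p+r)^{q/2}\varepsilon_{q,M}.
\]
The dominating sequence $A_q(p+r)^{q/2}(B+C)^q/q!$ is summable by Stirling's formula, as in Lemma~\ref{lem:analytic-series-convergence}, since $A_q\le q^{q/2}C_*^q$. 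Dominated convergence for series then sends the right-hand side to zero, and both sides of the truncated identity converge to the two sides of \eqref{eq:analytic-series-product}.
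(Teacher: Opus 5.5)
Your proposal is correct and follows essentially the paper's proof: the same reindexing $m=a+k$, $n=b+k$ resums the pairing count to $e^{BC}(B+C)^q/q!$, and the product identity is obtained by applying the completed Wick-chaos product theorem on finite rectangles of input orders and then passing to the limit, with the Stirling-summable majorant $\sum_q A_q(p+r)^{q/2}(B+C)^q/q!$ controlling the output side. Your dominated-convergence treatment of the tails $\varepsilon_{q,M}$, combined with the H\"older argument on the input side, is just a repackaging of the paper's Tonelli--Fubini rearrangement, which the paper justifies by branchwise absolute summability.
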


\begin{proof}
First suppose $B,C>0$.
Put $k=|P|$, $m=a+k$, and $n=b+k$, so that $a+b=q$.
Proposition~\ref{prop:same-field-contraction} and the number of size-$k$
pairings give the scalar coefficient
\begin{align*}
 &\binom{a+k}{k}\binom{b+k}{k}k!\,
 \frac{B^{a+k}}{(a+k)!}\frac{C^{b+k}}{(b+k)!}\\
 &\qquad=
 \frac{B^aC^b(BC)^k}{a!\,b!\,k!}.
\end{align*}
Summing first over $k\ge0$ and then over $a+b=q$ yields
\[
 \sum_{k\ge0}\frac{(BC)^k}{k!}
 \sum_{a+b=q}\frac{B^aC^b}{a!\,b!}
 =e^{BC}\frac{(B+C)^q}{q!}.
\]
This proves absolute convergence in the profile norm,
\eqref{eq:analytic-wick-algebra}, and \eqref{eq:analytic-star-bound}.  The
same estimates, followed by Theorem~\ref{thm:wick-estimate}, give the explicit
branchwise summability
\begin{align}
\label{eq:analytic-branch-summability}
 &\sum_{q\ge0}
 \sum_{\substack{m,n\ge0,\ P\in\mathfrak M_{m,n}\\q(P)=q}}
 \norm{I_q\!\left(\operatorname{Sym}_q(L_n\circ_PK_m)\right)}
       _{L^p(\Omega;\Sch_r)}\notag\\
 &\qquad\le
 e^{BC}\norm{\mathbf L}_{\mathscr A_r(C)}
          \norm{\mathbf K}_{\mathscr A_r(B)}
 \sum_{q\ge0}A_q(p+r)^{q/2}\frac{(B+C)^q}{q!}<\infty.
\end{align}
The last series converges by the same Stirling estimate as in
Lemma~\ref{lem:analytic-series-convergence}.

H\"older's inequality in $\Omega$, the Schatten ideal property, and
Lemma~\ref{lem:analytic-series-convergence} also give
\begin{align*}
 \sum_{m,n\ge0}
 \norm{I_n(L_n)I_m(K_m)}_{L^p(\Sch_r)}
 &\le
 \left(\sum_{n\ge0}\norm{I_n(L_n)}_{L^{2p}(\Sch_r)}\right)
 \left(\sum_{m\ge0}\norm{I_m(K_m)}_{L^{2p}(\Sch_r)}\right)\\
 &<\infty.
\end{align*}
On the output side, \eqref{eq:analytic-wick-algebra} and the proof of
Lemma~\ref{lem:analytic-series-convergence} give
\[
 \sum_{q\ge0}\norm{I_q(H_q)}_{L^p(\Sch_r)}<\infty.
\]
Apply Theorem~\ref{thm:completed-wick-product} to each homogeneous pair in a
finite rectangle and sum.  Equation \eqref{eq:analytic-branch-summability}
and the preceding
absolute convergence bounds justify the Tonelli--Fubini rearrangement and
prove \eqref{eq:analytic-series-product}.

If $B=0$, then $K_m=0$ for $m\ge1$, and the formula reduces to deterministic
right multiplication by $K_0$; the contraction estimate with $m=0$ gives
all the asserted bounds.  The case $C=0$ is identical, and $B=C=0$ is their
common order-zero case.
\end{proof}

\begin{corollary}[Associativity, filtration, and iterated products]
\label{cor:iterated-analytic-product}
The product $\star$ is associative for composable deterministic Hilbert
spaces.  For a fixed deterministic Hilbert space $\cC$, the classes are
nested in the radius and
\begin{equation}
\label{eq:analytic-filtered-algebra}
 \mathscr A_r^{\mathrm{an}}(\cC)
 :=\bigcup_{B\ge0}\mathscr A_r(B;\cC,\cC)
\end{equation}
is an associative algebra filtered by
\[
 \mathscr A_r(C;\cC,\cC)\star
 \mathscr A_r(B;\cC,\cC)
 \subset \mathscr A_r(B+C;\cC,\cC).
\]
If
\[
 \mathbf K^{(j)}\in
 \mathscr A_r(B_j;\cC_{j-1},\cC_j),
 \qquad 1\le j\le N,
\]
then
\begin{align}
\label{eq:iterated-analytic-product}
 &\norm{\mathbf K^{(N)}\star\cdots\star\mathbf K^{(1)}}
 _{\mathscr A_r(B_1+\cdots+B_N)}\notag\\
 &\qquad\le
 \exp\!\left(\sum_{1\le i<j\le N}B_iB_j\right)
 \prod_{j=1}^N\norm{\mathbf K^{(j)}}_{\mathscr A_r(B_j)}.
\end{align}
When $\Id_{\cC}\in\Sch_r(\cC)$, the order-zero sequence
$(\Id_{\cC},0,\ldots)$ is a unit.
\end{corollary}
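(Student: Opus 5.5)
The plan is to derive associativity from the random-variable identity \eqref{eq:analytic-series-product} together with injectivity of the Wick-series map $\mathscr I$. The quantitative bound \eqref{eq:iterated-analytic-product} will follow by induction from \eqref{eq:analytic-star-bound}.

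\emph{Injectivity and associativity.} First I show that $\mathscr I$ is injective on each $\mathscr A_r(B)$. If $\mathscr I(\mathbf K)=0$ in $L^2(\Omega;\Sch_r)$, pair against finite-rank operators $F$ via $\Tr(F^*\,\cdot)$. This gives scalar random variables $\sum_m I_m(\Tr(F^*K_m))$. Here I use that $\Tr(F^*\,\cdot)$ is a legwise coefficient map, cut-bounded by Proposition~\ref{prop:legwise-rule} up to the Schatten duality constant, with the series converging in $L^2$ by Lemma~\ref{lem:analytic-series-convergence}. Orthogonality of distinct scalar Wiener chaoses then forces each symmetric scalar kernel $\Tr(F^*K_m)\in\cH^{\odot m}$ to vanish. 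Since $F$ is arbitrary, $K_m=0$. For $\mathbf K^{(1)},\mathbf K^{(2)},\mathbf K^{(3)}$ with radii $B_1,B_2,B_3$, Theorem~\ref{thm:analytic-wick-algebra} places both bracketings in $\mathscr A_r(B_1+B_2+B_3)$. Applying \eqref{eq:analytic-series-product} twice, with $p$ doubled and using H\"older in $\Omega$ as in its proof, shows that both bracketings have Wick series equal to $\mathscr I(\mathbf K^{(3)})\mathscr I(\mathbf K^{(2)})\mathscr I(\mathbf K^{(1)})$. Injectivity then gives equality of the sequences. The main obstacle is this injectivity step: it must be done carefully in the completed same-field classes, where kernels are only limits. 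Alternatively, associativity can be checked directly on algebraic kernels by the associativity of scalar diagram expansions and then extended by continuity; this is more combinatorial, so I prefer the chaos-orthogonality route.

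\emph{Filtration.} Nesting follows because $B\le B'$ implies $m!\,B'^{-m}\le m!\,B^{-m}$, so $\norm{\mathbf K}_{\mathscr A_r(B')}\le\norm{\mathbf K}_{\mathscr A_r(B)}$; the case $B=0$ is trivial. Closure under $\star$, with additive radii, is \eqref{eq:analytic-star-bound}. Bilinearity is clear, so $\mathscr A_r^{\mathrm{an}}(\cC)$ is a filtered associative algebra.

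\emph{Iterated bound and unit.} For \eqref{eq:iterated-analytic-product}, induct on $N$. Write the product as $\mathbf K^{(N)}\star\mathbf M$ with $\mathbf M\in\mathscr A_r(B_1+\cdots+B_{N-1})$. Then \eqref{eq:analytic-star-bound} adds the exponent $B_N(B_1+\cdots+B_{N-1})=\sum_{i<N}B_iB_N$, and associativity makes the bracketing irrelevant. For the unit, when $\mathbf L=(\Id_\cC,0,\ldots)$ only $n=0$ and $P=\emptyset$ contribute to \eqref{eq:analytic-product-kernel}. Hence $H_q=\operatorname{Sym}_q(K_q)=K_q$ by symmetry, and the same holds on the other side.
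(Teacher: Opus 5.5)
Your proposal is correct, but it reaches associativity by a different route from the paper.

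\textbf{The paper's route.} The paper also uses operator associativity plus uniqueness of the scalar chaos expansion, but only for finite-order algebraic kernels. It extends this to finite-order completed kernels by the bilinear continuity of Proposition~\ref{prop:same-field-contraction}. It then passes to full analytic sequences by truncating the input chaos orders to finite rectangles. The tails in $\mathfrak W_{q,r}$ are controlled by the absolutely summable triple-branch majorant
$\exp(B_1B_2+B_1B_3+B_2B_3)\prod_j\norm{\mathbf K^{(j)}}_{\mathscr A_r(B_j)}(B_1+B_2+B_3)^q/q!$.

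\textbf{Your route.} You apply \eqref{eq:analytic-series-product} directly to the full analytic sequences, including to the intermediate products. You then conclude by injectivity of $\mathscr I$ on $\mathscr A_r(B_1+B_2+B_3)$. This skips both the density step and the triple majorant, so associativity of $\star$ becomes a literal consequence of associativity of operator multiplication.

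\textbf{What injectivity on the completions requires.} The price is an injectivity lemma on the abstract completions, and two points there deserve a sentence each.
\begin{enumerate}
\item For scalar kernels ($\cC=\cE=\C$), the empty-cut flattening is rank one. Hence its $\Sch_r$ norm equals the Hilbert norm, and for $r\ge2$ the whole profile norm equals the Hilbert norm. So the legwise rule really does make the coefficient functional $T\mapsto\ip{Tc}{e}$ continuous from $\mathfrak W_{m,r}$ into the symmetric Hilbert tensor power. That is where the scalar Wiener--It\^o isometry and chaos orthogonality apply; ``Schatten duality constant'' is not the relevant point.
\item The step \emph{$F$ arbitrary implies $K_m=0$} needs that an element of the completion vanishes once all its scalar coefficient kernels vanish. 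This holds because every one of the $2^m$ limiting flattenings is determined by its matrix entries. Those entries are limits of the same scalar coefficients, so all flattenings vanish.
\end{enumerate}

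\textbf{Comparison.} The paper's argument never leaves the norm topology of the profile completions and needs no injectivity statement. It also yields absolute convergence of the full triple diagram expansion, which yours does not provide.

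Your filtration, iterated-bound, and unit arguments coincide with the paper's.
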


\begin{proof}
If $0<B\le C$, then \eqref{eq:analytic-profile-norm} gives
\[
 \mathscr A_r(B;\cC,\cC)\subset
 \mathscr A_r(C;\cC,\cC),
 \qquad
 \norm{\mathbf K}_{\mathscr A_r(C)}
 \le\norm{\mathbf K}_{\mathscr A_r(B)}.
\]
For $B=0$, the same inclusion and norm inequality follow directly from the
separate definition of $\mathscr A_r(0)$.

The filtration assertion is Theorem~\ref{thm:analytic-wick-algebra}.

For finite chaos-order sequences with algebraic kernels, the Wick series of
the two bracketings agree by operator associativity.  Applying the scalar
coefficient functionals $T\mapsto\ip{Tc}{e}$ and uniqueness of the scalar
finite Wiener-chaos expansion shows that the symmetric algebraic kernel
coefficients agree at every order.  For finite chaos-order sequences in the
profile completions, approximate the finitely many input kernels
algebraically and use the bilinear continuity of
Proposition~\ref{prop:same-field-contraction}.

Consider now three arbitrary analytic sequences with radii $B_1,B_2,B_3$.
For either bracketing and every fixed output order $q$, expand both binary
products into their pairing branches.  Applying the branchwise estimate in
the proof of Theorem~\ref{thm:analytic-wick-algebra} twice shows that the sum
of the profile norms of all triple-product branches is bounded by
\begin{equation}
\label{eq:triple-branch-majorant}
 \exp\!\left(B_1B_2+B_1B_3+B_2B_3\right)
 \prod_{j=1}^3\norm{\mathbf K^{(j)}}_{\mathscr A_r(B_j)}
 \frac{(B_1+B_2+B_3)^q}{q!}.
\end{equation}
Indeed, the two successive resummations give, according to the bracketing,
\[
 e^{B_1B_2}e^{(B_1+B_2)B_3}
 \quad\text{or}\quad
 e^{B_2B_3}e^{B_1(B_2+B_3)};
\]
both exponential factors equal the one in
\eqref{eq:triple-branch-majorant}, and the unpaired-leg sum is
$(B_1+B_2+B_3)^q/q!$.
Thus both triple expansions are absolutely convergent in
$\mathfrak W_{q,r}$.  Truncate the three input chaos orders to a finite
rectangle.  The two bracketings agree there by the first paragraph.
Since \eqref{eq:triple-branch-majorant} comes from a nonnegative absolutely
summable numerical majorant, the contribution of branches outside an
increasing finite input rectangle tends to zero.  Thus, for each fixed $q$,
the rectangular truncations converge in $\mathfrak W_{q,r}$ to the two full
bracketed coefficients.  Passing to the limit proves their equality.  Since
$q$ is arbitrary, $\star$ is associative on the full analytic classes.

Iterating \eqref{eq:analytic-star-bound} now gives
\[
 \sum_{j=2}^N(B_1+\cdots+B_{j-1})B_j
 =\sum_{1\le i<j\le N}B_iB_j,
\]
which proves \eqref{eq:iterated-analytic-product}.  The unit assertion is
immediate from the order-zero product.
\end{proof}

\section{Spectral blocks and Peter--Weyl transfer}
\label{sec:spectral-peter-weyl}

\subsection{Spectral blocks and Schatten-stable maps}

Let $H$ be a separable Hilbert space and let
$(\Delta_L)_{L\in\Dya}$, $\Dya=\{1,2,4,\ldots\}$, be mutually orthogonal
finite-rank projections with $\sum_L\Delta_L=\Id$ strongly.  For
$T\in\cL(H)$ and $1\le r\le\infty$, set
\begin{equation}
\label{eq:block-profile}
 b_r(T;L,Q):=\norm{\Delta_LT\Delta_Q}_{\Sch_r}.
\end{equation}

\begin{lemma}[Finite-rank Schatten conversion]
\label{lem:finite-rank-schatten-conversion}
Let $P,Q$ be finite-rank orthogonal projections and let $1\le r\le2$.
Then every bounded operator $A$ satisfies
\begin{equation}
\label{eq:finite-rank-schatten-conversion}
 \norm{PAQ}_{\Sch_r}
 \le
 \min\{\rank P,\rank Q\}^{1/r-1/2}
 \norm{PAQ}_{\Sch_2}.
\end{equation}
The same inequality holds after taking any $L^p(\Omega)$ norm.
\end{lemma}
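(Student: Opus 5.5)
The proof is a rank bound followed by Hölder's inequality on the singular-value sequence. Set $T:=PAQ$. Since $T=PT$ and $T=TQ$, its rank satisfies
\[
 \rank T\le\min\{\rank P,\rank Q\}=:d.
\]
In particular $T$ has at most $d$ nonzero singular values $s_1(T),\ldots,s_d(T)$.

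Apply Hölder on $\{1,\ldots,d\}$ with exponents $2/r$ and its conjugate $2/(2-r)$. This is valid for $1\le r<2$, and the case $r=2$ is trivial. It gives
\[
 \sum_{j=1}^d s_j(T)^r
 \le d^{1-r/2}\Bigl(\sum_{j=1}^d s_j(T)^2\Bigr)^{r/2}.
\]
Taking $r$-th roots yields $\norm{T}_{\Sch_r}\le d^{1/r-1/2}\norm{T}_{\Sch_2}$, which is \eqref{eq:finite-rank-schatten-conversion}. Equivalently, one can factor $T=PT$ and use Schatten--Hölder with $\norm P_{\Sch_s}=(\rank P)^{1/s}$ and $1/r=1/s+1/2$, then do the same with $Q$ on the right and take the better of the two. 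If $d=0$ then $T=0$ and there is nothing to prove.

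For the random version, $A=A(\omega)$ is a measurable operator-valued function and $P,Q$ are deterministic. The constant $d^{1/r-1/2}$ therefore does not depend on $\omega$, and the pointwise inequality holds for every $\omega$. Monotonicity and homogeneity of $L^p(\Omega)$ norms then give
\[
 \norm{PAQ}_{L^p(\Omega;\Sch_r)}\le d^{1/r-1/2}\norm{PAQ}_{L^p(\Omega;\Sch_2)}.
\]
The only point needing care is measurability of $\omega\mapsto\norm{PA(\omega)Q}_{\Sch_r}$. This is automatic because $PAQ$ takes values in a fixed finite-dimensional operator space, on which all norms are continuous.
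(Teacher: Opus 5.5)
Your proposal is correct and follows essentially the same route as the paper: bound $\rank(PAQ)$ by $\min\{\rank P,\rank Q\}$, apply the finite-dimensional $\ell^r$--$\ell^2$ (H\"older) inequality to the singular values, and obtain the random version pointwise. Your alternative via Schatten--H\"older with $\norm{P}_{\Sch_s}=(\rank P)^{1/s}$ and the measurability remark are valid additions.
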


\begin{proof}
The rank of $PAQ$ is at most
$k:=\min\{\rank P,\rank Q\}$.  Applying the finite-dimensional
$\ell^r$--$\ell^2$ inequality to its singular values proves
\eqref{eq:finite-rank-schatten-conversion}.  The random assertion follows
pointwise.
\end{proof}

\begin{lemma}[Random Schatten interpolation]
\label{lem:random-schatten-interpolation}
Let $1\le r<2<q<\infty$ and choose $0<\theta<1$ so that
\begin{equation}
\label{eq:random-schatten-interpolation-parameter}
 \frac12=\frac\theta r+\frac{1-\theta}{q}.
\end{equation}
If $B\in L^2(\Omega;\Sch_r)\cap L^2(\Omega;\Sch_q)$, then
\begin{equation}
\label{eq:random-schatten-interpolation}
 \norm B_{L^2(\Omega;\Sch_2)}
 \le
 \norm B_{L^2(\Omega;\Sch_r)}^\theta
 \norm B_{L^2(\Omega;\Sch_q)}^{1-\theta}.
\end{equation}
Equivalently, whenever $B\ne0$,
\begin{equation}
\label{eq:random-schatten-interpolation-lower}
 \norm B_{L^2(\Omega;\Sch_r)}
 \ge
 \frac{\norm B_{L^2(\Omega;\Sch_2)}^{1/\theta}}
 {\norm B_{L^2(\Omega;\Sch_q)}^{(1-\theta)/\theta}}.
\end{equation}
\end{lemma}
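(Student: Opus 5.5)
The inequality is proved pointwise in $\omega$ and then integrated using H\"older's inequality on $\Omega$. Fix $\omega$ with $B(\omega)\in\Sch_r\cap\Sch_q$; since $r<2<q$ this is a compact operator. The deterministic Schatten interpolation (log-convexity of $\ell^p$ norms of the singular value sequence, i.e. H\"older applied to $s_j^2=s_j^{2\theta}s_j^{2(1-\theta)}$ with exponents $r/(2\theta)$ and $q/(2(1-\theta))$, which are conjugate precisely by \eqref{eq:random-schatten-interpolation-parameter}) gives
\[
 \norm{B(\omega)}_{\Sch_2}
 \le\norm{B(\omega)}_{\Sch_r}^{\theta}\norm{B(\omega)}_{\Sch_q}^{1-\theta}.
\]

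Next I would square both sides, take expectations, and apply H\"older in $\Omega$ with the conjugate exponents $1/\theta$ and $1/(1-\theta)$:
\[
 \E\norm{B}_{\Sch_2}^2
 \le\E\bigl[\norm B_{\Sch_r}^{2\theta}\norm B_{\Sch_q}^{2(1-\theta)}\bigr]
 \le\bigl(\E\norm B_{\Sch_r}^2\bigr)^{\theta}
 \bigl(\E\norm B_{\Sch_q}^2\bigr)^{1-\theta}.
\]
Taking square roots yields \eqref{eq:random-schatten-interpolation}. Measurability of the three norms of $B$ is implicit in the Bochner-space hypotheses. Note that $B\in L^2(\Sch_2)$ comes out as a consequence of the inequality, so it need not be assumed.

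For the equivalent form \eqref{eq:random-schatten-interpolation-lower}, assume $B\ne0$. Then $\norm B_{L^2(\Sch_2)}>0$, and by \eqref{eq:random-schatten-interpolation} both factors on the right are positive. Raise the inequality to the power $1/\theta$ and divide by $\norm B_{L^2(\Sch_q)}^{(1-\theta)/\theta}$; conversely, reversing these algebraic steps recovers \eqref{eq:random-schatten-interpolation}.

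There is no genuine obstacle here. The only point needing care is checking that the exponent relation \eqref{eq:random-schatten-interpolation-parameter} produces conjugate H\"older exponents at the singular-value level, and that the same $\theta$ serves as the H\"older split in $\Omega$. Both checks are immediate because the outer exponent $2$ is common to all three norms.
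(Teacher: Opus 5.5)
Your proposal is correct and follows essentially the same route as the paper: pointwise Schatten interpolation, then squaring, integrating, and applying H\"older on $\Omega$ with exponents $1/\theta$ and $1/(1-\theta)$, then rearranging for the lower bound. You also spell out the singular-value H\"older step and the measurability and positivity details, which the paper leaves implicit.
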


\begin{proof}
Pointwise Schatten interpolation gives
$\norm B_{\Sch_2}\le\norm B_{\Sch_r}^\theta
\norm B_{\Sch_q}^{1-\theta}$.  Square this inequality, integrate, and
apply H\"older on $\Omega$ with conjugate exponents $1/\theta$ and
$1/(1-\theta)$.  Rearranging gives
\eqref{eq:random-schatten-interpolation-lower}.
\end{proof}

\begin{theorem}[Block convolution bound]
\label{thm:block-convolution}
Let $T,U\in\cL(H)$ and let $1\le r,r_1,r_2\le\infty$ satisfy
$1/r=1/r_1+1/r_2$.  Whenever the right-hand side is finite,
\begin{equation}
\label{eq:block-convolution}
 b_r(TU;L,Q)
 \le\sum_{R\in\Dya}b_{r_1}(T;L,R)b_{r_2}(U;R,Q).
\end{equation}
More generally, if $1/r=\sum_{j=1}^k1/r_j$, then
\begin{align}
\label{eq:block-path-sum}
 b_r(T_k\cdots T_1;L_k,L_0)
 &\le\sum_{L_1,\ldots,L_{k-1}\in\Dya}
 \prod_{j=1}^k b_{r_j}(T_j;L_j,L_{j-1}).
\end{align}
\end{theorem}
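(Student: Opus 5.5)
The plan is to insert the resolution of the identity between the two factors and then apply Schatten--H\"older to each term. Since $\sum_R\Delta_R=\Id$ strongly, for fixed $L,Q$ we have
\[
 \Delta_LTU\Delta_Q=\sum_{R\in\Dya}(\Delta_LT\Delta_R)(\Delta_RU\Delta_Q),
\]
with the partial sums converging strongly. Indeed, $\Delta_LT\Bigl(\sum_{R\le M}\Delta_R\Bigr)U\Delta_Q x\to\Delta_LTU\Delta_Qx$ because $T$ is bounded. Each summand is finite rank, since $\Delta_L$ is. Schatten--H\"older with $1/r=1/r_1+1/r_2$ gives
\[
 \norm{\Delta_LT\Delta_R\Delta_RU\Delta_Q}_{\Sch_r}\le b_{r_1}(T;L,R)\,b_{r_2}(U;R,Q),
\]
using $\Delta_R^2=\Delta_R$.

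Assume the right-hand side of \eqref{eq:block-convolution} is finite. Then the series converges absolutely in $\Sch_r$, and its $\Sch_r$-limit has norm at most the sum, by the triangle inequality. The $\Sch_r$-limit is also the operator-norm limit, and operator-norm convergence implies strong convergence. So this limit coincides with the strong limit $\Delta_LTU\Delta_Q$, which proves \eqref{eq:block-convolution}. For $r=\infty$ (so $r_1=r_2=\infty$) the same argument works directly in operator norm.

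For the path sum \eqref{eq:block-path-sum}, I will argue by induction on $k$. Put $1/\rho=\sum_{j<k}1/r_j$, so that $1/r=1/r_k+1/\rho$. Apply \eqref{eq:block-convolution} to $T_k$ and $T_{k-1}\cdots T_1$ with exponents $r_k$ and $\rho$, then bound $b_\rho(T_{k-1}\cdots T_1;L_{k-1},L_0)$ by the inductive hypothesis. All terms are nonnegative, so Tonelli lets us rearrange the iterated sum into the full path sum. The only care needed is that the finiteness hypothesis for the outer step is implied by finiteness of the full path sum, and this holds because the inductive bound dominates the intermediate quantity.

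The main subtlety is identifying the limit. The decomposition of $TU$ holds only strongly, not in $\Sch_r$ a priori. I plan to handle this by matching the absolutely convergent $\Sch_r$ sum with the strong limit through operator-norm convergence, rather than relying on any density of finite-rank operators.
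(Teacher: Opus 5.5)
Your proof is correct and is essentially the paper's argument: insert $\Id=\sum_R\Delta_R$, bound each block term by Schatten--H\"older, sum with the triangle inequality, and iterate for the path sum. The one difference is how the limit is identified. The paper shows directly that the finite partial sums $\Delta_LT\bigl(\sum_{R\in F}\Delta_R\bigr)U\Delta_Q$ converge to $\Delta_LTU\Delta_Q$ in $\Sch_r$, using that the error factor tends to zero in operator norm on the finite-dimensional space $\Delta_QH$ and has rank at most $\rank\Delta_Q$, so it never uses the finiteness hypothesis; you instead use finiteness of the right-hand side to get absolute $\Sch_r$ convergence and match that limit with the strong limit, which is equally valid.
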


\begin{proof}
Insert the strong resolution $\Id=\sum_R\Delta_R$ between $T$ and $U$.
Schatten--H\"older gives
\[
 \norm{\Delta_LT\Delta_RU\Delta_Q}_{\Sch_r}
 \le b_{r_1}(T;L,R)b_{r_2}(U;R,Q).
\]
For a finite $F\Subset\Dya$, put $E_F=\sum_{R\in F}\Delta_R$.
Because $\Delta_QH$ is finite-dimensional and $E_F\to\Id$ strongly,
$(E_F-\Id)U\Delta_Q\to0$ in operator norm.  Hence
\[
 \Delta_LT(E_F-\Id)U\Delta_Q\longrightarrow0
 \quad\text{in }\Sch_r,
\]
using the finite-rank Schatten bound when $r<\infty$ and the operator norm
when $r=\infty$.  Thus the finite block sums converge in $\Sch_r$ to
$\Delta_LTU\Delta_Q$.  The triangle inequality and the displayed
Schatten--H\"older estimate now give \eqref{eq:block-convolution}.
Iteration gives \eqref{eq:block-path-sum}.
\end{proof}

\begin{corollary}[Algebra representations]
\label{cor:algebra-representation-blocks}
Let $\pi:\cA\to\cL(H)$ be an algebra representation, and let
$1\le r,r_1,r_2\le\infty$ satisfy $1/r=1/r_1+1/r_2$.  Then
\begin{equation}
\label{eq:represented-product-profile}
 b_r(\pi(ab);L,Q)
 \le\sum_R b_{r_1}(\pi(a);L,R)b_{r_2}(\pi(b);R,Q).
\end{equation}
If $\pi$ is a $*$-representation, then
\begin{equation}
\label{eq:adjoint-profile}
 b_r(\pi(a^*);L,Q)=b_r(\pi(a);Q,L).
\end{equation}
\end{corollary}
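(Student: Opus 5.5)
The first inequality is a direct specialization of Theorem~\ref{thm:block-convolution}. Since $\pi$ is an algebra homomorphism, $\pi(ab)=\pi(a)\pi(b)$ as bounded operators on $H$. We apply \eqref{eq:block-convolution} with $T=\pi(a)$ and $U=\pi(b)$, using the same exponents $r_1,r_2$. This gives \eqref{eq:represented-product-profile} verbatim, with the same convention that the bound is asserted whenever the right-hand side is finite. No extra work is needed beyond observing that the dyadic resolution $(\Delta_L)$ is fixed on $H$ and does not depend on the algebra element.

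For the adjoint identity, a $*$-representation satisfies $\pi(a^*)=\pi(a)^*$. Since the $\Delta_L$ are orthogonal projections, they are self-adjoint, and so
\[
 \Delta_L\pi(a^*)\Delta_Q=\Delta_L\pi(a)^*\Delta_Q=(\Delta_Q\pi(a)\Delta_L)^*.
\]
Schatten norms are invariant under taking adjoints, because $T$ and $T^*$ have the same singular values. The operator-norm case $r=\infty$ is also immediate. Hence
\[
 b_r(\pi(a^*);L,Q)=\norm{(\Delta_Q\pi(a)\Delta_L)^*}_{\Sch_r}=b_r(\pi(a);Q,L).
\]

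There is no real obstacle. The only points to check are that the representation is by bounded operators, which is part of the standing setup since $\pi$ maps into $\cL(H)$, and that the equality of Schatten norms under adjoints also holds when one side is infinite. The latter is clear because $T\in\Sch_r$ if and only if $T^*\in\Sch_r$.
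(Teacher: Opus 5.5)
Your proposal is correct and follows the paper's own argument: the product bound is Theorem~\ref{thm:block-convolution} applied to $T=\pi(a)$ and $U=\pi(b)$ via $\pi(ab)=\pi(a)\pi(b)$, and the adjoint identity comes from $(\Delta_Q\pi(a)\Delta_L)^*=\Delta_L\pi(a^*)\Delta_Q$ together with adjoint invariance of Schatten norms. Your remarks on the edge cases (the inequality is trivial when the right-hand side is infinite, and $T\in\Sch_r$ if and only if $T^*\in\Sch_r$) make explicit two points the paper leaves implicit.
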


\begin{proof}
Use $\pi(ab)=\pi(a)\pi(b)$ in
Theorem~\ref{thm:block-convolution}.  Equation~\eqref{eq:adjoint-profile}
follows by taking adjoints of the block.
\end{proof}

\begin{proposition}[Absolute block reconstruction]
\label{prop:block-reconstruction}
For $1\le r<\infty$, if
\begin{equation}
\label{eq:absolute-block-sum}
 \sum_{L,Q\in\Dya}b_r(T;L,Q)<\infty,
\end{equation}
then $T\in\Sch_r(H)$ and
\begin{equation}
\label{eq:block-reconstruction-bound}
 \norm T_{\Sch_r}\le\sum_{L,Q}b_r(T;L,Q).
\end{equation}
More generally, let $1\le p<\infty$ and let
$T:\Omega\to\cL(H)$ be strongly operator measurable, with every
$\Delta_LT\Delta_Q$ strongly measurable as an $\Sch_r$-valued map.  If
\[
 \sum_{L,Q\in\Dya}
 \norm{\Delta_LT\Delta_Q}_{L^p(\Omega;\Sch_r)}<\infty,
\]
then $T$ agrees almost surely with an element of
$L^p(\Omega;\Sch_r)$ and
\[
 \norm T_{L^p(\Omega;\Sch_r)}
 \le\sum_{L,Q\in\Dya}
 \norm{\Delta_LT\Delta_Q}_{L^p(\Omega;\Sch_r)}.
\]
\end{proposition}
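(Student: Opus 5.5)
The argument is absolute summation of the block decomposition inside the Banach space $\Sch_r(H)$. For finite $F\Subset\Dya$ put $E_F:=\sum_{L\in F}\Delta_L$ and $T_F:=E_FTE_F=\sum_{L,Q\in F}\Delta_LT\Delta_Q$. Each $T_F$ is finite rank, and the triangle inequality gives $\norm{T_F}_{\Sch_r}\le\sum_{L,Q}b_r(T;L,Q)$.

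Under \eqref{eq:absolute-block-sum}, the double series $\sum_{L,Q}\Delta_LT\Delta_Q$ converges absolutely in the complete space $\Sch_r(H)$. Call its sum $S$; then $\norm S_{\Sch_r}\le\sum b_r(T;L,Q)$. Taking the increasing exhaustion $F_N=\{1,2,\ldots,2^N\}$, the partial sums $T_{F_N}$ converge to $S$ in $\Sch_r$, and hence in operator norm.

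It remains to identify $S$ with $T$. Since $E_{F_N}\to\Id$ strongly and the $E_{F_N}$ are uniformly bounded, $E_{F_N}TE_{F_N}x\to Tx$ for every $x\in H$. Writing
\[
 E_{F_N}TE_{F_N}x-Tx=E_{F_N}T(E_{F_N}x-x)+(E_{F_N}-\Id)Tx,
\]
both terms tend to zero. So $T_{F_N}\to T$ strongly and $T_{F_N}\to S$ in norm, whence $S=T$. This proves $T\in\Sch_r(H)$ and \eqref{eq:block-reconstruction-bound}. This identification is the only point needing care, and strong convergence handles it cleanly.

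For the random version, work in the Bochner space $L^p(\Omega;\Sch_r)$, which is Banach. Each block $\Delta_LT\Delta_Q$ is assumed strongly measurable with finite norm, so it defines an element of $L^p(\Omega;\Sch_r)$. Absolute summability then gives a limit $S\in L^p(\Omega;\Sch_r)$ of the $T_{F_N}$, with the stated norm bound.

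Pass to a subsequence converging almost surely in $\Sch_r$. Alternatively, note that $\sum_{L,Q}\norm{\Delta_LT\Delta_Q}_{\Sch_r}<\infty$ almost surely, because its $L^p$ norm is finite by Minkowski's inequality. Either way, for almost every $\omega$ the deterministic argument applies to $T(\omega)$, giving $T(\omega)=S(\omega)$ almost surely. The measurability hypotheses ensure that $S$ is a genuine strongly measurable representative, so $T$ agrees almost surely with an element of $L^p(\Omega;\Sch_r)$ with the claimed norm bound.
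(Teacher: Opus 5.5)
Your proof is correct and follows the paper's argument. Absolute summation of the finite block sums $E_FTE_F$ in the Banach space $\Sch_r(H)$, resp.\ $L^p(\Omega;\Sch_r)$, is combined with identification of the limit with $T$ through strong convergence, and with an almost-surely convergent subsequence (or your pointwise Minkowski variant) in the random case; your splitting $E_FT(E_Fx-x)+(E_F-\Id)Tx$ simply writes out the strong-convergence step that the paper states without detail.
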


\begin{proof}
Finite block sums are Cauchy in $\Sch_r$ by the triangle inequality and
converge strongly to $T$.  For the random assertion, Minkowski's inequality
makes the finite block sums Cauchy in $L^p(\Omega;\Sch_r)$.  Their limit has
a subsequence converging almost surely in $\Sch_r$, hence strongly; comparison
with the pointwise strong block reconstruction identifies that limit with
$T$ almost surely.
\end{proof}

\begin{remark}[Alternative block summations]
\label{rem:sharper-summation}
Absolute block reconstruction may be replaced by orthogonality,
noncommutative square functions, or the refined dyadic summation theorem of
\cite{ZhaoSchattenWick}.  The preceding algebraic block estimates are
unchanged by this choice.
\end{remark}

Alongside block summation, we use coefficient maps whose rectangular
amplifications act uniformly on Schatten classes.

For operator spaces $X,Y,Z$, a bilinear map $u:X\times Y\to Z$ is
multiplicatively bounded, equivalently completely bounded in the
Christensen--Sinclair sense, precisely when it linearizes to a completely
bounded map on the Haagerup tensor product $X\otimes_hY$; see
\cite{ChristensenSinclair,EffrosRuan,PisierBook}.  In particular,
multiplication on a
$C^*$-algebra $\cA$ linearizes to a complete contraction
\begin{equation}
\label{eq:haagerup-multiplication}
 m_\cA:\cA\otimes_h\cA\longrightarrow\cA,
 \qquad m_\cA(a\otimes b)=ab.
\end{equation}
This is a natural matrix-amplified norm for algebraic multiplication, but it
is not by itself a Schatten-profile theorem.

Let $\Phi:\cL(\cC,\cE)\to\cL(\cC',\cE')$ be linear.  For
finite-dimensional Hilbert spaces $A,B$, let $\Phi^{B,A}$ act entrywise on
finite rectangular operator matrices.

\begin{definition}[Schatten-stable map]
\label{def:schatten-stable}
For $1\le r\le\infty$, define
\begin{align}
\label{eq:schatten-stable-constant}
 \kappa_r(\Phi):=
 \sup_{A,B}
 \norm{\Phi^{B,A}}_{
   \Sch_r(A\otimes\cC,B\otimes\cE)\to
   \Sch_r(A\otimes\cC',B\otimes\cE')},
\end{align}
where the supremum is over finite-dimensional $A,B$.  The norm in
\eqref{eq:schatten-stable-constant} is the norm of the actual restriction of
the entrywise map to the indicated Schatten class.  It is defined to be
$+\infty$ if that restriction does not take values in the target Schatten
class or is not bounded.
Changing orthonormal bases in $A$ or $B$ conjugates the rectangular
amplification by unitaries on its source and target, so the displayed norm is
coordinate-independent.
If $\kappa_r(\Phi)<\infty$, call $\Phi$ $r$-Schatten-stable.
\end{definition}

After the standard corner identification of rectangular Schatten classes,
\eqref{eq:schatten-stable-constant} is the concrete rectangular realization
of the usual completely $\Sch_r$-bounded amplification norm; compare the
vector-valued noncommutative $L^p$ framework in \cite{PisierLp}.  We retain
the term Schatten-stable to distinguish this norm from the completely
bounded norm of $\Phi$ on the ambient operator-norm spaces.

\begin{proposition}[Composition and interpolation]
\label{prop:schatten-stable-calculus}
Schatten-stable maps are closed under composition, and
\begin{equation}
\label{eq:schatten-stable-composition}
 \kappa_r(\Psi\circ\Phi)
 \le\kappa_r(\Psi)\kappa_r(\Phi).
\end{equation}
More generally, let
\[
 1\le r_0<r<r_1\le\infty,
 \qquad
 \frac1r=\frac{1-\theta}{r_0}+\frac\theta{r_1},
 \qquad 0<\theta<1.
\]
If $\kappa_{r_0}(\Phi)$ and $\kappa_{r_1}(\Phi)$ are finite, then
\begin{equation}
\label{eq:schatten-stable-interpolation}
 \kappa_r(\Phi)
 \le \kappa_{r_0}(\Phi)^{1-\theta}
      \kappa_{r_1}(\Phi)^\theta.
\end{equation}
In particular, interpolation applies on both sides of the Hilbert endpoint;
for $2<r<\infty$ it gives
$\kappa_r(\Phi)\le
\kappa_2(\Phi)^{2/r}\kappa_\infty(\Phi)^{1-2/r}$.
\end{proposition}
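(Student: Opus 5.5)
The statement has two parts, submultiplicativity under composition and interpolation, and I will treat them separately.

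\emph{Composition.} Fix finite-dimensional $A,B$. Entrywise amplification is functorial: $(\Psi\circ\Phi)^{B,A}=\Psi^{B,A}\circ\Phi^{B,A}$, because both sides act on each matrix entry by $\Psi\circ\Phi$. If $\kappa_r(\Phi)$ or $\kappa_r(\Psi)$ is infinite there is nothing to prove. Otherwise $\Phi^{B,A}$ maps $\Sch_r(A\otimes\cC,B\otimes\cE)$ into $\Sch_r(A\otimes\cC',B\otimes\cE')$ with norm at most $\kappa_r(\Phi)$. Then $\Psi^{B,A}$ maps that space into the next Schatten class with norm at most $\kappa_r(\Psi)$. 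Composing the two restrictions and taking the supremum over $A,B$ gives \eqref{eq:schatten-stable-composition}.

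\emph{Interpolation.} Again fix finite-dimensional $A,B$, and write $T_{A,B}:=\Phi^{B,A}$. By hypothesis $T_{A,B}$ is bounded $\Sch_{r_j}(A\otimes\cC,B\otimes\cE)\to\Sch_{r_j}(A\otimes\cC',B\otimes\cE')$ for $j=0,1$, with norms at most $\kappa_{r_j}(\Phi)$. The spaces $\cC,\cE$ may be infinite-dimensional, and this is the main obstacle: one needs the Schatten classes on fixed Hilbert spaces to form a compatible couple with $[\Sch_{r_0},\Sch_{r_1}]_\theta=\Sch_r$ isometrically. When $r_1=\infty$ this must be read with $\Sch_\infty$ replaced by the compact operators, or with the restriction to $\Sch_{r_0}$.

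I would avoid these subtleties by a finite-rank reduction, modelled on the last step of the proof of Theorem~\ref{thm:schatten-link}. Take $X\in\Sch_r$. Compress by finite-rank projections $P_\nu$ on $A\otimes\cC$ and $Q_\nu$ on $B\otimes\cE$ of the form $\Id_A\otimes p_\nu$ and $\Id_B\otimes q_\nu$, so that $Q_\nu XP_\nu\to X$ in $\Sch_r$.
\begin{itemize}
\item On the finite-dimensional source space $\cL(p_\nu\cC,q_\nu\cE)\otimes M_{B,A}$, the operator $T_{A,B}$ lands in $\Sch_{r_0}$ of the target.
\item Since the source is finite-dimensional, $T_{A,B}$ is bounded there as a map into $\Sch_{r_0}\cap\Sch_{r_1}$ of the target.
\item Apply the Riesz--Thorin/Stein complex interpolation theorem for Schatten classes, either with target spaces of possibly infinite dimension, or after a further output compression by $\Id_B\otimes q'\cdot\,\Id_A\otimes p'$, whose norm is contractive at all exponents. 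This gives
\[
\|T_{A,B}(Q_\nu XP_\nu)\|_{\Sch_r}\le\kappa_{r_0}^{1-\theta}\kappa_{r_1}^{\theta}\|Q_\nu XP_\nu\|_{\Sch_r}.
\]
\end{itemize}

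To pass to the limit, note that $Q_\nu XP_\nu$ is Cauchy in $\Sch_r$ and hence in $\Sch_{r_1}$, or in operator norm. Therefore $T_{A,B}(Q_\nu XP_\nu)$ converges in $\Sch_{r_1}$, or in $\cL$ when $r_1=\infty$, to $T_{A,B}X$. The uniform $\Sch_r$ bound and the Fatou property of Schatten norms then put $T_{A,B}X$ in $\Sch_r$ with the same bound.

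Taking the supremum over $A,B$ gives \eqref{eq:schatten-stable-interpolation}. The special case $r_0=2$, $r_1=\infty$ has $\theta=1-2/r$, which is the displayed consequence.
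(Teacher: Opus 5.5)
Your proof is correct and follows the paper's route: you get composition from functoriality of the rectangular amplifications, and interpolation by complexly interpolating $\Phi^{B,A}$ for fixed finite-dimensional $A,B$ and then taking the supremum. The only difference is that the paper applies $[\Sch_{r_0},\Sch_{r_1}]_\theta=\Sch_r$ directly on possibly infinite-dimensional spaces, while you reduce to finite-dimensional corners and pass to the limit using $\Sch_{r_1}$ (or operator-norm) continuity and the Fatou property; this spells out a step the paper treats as standard.
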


\begin{proof}
Equation \eqref{eq:schatten-stable-composition} follows by composing every
finite rectangular amplification and then taking the supremum.  For fixed
auxiliary spaces $A,B$, complex interpolation between the amplified maps on
$\Sch_{r_0}$ and $\Sch_{r_1}$ gives
\[
 \norm{\Phi^{B,A}}_{\Sch_r\to\Sch_r}
 \le
 \norm{\Phi^{B,A}}_{\Sch_{r_0}\to\Sch_{r_0}}^{1-\theta}
 \norm{\Phi^{B,A}}_{\Sch_{r_1}\to\Sch_{r_1}}^\theta.
\]
Taking the supremum over $A,B$ proves
\eqref{eq:schatten-stable-interpolation}.
\end{proof}

\begin{proposition}[Coefficient-map transfer]
\label{prop:coefficient-map-transfer}
Assume $\kappa_r(\Phi)<\infty$ and that $\Phi$ maps finite-rank operators to
finite-rank operators.  Let
$K\in\cK_m^{\mathrm{alg}}$, and let $\Phi_\#K$ be obtained by applying
$\Phi$ to every deterministic coefficient of $K$.  Then
\begin{equation}
\label{eq:coefficient-map-transfer}
 \norm{\Flat_S(\Phi_\#K)}_{\Sch_r}
 \le\kappa_r(\Phi)\norm{\Flat_S(K)}_{\Sch_r}
\end{equation}
for every cut.  Hence
\[
 \Prof_{m,r}(\Phi_\#K)\le\kappa_r(\Phi)\Prof_{m,r}(K).
\]
For $1\le r<\infty$, $\Phi_\#$ therefore extends uniquely to a bounded map
between the corresponding simultaneous profile completions.
\end{proposition}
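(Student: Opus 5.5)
The plan is to show that each oriented flattening of $\Phi_\#K$ is, up to canonical unitary regroupings, a finite rectangular amplification of $\Phi$ applied to the corresponding flattening of $K$. Definition~\ref{def:schatten-stable} then gives the cutwise bound immediately.

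Fix $S\subset[m]$ and choose finite-dimensional subspaces $F_j\subset\cH_j$ supporting $K$. Set
\[
 A:=\conj{F_S},\qquad B:=F_{S^c},
\]
where $F_S=\bigotimes_{j\in S}F_j$, and fix orthonormal bases $(e^{(j)}_i)$. By the matrix description after Definition~\ref{def:oriented-flattening}, $\Flat_S(K)$ is then the finite rectangular operator matrix
\[
 \bigl(K_{\mathbf i}\bigr):A\otimes\cC\longrightarrow B\otimes\cE,
\]
whose $(\mathbf i_{S^c},\mathbf i_S)$ entry is the coefficient $K_{\mathbf i}\in\cL(\cC,\cE)$ (a finite-rank operator). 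The column index of this matrix is $\mathbf i_S$ and the row index is $\mathbf i_{S^c}$.

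Since $\Phi_\#K$ has the same index structure with coefficients $\Phi(K_{\mathbf i})$, its flattening $\Flat_S(\Phi_\#K)$ is the entrywise amplification $\Phi^{B,A}(\Flat_S(K))$. The basis expansion identifies $\Phi_\#K$ as an element of $F_1\otimes\cdots\otimes F_m\otimes\conj{\cC'}\otimes\cE'$, and finite-rankness of $\Phi$ on finite-rank operators guarantees this lies in the Hilbert tensor product. The remark after Definition~\ref{def:schatten-stable} shows the identification does not depend on the chosen bases. It is also independent of the choice of supporting subspaces, because enlarging $F_j$ only adds zero rows and columns.

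Applying \eqref{eq:schatten-stable-constant} then gives
\[
 \norm{\Flat_S(\Phi_\#K)}_{\Sch_r}\le\norm{\Phi^{B,A}}\,\norm{\Flat_S(K)}_{\Sch_r}\le\kappa_r(\Phi)\,\norm{\Flat_S(K)}_{\Sch_r},
\]
which is \eqref{eq:coefficient-map-transfer}. Taking the maximum over $S$ yields the profile bound. For $1\le r<\infty$, the map $\Phi_\#$ is linear and bounded on the dense subspace $\cK_m^{\mathrm{alg}}$ of $\mathfrak T_{m,r}$, and its target $\cK_m^{\mathrm{alg}}(\ldots;\cC',\cE')$ is dense in the corresponding completion. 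It therefore extends uniquely by continuity.

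The only delicate point is the first identification: checking that, with the conjugate-space conventions, the flattening places the $\conj{\cH_S}$ indices together with $\cC$ on the input side exactly as an amplification expects, so that entries are coefficients and not their transposes or adjoints. I would verify this on elementary tensors $e_{\mathbf i}\otimes\Theta^{-1}(T)$. For such a tensor, $\Flat_S$ is the matrix unit $E_{\mathbf i_{S^c},\mathbf i_S}\otimes T$, and $\Phi^{B,A}$ maps it to $E_{\mathbf i_{S^c},\mathbf i_S}\otimes\Phi(T)$, which is exactly the corresponding flattening of $\Phi_\#$ applied to that tensor. Linearity then extends the identity to all of $\cK_m^{\mathrm{alg}}$.
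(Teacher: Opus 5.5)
Your proposal is correct and follows the paper's argument. The paper's proof is exactly the observation that, for a fixed cut, the stochastic column and row factors $\conj{\cH_S}$ and $\cH_{S^c}$ of the flattening play the roles of the auxiliary spaces $A$ and $B$ in the definition of $\kappa_r(\Phi)$, so that $\Flat_S(\Phi_\#K)=\Phi^{B,A}(\Flat_S(K))$; your restriction to the finite-dimensional supports $\conj{F_S}$ and $F_{S^c}$ (needed because $\kappa_r$ is a supremum over finite-dimensional auxiliary spaces) and your check $\Flat_S(e_{\mathbf i}\otimes\Theta_{\cC,\cE}^{-1}(T))=E_{\mathbf i_{S^c},\mathbf i_S}\otimes T$ simply make explicit what the paper leaves implicit.
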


\begin{proof}
For a fixed cut, the stochastic row and column factors in the flattening are
precisely the auxiliary spaces $B$ and $A$ in
\eqref{eq:schatten-stable-constant}.  The definition of $\kappa_r(\Phi)$
therefore gives \eqref{eq:coefficient-map-transfer}.
\end{proof}

\begin{example}[Elementary maps and finite multiplicity]
\label{ex:schatten-stable-maps}
For $\Phi(T)=VTU^*$,
\[
 \kappa_r(\Phi)\le\norm V\norm U.
\]
This includes compressions and inclusions.  For the finite ampliation
\[
 \Phi_d(T)=T\otimes\Id_{\C^d},
\]
one has
\[
 \kappa_r(\Phi_d)=d^{1/r}.
\]
Both assertions follow from the Schatten ideal property and multiplicativity
of singular values under tensor products.
\end{example}

\begin{remark}[Complete boundedness and Schatten stability]
\label{rem:cb-not-schatten}
The infinite ampliation
\[
 T\longmapsto T\otimes\Id_{\ell^2}
\]
is completely isometric at the operator-space level, but it sends every
nonzero compact operator to a noncompact operator.  Thus
\[
 \norm\Phi_{\cb}<\infty
 \quad\not\Longrightarrow\quad
 \kappa_r(\Phi)<\infty.
\]
In particular, a coproduct cannot be passed through a Schatten inequality
solely because it is a $*$-homomorphism.  One must also control its spectral
multiplicity or insert suitable smoothing.
\end{remark}

\subsection{Peter--Weyl Hilbert blocks}

Let $\mathbb G$ be a compact quantum group, let $\Pol(\mathbb G)$ be its
canonical Hopf $*$-algebra, and let $h$ be the Haar state; see
\cite{Woronowicz,NeshveyevTuset}.  We work in the reduced realization of
$\mathbb G$.  Thus $h$ is faithful and the GNS map
\[
 \Lambda_h:\Pol(\mathbb G)\longrightarrow H:=L^2(\mathbb G,h)
\]
is injective.  Our convention that Hilbert-space inner products are linear
in the first variable means
\[
 \ip{\Lambda_h(a)}{\Lambda_h(b)}_H=h(b^*a).
\]

For $\alpha\in\operatorname{Irr}(\mathbb G)$, choose an irreducible unitary
corepresentation
$u^\alpha=(u_{ij}^\alpha)_{1\le i,j\le n_\alpha}$ on a Hilbert space
$\mathsf H_\alpha$, and put
\[
 \cA_\alpha:=\operatorname{span}\{u_{ij}^\alpha:1\le i,j\le n_\alpha\},
 \qquad
 H_\alpha:=\Lambda_h(\cA_\alpha).
\]
The algebraic and Hilbert Peter--Weyl decompositions are, respectively,
\begin{equation}
\label{eq:peter-weyl}
 \Pol(\mathbb G)
 =\bigoplus_{\alpha\in\operatorname{Irr}(\mathbb G)}^{\mathrm{alg}}
   \cA_\alpha,
 \qquad
 H=\bigoplus_{\alpha\in\operatorname{Irr}(\mathbb G)}^{\ell^2}H_\alpha.
\end{equation}
In particular, $H_\alpha\perp H_\beta$ for $\alpha\ne\beta$ and
$\dim H_\alpha=n_\alpha^2$.

We fix the following standard modular normalization.  For each $\alpha$,
let $Q_\alpha$ be the positive Woronowicz matrix normalized by
\[
 \Tr Q_\alpha=\Tr Q_\alpha^{-1}=:d_q(\alpha),
\]
and choose the basis of $\mathsf H_\alpha$ so that
$Q_\alpha e_i=q_{\alpha,i}e_i$.  In this convention the Schur
orthogonality relations read
\begin{align}
\label{eq:quantum-schur-orthogonality}
 h\bigl((u_{ij}^{\alpha})^*u_{kl}^{\beta}\bigr)
 &=\delta_{\alpha\beta}\delta_{jl}\delta_{ik}
   \frac{q_{\alpha,i}^{-1}}{d_q(\alpha)},\notag\\
 h\bigl(u_{ij}^{\alpha}(u_{kl}^{\beta})^*\bigr)
 &=\delta_{\alpha\beta}\delta_{ik}\delta_{jl}
   \frac{q_{\alpha,j}}{d_q(\alpha)}.
\end{align}
Consequently,
\begin{equation}
\label{eq:peter-weyl-orthonormal-basis}
 \varepsilon_{ij}^{\alpha}
 :=\bigl(d_q(\alpha)q_{\alpha,i}\bigr)^{1/2}
   \Lambda_h(u_{ij}^{\alpha})
\end{equation}
is an orthonormal basis of $H_\alpha$.  Formula
\eqref{eq:quantum-schur-orthogonality} fixes all modular conventions used
below.

Write $\bar\alpha$ for the conjugate class and set
\[
 N_{\alpha\beta}^{\gamma}
 :=\dim\operatorname{Hom}_{\mathbb G}
     (u^\gamma,u^\alpha\otimes u^\beta).
\]
A proper fusion length is a function
\[
 \ell:\operatorname{Irr}(\mathbb G)\longrightarrow[0,\infty)
\]
such that
\[
 \ell(\mathbf1)=0,
 \qquad \ell(\bar\alpha)=\ell(\alpha),
 \qquad
 N_{\alpha\beta}^{\gamma}>0
 \Longrightarrow
 \ell(\gamma)\le\ell(\alpha)+\ell(\beta),
\]
and every length ball contains only finitely many irreducibles.  Assume
henceforth that $\mathbb G$ is equipped with such a length; this is the
standard length-and-growth viewpoint on discrete quantum groups
\cite{BanicaVergnioux}.  For $L\in\Dya$, set
\[
 \operatorname{Irr}_L(\mathbb G)
 :=\{\alpha:L\le1+\ell(\alpha)<2L\},
 \qquad
 P_L:=\sum_{\alpha\in\operatorname{Irr}_L(\mathbb G)}P_\alpha,
 \qquad H_L:=P_LH,
\]
where $P_\alpha$ is the orthogonal projection onto $H_\alpha$.
Properness makes $\operatorname{Irr}_L(\mathbb G)$ finite, hence $P_L$ has
finite rank.  The dyadic shells partition the irreducibles, so the $P_L$ are
mutually orthogonal and
\begin{equation}
\label{eq:peter-weyl-strong-resolution}
 \sum_{L\in\Dya}P_L=\Id_H
 \quad\text{strongly}.
\end{equation}
Thus they satisfy the spectral-block hypotheses of the preceding
subsection.

Multiplication is used only on the algebraic GNS core:
\begin{equation}
\label{eq:peter-weyl-algebraic-multiplication}
 m_h^{\mathrm{alg}}\bigl(\Lambda_h(a)\otimes\Lambda_h(b)\bigr)
 :=\Lambda_h(ab),
 \qquad a,b\in\Pol(\mathbb G).
\end{equation}
We do not assume that this map extends boundedly from $H\otimes H$ to $H$.
The tensor-product decomposition
\begin{equation}
\label{eq:fusion-rule}
 u^\alpha\otimes u^\beta
 \simeq\bigoplus_{\gamma}
 N_{\alpha\beta}^{\gamma}u^\gamma
\end{equation}
implies the algebraic support relation
\begin{equation}
\label{eq:coefficient-product-fusion-support}
 \cA_\alpha\cA_\beta
 \subset
 \bigoplus_{\gamma:N_{\alpha\beta}^{\gamma}>0}\cA_\gamma.
\end{equation}
More explicitly, choose isometric intertwiners
$V_{\gamma,t}:\mathsf H_\gamma\to
\mathsf H_\alpha\otimes\mathsf H_\beta$ with mutually orthogonal ranges,
$1\le t\le N_{\alpha\beta}^{\gamma}$, and write
\[
 \sum_{\gamma,t}V_{\gamma,t}V_{\gamma,t}^*
 =\Id_{\mathsf H_\alpha\otimes\mathsf H_\beta},
 \qquad
 V_{\gamma,t}e_p
 =\sum_{i,k}(V_{\gamma,t})_{(i,k),p}\,e_i\otimes e_k.
\]
Then
\begin{align}
\label{eq:clebsch-gordan-product}
 u_{ij}^{\alpha}u_{kl}^{\beta}
 &=
 \sum_{\gamma,t,p,s}
 (V_{\gamma,t})_{(i,k),p}
 \overline{(V_{\gamma,t})_{(j,l),s}}\,
 u_{ps}^{\gamma},\\
\label{eq:clebsch-gordan-gns-product}
 m_h^{\mathrm{alg}}
  (\varepsilon_{ij}^{\alpha}\otimes\varepsilon_{kl}^{\beta})
 &=
 \sum_{\gamma,t,p,s}
 \left(
  \frac{d_q(\alpha)d_q(\beta)
        q_{\alpha,i}q_{\beta,k}}
       {d_q(\gamma)q_{\gamma,p}}
 \right)^{1/2}
 (V_{\gamma,t})_{(i,k),p}
 \overline{(V_{\gamma,t})_{(j,l),s}}\,
 \varepsilon_{ps}^{\gamma}.
\end{align}
All sums in these two formulas are finite.

We use one strict tensor-linearization convention throughout this subsection.
Let $J$ be a finite ordered set, let $(E_h)_{h\in J}$ be Hilbert spaces, and
write $E_I:=\bigotimes_{h\in I}E_h$ in the inherited order, with the empty
tensor product equal to $\C$.  If
$\Sigma_I:\bigotimes_{h\in J}E_h\to E_I\otimes E_{J\setminus I}$ is the
canonical factor permutation, define
\begin{equation}
\label{eq:strict-tensor-flattening}
 \Flat_I^{\mathrm{str}}(\mathbf t)
 :=\Theta_{\conj{E_I},E_{J\setminus I}}(\Sigma_I\mathbf t):
 \conj{E_I}\longrightarrow E_{J\setminus I}.
\end{equation}
Here we use the canonical complex-linear identification
$\conj{\conj{E_I}}\simeq E_I$.  Thus
\eqref{eq:strict-tensor-flattening} fixes the source, target, ordering, and
double-conjugation convention for every structure-tensor cut below.

\begin{definition}[Oriented fusion cut profile]
\label{def:fusion-profile}
For dyadic $L,R,Q$, define
\begin{equation}
\label{eq:fusion-block}
 \mu_{L,R}^{Q}
 :=P_Qm_h^{\mathrm{alg}}\big|_{H_L\otimes H_R}:
 H_L\otimes H_R\longrightarrow H_Q.
\end{equation}
This is a well-defined bounded map because its domain and target are
finite-dimensional.  Using the canonical unitary from
\eqref{eq:kernel-space}--\eqref{eq:flattening-map}, set
\[
 \mathbf m_{L,R}^{Q}
 :=\Theta_{H_L\otimes H_R,H_Q}^{-1}(\mu_{L,R}^{Q})
 \in\conj{H_L}\otimes\conj{H_R}\otimes H_Q.
\]
Put $E_1=\conj{H_L}$, $E_2=\conj{H_R}$, and $E_3=H_Q$.  For every
$I\subset\{1,2,3\}$, let
\[
 \Flat_I^{\mathrm{str}}(\mathbf m_{L,R}^{Q}):
 \conj{\bigotimes_{i\in I}E_i}
 \longrightarrow\bigotimes_{j\notin I}E_j
\]
be the canonical Hilbert-space linearization, and define
\begin{equation}
\label{eq:oriented-fusion-profile}
 \mathfrak f_{r,I}(L,R;Q)
 :=\norm{\Flat_I^{\mathrm{str}}(\mathbf m_{L,R}^{Q})}_{\Sch_r},
 \qquad
 \mathfrak f_r(L,R;Q)
 :=\max_{I\subset\{1,2,3\}}
 \mathfrak f_{r,I}(L,R;Q).
\end{equation}
The cut $I=\{1,2\}$ is exactly $\mu_{L,R}^{Q}$.  The two other
inequivalent nontrivial linearizations are its one-leg currying maps, and
the empty and full cuts give the Hilbert norm of the structure tensor.
\end{definition}

\begin{lemma}[Intrinsicness and fusion support]
\label{lem:fusion-profile-intrinsic}
The quantities in \eqref{eq:oriented-fusion-profile} are independent of
matrix-coefficient bases and of unitary choices in the
Clebsch--Gordan multiplicity spaces.  The eight cuts form four
complementary pairs, so it suffices to retain the Hilbert norm and three
nontrivial linearization norms.  Moreover, $\mu_{L,R}^{Q}=0$ unless
\begin{equation}
\label{eq:fusion-triangle}
 L<2(R+Q),\qquad R<2(L+Q),\qquad Q<2(L+R).
\end{equation}
\end{lemma}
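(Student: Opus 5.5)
The plan has three parts: intrinsicness, complementary cuts, and the fusion triangle.

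\emph{Intrinsicness.} I would show that every admissible change of choices acts on the structure tensor $\mathbf m_{L,R}^{Q}$ by a legwise unitary. Changing the matrix-coefficient basis of $\mathsf H_\alpha$ within the modular convention replaces the orthonormal basis $(\varepsilon_{ij}^\alpha)$ of $H_\alpha$ by another orthonormal basis. The spaces $H_\alpha=\Lambda_h(\cA_\alpha)$, the projections $P_\alpha$ and $P_L$, and the map $m_h^{\mathrm{alg}}$ are themselves defined intrinsically, so $\mu_{L,R}^{Q}$ does not depend on these choices at all. Unitary changes in the Clebsch--Gordan multiplicity spaces change only the representation \eqref{eq:clebsch-gordan-gns-product}, not the product $\Lambda_h(ab)$, and so leave $\mu_{L,R}^{Q}$ unchanged. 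Any coordinate description of $\mathbf m_{L,R}^{Q}$ is therefore related to the canonical one by a tensor product of unitaries on $\conj{H_L}$, $\conj{H_R}$ and $H_Q$. Each strict flattening \eqref{eq:strict-tensor-flattening} transforms under such a unitary by two-sided multiplication with unitaries, exactly as in Proposition~\ref{prop:legwise-rule}. Unitary invariance of Schatten norms then gives independence of every $\mathfrak f_{r,I}$.

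\emph{Complementary cuts.} For $I$ and its complement $I^c$, the flattening $\Flat_{I^c}^{\mathrm{str}}$ is, after the canonical identifications of conjugate spaces, the transpose (equivalently the conjugate of the adjoint) of $\Flat_I^{\mathrm{str}}$. These have the same singular values, so the eight cuts collapse to four values. The pair $\{\emptyset,\{1,2,3\}\}$ consists of vector-to-scalar maps, whose Schatten norms all equal the Hilbert norm, as in Lemma~\ref{lem:hilbert-endpoint}. The remaining three pairs are represented by $I=\{1,2\}$, which is $\mu_{L,R}^{Q}$ itself, and by $\{1\}$ and $\{2\}$, which are its one-leg currying maps.

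\emph{Fusion triangle.} Suppose $\mu_{L,R}^{Q}\neq0$. Then there are $\alpha\in\operatorname{Irr}_L$, $\beta\in\operatorname{Irr}_R$ and $\gamma\in\operatorname{Irr}_Q$ with $P_\gamma\Lambda_h(\cA_\alpha\cA_\beta)\neq0$. By \eqref{eq:coefficient-product-fusion-support} this forces $N_{\alpha\beta}^{\gamma}>0$. Subadditivity of the length gives $\ell(\gamma)\le\ell(\alpha)+\ell(\beta)$, so
\[
 Q\le1+\ell(\gamma)\le(1+\ell(\alpha))+(1+\ell(\beta))<2L+2R,
\]
which is $Q<2(L+R)$. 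For the other two inequalities I would use Frobenius reciprocity: $N_{\alpha\beta}^{\gamma}>0$ implies $N_{\gamma\bar\beta}^{\alpha}>0$ and $N_{\bar\alpha\gamma}^{\beta}>0$. Combining this with $\ell(\bar\beta)=\ell(\beta)$ and $\ell(\bar\alpha)=\ell(\alpha)$, the same computation yields $L<2(R+Q)$ and $R<2(L+Q)$.

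The main obstacle is to justify the Frobenius reciprocity identity for compact quantum groups, that is, $\operatorname{Hom}(u^\gamma,u^\alpha\otimes u^\beta)\neq0$ if and only if $\operatorname{Hom}(u^\alpha,u^\gamma\otimes u^{\bar\beta})\neq0$. I would cite \cite{NeshveyevTuset} for this, using the conjugate solutions $R_\beta,\bar R_\beta$ of the conjugate equations to transport intertwiners. Some care is also needed to check that the modular normalization does not break the legwise-unitary argument. This holds because the normalization in \eqref{eq:peter-weyl-orthonormal-basis} produces genuine orthonormal bases, so all changes of basis are unitary.
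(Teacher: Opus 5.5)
Your proposal is correct and follows essentially the same route as the paper: the intrinsic definition of $\mu_{L,R}^{Q}$ together with legwise unitary invariance, complementary cuts as (conjugate) transposes with equal singular values, and length subadditivity combined with Frobenius reciprocity $N_{\alpha\beta}^{\gamma}=N_{\gamma\bar\beta}^{\alpha}=N_{\bar\alpha\gamma}^{\beta}$ and $\ell(\bar\eta)=\ell(\eta)$. One wording slip: the empty cut is a scalar-to-vector map, not vector-to-scalar, but both trivial cuts are rank one with Schatten norm equal to the Hilbert norm, so nothing changes.
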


\begin{proof}
The GNS projections and the algebraic multiplication map determine
$\mathbf m_{L,R}^{Q}$ without a coordinate choice.  Unitary changes of
coordinates on its three Hilbert legs conjugate every linearization by
unitaries.  Complementary linearizations are conjugate transposes under the
canonical Hilbert-space identifications and therefore have the same singular
values.

If $\mu_{L,R}^{Q}\ne0$, relation
\eqref{eq:coefficient-product-fusion-support} supplies
$\alpha\in\operatorname{Irr}_L(\mathbb G)$,
$\beta\in\operatorname{Irr}_R(\mathbb G)$, and
$\gamma\in\operatorname{Irr}_Q(\mathbb G)$ with
$N_{\alpha\beta}^{\gamma}>0$.  Fusion subadditivity gives
\[
 Q\le1+\ell(\gamma)
 \le1+\ell(\alpha)+\ell(\beta)
 <2L+2R-1<2(L+R).
\]
Frobenius reciprocity in the rigid representation category gives
\[
 N_{\alpha\beta}^{\gamma}
 =N_{\gamma\bar\beta}^{\alpha}
 =N_{\bar\alpha\gamma}^{\beta}.
\]
Applying the same argument and using
$\ell(\bar\eta)=\ell(\eta)$ proves the other two inequalities.
\end{proof}

The coproduct also has a canonical GNS realization, unlike an arbitrary
coaction for which additional Hilbert-space data may be needed.  Define
\begin{equation}
\label{eq:peter-weyl-coproduct-isometry}
 \delta_h\Lambda_h(a)
 :=(\Lambda_h\otimes\Lambda_h)\Delta(a),
 \qquad a\in\Pol(\mathbb G).
\end{equation}
Haar invariance gives
\[
 \norm{\delta_h\Lambda_h(a)}^2
 =(h\otimes h)\Delta(a^*a)=h(a^*a),
\]
so $\delta_h$ extends to an isometry $H\to H\otimes H$.  Since
$\Delta(u_{ij}^{\alpha})=\sum_k
u_{ik}^{\alpha}\otimes u_{kj}^{\alpha}$,
\begin{equation}
\label{eq:normalized-peter-weyl-coproduct}
 \delta_h(\varepsilon_{ij}^{\alpha})
 =\sum_{k=1}^{n_\alpha}
  \bigl(d_q(\alpha)q_{\alpha,k}\bigr)^{-1/2}
  \varepsilon_{ik}^{\alpha}\otimes\varepsilon_{kj}^{\alpha}.
\end{equation}
In particular, $\delta_h(H_L)\subset H_L\otimes H_L$ and
\begin{equation}
\label{eq:coproduct-block-schatten}
 \norm{\delta_hP_L}_{\Sch_r}
 =
 \begin{cases}
  (\dim H_L)^{1/r},&1\le r<\infty,\\
  \mathbf1_{\{H_L\ne0\}},&r=\infty.
 \end{cases}
\end{equation}
The other linearizations of its finite structure tensor retain the modular
weights visible in \eqref{eq:normalized-peter-weyl-coproduct}.

\subsection{Typed Peter--Weyl block trees}

\begin{definition}[Finite Peter--Weyl block family]
\label{def:peter-weyl-block-family}
Let $J_v$ be a finite ordered leg set.  For every $h\in J_v$, fix a
countable label set $\mathscr L_{v,h}$, finite-dimensional Hilbert spaces
$Z_{v,h}(\lambda)$ for $\lambda\in\mathscr L_{v,h}$, and a sign
$\sigma_{v,h}\in\{+,-\}$.  Put
\[
 E_{v,h}(\lambda)
 :=
 \begin{cases}
  Z_{v,h}(\lambda),&\sigma_{v,h}=+,\\
  \conj{Z_{v,h}(\lambda)},&\sigma_{v,h}=-.
 \end{cases}
\]
A finite Peter--Weyl block family at $v$ is a specified tensor
\begin{equation}
\label{eq:typed-local-structure-tensor}
 \mathbf v_{\boldsymbol\lambda_v}
 \in\bigotimes_{h\in J_v}E_{v,h}(\lambda_h)
\end{equation}
for every
$\boldsymbol\lambda_v\in\prod_{h\in J_v}\mathscr L_{v,h}$; zero tensors
are allowed.  All tensor products over subsets of $J_v$ use the inherited
order.  For $I\subset J_v$, let
\[
 \Flat_I^{\mathrm{str}}(\mathbf v_{\boldsymbol\lambda_v}):
 \conj{\bigotimes_{h\in I}E_{v,h}(\lambda_h)}
 \longrightarrow
 \bigotimes_{h\in J_v\setminus I}E_{v,h}(\lambda_h)
\]
be the canonical Hilbert-space linearization and define
\begin{equation}
\label{eq:typed-local-profile}
 \mathfrak f_{v,r,I}(\boldsymbol\lambda_v)
 :=
 \norm{\Flat_I^{\mathrm{str}}
        (\mathbf v_{\boldsymbol\lambda_v})}_{\Sch_r}.
\end{equation}
Singleton label sets encode fixed auxiliary legs.  A dyadic Peter--Weyl leg
has label set $\Dya$ and space $H_L$; conjugating the leg changes its sign.
\end{definition}

The multiplication tensor is a block family of signs $(-,-,+)$, and the
coproduct tensor is one of signs $(-,+,+)$.  More general algebraic
coproduct, multiplication, or coaction vertices are admitted only after all
incident Hilbert realizations and the finite-dimensional compressed tensors
\eqref{eq:typed-local-structure-tensor} have been explicitly supplied.  In
particular, no uncompressed coaction is tacitly assumed to be bounded on a
Haar--GNS $L^2$ space.

\begin{definition}[Typed Peter--Weyl fusion tree]
\label{def:peter-weyl-fusion-tree}
A typed Peter--Weyl fusion tree $\mathscr D$ consists of the following data.
\begin{enumerate}
\item A finite connected acyclic graph whose vertex set is the disjoint union
\[
 V(\mathscr D)=V_{\mathrm{loc}}(\mathscr D)
 \sqcup\{b_1,\ldots,b_N\},
\]
with a distinguished root in $V_{\mathrm{loc}}(\mathscr D)$; every boundary
vertex $b_j$ has degree one.
\item At every local vertex $v$, a block family
$\mathbf v_{\boldsymbol\lambda_v}$ as in
Definition~\ref{def:peter-weyl-block-family}.  Each tensor leg is either
paired with exactly one graph edge or is designated external, and the paired
legs at $v$ are in bijection with the graph half-edges incident to $v$.  Every
external negative leg contributes to the global factor
$\conj{\cC_{\mathscr D}}$, and every external positive leg contributes to
$\cE_{\mathscr D}$, through fixed unitary identifications and a fixed order.
\item At $b_j$, fix an integer $m_j\ge0$ and a coefficient-kernel
configuration
\[
 \cH_{j,1}\otimes\cdots\otimes\cH_{j,m_j}
 \otimes\conj{\cC_j}\otimes\cE_j.
\]
Exactly one of its two deterministic coefficient legs is paired with the
incident edge; the other is external.  Its stochastic legs are all external.
The factor $\conj{\cC_j}$ is a negative deterministic leg and $\cE_j$ is a
positive deterministic leg; the attached one is regarded as a
singleton-labelled boundary leg.  Only the compressed local-vertex legs,
not an arbitrary boundary coefficient space, are required to be
finite-dimensional.
The external deterministic factors over all vertices are identified,
in fixed order, with
$\conj{\cC_{\mathscr D}}\otimes\cE_{\mathscr D}$.  With
$q:=\sum_{j=1}^Nm_j$, the stochastic legs are identified with $[q]$ by
concatenating $b_1,\ldots,b_N$ and retaining their internal slot orders.
Writing $\cE_{\mathrm{ext}}(\mathscr D)$ for the ordered tensor product of
all surviving factors, the data include a fixed unitary
\begin{equation}
\label{eq:external-tree-identification}
 J_{\mathrm{ext}}:
 \cE_{\mathrm{ext}}(\mathscr D)
 \longrightarrow
 \left(\bigotimes_{j=1}^N\bigotimes_{k=1}^{m_j}\cH_{j,k}\right)
 \otimes\conj{\cC_{\mathscr D}}\otimes\cE_{\mathscr D}
\end{equation}
that implements these identifications and orders.  More precisely,
$J_{\mathrm{ext}}$ is the tensor product of the specified legwise unitaries,
followed by the specified permutation of tensor factors.  In particular,
for every global cut it induces unitaries separately on the input and output
sides of the associated flattening.
\item Every edge $e$ pairs one positive leg
$Z_{e,+}(\rho)$ with one negative leg
$\conj{Z_{e,-}(\lambda)}$.  As part of the edge data, fix a relation
\[
 \mathscr R_e\subset
 \mathscr L_{e,-}\times\mathscr L_{e,+}
\]
of allowed endpoint-label pairs, where $\mathscr L_{e,\pm}$ are the label
sets of the two endpoint legs.  For every
$(\lambda,\rho)\in\mathscr R_e$ the edge
carries a bounded connector block
\[
 U_{e;\lambda,\rho}:
 Z_{e,+}(\rho)\longrightarrow Z_{e,-}(\lambda)
\]
and the contraction on that edge is the complex-linear functional
\begin{equation}
\label{eq:typed-edge-evaluation}
 \operatorname{ev}_{e;\lambda,\rho}
   (\conj y\otimes x)
 :=\ip{U_{e;\lambda,\rho}x}{y}_{Z_{e,-}(\lambda)}.
\end{equation}
If the two factors occur in the reverse tensor order, the canonical flip is
used first.  When an edge comes from a global bounded map $U_e$ between
spectrally decomposed Hilbert spaces, its connector block is, by definition,
$P_\lambda^-U_eP_\rho^+$ viewed as a map from $Z_{e,+}(\rho)$ to
$Z_{e,-}(\lambda)$; thus cross-scale blocks are not discarded.
\item Labels on external legs and on boundary attachments are fixed.
Labels on the remaining internal half-edges vary over the countable set
$\Lambda_{\mathrm{int}}(\mathscr D)$ of assignments satisfying
$(\lambda_{e,-},\lambda_{e,+})\in\mathscr R_e$ on every edge.  Fusion and
other vertex restrictions may be encoded by declaring the corresponding
local tensor to be zero.  For
$\boldsymbol\lambda\in\Lambda_{\mathrm{int}}(\mathscr D)$,
$\boldsymbol\lambda_v$ denotes the tuple incident to $v$.
\end{enumerate}
\end{definition}

For algebraic boundary kernels $K_1,\ldots,K_N$ and a fixed internal label
assignment $\boldsymbol\lambda$, take the tensor product of all local tensors
and all $K_j$, contract each graph edge by
\eqref{eq:typed-edge-evaluation}, and finally apply the unitary
$J_{\mathrm{ext}}$ from \eqref{eq:external-tree-identification}.
Because distinct edge evaluations act on disjoint tensor factors, their
order is immaterial.  This defines the single-label algebraic kernel
\begin{equation}
\label{eq:single-label-fusion-evaluation}
 \theta_{\mathscr D,\boldsymbol\lambda}(K_1,\ldots,K_N)
 :=
 J_{\mathrm{ext}}
 \operatorname{Contr}_{E(\mathscr D),\boldsymbol\lambda}
 \left(
  \bigotimes_{v\in V_{\mathrm{loc}}(\mathscr D)}
     \mathbf v_{\boldsymbol\lambda_v}
  \otimes\bigotimes_{j=1}^N K_j
 \right)
 \in\cK_q^{\mathrm{alg}}(\cC_{\mathscr D},\cE_{\mathscr D}).
\end{equation}
For a finite
$\Lambda\Subset\Lambda_{\mathrm{int}}(\mathscr D)$, define
\begin{equation}
\label{eq:finite-label-fusion-evaluation}
 \Theta_{\mathscr D,\Lambda}
 :=\sum_{\boldsymbol\lambda\in\Lambda}
   \theta_{\mathscr D,\boldsymbol\lambda}.
\end{equation}
Equations \eqref{eq:typed-edge-evaluation}--\eqref{eq:finite-label-fusion-evaluation}
fix the contraction and external-identification conventions used below.

\begin{definition}[Admissible tree cut]
\label{def:admissible-tree-cut}
Fix $S\subset[q]$.  At boundary vertex $b_j$, let
$T_j(S)\subset[m_j]$ be the slots whose concatenated global indices belong
to $S$.  Mark as input-side tensor factors the stochastic legs in $T_j(S)$
and the factor $\conj{\cC_j}$; mark the remaining stochastic legs and
$\cE_j$ as output-side factors.

An admissible tree cut
$\boldsymbol I\in\mathfrak A_{\mathscr D}(S)$ assigns a subset
$I_v^{\boldsymbol I}\subset J_v$ to every local vertex, subject to the
following rules.  An external negative local leg belongs to
$I_v^{\boldsymbol I}$, an external positive
local leg does not, and exactly one of the two endpoint legs of every graph
edge belongs to the corresponding local-or-boundary input-side set.
Thus every contracted edge crosses the induced cut exactly once.
\end{definition}

The set $\mathfrak A_{\mathscr D}(S)$ is finite and nonempty.  Indeed, the
boundary markings force the opposite endpoint choices on boundary--local
edges, while external markings merely fix the sides of their own unpaired
legs.  On each local--local edge either orientation may be chosen
independently.  No vertex-level compatibility condition is needed because an
oriented flattening is available for every subset of its tensor legs.

\begin{lemma}[One-edge link factorization]
\label{lem:tree-one-edge-factorization}
Let $\mathbf a$ and $\mathbf b$ be algebraic tensors over finite ordered
families of Hilbert spaces, with respective distinguished legs $h_+$ and
$h_-$ realized as $Z_+$ and $\conj{Z_-}$.
Contract these legs by a bounded map $U:Z_+\to Z_-$ as in
\eqref{eq:typed-edge-evaluation}.  Let $I_a$ and $I_b$ be cuts of the two
tensors such that exactly one of $h_+$ and $h_-$ is input-side.  Under the
fixed ordering of the surviving legs, set
\begin{equation}
\label{eq:induced-one-edge-cut}
 I=(I_a\setminus\{h_+\})\sqcup(I_b\setminus\{h_-\}).
\end{equation}
Then, up to canonical unitary permutations, the $I$-flattening of the
contracted tensor is a Schatten link of the two chosen flattenings with the
connector inserted on the common leg.  Consequently,
for every $1\le r\le\infty$,
\begin{equation}
\label{eq:tree-one-edge-bound}
 \norm{\Flat_I^{\mathrm{str}}
       (\operatorname{Contr}_U(\mathbf a\otimes\mathbf b))}_{\Sch_r}
 \le
 \norm U\,
 \norm{\Flat_{I_a}^{\mathrm{str}}(\mathbf a)}_{\Sch_r}
 \norm{\Flat_{I_b}^{\mathrm{str}}(\mathbf b)}_{\Sch_r}.
\end{equation}
\end{lemma}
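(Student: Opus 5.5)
The plan is to reduce the statement to Theorem~\ref{thm:schatten-link} by identifying the five Hilbert spaces of \eqref{eq:link-definition}, and to absorb the connector through a legwise rule. Two cases arise, according to which distinguished leg is input-side.

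\emph{Case $h_+\notin I_a$, $h_-\in I_b$.} The leg $h_+$ is an output leg of $\Flat_{I_a}^{\mathrm{str}}(\mathbf a)$, so this flattening has the form $X\to A\otimes Z_+$. Here $X$ is the conjugate tensor product of the legs in $I_a$, and $A$ is the tensor product of the remaining output legs of $\mathbf a$.

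The leg $h_-$ is realized as $\conj{Z_-}$ and lies on the input side of $\mathbf b$. Its contribution to the domain of $\Flat_{I_b}^{\mathrm{str}}(\mathbf b)$ is therefore $\conj{\conj{Z_-}}\simeq Z_-$. So that flattening has the form $B\otimes Z_-\to Y$, after a canonical factor permutation.

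The first step is to replace $\mathbf a$ by $\mathbf a':=(U\text{ on leg }h_+)\mathbf a$. By Proposition~\ref{prop:legwise-rule}, more precisely its strict-flattening analogue which has the same elementary-tensor proof, one has $\Flat_{I_a}^{\mathrm{str}}(\mathbf a')=(\Id_A\otimes U)\Flat_{I_a}^{\mathrm{str}}(\mathbf a)$. Hence $\norm{\Flat_{I_a}^{\mathrm{str}}(\mathbf a')}_{\Sch_r}\le\norm U\norm{\Flat_{I_a}^{\mathrm{str}}(\mathbf a)}_{\Sch_r}$, and $\mathbf a'$ now has leg $Z_-$. Moreover, by \eqref{eq:typed-edge-evaluation}, contracting with $U$ equals contracting $\mathbf a'$ against $\mathbf b$ with the plain pairing $\ip{x'}{y}$, with $x'\in Z_-$ and $\conj y\in\conj{Z_-}$.

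The second step is to check on elementary tensors, $\mathbf a'=\alpha\otimes x'$ and $\mathbf b=\beta\otimes\conj y$, that the $I$-flattening of this contraction coincides with $\operatorname{Link}_{Z_-}(\Flat_{I_b}^{\mathrm{str}}(\mathbf b),\Flat_{I_a}^{\mathrm{str}}(\mathbf a'))$ up to the canonical unitary permutations ordering the surviving legs. The rank-one computation from the proof of Theorem~\ref{thm:schatten-link} is the template. Both sides send an input vector to the product of inner products times the output tensor, and the pairing $\ip{x'}{y}$ appears exactly where $L$ evaluates on $Z$. Bilinearity and the finite dimensionality of algebraic tensors then extend the identity. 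Theorem~\ref{thm:schatten-link} gives \eqref{eq:tree-one-edge-bound} in this case, since unitary permutations preserve Schatten norms.

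\emph{Case $h_+\in I_a$, $h_-\notin I_b$.} This is the symmetric case. Here $\Flat_{I_b}^{\mathrm{str}}(\mathbf b)$ maps into $A\otimes\conj{Z_-}$, and $\Flat_{I_a}^{\mathrm{str}}(\mathbf a)$ has $\conj{Z_+}$ in its domain. I would move $U$ onto $\mathbf b$ as $U^*$ acting on the conjugate leg: $\conj{U^*}:\conj{Z_-}\to\conj{Z_+}$, which has the same norm. This uses $\ip{Ux}{y}=\ip{x}{U^*y}$, rewritten as the pairing of $x\in Z_+$ with $\conj{U^*y}$. The roles are then swapped: $\mathbf b$ plays $K$ with $Z=\conj{Z_+}$, and $\mathbf a$ plays $L$. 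The same link identification and Theorem~\ref{thm:schatten-link} finish the argument.

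The main obstacle is bookkeeping rather than analysis. One must verify that the conjugations line up, i.e.\ that the double-conjugation identification $\conj{\conj{E}}\simeq E$ fixed in \eqref{eq:strict-tensor-flattening} makes the link contract $\sum L_iK_i$ with no stray complex conjugation, and that $\conj{U^*}$ is complex linear. I would settle this first by the elementary-tensor check, mirroring the treatment of $U_0$ in the proof of Theorem~\ref{thm:ordered-contraction-profile}. I would then note that all remaining identifications are canonical unitary permutations of tensor factors.
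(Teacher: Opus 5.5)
Your proposal is correct and follows essentially the paper's proof: the same two-case split, the identification of the $I$-flattening with $\operatorname{Link}_{Z_-}\bigl(\Flat_{I_b}^{\mathrm{str}}(\mathbf b),(\Id\otimes U)\Flat_{I_a}^{\mathrm{str}}(\mathbf a)\bigr)$ in the first case, and the connector $\conj{U^*}:\conj{Z_-}\to\conj{Z_+}$ in the reverse case, all closed by the ideal property and Theorem~\ref{thm:schatten-link}. The only cosmetic difference is that in the second case you absorb $\conj{U^*}$ into the $\mathbf b$-flattening and link over $\conj{Z_+}$, whereas the paper composes it on the right of the $\mathbf a$-flattening and links over $\conj{Z_-}$; both describe the same operator and give the same bound.
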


\begin{proof}
There are two cases.  Suppose first that $h_+\notin I_a$ and
$h_-\in I_b$.  After canonical unitary regrouping, write
\[
 A:=\Flat_{I_a}^{\mathrm{str}}(\mathbf a):X_a\to A_a\otimes Z_+,
 \qquad
 B:=\Flat_{I_b}^{\mathrm{str}}(\mathbf b):B_b\otimes Z_-\to Y_b.
\]
Direct calculation on elementary tensors gives the operator identity
\begin{equation}
\label{eq:one-edge-link-forward}
 \Flat_I^{\mathrm{str}}
  (\operatorname{Contr}_U(\mathbf a\otimes\mathbf b))
 \simeq
 \operatorname{Link}_{Z_-}
  \bigl(B,(\Id_{A_a}\otimes U)A\bigr),
\end{equation}
where $\simeq$ denotes unitary equivalence on source and target.

Suppose instead that $h_+\in I_a$ and $h_-\notin I_b$.  Regroup so that
\[
 A:=\Flat_{I_a}^{\mathrm{str}}(\mathbf a):
 B_a\otimes\conj{Z_+}\to Y_a,
 \qquad
 B:=\Flat_{I_b}^{\mathrm{str}}(\mathbf b):
 X_b\to A_b\otimes\conj{Z_-}.
\]
The same elementary-tensor calculation, now in the reverse orientation,
gives
\begin{equation}
\label{eq:one-edge-link-reverse}
 \Flat_I^{\mathrm{str}}
  (\operatorname{Contr}_U(\mathbf a\otimes\mathbf b))
 \simeq
 \operatorname{Link}_{\conj{Z_-}}
 \bigl(A(\Id_{B_a}\otimes\conj{U^*}),B\bigr).
\end{equation}
Here $\conj{U^*}:\conj{Z_-}\to\conj{Z_+}$ and
$\norm{\conj{U^*}}=\norm U$.  Applying the Schatten ideal property and
Theorem~\ref{thm:schatten-link} to
\eqref{eq:one-edge-link-forward} and \eqref{eq:one-edge-link-reverse} proves
\eqref{eq:tree-one-edge-bound}.
\end{proof}

\begin{theorem}[Local-to-global Peter--Weyl fusion-tree transfer]
\label{thm:fusion-tree-transfer}
Fix $1\le r\le\infty$ and a typed Peter--Weyl fusion tree $\mathscr D$.
In boundary slot $j$, let
$K_j\in\cK_{m_j}^{\mathrm{alg}}$ be an algebraic kernel in the specified
configuration, and put $q=\sum_jm_j$.  For
$S\subset[q]$ and
$\boldsymbol I\in\mathfrak A_{\mathscr D}(S)$, define the unordered
nonnegative sum
\begin{equation}
\label{eq:global-fusion-majorant}
 \mathfrak F_{\mathscr D,r}^{\boldsymbol I}(S)
 :=
 \sum_{\boldsymbol\lambda\in\Lambda_{\mathrm{int}}(\mathscr D)}
 \prod_{v\in V_{\mathrm{loc}}(\mathscr D)}
  \mathfrak f_{v,r,I_v^{\boldsymbol I}}(\boldsymbol\lambda_v)
 \prod_{e\in E(\mathscr D)}
  \norm{U_{e;\lambda_{e,-},\lambda_{e,+}}},
\end{equation}
where an unordered sum means the supremum over its finite subsums.  Then,
for every finite
$\Lambda\Subset\Lambda_{\mathrm{int}}(\mathscr D)$ and every admissible
tree cut $\boldsymbol I$,
\begin{align}
\label{eq:truncated-fusion-transfer}
 &\norm{\Flat_S(\Theta_{\mathscr D,\Lambda}
       (K_1,\ldots,K_N))}_{\Sch_r}\notag\\
 &\quad\le
 \left(
 \sum_{\boldsymbol\lambda\in\Lambda}
 \prod_{v\in V_{\mathrm{loc}}(\mathscr D)}
  \mathfrak f_{v,r,I_v^{\boldsymbol I}}(\boldsymbol\lambda_v)
 \prod_{e\in E(\mathscr D)}
  \norm{U_{e;\lambda_{e,-},\lambda_{e,+}}}
 \right)
 \prod_{j=1}^N\Prof_{m_j,r}(K_j).
\end{align}
When $q=0$, the probabilistic conclusions below are interpreted using the
zero-order conventions $\cT_K^{(0)}=I_0(K)=K$ and $C_0=A_0=1$ fixed above.

For $1\le r<\infty$, set
\begin{equation}
\label{eq:fusion-global-cost}
 \mathcal M_{\mathscr D,r}
 :=
 \max_{S\subset[q]}
 \min_{\boldsymbol I\in\mathfrak A_{\mathscr D}(S)}
 \mathfrak F_{\mathscr D,r}^{\boldsymbol I}(S).
\end{equation}
If $\mathcal M_{\mathscr D,r}<\infty$, the series
\[
 \sum_{\boldsymbol\lambda\in\Lambda_{\mathrm{int}}(\mathscr D)}
 \theta_{\mathscr D,\boldsymbol\lambda}(K_1,\ldots,K_N)
\]
converges absolutely and unconditionally in $\mathfrak T_{q,r}$, independently
of the exhaustion or ordering of the internal labels.  Denote its sum by
$\Theta_{\mathscr D}(K_1,\ldots,K_N)$.  It obeys
\begin{equation}
\label{eq:global-fusion-transfer}
 \norm{\Flat_S(\Theta_{\mathscr D}(K_1,\ldots,K_N))}_{\Sch_r}
 \le
 \min_{\boldsymbol I\in\mathfrak A_{\mathscr D}(S)}
 \mathfrak F_{\mathscr D,r}^{\boldsymbol I}(S)
 \prod_{j=1}^N\Prof_{m_j,r}(K_j),
\end{equation}
and hence
\begin{equation}
\label{eq:global-fusion-profile}
 \Prof_{q,r}(\Theta_{\mathscr D}(K_1,\ldots,K_N))
 \le \mathcal M_{\mathscr D,r}
       \prod_{j=1}^N\Prof_{m_j,r}(K_j).
\end{equation}
Thus $\Theta_{\mathscr D}$ extends uniquely to the corresponding simultaneous
profile completions.  If $2\le r<\infty$ and $1\le p<\infty$, then
\begin{align}
\label{eq:fusion-decoupled-bound}
 &\norm{\cT_{\Theta_{\mathscr D}(K_1,\ldots,K_N)}^{(q)}}
       _{L^p(\Omega;\Sch_r)}\notag\\
 &\qquad\le C_q(p+r)^{q/2}\mathcal M_{\mathscr D,r}
       \prod_{j=1}^N\Prof_{m_j,r}(K_j).
\end{align}

For $1\le r<\infty$, still assuming
$\mathcal M_{\mathscr D,r}<\infty$, suppose that all
stochastic legs are labelled copies of the complexification of one real
Hilbert space and that every finite-truncation map
$\Theta_{\mathscr D,\Lambda}$ is same-field compatible.  Then
$\Theta_{\mathscr D,\Lambda}^{\mathrm W}$ converges in multilinear operator
norm to a continuous map
\[
 \Theta_{\mathscr D}^{\mathrm W}:
 \prod_{j=1}^N\mathfrak W_{m_j,r}\longrightarrow\mathfrak W_{q,r}.
\]
If, moreover, $2\le r<\infty$, then, for $1\le p<\infty$,
\begin{align}
\label{eq:fusion-wick-bound}
 &\norm{I_q(\Theta_{\mathscr D}^{\mathrm W}
       (K_1,\ldots,K_N))}_{L^p(\Omega;\Sch_r)}\notag\\
 &\qquad\le A_q(p+r)^{q/2}\mathcal M_{\mathscr D,r}
       \prod_{j=1}^N\norm{K_j}_{\mathfrak W_{m_j,r}}.
\end{align}
For $r=\infty$, \eqref{eq:truncated-fusion-transfer} remains valid for every
finite truncation; no infinite-label profile completion is asserted at that
endpoint.
\end{theorem}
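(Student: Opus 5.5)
The heart of the proof is the single-label estimate: for fixed $\boldsymbol\lambda$ and admissible $\boldsymbol I\in\mathfrak A_{\mathscr D}(S)$,
\[
 \norm{\Flat_S(\theta_{\mathscr D,\boldsymbol\lambda}(K_1,\ldots,K_N))}_{\Sch_r}
 \le\prod_{v}\mathfrak f_{v,r,I_v^{\boldsymbol I}}(\boldsymbol\lambda_v)
 \prod_{e}\norm{U_{e;\lambda_{e,-},\lambda_{e,+}}}
 \prod_j\norm{\Flat_{T_j(S)}(K_j)}_{\Sch_r}.
\]
I would prove this by induction on the number of edges, contracting one edge at a time.

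Since the graph is a finite tree, I order the edges so that each contraction joins two components that are still disjoint, for instance by repeatedly removing a leaf edge. At each step both sides are algebraic tensors over finite ordered families: the local tensors are finite-dimensional, and each $K_j$ is algebraic, so its boundary leg lies in a finite-dimensional support. The admissible tree cut marks exactly one endpoint of the current edge as input-side. Lemma~\ref{lem:tree-one-edge-factorization} then bounds the $\Sch_r$ norm of the induced cut \eqref{eq:induced-one-edge-cut} of the merged tensor by $\norm U$ times the two cut norms. Because the tree has no cycles, every later contraction again joins two distinct components, and the lemma applies at each step. The case of a boundary vertex reduces to this lemma once $\Flat_{T_j(S)}(K_j)$ is regarded as the strict flattening with $\conj{\cC_j}$ input-side. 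After all edges are contracted, the resulting cut on the surviving external legs is carried by $J_{\mathrm{ext}}$ to $\Flat_S$, up to unitaries on the input and output sides. Here I use the clause in \eqref{eq:external-tree-identification} that $J_{\mathrm{ext}}$ induces separate unitaries on each side of every global cut, together with the rule that external negative legs are input-side. Bounding $\norm{\Flat_{T_j(S)}(K_j)}_{\Sch_r}\le\Prof_{m_j,r}(K_j)$ and summing over $\Lambda$ with the triangle inequality gives \eqref{eq:truncated-fusion-transfer} for all $1\le r\le\infty$.

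For $r<\infty$ with $\mathcal M_{\mathscr D,r}<\infty$, I apply the single-label bound to each cut $S$ with a minimizing $\boldsymbol I_S$. This gives $\Prof_{q,r}(\theta_{\boldsymbol\lambda})\le\max_S c_{S,\boldsymbol\lambda}\prod_j\Prof_{m_j,r}(K_j)$, and $\sum_{\boldsymbol\lambda}\max_S c_{S,\boldsymbol\lambda}\le\sum_S\sum_{\boldsymbol\lambda}c_{S,\boldsymbol\lambda}\le 2^q\mathcal M_{\mathscr D,r}$. So the series converges absolutely in $\mathfrak T_{q,r}$, and absolute convergence in a Banach space makes it unconditional and independent of the exhaustion. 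The sharp bound \eqref{eq:global-fusion-transfer} is obtained cut by cut: for each $S$ the finite partial sums satisfy \eqref{eq:truncated-fusion-transfer} with the full unordered sum as an upper bound, and the cut norm is continuous under profile convergence. Taking the maximum over $S$ gives \eqref{eq:global-fusion-profile}.

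The remaining assertions follow from earlier results. Multilinear continuity, as in Corollary~\ref{cor:majorant-chaos-transfer}, gives the extension to completions. Theorem~\ref{thm:gaussian-estimate} gives \eqref{eq:fusion-decoupled-bound}. In the same-field case, $\Theta^{\mathrm W}_{\mathscr D,\Lambda}$ equals $\operatorname{Dec}^{-1}_q\operatorname{Sym}^{\mathrm{lab}}_q$ applied to the decoupled truncation. Lemma~\ref{lem:symmetrization-contractive} shows that the multilinear norms of the differences between truncations are bounded by tails of the absolutely convergent label series, so the truncations are Cauchy in operator norm. Theorem~\ref{thm:wick-estimate} then gives \eqref{eq:fusion-wick-bound}. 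At $r=\infty$ only the finite statement is claimed, so nothing more is needed.

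The main obstacle is the bookkeeping in the induction. At each contraction, the cut inherited by the merged component must be exactly \eqref{eq:induced-one-edge-cut}, in the orientation the lemma requires, and the modular and conjugation conventions of the edge evaluation must match the link forms \eqref{eq:one-edge-link-forward}--\eqref{eq:one-edge-link-reverse}. I would handle this by keeping, as the induction invariant, that each component's tensor carries the cut given by restricting $\boldsymbol I$ to the legs that are still uncontracted.
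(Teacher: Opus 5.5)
Your proposal is correct and follows the paper's proof. The paper runs the same one-edge-at-a-time induction as a rooted-subtree invariant: it orients the tree away from a root and attaches child subtrees to each local vertex. Its inherited cut is the set of input-side legs intersected with the surviving legs, which is your component-wise invariant. Your $r<\infty$ and same-field steps (the $2^q\mathcal M_{\mathscr D,r}$ summability bound, the cutwise limit, and the symmetrized tail estimate) match the paper's, and the order of contractions is immaterial because in a tree each contraction joins two disjoint components across a single edge.
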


\begin{proof}
Fix a label assignment, a global cut $S$, and an admissible tree cut
$\boldsymbol I$.  Let $\mathscr M$ be the set of all input-side legs in the
uncontracted vertex tensors: it contains $I_v^{\boldsymbol I}$ at each local
vertex, and at $b_j$ it contains the stochastic legs in $T_j(S)$ together
with $\conj{\cC_j}$.  Orient the graph away from its distinguished root.  For
a vertex $w$, let $\mathscr D_w$ be the subtree consisting of $w$ and all its
descendants.  Contract all edges internal to $\mathscr D_w$, but leave its
edge to the parent uncontracted when $w$ is not the root; denote the resulting
tensor by $\mathbf t_w$.  If $\mathscr L(\mathbf t_w)$ is its set of surviving
legs, define the inherited cut by
\begin{equation}
\label{eq:rooted-subtree-cut}
 \widehat I_w:=\mathscr M\cap\mathscr L(\mathbf t_w).
\end{equation}
We claim the rooted-subtree invariant
\begin{align}
\label{eq:rooted-subtree-invariant}
 \norm{\Flat_{\widehat I_w}^{\mathrm{str}}(\mathbf t_w)}_{\Sch_r}
 &\le
 \prod_{v\in V_{\mathrm{loc}}(\mathscr D_w)}
  \mathfrak f_{v,r,I_v^{\boldsymbol I}}(\boldsymbol\lambda_v)
 \prod_{e\in E(\mathscr D_w)}
  \norm{U_{e;\lambda_{e,-},\lambda_{e,+}}}\notag\\
 &\quad\times
 \prod_{b_j\in V(\mathscr D_w)}
  \norm{\Flat_{T_j(S)}(K_j)}_{\Sch_r},
\end{align}
where the last product ranges only over boundary vertices and
$E(\mathscr D_w)$ denotes the edges with both endpoints in the subtree.

We prove the claim upward from the leaves.  If $w=b_j$, then
$\mathbf t_w=K_j$, and its inherited linearization is
$\Flat_{T_j(S)}(K_j)$ up to the fixed canonical unitaries.  If $w$ is a
local leaf, then $\widehat I_w=I_w^{\boldsymbol I}$, and the claim is exactly
the definition of
$\mathfrak f_{w,r,I_w^{\boldsymbol I}}$.  Now let $w$ be a local vertex with
child subtrees $\mathscr D_{w_1},\ldots,\mathscr D_{w_d}$, and assume the
claim for every child.  Start from
$\mathbf v_{\boldsymbol\lambda_w}$ and attach the tensors
$\mathbf t_{w_1},\ldots,\mathbf t_{w_d}$ one at a time.  On each attaching
edge, admissibility puts the two endpoint legs on opposite sides of the
inherited cut.  Lemma~\ref{lem:tree-one-edge-factorization} therefore
preserves the claimed product bound and contributes precisely the norm of
that edge connector.  After each attachment, the new cut remains the
intersection of $\mathscr M$ with the current surviving-leg set; hence the
next attaching edge still crosses it exactly once.  After all children have
been attached, this is
\eqref{eq:rooted-subtree-invariant} for $w$.

At the root, $\mathscr D_w=\mathscr D$.  The fixed external unitary
$J_{\mathrm{ext}}$ identifies its inherited flattening with the global
$S$-flattening, up to unitary permutations.  Hence the invariant gives
\begin{align}
\label{eq:single-label-tree-bound}
 &\norm{\Flat_S(
   \theta_{\mathscr D,\boldsymbol\lambda}
   (K_1,\ldots,K_N))}_{\Sch_r}\notag\\
 &\quad\le
 \prod_{v\in V_{\mathrm{loc}}(\mathscr D)}
  \mathfrak f_{v,r,I_v^{\boldsymbol I}}(\boldsymbol\lambda_v)
 \prod_{e\in E(\mathscr D)}
  \norm{U_{e;\lambda_{e,-},\lambda_{e,+}}}
 \prod_{j=1}^N
  \norm{\Flat_{T_j(S)}(K_j)}_{\Sch_r}.
\end{align}
Each local tensor and each connector occurs once, so no dimension or
vertex-count constant is introduced.  Moreover, the graph is finite, every
incident space of a local tensor is finite-dimensional, every $K_j$ is
algebraic, and a
bounded connector sends each vector on its incident leg to a vector on the
opposite leg; consequently each single-label term is algebraic.  Bounding the
last factors by the full
boundary profiles, summing over a finite $\Lambda$, and applying the triangle
inequality proves
\eqref{eq:truncated-fusion-transfer}.

Assume now that $r<\infty$ and
$\mathcal M_{\mathscr D,r}<\infty$.  For every $S$, choose a minimizing
$\boldsymbol I_S\in\mathfrak A_{\mathscr D}(S)$; this is possible because
the admissible-cut set is finite.  Put
$I_{S,v}:=I_v^{\boldsymbol I_S}$ and let
\[
 a_S(\boldsymbol\lambda)
 :=
 \prod_v
  \mathfrak f_{v,r,I_{S,v}}(\boldsymbol\lambda_v)
 \prod_e
  \norm{U_{e;\lambda_{e,-},\lambda_{e,+}}}.
\]
Since every single-label term is algebraic, it represents an element of
$\mathfrak T_{q,r}$, and \eqref{eq:single-label-tree-bound} gives
\begin{align*}
 \sum_{\boldsymbol\lambda}
 \Prof_{q,r}\bigl(
  \theta_{\mathscr D,\boldsymbol\lambda}(K_1,\ldots,K_N)\bigr)
 &\le
 \prod_{j=1}^N\Prof_{m_j,r}(K_j)
 \sum_{S\subset[q]}\sum_{\boldsymbol\lambda}a_S(\boldsymbol\lambda)\\
 &\le
 2^q\mathcal M_{\mathscr D,r}
 \prod_{j=1}^N\Prof_{m_j,r}(K_j)<\infty.
\end{align*}
Thus the label series is absolutely, hence unconditionally, convergent in
the Banach space $\mathfrak T_{q,r}$.  Applying
\eqref{eq:truncated-fusion-transfer} to arbitrary finite subsums, passing to
the limit, and minimizing over admissible cuts proves
\eqref{eq:global-fusion-transfer} and
\eqref{eq:global-fusion-profile}.  The same estimate for differences is
uniform on products of profile-unit balls, so multilinear continuity gives
the asserted extension.

For $2\le r<\infty$, Theorem~\ref{thm:gaussian-estimate} applied to
\eqref{eq:global-fusion-profile} gives
\eqref{eq:fusion-decoupled-bound}.  Under the same-field hypothesis,
normalized symmetrization is contractive and the preceding absolute-tail
estimate is uniform on products of $\mathfrak W_{m_j,r}$ unit balls.  More
precisely, for finite $\Lambda\subset\Lambda'$ the multilinear operator norm
of the symmetrized difference is bounded by
\[
 \sum_{S\subset[q]}
 \sum_{\boldsymbol\lambda\in\Lambda'\setminus\Lambda}
 a_S(\boldsymbol\lambda),
\]
which tends to zero along the directed set of finite label subsets.
Therefore the finite-truncation maps converge in multilinear operator norm.
Theorem~\ref{thm:wick-estimate} then gives
\eqref{eq:fusion-wick-bound}.
\end{proof}

\begin{remark}[Leg maps versus coefficient maps]
\label{rem:fusion-leg-coefficient-maps}
The connectors in Definition~\ref{def:peter-weyl-fusion-tree} act on one
Hilbert leg and therefore cost their ordinary operator norms.  An entrywise
map on deterministic coefficient operators is a different kind of vertex.
If such a map $\Phi$ is inserted, is finite-rank preserving, and satisfies
$\kappa_r(\Phi)<\infty$, Proposition~\ref{prop:coefficient-map-transfer}
multiplies the corresponding local majorant by $\kappa_r(\Phi)$.  At finite
Schatten exponents, neither the ordinary operator norm nor the
completely bounded norm can in general be substituted for this factor.
\end{remark}

\begin{remark}[Orientation and modular weights]
\label{rem:kac-nonkac}
The basis-free profile \eqref{eq:oriented-fusion-profile} is formed in the
Haar--GNS Hilbert structure and therefore already contains the asymmetry
made explicit by
\eqref{eq:quantum-schur-orthogonality}--\eqref{eq:clebsch-gordan-gns-product}.
In the Kac case $Q_\alpha=\Id$ and this modular asymmetry disappears.  In
the non-Kac case the two inequivalent one-leg currying cuts of the
multiplication tensor can carry different $Q^{1/2}$ and $Q^{-1/2}$ weights;
complementary cuts nevertheless have identical singular values, as stated
in Lemma~\ref{lem:fusion-profile-intrinsic}.  The transfer theorem is
formally valid in both cases, but a quantitative non-Kac application still
requires verification that $\mathcal M_{\mathscr D,r}<\infty$.  No uniform
dyadic estimate for the modular factors is claimed here.
\end{remark}

\section{Polynomial-growth group duals and singular Wick multipliers}
\label{sec:group-duals}

\subsection{The group-dual model and
\texorpdfstring{$\ell^2$}{l2} Wick transfer}

Let $\Gamma$ be a finitely generated discrete group with word length
$|\cdot|$ and polynomial volume growth
\begin{equation}
\label{eq:group-growth}
 \#\{x\in\Gamma:|x|\le R\}\le C_\Gamma R^D,
 \qquad R\ge1.
\end{equation}
Its compact quantum dual $\widehat\Gamma$ has reduced algebra
$C_r^*(\Gamma)$, coproduct
\begin{equation}
\label{eq:group-dual-coproduct}
 \Delta(\lambda_x)=\lambda_x\otimes\lambda_x,
\end{equation}
and tracial Haar state $\tau(\lambda_x)=\mathbf1_{x=e}$.  The GNS space is
$H_{\mathrm{sp}}=\ell^2(\Gamma)$ and left multiplication by $\lambda_x$ is
the left regular shift.  We identify an algebra element with its left
multiplication operator on $H_{\mathrm{sp}}$.

Put $\langle x\rangle=1+|x|$ and define
\begin{equation}
\label{eq:group-spectral-operator}
 \Lambda\delta_x=\langle x\rangle\delta_x.
\end{equation}
For $L\in\Dya$, let
\[
 A_L:=\{x\in\Gamma:L\le\langle x\rangle<2L\}
\]
and let $P_L$ be the corresponding coordinate projection.
After enlarging $C_\Gamma$ by a factor depending only on $D$, we use
$\#A_L\le C_\Gamma L^D$ without further comment.

The multiplication structure tensor from
Definition~\ref{def:fusion-profile} is explicit in this model:
\begin{equation}
\label{eq:group-fusion-tensor}
 \mathbf m_{L,R}^{Q}
 =\sum_{\substack{x\in A_L,\ y\in A_R\\xy\in A_Q}}
 \conj{\delta_x}\otimes\conj{\delta_y}\otimes\delta_{xy}.
\end{equation}
For $z\in A_Q$, $x\in A_L$, and $y\in A_R$, define the fiber degrees
\begin{align*}
 d_Q(z)&:=\#\{(x,y)\in A_L\times A_R:xy=z\},\\
 d_L(x)&:=\#\{y\in A_R:xy\in A_Q\},\\
 d_R(y)&:=\#\{x\in A_L:xy\in A_Q\}.
\end{align*}

\begin{lemma}[Fusion cuts as incidence degrees]
\label{lem:group-fusion-incidence}
For $1\le r<\infty$, the three nontrivial fusion linearizations of
\eqref{eq:group-fusion-tensor} satisfy
\begin{align}
\label{eq:group-fusion-cut-formulas}
 \norm{\Flat_{\{1,2\}}^{\mathrm{str}}\mathbf m_{L,R}^{Q}}_{\Sch_r}
 &=\left(\sum_{z\in A_Q}d_Q(z)^{r/2}\right)^{1/r},\notag\\
 \norm{\Flat_{\{1\}}^{\mathrm{str}}\mathbf m_{L,R}^{Q}}_{\Sch_r}
 &=\left(\sum_{x\in A_L}d_L(x)^{r/2}\right)^{1/r},\\
 \norm{\Flat_{\{2\}}^{\mathrm{str}}\mathbf m_{L,R}^{Q}}_{\Sch_r}
 &=\left(\sum_{y\in A_R}d_R(y)^{r/2}\right)^{1/r}.
 \notag
\end{align}
At $r=\infty$, the three norms are respectively the square roots of
$\max_z d_Q(z)$, $\max_x d_L(x)$, and $\max_y d_R(y)$.  Moreover,
\begin{equation}
\label{eq:group-fusion-degree-bounds}
 d_Q(z)\le C_\Gamma\min\{L,R\}^{D},\qquad
 d_L(x)\le C_\Gamma\min\{R,Q\}^{D},\qquad
 d_R(y)\le C_\Gamma\min\{L,Q\}^{D}.
\end{equation}
\end{lemma}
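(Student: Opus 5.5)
Each of the three nontrivial linearizations of \eqref{eq:group-fusion-tensor} is a $0$--$1$ incidence matrix between orthonormal bases whose rows have pairwise disjoint supports. Its Gram matrix is therefore diagonal, with the fiber degrees on the diagonal, and the singular values are the square roots of those degrees.

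\emph{The cut $I=\{1,2\}$.} This flattening is $\mu_{L,R}^Q$, sending $\delta_x\otimes\delta_y$ to $\delta_{xy}$ when $xy\in A_Q$ and to $0$ otherwise. Its adjoint sends $\delta_z$ to $\sum_{xy=z}\delta_x\otimes\delta_y$ for $z\in A_Q$. The vectors $\mu^*\delta_z$ for distinct $z$ are supported on disjoint sets of pairs $(x,y)$, so $\mu\mu^*$ is diagonal in $(\delta_z)_{z\in A_Q}$ with entries $d_Q(z)$. Hence the singular values of $\mu$ are $d_Q(z)^{1/2}$, and the $\Sch_r$ formula and the $r=\infty$ formula both follow.

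\emph{The cut $I=\{1\}$.} This linearization sends $\delta_x$, through the canonical identification $\conj{\conj{H_L}}\simeq H_L$, to $\sum_{y\in A_R,\ xy\in A_Q}\conj{\delta_y}\otimes\delta_{xy}$. For distinct $x$ the terms $\conj{\delta_y}\otimes\delta_{xy}$ occurring in these images are distinct, since they differ in the pair $(y,xy)$, which determines $x$. So the images are orthogonal with squared norms $d_L(x)$, and the operator is a diagonal weight $d_L(x)^{1/2}$ followed by an isometry.

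\emph{The cut $I=\{2\}$.} The argument is identical: $y$ maps to the orthogonal family indexed by $(x,xy)$, and the pair $(x,xy)$ determines $y$. This gives singular values $d_R(y)^{1/2}$.

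Since the bases are coordinate bases, no conjugation issue affects the singular values; this is consistent with Lemma~\ref{lem:fusion-profile-intrinsic}.

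\emph{Degree bounds.} We use group cancellation together with $\#A_L\le C_\Gamma L^D$.

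For $d_Q(z)$: the pair $(x,y)$ with $xy=z$ is determined by $x$, since $y=x^{-1}z$, and also by $y$, since $x=zy^{-1}$. Hence $d_Q(z)\le\min\{\#A_L,\#A_R\}\le C_\Gamma\min\{L,R\}^D$.

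For $d_L(x)$: the element $y$ is determined by $w=xy\in A_Q$, so $d_L(x)\le\min\{\#A_R,\#A_Q\}$.

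For $d_R(y)$: likewise $d_R(y)\le\min\{\#A_L,\#A_Q\}$.

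The only point needing care is making the double-conjugate identification explicit in the one-leg cuts, and that is routine. I expect no real obstacle.
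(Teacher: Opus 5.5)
Your proposal is correct and follows the paper's own proof essentially step for step: $\mu_{L,R}^{Q}(\mu_{L,R}^{Q})^*$ is diagonal with entries $d_Q(z)$, the one-leg cuts send distinct basis vectors to orthogonal vectors of squared norms $d_L(x)$ and $d_R(y)$, and cancellativity injects each fiber into either of its two defining shells. The only slip is in your opening summary: for the one-leg cuts it is the columns (the images of basis vectors), not the rows, that have disjoint supports, so the diagonal Gram matrix there is $F^*F$ rather than $FF^*$. Your detailed argument already uses this correctly.
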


\begin{proof}
For the first cut, the multiplication map sends
$\delta_x\otimes\delta_y$ to $\delta_{xy}$.  Its product with its adjoint is
diagonal on $H_Q$, with diagonal entry $d_Q(z)$ at $\delta_z$.  Thus its
singular values are $d_Q(z)^{1/2}$.  For the second cut, the images of
distinct $\delta_x$ are orthogonal and have squared norms $d_L(x)$; the
third cut is identical with $x$ and $y$ interchanged.  This proves
\eqref{eq:group-fusion-cut-formulas} and the operator-norm formulas.  Finally,
group multiplication is cancellative, so every fiber injects into either of
the two shells that define it.  The shell volume bounds give
\eqref{eq:group-fusion-degree-bounds}.
\end{proof}

\begin{lemma}[Shift incidence formula]
\label{lem:shift-incidence}
For $w\in\Gamma$,
\begin{equation}
\label{eq:shift-incidence}
 \norm{P_L\lambda_wP_Q}_{\Sch_2}^2
 =\#\{x\in A_Q:wx\in A_L\}
 \le C_\Gamma\min\{L,Q\}^{D}.
\end{equation}
Moreover $\norm{P_L\lambda_wP_Q}_{\cL}\le1$.
\end{lemma}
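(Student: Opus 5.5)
The plan is to compute the operator $P_L\lambda_wP_Q$ directly on the coordinate basis $(\delta_x)_{x\in\Gamma}$ of $\ell^2(\Gamma)$. The left regular shift acts by $\lambda_w\delta_x=\delta_{wx}$. Hence for $x\in A_Q$ we get $P_L\lambda_wP_Q\delta_x=\delta_{wx}$ when $wx\in A_L$, and $0$ otherwise. For $x\notin A_Q$ the operator vanishes on $\delta_x$.

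For the Hilbert--Schmidt identity in \eqref{eq:shift-incidence}, sum $\norm{P_L\lambda_wP_Q\delta_x}^2$ over the orthonormal basis. Each summand is $1$ or $0$, so the sum is exactly the number of $x\in A_Q$ with $wx\in A_L$.

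For the counting bound, use cancellativity as in the proof of Lemma~\ref{lem:group-fusion-incidence}. The set $\{x\in A_Q:wx\in A_L\}$ is contained in $A_Q$, and the map $x\mapsto wx$ injects it into $A_L$. Its cardinality is therefore at most $\min\{\#A_Q,\#A_L\}$. The shell volume bound $\#A_L\le C_\Gamma L^D$, fixed after \eqref{eq:group-spectral-operator}, then gives $C_\Gamma\min\{L,Q\}^D$.

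For the operator-norm claim, $\lambda_w$ is unitary and $P_L,P_Q$ are orthogonal projections, so $\norm{P_L\lambda_wP_Q}_{\cL}\le\norm{\lambda_w}=1$. Alternatively, the operator maps distinct basis vectors to distinct basis vectors or to zero, so it is a partial isometry. No step is a real obstacle. The only care needed is to keep the convention $\lambda_w\delta_x=\delta_{wx}$ consistent with left multiplication, and this does not affect the count.
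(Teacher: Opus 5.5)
Your proposal is correct and follows essentially the same route as the paper: both identify $P_L\lambda_wP_Q$ as a partial permutation matrix, read off the squared Hilbert--Schmidt norm as the number of nonzero columns, and bound that count by $\min\{\#A_Q,\#A_L\}$ using injectivity of $x\mapsto wx$ together with the shell volume bound. The operator-norm estimate is the same immediate contraction bound in both.
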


\begin{proof}
The operator sends $\delta_x$ to $\delta_{wx}$ when
$x\in A_Q$ and $wx\in A_L$, and sends it to zero otherwise.  It is therefore
a partial permutation matrix.  Its Hilbert--Schmidt norm squared is the
number of nonzero columns.  The same number is at most both $\#A_Q$ and
$\#A_L$, which gives \eqref{eq:shift-incidence}.  The operator-norm bound is
immediate.
\end{proof}

Let $H_{\mathrm G,\mathbb R}=\ell^2(\Gamma;\mathbb R)$, let
$H_{\mathrm G}$ be its complexification, and let
$g_x=W(\delta_x)$ be the real isonormal coordinates.  For an ordered tuple
$\mathbf c=(c_1,\ldots,c_m)$ of scalar sequences and $N\ge1$, put
$c_{j,x}^{(N)}=c_{j,x}\mathbf1_{|x|\le N}$ and define
\begin{equation}
\label{eq:group-mixed-wick-polynomial}
 \Phi_{\mathbf c,N}
 :=\sum_{x_1,\ldots,x_m\in\Gamma}
 \wick{g_{x_1}\cdots g_{x_m}}
 \left(\prod_{j=1}^m c_{j,x_j}^{(N)}\right)
 \lambda_{x_1\cdots x_m}.
\end{equation}
The sum is finite.  When $c_1=\cdots=c_m=c$, write
$c^{(N)}$ for the common cutoff and
$\Phi_{m,N}(c):=\Phi_{\mathbf c,N}$.  In this diagonal case,
\begin{equation}
\label{eq:group-gaussian-field}
 X_N(c):=\sum_{x\in\Gamma}g_xc_x^{(N)}\lambda_x
\end{equation}
has $\Phi_{m,N}(c)$ as the order-$m$ homogeneous Wick projection of
$X_N(c)^m$.  We identify these algebra elements with their left
multiplication operators on $H_{\mathrm{sp}}$.

For fixed $L,Q$, let $\widetilde K_{\mathbf c,N}^{L,Q}$ be the ordered
coefficient kernel whose coefficient at $(x_1,\ldots,x_m)$ is
\begin{equation}
\label{eq:group-block-coefficient}
 \left(\prod_{j=1}^m c_{j,x_j}^{(N)}\right)
 P_L\lambda_{x_1\cdots x_m}P_Q,
\end{equation}
and put
\begin{equation}
\label{eq:group-symmetric-kernel}
 K_{\mathbf c,N}^{L,Q}
 :=\operatorname{Sym}_m\widetilde K_{\mathbf c,N}^{L,Q}.
\end{equation}
For a diagonal tuple $(c,\ldots,c)$, abbreviate these kernels by
$\widetilde K_{m,N}^{L,Q}(c)$ and $K_{m,N}^{L,Q}(c)$.
The ordered kernel need not be symmetric when $\Gamma$ is nonabelian.  For
a nonsymmetric algebraic kernel we use the standard convention
$I_m(K):=I_m(\operatorname{Sym}_mK)$.  Hence
\[
 P_L\Phi_{\mathbf c,N}P_Q
 =I_m(K_{\mathbf c,N}^{L,Q})
 =I_m(\widetilde K_{\mathbf c,N}^{L,Q}),
\]
because a multiple integral depends only on the symmetrization of its
kernel.

\begin{lemma}[Controlled-shift operator endpoint]
\label{lem:group-operator-endpoint}
Let $c_1,\ldots,c_m\in\ell^2(\Gamma)$ be finitely supported, and let
$\widetilde K_{\mathbf c}^{L,Q}$ be the ordered kernel with coefficients
\[
 \bigl(\widetilde K_{\mathbf c}^{L,Q}\bigr)_{x_1,\ldots,x_m}
 :=\left(\prod_{j=1}^m c_{j,x_j}\right)
 P_L\lambda_{x_1\cdots x_m}P_Q.
\]
Then, for every $S\subset[m]$,
\begin{equation}
\label{eq:group-operator-endpoint}
 \norm{\Flat_S(\widetilde K_{\mathbf c}^{L,Q})}_{\cL}
 \le\prod_{j=1}^m\norm{c_j}_{\ell^2}.
\end{equation}
In particular,
\[
 \Prof_{m,\infty}(\widetilde K_{\mathbf c}^{L,Q})
 \le\prod_{j=1}^m\norm{c_j}_{\ell^2}.
\]
\end{lemma}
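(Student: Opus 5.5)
Since $P_L$ and $P_Q$ are contractions, Proposition~\ref{prop:legwise-rule} (or simply the Schatten ideal property applied to the coefficient map $T\mapsto P_LTP_Q$, cf.\ Example~\ref{ex:schatten-stable-maps}) reduces \eqref{eq:group-operator-endpoint} to the uncompressed kernel with coefficients $(\prod_j c_{j,x_j})\lambda_{x_1\cdots x_m}$ acting on $\ell^2(\Gamma)$. I would then write this kernel as an iterated composition of single-leg kernels. For one leg, let $M_j$ be the order-one kernel with coefficients $c_{j,x}\lambda_x$. Because the deterministic coefficients multiply in order ($\lambda_{x_1\cdots x_m}=\lambda_{x_1}\cdots\lambda_{x_m}$), the full kernel is the zero-pairing ordered contraction $M_1\circ_\emptyset(M_2\circ_\emptyset(\cdots\circ_\emptyset M_m))$, up to the fixed ordering of the unpaired legs. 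Equivalently, every cut flattening is an iterated $\operatorname{Link}$ with a one-dimensional stochastic side over the common deterministic space $Z=\ell^2(\Gamma)$.

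By Theorem~\ref{thm:ordered-contraction-profile} (the empty-pairing case, which rests on Theorem~\ref{thm:schatten-link} at $r=\infty$), each cut of the product is bounded by the product of cuts of the factors. The bound $\Prof_{m,\infty}\le\prod_j\Prof_{1,\infty}(M_j)$ therefore reduces the statement to order one.

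At order one there are two cuts. For $S=\emptyset$, the flattening maps $\xi\in\ell^2(\Gamma)$ to $\sum_x \delta_x\otimes c_{j,x}\lambda_x\xi$. This is a column operator with norm $\|\sum_x|c_{j,x}|^2\lambda_x^*\lambda_x\|^{1/2}=\|c_j\|_{\ell^2}$, since each $\lambda_x$ is unitary. For $S=\{1\}$, the flattening is the row operator $\sum_x \overline{\delta_x}\otimes\eta\mapsto c_{j,x}\lambda_x\eta$. Its norm is $\|\sum_x|c_{j,x}|^2\lambda_x\lambda_x^*\|^{1/2}=\|c_j\|_{\ell^2}$, again by unitarity. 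Both cuts are thus controlled by $\|c_j\|_{\ell^2}$.

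The main point of care is checking that the coefficient order in the ordered contraction matches $\lambda_{x_1}\cdots\lambda_{x_m}$: the convention is $L_{\mathbf j}K_{\mathbf i}$, so the leftmost factor must be $L$. One must also check that the output leg ordering ($K$ legs before $L$ legs) agrees with $(x_1,\ldots,x_m)$ up to a leg permutation. Lemma~\ref{lem:symmetrization-contractive}'s permutation invariance of the profile absorbs any reordering, so neither issue affects the bound.

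Finally, I would apply the compressions $P_L$ on the output and $P_Q$ on the input, which are contractions, and take the maximum over $S$ to obtain the profile statement.
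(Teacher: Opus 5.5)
Your proof is correct and is essentially the paper's argument. Your order-one flattenings (the column operator for $S=\emptyset$ and the row operator for $S=\{1\}$) are exactly the paper's vertex maps $V_{c_j}$ and $U_{c_j}$, and your iterated zero-pairing contraction through the $r=\infty$ link bound is the paper's serial composition of these vertices along the spatial leg, with $P_L,P_Q$ absorbed as contractions. One technical point to keep in mind: the uncompressed and intermediate kernels have coefficients $\lambda_w$ that are not Hilbert--Schmidt, so you are applying the contraction and legwise rules at $r=\infty$ to finite arrays of bounded coefficients, just as the paper's own ``uncompressed flattening'' implicitly does.
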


\begin{proof}
For finitely supported $d\in\ell^2(\Gamma)$, define
\begin{align*}
 V_d:H_{\mathrm{sp}}&\longrightarrow H_{\mathrm G}\otimes H_{\mathrm{sp}},
 &V_d\xi&:=\sum_xd_x\delta_x\otimes\lambda_x\xi,\\
 U_d:\conj{H_{\mathrm G}}\otimes H_{\mathrm{sp}}&\longrightarrow H_{\mathrm{sp}},
 &U_d(\conj{\delta_x}\otimes\xi)&:=d_x\lambda_x\xi.
\end{align*}
Orthogonality of the Gaussian index vectors and unitarity of the shifts give
\[
 V_d^*V_d=\norm d_{\ell^2}^2\Id_{H_{\mathrm{sp}}},
 \qquad
 U_dU_d^*=\norm d_{\ell^2}^2\Id_{H_{\mathrm{sp}}}.
\]
Thus $\norm{V_d}=\norm{U_d}=\norm d_{\ell^2}$.

Fix $S\subset[m]$ and read the word
$\lambda_{x_1}\cdots\lambda_{x_m}$ along the spatial leg from right to
left.  At the $j$th vertex use $V_{c_j}$ when $j\notin S$, creating the
$j$th output stochastic leg, and use $U_{c_j}$ when $j\in S$, consuming the
$j$th input stochastic leg.  Up to canonical permutations, the resulting
serial tree diagram is the uncompressed flattening of the coefficient word.
The initial and terminal projections $P_Q$ and $P_L$ are contractions.
Multiplying the vertex norms proves \eqref{eq:group-operator-endpoint}.
\end{proof}

\begin{proposition}[Mixed Schatten profile and incidence reduction]
\label{prop:group-schatten-profile}
Let $\mathbf c=(c_1,\ldots,c_m)$ with $c_j\in\ell^2(\Gamma)$.  For every
$N\ge1$ and every cut $S\subset[m]$,
\begin{align}
\label{eq:group-exact-hilbert-profile}
 \norm{\Flat_S(\widetilde K_{\mathbf c,N}^{L,Q})}_{\Sch_2}^2
 &=\sum_{x_1,\ldots,x_m}
 \left|\prod_{j=1}^mc_{j,x_j}^{(N)}\right|^2
 \#\{y\in A_Q:x_1\cdots x_my\in A_L\}.
\end{align}
Consequently,
\begin{equation}
\label{eq:group-profile-bound}
 \Prof_{m,r}(K_{\mathbf c,N}^{L,Q})
 \le\Prof_{m,r}(\widetilde K_{\mathbf c,N}^{L,Q})
 \le C_\Gamma^{1/r}\min\{L,Q\}^{D/r}
      \prod_{j=1}^m\norm{c_j^{(N)}}_{\ell^2},
 \qquad 2\le r\le\infty.
\end{equation}
At $r=\infty$, the factor involving $C_\Gamma,L,Q$ is interpreted as one.
\end{proposition}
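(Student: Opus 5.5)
The plan is to compute the Hilbert--Schmidt norm of each flattening directly and then interpolate against the operator-norm endpoint of Lemma~\ref{lem:group-operator-endpoint}.

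\emph{Step 1: the exact Hilbert--Schmidt identity \eqref{eq:group-exact-hilbert-profile}.} By Lemma~\ref{lem:hilbert-endpoint}, every flattening of $\widetilde K_{\mathbf c,N}^{L,Q}$ has Hilbert--Schmidt norm equal to the kernel norm in $\cK_m$, so the quantity does not depend on $S$. Since the $\delta_{x_j}$ are orthonormal, that norm squared is
\[
 \sum_{x_1,\ldots,x_m}\Bigl|\prod_jc_{j,x_j}^{(N)}\Bigr|^2
 \norm{P_L\lambda_{x_1\cdots x_m}P_Q}_{\Sch_2}^2 .
\]
Lemma~\ref{lem:shift-incidence} with $w=x_1\cdots x_m$ identifies the last factor with $\#\{y\in A_Q:wy\in A_L\}$. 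This gives \eqref{eq:group-exact-hilbert-profile}. The same lemma bounds that count by $C_\Gamma\min\{L,Q\}^D$, so
\[
 \Prof_{m,2}\bigl(\widetilde K_{\mathbf c,N}^{L,Q}\bigr)
 \le C_\Gamma^{1/2}\min\{L,Q\}^{D/2}\prod_j\norm{c_j^{(N)}}_{\ell^2}.
\]

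\emph{Step 2: interpolation for $2\le r\le\infty$.} For each cut, Schatten interpolation gives
\[
 \norm{T}_{\Sch_r}\le\norm T_{\cL}^{1-2/r}\norm T_{\Sch_2}^{2/r},
\]
the same inequality used in \eqref{eq:effective-rank-schatten-conversion}. The cutoff sequences $c_j^{(N)}$ are finitely supported, so Lemma~\ref{lem:group-operator-endpoint} applied to them gives $\norm{\Flat_S(\widetilde K_{\mathbf c,N}^{L,Q})}_{\cL}\le\prod_j\norm{c_j^{(N)}}_{\ell^2}$. Combining the two endpoints yields the factor $(C_\Gamma\min\{L,Q\}^D)^{1/r}$ times the product of the $\ell^2$ norms. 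This is the right-hand bound in \eqref{eq:group-profile-bound}; at $r=\infty$ it reduces to Lemma~\ref{lem:group-operator-endpoint} alone, consistent with the stated convention.

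\emph{Step 3: symmetrization.} The left inequality in \eqref{eq:group-profile-bound} is Lemma~\ref{lem:symmetrization-contractive}, since $K_{\mathbf c,N}^{L,Q}=\operatorname{Sym}_m\widetilde K_{\mathbf c,N}^{L,Q}$ and all stochastic legs are copies of $H_{\mathrm G}$. Here it must be checked that the same-field profile is computed on labelled copies. Because $\operatorname{Dec}_m$ is a legwise unitary that intertwines the two symmetrizations, Proposition~\ref{prop:legwise-rule} shows this causes no change.

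I expect no serious obstacle. The one point needing care is Step 1's independence of $S$: it must be confirmed that the coefficient blocks $P_L\lambda_wP_Q$ indexed by distinct tuples are paired with orthonormal stochastic vectors. That holds because the flattening is a unitary regrouping of the kernel, and Lemma~\ref{lem:hilbert-endpoint} already covers it.
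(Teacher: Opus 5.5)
Your proposal is correct and follows the paper's proof step for step. It computes the $S$-independent Hilbert--Schmidt norm via Lemma~\ref{lem:hilbert-endpoint} and Lemma~\ref{lem:shift-incidence}, interpolates cut by cut against the controlled-shift operator endpoint of Lemma~\ref{lem:group-operator-endpoint} (applied to the finitely supported cutoffs), and then applies Lemma~\ref{lem:symmetrization-contractive} to the symmetrized kernel. Your remark that $\operatorname{Dec}_m$ is a legwise unitary, so that Proposition~\ref{prop:legwise-rule} makes the labelled-copy convention harmless, is a detail the paper leaves implicit, and it is exactly right.
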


\begin{proof}
By Lemma~\ref{lem:hilbert-endpoint}, the Hilbert--Schmidt norm of every
flattening of the ordered kernel is the Hilbert tensor norm of the full
coefficient tensor.  Expanding this norm and using
Lemma~\ref{lem:shift-incidence} gives
\eqref{eq:group-exact-hilbert-profile}.  The incidence bound in that lemma
then gives
\[
 \norm{\Flat_S(\widetilde K_{\mathbf c,N}^{L,Q})}_{\Sch_2}
 \le C_\Gamma^{1/2}\min\{L,Q\}^{D/2}
      \prod_{j=1}^m\norm{c_j^{(N)}}_{\ell^2}.
\]
For each cut, interpolate this estimate with
Lemma~\ref{lem:group-operator-endpoint}.  Finally use
Lemma~\ref{lem:symmetrization-contractive} for the actual same-field kernel
$K_{\mathbf c,N}^{L,Q}$.
\end{proof}

\begin{theorem}[Multilinear group-dual Wick transfer]
\label{thm:group-dual-convergence}
Assume \eqref{eq:group-growth}, fix $m\ge1$ and $1\le r<\infty$, and
suppose
\begin{equation}
\label{eq:group-smoothing-range}
 a>0,\qquad b>0,\qquad a+b>\frac Dr.
\end{equation}
For finitely supported $c_1,\ldots,c_m$, set
\begin{align}
\label{eq:group-multilinear-multiplier}
 \mathcal M_m^{a,b}(c_1,\ldots,c_m)
 &:=
 \Lambda^{-a}
 \sum_{x_1,\ldots,x_m\in\Gamma}
 \wick{g_{x_1}\cdots g_{x_m}}
 \left(\prod_{j=1}^mc_{j,x_j}\right)
 \lambda_{x_1\cdots x_m}
 \Lambda^{-b}.
\end{align}
For every $1\le p<\infty$, this assignment extends uniquely to a bounded
$m$-linear map
\[
 \mathcal M_m^{a,b}:
 \ell^2(\Gamma)^m
 \longrightarrow L^p(\Omega;\Sch_r(\ell^2(\Gamma))),
\]
and
\begin{equation}
\label{eq:group-multilinear-bound}
 \norm{\mathcal M_m^{a,b}(c_1,\ldots,c_m)}_{L^p(\Omega;\Sch_r)}
 \le C_{\Gamma,D,a,b,m,r}(p+r)^{m/2}
      \prod_{j=1}^m\norm{c_j}_{\ell^2}.
\end{equation}
The extensions for different values of $p$ agree as random operators.  Set
$A(\mathbf c):=\prod_{j=1}^m\norm{c_j}_{\ell^2}$.  Moreover, for every
$t\ge1$,
\begin{equation}
\label{eq:group-multilinear-tail}
 \Prob\!\left(
  \norm{\mathcal M_m^{a,b}(c_1,\ldots,c_m)}_{\Sch_r}
  > \mathrm e\,C_{\Gamma,D,a,b,m,r}(t+r)^{m/2}A(\mathbf c)
 \right)
 \le \mathrm e^{-t}.
\end{equation}
Thus arbitrary finite-support approximations on the fixed isonormal process
converge to the same limit.  In particular, with
\begin{equation}
\label{eq:group-smoothed-operator}
 T_{m,N}^{a,b}(c)
 :=\Lambda^{-a}\Phi_{m,N}(c)\Lambda^{-b},
\end{equation}
one has
\[
 T_{m,N}^{a,b}(c)\longrightarrow
 T_m^{a,b}(c):=\mathcal M_m^{a,b}(c,\ldots,c)
\quad\text{in }L^p(\Omega;\Sch_r)
\]
for every $c\in\ell^2(\Gamma)$ and every $1\le p<\infty$.
\end{theorem}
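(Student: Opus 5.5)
The plan is to decompose the smoothed operator into dyadic spatial blocks, bound each block with the profile estimates already proved, and sum the blocks absolutely using Proposition~\ref{prop:block-reconstruction}. Since $\Lambda$ is diagonal and commutes with every $P_L$, we have
\[
 P_L\mathcal M_m^{a,b}(c_1,\ldots,c_m)P_Q
 =\Lambda^{-a}P_L\,I_m(\widetilde K_{\mathbf c}^{L,Q})\,P_Q\Lambda^{-b},
\]
and the ideal property gives $\norm{P_L\Lambda^{-a}}\le L^{-a}$ and $\norm{\Lambda^{-b}P_Q}\le Q^{-b}$. For finitely supported $c_j$ everything is a finite sum, so it suffices to prove \eqref{eq:group-multilinear-bound} with a constant independent of the supports; the $m$-linear extension to $\ell^2(\Gamma)^m$ then follows by density and multilinearity.

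For $2\le r<\infty$, Theorem~\ref{thm:wick-estimate} together with \eqref{eq:group-profile-bound} bounds each block by
\[
 A_m(p+r)^{m/2}C_\Gamma^{1/r}\min\{L,Q\}^{D/r}L^{-a}Q^{-b}\prod_{j=1}^m\norm{c_j}_{\ell^2}.
\]
We then need to show that the dyadic double sum $\sum_{L,Q}\min\{L,Q\}^{D/r}L^{-a}Q^{-b}$ is finite. Split it into $L\le Q$ and $L>Q$. On $L\le Q$, summing first over $Q\ge L$ with $b>0$ leaves $\sum_L L^{D/r-a-b}$, which converges because $a+b>D/r$. The case $L>Q$ is symmetric and uses $a>0$.

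The main obstacle is the range $1\le r<2$, where Theorem~\ref{thm:wick-estimate} is unavailable. Here I would use Lemma~\ref{lem:finite-rank-schatten-conversion} pathwise on each block:
\[
 \norm{P_LXP_Q}_{\Sch_r}\le(C_\Gamma\min\{L,Q\}^D)^{1/r-1/2}\norm{P_LXP_Q}_{\Sch_2}.
\]
The Hilbert--Schmidt norm is then estimated with the $r=2$ Wick bound, which gives a factor $\min\{L,Q\}^{D/2}$. Multiplying, the block carries the weight $\min\{L,Q\}^{D/r}(p+2)^{m/2}$, the same dyadic sum as before, and we use $(p+2)\le(p+r)\cdot 2$ to return to the form $(p+r)^{m/2}$. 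To check this step I would verify that the rank bound $\min\{\#A_L,\#A_Q\}$ is correct and that the chaos constant remains $(p+r)^{m/2}$ up to factors depending only on $m$.

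It remains to record the consequences. The extensions for different $p$ agree because all of them are limits of the same finite-support approximants, and $L^p$ convergence implies convergence in probability. The tail bound \eqref{eq:group-multilinear-tail} follows from Markov's inequality at moment $q=\max\{2,t\}$, using $e^{-q}\le e^{-t}$ and absorbing constants, exactly as in the proof of Corollary~\ref{cor:finite-cut-rank-operator-endpoint}. Finally, $T_{m,N}^{a,b}(c)=\mathcal M_m^{a,b}(c^{(N)},\ldots,c^{(N)})$, and $c^{(N)}\to c$ in $\ell^2$; multilinear boundedness, applied through a telescoping sum of differences, then gives $T_{m,N}^{a,b}(c)\to T_m^{a,b}(c)$ in $L^p(\Omega;\Sch_r)$.
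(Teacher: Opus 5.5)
Your proposal is correct and follows the paper's proof essentially step for step. The shared steps are: dyadic blocks bounded via Theorem~\ref{thm:wick-estimate} and \eqref{eq:group-profile-bound} for $r\ge2$; finite-rank Schatten conversion from $r=2$ with the rank bound $\min\{\#A_L,\#A_Q\}$ for $1\le r<2$; the dyadic sum controlled by $a,b>0$ and $a+b>D/r$; absolute block reconstruction; density; and Markov's inequality.

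The only deviation is in the tail bound, and it is harmless. The paper takes $p=t$ directly, which is allowed since $t\ge1$, and this yields \eqref{eq:group-multilinear-tail} with the same constant as \eqref{eq:group-multilinear-bound}. Your $q=\max\{2,t\}$ detour costs a factor $2^{m/2}$, which must be absorbed into a common enlarged constant.
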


\begin{proof}
If $2\le r<\infty$, the Wick estimate and
Proposition~\ref{prop:group-schatten-profile} give
\begin{align}
\label{eq:group-block-moment}
 &\norm{P_L\Phi_{\mathbf c,N}P_Q}_{L^p(\Omega;\Sch_r)}\notag\\
 &\qquad\le A_m(p+r)^{m/2}C_\Gamma^{1/r}
 \min\{L,Q\}^{D/r}\prod_{j=1}^m\norm{c_j^{(N)}}_{\ell^2}.
\end{align}
If $1\le r<2$, apply the same two results at $r=2$ and then use
Lemma~\ref{lem:finite-rank-schatten-conversion}, noting that
\[
 \rank(P_L\Phi_{\mathbf c,N}P_Q)
 \le\min\{\#A_L,\#A_Q\}
 \le C_\Gamma\min\{L,Q\}^D.
\]
This gives \eqref{eq:group-block-moment} in the remaining range as well;
the harmless comparison $(p+2)^{m/2}\lesssim_m(p+r)^{m/2}$ is absorbed
in the constant.
After inserting the two smoothing factors, the block on the left is
bounded by the same right-hand side multiplied by a constant times
$L^{-a}Q^{-b}$.  In the region $L\ge Q$,
\[
 \sum_Q Q^{-b+D/r}\sum_{L\ge Q}L^{-a}
 \lesssim\sum_QQ^{-a-b+D/r}<\infty,
\]
and the region $Q\ge L$ is symmetric.  Absolute block reconstruction
therefore proves \eqref{eq:group-multilinear-bound} for finitely supported
tuples, with a constant independent of their supports.

Finite-support sequences are dense in $\ell^2(\Gamma)$, so the bounded
multilinear map extends uniquely to the stated product space.  Explicitly,
multilinearity and \eqref{eq:group-multilinear-bound} give
\begin{align}
\label{eq:group-multilinear-difference}
 &\norm{\mathcal M_m^{a,b}(u_1,\ldots,u_m)
 -\mathcal M_m^{a,b}(v_1,\ldots,v_m)}_{L^p(\Sch_r)}\notag\\
 &\quad\le C_{\Gamma,D,a,b,m,r}(p+r)^{m/2}
 \sum_{j=1}^m\norm{u_j-v_j}_{\ell^2}
 \prod_{i<j}\norm{v_i}_{\ell^2}
 \prod_{i>j}\norm{u_i}_{\ell^2}.
\end{align}
This proves both joint continuity and independence of the chosen
finite-support approximations.  The diagonal assertion follows by taking
$u_1=\cdots=u_m=c$.  Extensions obtained for different values of $p$ agree
as random operators, since they are limits in probability of the same
finite-support approximants.  Finally, apply \eqref{eq:group-multilinear-bound}
with $p=t$ and use Markov's inequality at the threshold in
\eqref{eq:group-multilinear-tail}; the factor $\mathrm e$ makes the resulting
upper bound $\mathrm e^{-t}$.
\end{proof}

\begin{remark}[Order sensitivity]
\label{rem:group-order-sensitivity}
The map $\mathcal M_m^{a,b}$ is generally not symmetric in
$c_1,\ldots,c_m$.  Permuting the coefficient sequences also permutes the
ordered word $\lambda_{x_1}\cdots\lambda_{x_m}$, which changes the multiplier
when $\Gamma$ is nonabelian.  Symmetrization concerns the stochastic legs
and does not erase this deterministic coefficient order.
\end{remark}

\begin{corollary}[Polynomial weights and quantitative cutoff rates]
\label{cor:polynomial-weight}
Fix $m\ge1$, $1\le r<\infty$, and $a,b\ge0$ satisfying
\eqref{eq:group-smoothing-range}.  Suppose \eqref{eq:group-growth} holds,
$c_x=\langle x\rangle^{-\sigma}$, and $\sigma>D/2$.  Then, with
$\delta=\sigma-D/2$, for every $1\le p<\infty$,
\begin{equation}
\label{eq:group-cutoff-lp-rate}
 \norm{T_{m,N}^{a,b}(c)-T_m^{a,b}(c)}_{L^p(\Omega;\Sch_r)}
 \le C_{\Gamma,D,\sigma,a,b,m,r}(p+r)^{m/2}N^{-\delta}.
\end{equation}
For a common realization and every $t\ge1$,
\begin{equation}
\label{eq:group-cutoff-tail-rate}
 \Prob\!\left(
  \norm{T_{m,N}^{a,b}(c)-T_m^{a,b}(c)}_{\Sch_r}
  >\mathrm e\,C_{\Gamma,D,\sigma,a,b,m,r}
    (t+r)^{m/2}N^{-\delta}
 \right)
 \le\mathrm e^{-t}.
\end{equation}
Consequently,
\begin{equation}
\label{eq:group-cutoff-as-rate}
 \norm{T_{m,N}^{a,b}(c)-T_m^{a,b}(c)}_{\Sch_r}
 =O_\omega\!\left(
    N^{-\delta}(\log(e+N))^{m/2}
  \right)
 \qquad\text{almost surely}.
\end{equation}
In particular, this is $O_\omega(N^{-\eta})$ for every
$0<\eta<\delta$.
For the integer Heisenberg group $H_3(\mathbb Z)$ one may take $D=4$;
thus the sufficient range becomes
\begin{equation}
\label{eq:heisenberg-range}
 \sigma>2,\qquad a>0,\qquad b>0,\qquad a+b>\frac4r.
\end{equation}
\end{corollary}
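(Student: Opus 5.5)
The proof will apply the multilinear bound \eqref{eq:group-multilinear-bound} of Theorem~\ref{thm:group-dual-convergence} to a telescoping decomposition of the cutoff error. Since $c\in\ell^2(\Gamma)$ when $\sigma>D/2$, the limit $T_m^{a,b}(c)=\mathcal M_m^{a,b}(c,\ldots,c)$ exists. Put $c^{(N)}=c\mathbf 1_{|x|\le N}$ and $d^{(N)}=c-c^{(N)}$. By multilinearity,
\[
 T_m^{a,b}(c)-T_{m,N}^{a,b}(c)
 =\sum_{j=1}^m\mathcal M_m^{a,b}\bigl(c^{(N)},\ldots,c^{(N)},d^{(N)},c,\ldots,c\bigr),
\]
with $d^{(N)}$ in the $j$th slot. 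This identity holds for finitely supported truncations of $c$, and it passes to the limit by the continuity estimate \eqref{eq:group-multilinear-difference}. Each term is bounded by $C(p+r)^{m/2}\norm{c}_{\ell^2}^{m-1}\norm{d^{(N)}}_{\ell^2}$.

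It then remains to estimate the tail $\norm{d^{(N)}}_{\ell^2}^2=\sum_{|x|>N}\langle x\rangle^{-2\sigma}$. Split this sum into dyadic shells $A_L$ with $L\gtrsim N$. The growth bound $\#A_L\le C_\Gamma L^D$ gives
\[
 \sum_{L\gtrsim N}C_\Gamma L^{D-2\sigma}\lesssim N^{D-2\sigma}=N^{-2\delta},
\]
since $2\sigma>D$. Hence $\norm{d^{(N)}}_{\ell^2}\lesssim N^{-\delta}$, which proves \eqref{eq:group-cutoff-lp-rate}. The tail bound \eqref{eq:group-cutoff-tail-rate} follows as in the proof of \eqref{eq:group-multilinear-tail}: take $p=t$ and apply Markov's inequality at $e$ times the $L^t$ bound, which gives probability at most $e^{-t}$.

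For the almost-sure rate, take $t=2\log(e+N)$ in \eqref{eq:group-cutoff-tail-rate}. The exceptional probabilities are then at most $(e+N)^{-2}$, which is summable over $N\ge1$. By Borel--Cantelli, almost surely for all large $N$ the error is at most $C(2\log(e+N)+r)^{m/2}N^{-\delta}$, which is $O_\omega\bigl(N^{-\delta}(\log(e+N))^{m/2}\bigr)$. This is applied to a common realization, so all $T_{m,N}^{a,b}(c)$ and the limit are built from the same isonormal process; Theorem~\ref{thm:group-dual-convergence} identifies the limit consistently. Since $(\log N)^{m/2}N^{-\delta}=O(N^{-\eta})$ for every $\eta<\delta$, the weaker rate follows.

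For the Heisenberg statement, I would invoke the Bass--Guivarc'h formula: $H_3(\mathbb Z)$ is nilpotent of step two with rank-one graded pieces, so its growth degree is $1\cdot1+1\cdot1+2\cdot1=4$ and \eqref{eq:group-growth} holds with $D=4$. Substituting into $\sigma>D/2$ and \eqref{eq:group-smoothing-range} gives \eqref{eq:heisenberg-range}.

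The step most likely to need care is justifying the telescoping identity for the extended multilinear map, together with the common-realization claim. I would handle it through the density and continuity statement \eqref{eq:group-multilinear-difference}, rather than by manipulating infinite sums directly.
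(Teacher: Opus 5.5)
Your proof is correct and follows the paper's argument. Your telescoping identity is exactly the bound \eqref{eq:group-multilinear-difference} applied to the diagonal tuples $(c^{(N)},\ldots,c^{(N)})$ and $(c,\ldots,c)$, and you combine it with the same dyadic tail estimate $\norm{c-c^{(N)}}_{\ell^2}\lesssim N^{-\delta}$, Markov's inequality at $p=t$, and Borel--Cantelli with $t\simeq 2\log N$. One small correction in the Heisenberg step: the lower-central-series quotients of $H_3(\mathbb Z)$ are $\mathbb Z^2$ and $\mathbb Z$, so the Bass--Guivarc'h count is $1\cdot 2+2\cdot 1=4$. Your total is right, but the graded pieces are not all of rank one.
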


\begin{proof}
Dyadic shell summation and \eqref{eq:group-growth} give
\[
 \norm{c-c^{(N)}}_{\ell^2}^2
 \lesssim\sum_{k:\,2^k>N/2}2^{k(D-2\sigma)}
 \lesssim N^{-2\delta}.
\]
Apply \eqref{eq:group-multilinear-difference} to the two diagonal tuples;
their $\ell^2$ norms are uniformly bounded.  This proves
\eqref{eq:group-cutoff-lp-rate}.

Choose the common realization furnished by
Theorem~\ref{thm:group-dual-convergence}.  Taking $p=t$ in
\eqref{eq:group-cutoff-lp-rate} and applying Markov's inequality proves
\eqref{eq:group-cutoff-tail-rate}.  For $N\ge2$, set $t=2\log N$ there.
The exceptional probabilities are bounded by $N^{-2}$, so Borel--Cantelli
proves \eqref{eq:group-cutoff-as-rate}; the weaker $N^{-\eta}$ rate follows
because every logarithmic power is $o(N^{\delta-\eta})$.  The integer
Heisenberg group has polynomial word-growth degree four; see \cite{Bass}.
\end{proof}

\subsection{Sharp quadratic singular transfer}

For $c_x=\langle x\rangle^{-\sigma}$, every cutoff is finitely supported,
so $T_{m,N}^{a,b}(c)$ in \eqref{eq:group-smoothed-operator} remains well
defined even when $c\notin\ell^2(\Gamma)$.  At order two, the range
$D/4<\sigma\le D/2$ relative to the exponent in
\eqref{eq:group-growth} can be treated by combining polynomial
rearrangement, a weighted group-convolution bound, and a two-weight Schur
test for the mixed cuts.

\begin{lemma}[Polynomial rearrangement and convolution tails]
\label{lem:polynomial-convolution}
Assume \eqref{eq:group-growth}.  If $0<\theta\le D$, $R\ge1$, and
$E\subset\Gamma$ is finite with $\#E\le C_0R^D$, then
\begin{align}
\label{eq:polynomial-rearrangement}
 \sum_{x\in E}\langle x\rangle^{-\theta}
 &\le C_{\Gamma,D,C_0,\theta}
 \begin{cases}
  R^{D-\theta},&0<\theta<D,\\
  \log(e+R),&\theta=D.
 \end{cases}
\end{align}
Moreover,
\begin{equation}
\label{eq:critical-log-rearrangement}
 \sum_{x\in E}\langle x\rangle^{-D}\log(e+\langle x\rangle)
 \le C_{\Gamma,D,C_0}\log(e+R)^2.
\end{equation}

Let $D/2<\alpha\le D$, put $\gamma:=2\alpha-D$, and define
\[
 H_\alpha(z):=
 \sum_{x\in\Gamma}
 \langle x\rangle^{-\alpha}
 \langle x^{-1}z\rangle^{-\alpha}.
\]
Then
\begin{equation}
\label{eq:polynomial-convolution}
 H_\alpha(z)\le C_{\Gamma,D,\alpha}
 \begin{cases}
  \langle z\rangle^{-\gamma},&D/2<\alpha<D,\\
  \langle z\rangle^{-D}\log(e+\langle z\rangle),&\alpha=D.
 \end{cases}
\end{equation}
For $M\ge1$, the truncated tail
\[
 H_{\alpha,>M}(z):=
 \sum_x
 \langle x\rangle^{-\alpha}
 \langle x^{-1}z\rangle^{-\alpha}
 \mathbf1_{\{\max(|x|,|x^{-1}z|)>M\}}
\]
satisfies
\begin{equation}
\label{eq:polynomial-convolution-tail}
 H_{\alpha,>M}(z)\le C_{\Gamma,D,\alpha}
 \begin{cases}
  \max\{M,\langle z\rangle\}^{-\gamma},&D/2<\alpha<D,\\
  \max\{M,\langle z\rangle\}^{-D}
       \log(e+\langle z\rangle),&\alpha=D.
 \end{cases}
\end{equation}
\end{lemma}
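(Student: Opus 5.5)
The plan is to reduce every estimate to dyadic shell counting. The growth bound \eqref{eq:group-growth} controls shells only from above, so I will combine it with a rearrangement argument.

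\emph{Rearrangement.} For \eqref{eq:polynomial-rearrangement}, since $\langle x\rangle^{-\theta}$ is a decreasing function of $|x|$, the sum over a set $E$ of cardinality at most $C_0R^D$ is maximized, up to constants, by placing the points as close to the identity as possible. Concretely, I split
\[
 E=\bigl(E\cap\{\langle x\rangle< R\}\bigr)\cup\bigl(E\cap\{\langle x\rangle\ge R\}\bigr).
\]
The first part lies in $\bigcup_{L<R}A_L$, and $\#A_L\le C_\Gamma L^D$ gives
\[
 \sum_{L<R}L^{D-\theta}\lesssim R^{D-\theta}
\]
for $\theta<D$, or $\log(e+R)$ for $\theta=D$. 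On the second part each term is at most $R^{-\theta}$, so the contribution is at most $C_0R^{D-\theta}$. For $\theta=D$ this is $O(1)$, which is absorbed into the logarithm. The same shell sum with the extra weight $\log(e+L)$ gives \eqref{eq:critical-log-rearrangement}, since $\sum_{L<R}\log(e+L)\lesssim\log(e+R)^2$.

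\emph{Convolution bound.} For \eqref{eq:polynomial-convolution}, fix $z$ and put $Z=\langle z\rangle$. The key point is the quasi-triangle inequality
\[
 |z|\le|x|+|x^{-1}z|,
\]
so $\max\{\langle x\rangle,\langle x^{-1}z\rangle\}\ge Z/2$. I split the sum into two regions.
\begin{itemize}
\item[(i)] $\langle x\rangle\ge Z/2$. I sort by the shell of $\langle x^{-1}z\rangle$. When $\langle x^{-1}z\rangle\le 4Z$, I bound $\langle x\rangle^{-\alpha}\lesssim Z^{-\alpha}$. The map $x\mapsto x^{-1}z$ is injective, so the sum of $\langle x^{-1}z\rangle^{-\alpha}$ over this piece is a sum over a set of size $\lesssim Z^D$. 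By rearrangement with $\theta=\alpha$, it is $\lesssim Z^{D-\alpha}$ (log-corrected at $\alpha=D$). This gives $Z^{D-2\alpha}=Z^{-\gamma}$, or $Z^{-D}\log(e+Z)$ at $\alpha=D$.
\item[(ii)] $\langle x^{-1}z\rangle>4Z$. Then $\langle x\rangle\gtrsim\langle x^{-1}z\rangle$, and the sum is dominated by $\sum_{L>Z}L^{D-2\alpha}\lesssim Z^{-\gamma}$. This converges because $2\alpha>D$, and it carries no logarithm even at $\alpha=D$.
\end{itemize}
The region $\langle x^{-1}z\rangle\ge Z/2$ is symmetric under the substitution $x\mapsto zx^{-1}$, which swaps the roles of the two factors; it relies on the symmetry $|w^{-1}|=|w|$ of the word length.

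\emph{Truncated tail.} For \eqref{eq:polynomial-convolution-tail}, if $M\le Z$ the bound follows from the full estimate. If $M>Z$, the constraint forces one of the two factors to have length greater than $M$, and then the other has length $\gtrsim M$ as well: when $|x|>M\ge Z$, we have $|x^{-1}z|\ge|x|-|z|$, which is $\gtrsim|x|$ once $|x|>2Z$, and the range $|x|\in(M,2Z]$ is empty when $M\ge 2Z$. Inserting this into the shell sum gives $\sum_{L\gtrsim M}L^{D-2\alpha}\lesssim M^{-\gamma}$. In the case $\alpha=D$ I keep $\log(e+Z)$ as a harmless upper factor, which is at least one.

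The main obstacle I expect is the bookkeeping of the critical case $\alpha=D$: the logarithm must arise only from the near-diagonal region (i) and must not be squared. I will handle this by applying rearrangement once and bounding the far region with the convergent geometric sum.
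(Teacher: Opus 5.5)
Your proof is correct and follows essentially the same route as the paper: dyadic shell counting with the growth bound gives the rearrangement estimates. The convolution is handled by a near/far split via the triangle inequality, injectivity of $x\mapsto x^{-1}z$, and a geometric far-field sum using $2\alpha>D$, and the tail uses comparability of $|x|$ and $|x^{-1}z|$ when $|x|\gg|z|$.

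The one loose end is the tail case $\langle z\rangle<M<2\langle z\rangle$, which your split skips; there your claim that the other factor has length $\gtrsim M$ can fail. However, in that case $\max\{M,\langle z\rangle\}\le 2\langle z\rangle$, so the full bound \eqref{eq:polynomial-convolution} already suffices, exactly as in the paper's case $|z|\ge M/2$.
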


\begin{proof}
The assertion is trivial if $E=\emptyset$.  Otherwise decompose $\Gamma$
into the shells $2^k\le\langle x\rangle<2^{k+1}$ and put $n=\#E$.
The $k$th
contribution is bounded by
\[
 2^{-k\theta}\min\{C_\Gamma2^{kD},n\}.
\]
Choose $k_0$ so that $2^{k_0D}\simeq n$.  For $\theta<D$, summing below
and above $k_0$ gives $Cn^{1-\theta/D}\le CR^{D-\theta}$.  At
$\theta=D$, the lower part contributes $C(k_0+1)$ and the upper part is
bounded.  Inserting the additional shell factor $k+1$ gives
$C(k_0+1)^2$, proving \eqref{eq:critical-log-rearrangement}.

Put $R=\langle z\rangle$.  In the region $|x|\le |z|/2$, the triangle
inequality gives $\langle x^{-1}z\rangle\gtrsim R$, and dyadic summation
gives
\[
 \sum_{|x|\le |z|/2}
 \langle x\rangle^{-\alpha}
 \langle x^{-1}z\rangle^{-\alpha}
 \lesssim
 \begin{cases}
  R^{D-2\alpha},&\alpha<D,\\
  R^{-D}\log(e+R),&\alpha=D.
 \end{cases}
\]
The region $|x^{-1}z|\le |z|/2$ is identical after the change of variable
$y=x^{-1}z$.  In the remaining region, first sum over $|x|\lesssim R$ and
then over the dyadic shells $|x|\simeq2^kR$, $k\ge1$.  Since
$2\alpha>D$, the latter series is geometric and has the same bound.  This
proves \eqref{eq:polynomial-convolution}.

If $|z|\ge M/2$, \eqref{eq:polynomial-convolution-tail} follows from
\eqref{eq:polynomial-convolution}.  If $|z|<M/2$ and $|x|>M$, then
$|x^{-1}z|\ge |x|-|z|$; the two lengths are therefore comparable on
$M<|x|\lesssim M$ and on every subsequent dyadic shell.  Hence
\[
 \sum_{|x|>M}
 \langle x\rangle^{-\alpha}
 \langle x^{-1}z\rangle^{-\alpha}
 \lesssim M^{D-2\alpha}=M^{-\gamma}.
\]
The part $|x^{-1}z|>M$ is symmetric.  This proves the tail estimate; at
$\alpha=D$ the displayed bound is stronger than the logarithmically
relaxed form in \eqref{eq:polynomial-convolution-tail}.
\end{proof}

\begin{proposition}[Second-order singular block profiles]
\label{prop:second-order-singular-profile}
Assume \eqref{eq:group-growth}, let
\[
 \frac D4<\sigma\le\frac D2,\qquad
 c_x=\langle x\rangle^{-\sigma},\qquad
 \beta:=D-2\sigma,\qquad
 \gamma:=4\sigma-D,
\]
and set
\begin{equation}
\label{eq:singular-scale-weight}
 \omega_\sigma(R):=
 \begin{cases}
  R^\beta,&D/4<\sigma<D/2,\\
  \log(e+R),&\sigma=D/2.
 \end{cases}
\end{equation}
For $L,Q\in\Dya$, write
$R_+:=\max\{L,Q\}$ and $R_-:=\min\{L,Q\}$.  Then, for
$2\le r\le\infty$ and every $N\ge1$,
\begin{equation}
\label{eq:second-order-singular-profile}
 \Prof_{2,r}(K_{2,N}^{L,Q}(c))
 \le C_{\Gamma,D,\sigma,r}
 \omega_\sigma(R_+)R_-^{D/r},
\end{equation}
where $R_-^{D/r}=1$ at $r=\infty$.  For $N\ge M\ge1$,
\begin{align}
\label{eq:second-order-singular-profile-tail}
 &\Prof_{2,r}\bigl(
 K_{2,N}^{L,Q}(c)-K_{2,M}^{L,Q}(c)
 \bigr)\notag\\
 &\qquad\le C_{\Gamma,D,\sigma,r}
 \omega_\sigma(R_+)R_-^{D/r}
 \min\left\{1,
 \left(\frac{R_+}{M}\right)^{\gamma/2}\right\}.
\end{align}
\end{proposition}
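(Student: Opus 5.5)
The plan is to reduce to the ordered kernel $\widetilde K_{2,N}^{L,Q}(c)$ via Lemma~\ref{lem:symmetrization-contractive}, and then, for each of the four cuts $S\subset\{1,2\}$, to prove two endpoint bounds:
\[
 \norm{\Flat_S}_{\Sch_2}\lesssim\omega_\sigma(R_+)R_-^{D/2},
 \qquad
 \norm{\Flat_S}_{\cL}\lesssim\omega_\sigma(R_+).
\]
For $2\le r\le\infty$, the pointwise interpolation $\norm T_{\Sch_r}\le\norm T_{\cL}^{1-2/r}\norm T_{\Sch_2}^{2/r}$, applied cut by cut, then gives \eqref{eq:second-order-singular-profile}. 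A basic support observation is used throughout. Since $P_L\lambda_{xy}P_Q\ne0$ forces $xyz\in A_L$ for some $z\in A_Q$, the triangle inequality gives $|xy|\lesssim R_+$; only the products $w=xy$ are constrained, not $x$ and $y$ separately.

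\emph{Hilbert endpoint.} By \eqref{eq:group-exact-hilbert-profile}, grouping by $w=xy$, the squared Hilbert--Schmidt norm is at most
\[
 \sum_{z\in A_Q}\ \sum_{w:\,wz\in A_L}H_{2\sigma}(w).
\]
Here $2\sigma\in(D/2,D]$ because $\sigma>D/4$, so \eqref{eq:polynomial-convolution} bounds $H_{2\sigma}(w)$ by $\langle w\rangle^{-\gamma}$, with an extra factor $\log(e+\langle w\rangle)$ at $\gamma=D$. Cancellativity lets $z$ and $w$ range over a set of $\lesssim R_-^D$ values of one variable, with the other in a ball of radius $\lesssim R_+$. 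Then \eqref{eq:polynomial-rearrangement}, or \eqref{eq:critical-log-rearrangement} at $\sigma=D/2$, gives the bound $R_-^D R_+^{D-\gamma}=R_-^DR_+^{2\beta}$, respectively $R_-^D\log(e+R_+)^2$. This yields the Hilbert endpoint. For the cutoff difference $K_{2,N}-K_{2,M}$, only pairs with $\max(|x|,|y|)>M$ survive. Replacing $H_{2\sigma}$ by the tail \eqref{eq:polynomial-convolution-tail} gains the factor $\min\{1,(R_+/M)^{\gamma}\}$ on the square, hence $(R_+/M)^{\gamma/2}$ on the norm.

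\emph{Operator endpoint.} For $S=\emptyset$, each unit $\xi$ is sent to $\sum_{x,y}c_xc_y\,\delta_x\otimes\delta_y\otimes P_L\lambda_{xy}P_Q\xi$. Its squared norm is $\le\sum_{|w|\lesssim R_+}H_{2\sigma}(w)\lesssim\omega_\sigma(R_+)^2$, by the same convolution bound and rearrangement on a ball, now with no $R_-$ factor. The full cut is handled in the same way by the adjoint (column-recovery) version. The main obstacle is the two mixed cuts, say $S=\{1\}$. There the matrix from $(x,z)$ to $(y,u)$ has entry $c_xc_y\mathbf1_{\{u=xyz,\ z\in A_Q,\ u\in A_L\}}$, with nonnegative entries. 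I would apply a two-weight Schur test and split by the size of $|x|$.

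In the region $|x|\gg R_+$, we have $|y|\simeq|x|$, and for fixed $(x,z)$ at most $\#A_L$ values of $y$ occur, so the row sums are $\lesssim|x|^{-2\sigma}R_+^D\lesssim R_+^{\beta}$; the column sums are symmetric. In the region $|x|,|y|\lesssim R_+$, I would use the weights $\langle\cdot\rangle^{\mp\sigma}$. This converts each Schur sum into $\sum_{|y|\lesssim R_+}\langle y\rangle^{-2\sigma}\lesssim\omega_\sigma(R_+)^2$, by \eqref{eq:polynomial-rearrangement} with $\theta=2\sigma\le D$; the logarithm appears exactly at $\sigma=D/2$. Checking that these weights are consistent on both sides of the Schur test, and that the cross region is controlled, is the step I expect to require the most care. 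For the cutoff difference, the same Schur sums restricted to $\max(|x|,|y|)>M$ give the factor $(R_+/M)^{\gamma/2}$, or better, by the tail version of the rearrangement. Interpolating the two difference endpoints cut by cut then gives \eqref{eq:second-order-singular-profile-tail}, and the term $\min\{1,\cdot\}$ follows from \eqref{eq:second-order-singular-profile} and the triangle inequality.
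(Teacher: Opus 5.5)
Your plan is essentially the paper's proof. You reduce to the ordered kernel by contractive symmetrization and bound the Hilbert endpoint by $\sum_{z\in A_Q}\sum_{w:\,wz\in A_L}H_{2\sigma}(w)$ via Lemma~\ref{lem:polynomial-convolution}. You treat the empty and full cuts by row/column recovery and the mixed cuts by a two-weight Schur test, interpolate between $\Sch_2$ and $\cL$, and use the truncated convolution tail, with $\min\{1,\cdot\}$ coming from the uniform bound.

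Two adjustments are needed for the mixed cuts.

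\emph{No split is needed.} The weights $\langle x\rangle^{-\sigma}$ on the input and $\langle y\rangle^{-\sigma}$ on the output work globally. For a fixed output $(y,u)$, the variable $x=uz^{-1}y^{-1}$ is injective in $z\in A_Q$; symmetrically, $y=x^{-1}uz^{-1}$ is injective in $u\in A_L$. So the normalized Schur constants are $\lesssim\omega_\sigma(Q)$ and $\lesssim\omega_\sigma(L)$ directly from the rearrangement bound, with no split in $|x|$ and no cross region.

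\emph{Your bookkeeping slip.} Each Schur constant is of order $\omega_\sigma(R_+)$, not $\omega_\sigma(R_+)^2$ as you wrote. The Schur bound is the geometric mean of the two constants, so with $\omega_\sigma(R_+)^2$ you would only get $\omega_\sigma(R_+)^2$ for the mixed-cut operator norm, which is not enough.
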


\begin{proof}
Put $u(x)=\langle x\rangle^{-\sigma}$,
$u_N(x)=u(x)\mathbf1_{\{|x|\le N\}}$, and
\[
 H_N(z):=\sum_{xy=z}u_N(x)^2u_N(y)^2.
\]
Lemma~\ref{lem:polynomial-convolution}, with $\alpha=2\sigma$, bounds
$H_N$ uniformly in $N$.

We first estimate the operator endpoints of the ordered kernel
$\widetilde K_{2,N}^{L,Q}(c)$.  For the empty cut, distinct spatial input
vectors have orthogonal images, and therefore
\begin{equation}
\label{eq:empty-cut-singular-values}
 \norm{\Flat_\emptyset(\widetilde K_{2,N}^{L,Q}(c))}_{\cL}^2
 =
 \sup_{q\in A_Q}\sum_{l\in A_L}H_N(lq^{-1}).
\end{equation}
For the full cut, the adjoint relation gives
\begin{equation}
\label{eq:full-cut-singular-values}
 \norm{\Flat_{\{1,2\}}(\widetilde K_{2,N}^{L,Q}(c))}_{\cL}^2
 =
 \sup_{l\in A_L}\sum_{q\in A_Q}H_N(lq^{-1}).
\end{equation}
The maps $l\mapsto lq^{-1}$ and $q\mapsto lq^{-1}$ are injective.
Equations \eqref{eq:polynomial-rearrangement},
\eqref{eq:critical-log-rearrangement}, and
\eqref{eq:polynomial-convolution} consequently show that the two norms are
bounded respectively by a constant times
$\omega_\sigma(L)$ and $\omega_\sigma(Q)$.

For the cut $\{1\}$, use the input basis $(x,q)\in\Gamma\times A_Q$ and
the output basis $(y,l)\in\Gamma\times A_L$.  The absolute values of its
matrix entries are
\begin{equation}
\label{eq:mixed-cut-incidence-matrix}
 \mathsf M_{(y,l),(x,q)}
 =u_N(x)u_N(y)\mathbf1_{\{l=xyq\}}.
\end{equation}
Apply the two-weight Schur test with weights $u(x)$ on the input and
$u(y)$ on the output.  For fixed $(y,l)$, the relation in
\eqref{eq:mixed-cut-incidence-matrix} determines
$x=lq^{-1}y^{-1}$, and for fixed $(x,q)$ it determines
$y=x^{-1}lq^{-1}$.  Hence
\begin{align*}
 \sum_{x,q}|\mathsf M_{(y,l),(x,q)}|u(x)
 &\le u(y)\sum_{q\in A_Q}
       \langle lq^{-1}y^{-1}\rangle^{-2\sigma},\\
 \sum_{y,l}|\mathsf M_{(y,l),(x,q)}|u(y)
 &\le u(x)\sum_{l\in A_L}
       \langle x^{-1}lq^{-1}\rangle^{-2\sigma}.
\end{align*}
Each sum is over an injective image of a set of cardinality at most
$C_\Gamma Q^D$ or $C_\Gamma L^D$.  Thus
\eqref{eq:polynomial-rearrangement} bounds the mixed operator norm by
\[
 C\begin{cases}
 (LQ)^{\beta/2},&\sigma<D/2,\\
 \{\log(e+L)\log(e+Q)\}^{1/2},&\sigma=D/2,
 \end{cases}
 \le C\omega_\sigma(R_+).
\]
The same Schur argument applies to the cut $\{2\}$ after interchanging
the roles of the stochastic labels; no commutativity is used.

At the Hilbert endpoint, Proposition~\ref{prop:group-schatten-profile}
and the change of variables $l=xyq$ give, for every cut $S\subset[2]$,
\begin{equation}
\label{eq:singular-hilbert-profile}
 \norm{\Flat_S(\widetilde K_{2,N}^{L,Q}(c))}_{\Sch_2}^2
 =\sum_{q\in A_Q}\sum_{l\in A_L}H_N(lq^{-1}).
\end{equation}
Summing first in $l$ and then first in $q$ yields
\[
 \eqref{eq:singular-hilbert-profile}
 \le C\min\{
   \omega_\sigma(L)^2Q^D,\,
   \omega_\sigma(Q)^2L^D
 \}
 \le C\omega_\sigma(R_+)^2R_-^D.
\]
Interpolation between $\Sch_2$ and $\Sch_\infty$ proves
\eqref{eq:second-order-singular-profile} for the ordered kernel.

For the cutoff difference, set
\[
 H_{M,N}(z):=
 \sum_{xy=z}
 |u_N(x)u_N(y)-u_M(x)u_M(y)|^2.
\]
It is bounded by $H_{2\sigma,>M}(z)$ from
Lemma~\ref{lem:polynomial-convolution}.  Suppose first that
$R_+\le M/8$.  Then $|lq^{-1}|\le4R_+\le M/2$, and
\eqref{eq:polynomial-convolution-tail} inserted into
\eqref{eq:singular-hilbert-profile} gives
the desired power below the endpoint, using $2\beta+\gamma=D$.  At
$\sigma=D/2$, the same step is
explicitly
\[
 H_{M,N}(lq^{-1})\lesssim M^{-D}\log(e+R_+),
\]
and hence
\[
 \sum_{q\in A_Q}\sum_{l\in A_L}H_{M,N}(lq^{-1})
 \lesssim L^DQ^DM^{-D}\log(e+R_+)
 \lesssim\left[
  \omega_\sigma(R_+)R_-^{D/2}(R_+/M)^{D/2}
 \right]^2.
\]
Thus in both cases
\begin{align}
\label{eq:singular-hilbert-tail}
 \norm{\Flat_S(
 \widetilde K_{2,N}^{L,Q}(c)
 -\widetilde K_{2,M}^{L,Q}(c))}_{\Sch_2}
 \le C\omega_\sigma(R_+)R_-^{D/2}
 \left(\frac{R_+}{M}\right)^{\gamma/2}.
\end{align}
For the empty and full cuts, the exact singular-value formulas
\eqref{eq:empty-cut-singular-values}--\eqref{eq:full-cut-singular-values}
give the same factor at the operator endpoint.

For a mixed cut, every nonzero tail entry has $xy=lq^{-1}$ and
$\max\{|x|,|y|\}>M$.  Since $|lq^{-1}|\le M/2$, both $|x|$ and $|y|$ are
at least $M/2$.  The row and column constants in the weighted Schur test
are consequently bounded by
\[
 C Q^D M^{-2\sigma}
 \quad\text{and}\quad
 C L^D M^{-2\sigma},
\]
respectively.  Thus the mixed operator norm is at most
$C(LQ)^{D/2}M^{-2\sigma}$, and hence
\[
 \norm{\Flat_{\{j\}}(
 \widetilde K_{2,N}^{L,Q}(c)
 -\widetilde K_{2,M}^{L,Q}(c))}_{\cL}
 \le C\omega_\sigma(R_+)
 \left(\frac{R_+}{M}\right)^{2\sigma},
 \qquad j=1,2.
\]
Interpolating this with \eqref{eq:singular-hilbert-tail} preserves at
least the exponent $\gamma/2$, because
\[
 2\sigma\left(1-\frac2r\right)+\frac{\gamma}{r}
 =2\sigma-\frac Dr\ge\frac{\gamma}{2}.
\]
If $R_+>M/8$, the uniform estimate
\eqref{eq:second-order-singular-profile} implies the asserted tail bound
after enlarging the constant.  Finally,
Lemma~\ref{lem:symmetrization-contractive} transfers all estimates from the
ordered kernels to $K_{2,N}^{L,Q}(c)$ and their differences.
\end{proof}

\begin{remark}[Sharpness of the two-scale tail]
\label{rem:singular-tail-sharpness}
The scale dependence in
\eqref{eq:second-order-singular-profile-tail} cannot be replaced by a
factor $\varepsilon_M\to0$ uniform in $L,Q,N$.  To see this on
$H_3(\mathbb Z)$ when $1<\sigma<2$, take $Q=1$, $L=R\gg M$, and
$N=C_0R$ with a sufficiently large fixed $C_0$.  In the empty cut,
restrict $l$ to a thick subset of $A_R$ and take
$\varepsilon R/2\le |x|\le\varepsilon R$ with $\varepsilon>0$ fixed and
small.  Then $y=x^{-1}l$ also has size comparable to $R$.  Word-ball
asymptotics give $\gtrsim R^4$ such choices of $x$ for each of
$\gtrsim R^4$ choices of $l$, and all lie beyond the cutoff $M$.  Thus
\[
 \sum_{\substack{xy=l\\M<\max(|x|,|y|)\le N}}
 \langle x\rangle^{-2\sigma}\langle y\rangle^{-2\sigma}
 \gtrsim R^{4-4\sigma}.
\]
The unique spatial input column therefore has norm at least a constant
times $R^{4-2\sigma}$, the same order as
\eqref{eq:second-order-singular-profile}.  If instead $R\ll M$ and
$N=C_0M$, restrict $l$ to a thick subset of $A_R$, and choose
$2M\le |x|\le3M$.  Then $y=x^{-1}l$ is also at scale $M$, so each of the
$\gtrsim R^4$ choices of $l$ has $\gtrsim M^4$ orthogonal stochastic
output coordinates.  The empty-column norm is therefore bounded below by
\[
 R^2M^{-(4\sigma-4)/2}
 =R^\beta\left(\frac RM\right)^{(4\sigma-4)/2}.
\]
Thus the tail exponent is sharp below the logarithmic endpoint.  These
lower bounds survive normalized same-field symmetrization: at each
stochastic coordinate, the squared norm of
$(\lambda_{xy}+\lambda_{yx})/2$ is at least one quarter of the
$\lambda_{xy}$ contribution.
\end{remark}

\begin{theorem}[Second-order singular group-dual transfer]
\label{thm:second-order-singular-transfer}
Assume \eqref{eq:group-growth}, let
$D/4<\sigma\le D/2$, $c_x=\langle x\rangle^{-\sigma}$, and fix
$1\le r<\infty$.  With $\beta=D-2\sigma$, suppose
\begin{equation}
\label{eq:second-order-singular-range}
 a>\beta,\qquad b>\beta,\qquad
 a+b>\beta+\frac Dr.
\end{equation}
Then $T_{2,N}^{a,b}(c)$ converges, as $N\to\infty$, to a common random
operator $T_2^{a,b}(c)$ in
$L^p(\Omega;\Sch_r(\ell^2(\Gamma)))$ for every $1\le p<\infty$ and almost
surely in $\Sch_r$.  Define
\begin{equation}
\label{eq:second-order-rate-margin}
 \eta_*:=
 \min\left\{
  \frac{4\sigma-D}{2},\,
  a-\beta,\,
  b-\beta,\,
  a+b-\beta-\frac Dr
 \right\}>0.
\end{equation}
For every $0<\eta<\eta_*$ and $1\le p<\infty$,
\begin{equation}
\label{eq:second-order-singular-lp-rate}
 \norm{T_{2,N}^{a,b}(c)-T_2^{a,b}(c)}
      _{L^p(\Omega;\Sch_r)}
 \le C_{\Gamma,D,\sigma,a,b,r,\eta}(p+r)N^{-\eta}.
\end{equation}
For a common realization and every $t\ge1$,
\begin{align}
\label{eq:second-order-singular-tail-rate}
 \Prob\!\left(
  \norm{T_{2,N}^{a,b}(c)-T_2^{a,b}(c)}_{\Sch_r}
  >\mathrm e\,C_{\Gamma,D,\sigma,a,b,r,\eta}
    (t+r)N^{-\eta}
 \right)
 \le\mathrm e^{-t}.
\end{align}
Moreover, for every $0<\eta<\eta_*$,
\begin{equation}
\label{eq:second-order-singular-as-rate}
 \norm{T_{2,N}^{a,b}(c)-T_2^{a,b}(c)}_{\Sch_r}
 =O_\omega(N^{-\eta})
 \qquad\text{almost surely}.
\end{equation}
\end{theorem}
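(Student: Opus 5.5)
Reduce everything to dyadic blocks and run the same absolute block-reconstruction argument as in Theorem~\ref{thm:group-dual-convergence}, now driven by Proposition~\ref{prop:second-order-singular-profile} in place of the $\ell^2$ profile bound. For $N\ge M\ge1$ write
\[
 T_{2,N}^{a,b}(c)-T_{2,M}^{a,b}(c)
 =\sum_{L,Q}\Lambda^{-a}P_L\,I_2\bigl(K_{2,N}^{L,Q}(c)-K_{2,M}^{L,Q}(c)\bigr)P_Q\Lambda^{-b},
\]
where each block carries the factor $\lesssim L^{-a}Q^{-b}$. For $2\le r<\infty$, Theorem~\ref{thm:wick-estimate} with $m=2$ and \eqref{eq:second-order-singular-profile-tail} give the $L^p(\Omega;\Sch_r)$ block bound
\[
 C(p+r)L^{-a}Q^{-b}\omega_\sigma(R_+)R_-^{D/r}\min\{1,(R_+/M)^{\gamma/2}\}.
\]
For $1\le r<2$, apply the same estimate at $r=2$ and then use Lemma~\ref{lem:finite-rank-schatten-conversion}; the rank is at most $C_\Gamma R_-^D$, so the factor $R_-^{D/2}$ becomes $R_-^{D/r}$ exactly as in the earlier theorem, and $(p+2)\lesssim p+r$.

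The core step is the double dyadic sum. By symmetry, consider the region $L\ge Q$, so $R_+=L$ and $R_-=Q$. The sum over $Q\le L$ of $Q^{-b+D/r}$ is bounded by
\[
 \lesssim\max\{1,\,L^{D/r-b}\}\log(e+L).
\]
(The logarithm is needed only when $b=D/r$.) Multiplying by $L^{-a}\omega_\sigma(L)$ leaves a decay exponent of $\min\{a-\beta,\;a+b-\beta-D/r\}$, up to logarithmic losses. The two conditions $a>\beta$ and $a+b>\beta+D/r$ are used precisely here; the region $Q\ge L$ uses $b>\beta$.

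To extract the rate, split the outer sum at $L\approx M$:
\begin{itemize}
 \item[] \emph{Large scales, $L>M$.} Use the factor $1$ from the minimum; the tail of a geometric series gives $M^{-\eta'}$ for any $\eta'$ below the decay exponent.
 \item[] \emph{Small scales, $L\le M$.} Use $(L/M)^{\gamma/2}$. Summing $L^{\gamma/2-\eta'}$ up to $M$ gives $M^{-\gamma/2}$ when $\gamma/2<\eta'$, and $M^{-\eta'}$ up to a logarithm otherwise.
\end{itemize}
The logarithmic losses (from $\omega_\sigma$ at $\sigma=D/2$ and the borderline sums) are absorbed by taking $\eta<\eta_*$ strictly. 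This yields the Cauchy bound
\[
 \norm{T_{2,N}-T_{2,M}}_{L^p(\Sch_r)}\le C(p+r)M^{-\eta}.
\]
It also gives uniform boundedness in $N$ from \eqref{eq:second-order-singular-profile}, and Proposition~\ref{prop:block-reconstruction} justifies each summation.

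The sequence is therefore Cauchy in every $L^p(\Omega;\Sch_r)$. The limits for different $p$ agree, being limits in probability of the same approximants, and letting $N\to\infty$ in the Cauchy bound gives \eqref{eq:second-order-singular-lp-rate}. Markov's inequality with $p=t$ at threshold $\mathrm e\,C(t+r)N^{-\eta}$ gives \eqref{eq:second-order-singular-tail-rate}. For almost sure convergence and \eqref{eq:second-order-singular-as-rate}, choose $t=2\log N$ and apply Borel--Cantelli. This gives $O(N^{-\eta}\log N)$ along all $N$, which is $O(N^{-\eta''})$ for any $\eta''<\eta$; since $\eta<\eta_*$ is arbitrary, the stated rate follows. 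The main obstacle is the bookkeeping in the two-region dyadic summation at borderline exponents (for example $b=D/r$, or $\gamma/2=\eta'$) and at the logarithmic endpoint, where one must verify that every loss is merely logarithmic and hence swallowed by the strict inequality $\eta<\eta_*$.
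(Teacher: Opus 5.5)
Your proposal is correct and follows the paper's proof essentially step for step. You use the same block decomposition driven by Theorem~\ref{thm:wick-estimate} and the tail bound \eqref{eq:second-order-singular-profile-tail}, finite-rank conversion for $1\le r<2$, a two-region dyadic summation producing exactly the margins in $\eta_*$, and Markov plus Borel--Cantelli. The only cosmetic difference is that the paper avoids your split at $L\approx M$: for $\eta<\gamma/2$ it uses $\min\{1,(R_+/M)^{\gamma/2}\}\le M^{-\eta}R_+^{\eta}$ directly, which folds the ultraviolet factor into the outer dyadic sum in one line.
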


\begin{proof}
For $2\le r<\infty$, the same-field Wick estimate and
Proposition~\ref{prop:second-order-singular-profile} give, for $N\ge M$,
\begin{align}
\label{eq:second-order-singular-block-tail}
 &\norm{P_L(\Phi_{2,N}(c)-\Phi_{2,M}(c))P_Q}
       _{L^p(\Omega;\Sch_r)}\notag\\
 &\quad\le C_{\Gamma,D,\sigma,r}(p+r)
 \omega_\sigma(R_+)R_-^{D/r}
 \min\left\{1,
 \left(\frac{R_+}{M}\right)^{(4\sigma-D)/2}\right\}.
\end{align}
For $1\le r<2$, apply the same estimate at $r=2$ and then
Lemma~\ref{lem:finite-rank-schatten-conversion}.  Since the spatial block
has rank at most
$C_\Gamma R_-^D$, this gives exactly the factor $R_-^{D/r}$ in
\eqref{eq:second-order-singular-block-tail}, with the ultraviolet tail
factor unchanged.  The comparison $p+2\lesssim p+r$ absorbs the remaining
moment-factor change.  Thus \eqref{eq:second-order-singular-block-tail}
holds throughout $1\le r<\infty$.
Insert the smoothing factors $L^{-a}Q^{-b}$.  If
$0<\eta<(4\sigma-D)/2$, then
\[
 \min\left\{1,
 \left(\frac{R_+}{M}\right)^{(4\sigma-D)/2}\right\}
 \le M^{-\eta}R_+^\eta.
\]
In the region $L\ge Q$, the resulting dyadic majorant is
\[
 M^{-\eta}
 L^{-a+\beta+\eta}Q^{-b+D/r}
\]
when $\sigma<D/2$; at $\sigma=D/2$ it has the additional harmless factor
$\log(e+L)$.  The inner dyadic sum satisfies
\begin{equation}
\label{eq:singular-dyadic-inner-sum}
 \sum_{Q\le L}Q^{-b+D/r}
 \lesssim
 \begin{cases}
  1,&b>D/r,\\
  \log(e+L),&b=D/r,\\
  L^{D/r-b},&b<D/r.
 \end{cases}
\end{equation}
Consequently, the outer sum converges if
$\eta<a-\beta$ and
$\eta<a+b-\beta-D/r$.  The region $Q\ge L$ similarly requires
$\eta<b-\beta$.  All endpoint logarithms are absorbed by these strict
power inequalities.  Thus \eqref{eq:second-order-rate-margin} yields
\begin{equation}
\label{eq:second-order-uniform-cauchy-rate}
 \sup_{N\ge M}
 \norm{T_{2,N}^{a,b}(c)-T_{2,M}^{a,b}(c)}
      _{L^p(\Omega;\Sch_r)}
 \le C(p+r)M^{-\eta}.
\end{equation}
Completeness gives the common $L^p$ limit and
\eqref{eq:second-order-singular-lp-rate}; limits for different $p$ agree
because they are limits in probability of the same cutoff sequence.

Taking $p=t$ in \eqref{eq:second-order-singular-lp-rate} and applying
Markov's inequality proves \eqref{eq:second-order-singular-tail-rate}.
For the almost-sure rate, fix $\eta<\eta_1<\eta_*$ and use
\eqref{eq:second-order-singular-tail-rate} with $\eta_1$ and
$t=2\log N$.  Borel--Cantelli gives
$O_\omega(N^{-\eta_1}\log(e+N))$, which is
$O_\omega(N^{-\eta})$.
\end{proof}

The volume-asymptotic theorem of Breuillard \cite{Breuillard}, extending
the nilpotent asymptotics of Pansu \cite{Pansu}, shows that the word balls
of every infinite finitely generated group of polynomial growth degree $D$
satisfy
\begin{equation}
\label{eq:exact-word-growth}
 \#B(e,R)=vR^D+o(R^D)
 \qquad(R\to\infty)
\end{equation}
for some $v>0$.

\begin{theorem}[Sharp quadratic phase diagram on polynomial-growth groups]
\label{thm:universal-singular-phase}
Let $\Gamma$ be an infinite finitely generated group of polynomial growth
degree $D$.  Fix $1\le r<\infty$, $a,b\ge0$,
$0<\sigma\le D/2$, and $c_x=\langle x\rangle^{-\sigma}$.
Then $(T_{2,N}^{a,b}(c))_N$ is Cauchy in
$L^2(\Omega;\Sch_r)$ if and only if
\begin{equation}
\label{eq:universal-local-singular-range}
 \frac D4<\sigma\le\frac D2
\end{equation}
and, with $\beta=D-2\sigma$,
\begin{equation}
\label{eq:sharp-universal-singular-range}
 a>\beta,\qquad b>\beta,\qquad
 a+b>\beta+\frac Dr.
\end{equation}
Whenever these conditions hold, all the $L^p$-convergence, deviation,
almost-sure convergence, and quantitative cutoff-rate conclusions of
Theorem~\ref{thm:second-order-singular-transfer} hold.  At the logarithmic
endpoint $\sigma=D/2$, the three conditions become
\[
 a>0,\qquad b>0,\qquad a+b>\frac Dr.
\]
If $\sigma\le D/4$, no choice of $a,b\ge0$ yields
$L^2(\Omega;\Sch_r)$ convergence.
\end{theorem}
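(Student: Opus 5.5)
Sufficiency is already contained in Theorem~\ref{thm:second-order-singular-transfer}: under \eqref{eq:universal-local-singular-range} and \eqref{eq:sharp-universal-singular-range} the cutoff sequence is Cauchy in every $L^p(\Omega;\Sch_r)$, with all the stated rates. The work is therefore in necessity. The plan is to reduce the Cauchy property in $L^2(\Omega;\Sch_r)$ to lower bounds on single spatial blocks, and then obtain those lower bounds from the exact word-ball asymptotics \eqref{eq:exact-word-growth}.

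\emph{Reduction to one block.} Compression by $P_L$ and $P_Q$ is contractive on $\Sch_r$, and $\Lambda$ is diagonal and comparable to $L$ on $A_L$. Hence Cauchy convergence forces
\[
\sup_{N}\norm{P_L\Phi_{2,N}(c)P_Q}_{L^2(\Sch_r)}\lesssim L^{a}Q^{b},
\]
uniformly in $L$ and $Q$, together with a finite limit for each fixed block. To bound the block from below, I work at the Hilbert level and use finite rank. For $r\le2$ one has $\norm{B}_{\Sch_r}\ge\norm B_{\Sch_2}$. For $r>2$, since the rank of the block is at most $C R_-^D$, one has $\norm B_{\Sch_r}\ge R_-^{D/r-D/2}\norm B_{\Sch_2}$. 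By orthogonality of second-chaos coordinates and Lemma~\ref{lem:hilbert-endpoint},
\[
\E\norm{P_L\Phi_{2,N}P_Q}_{\Sch_2}^2\asymp\sum_{q\in A_Q}\sum_{l\in A_L}H_N(lq^{-1}),
\]
up to the symmetrization factor discussed in Remark~\ref{rem:singular-tail-sharpness}. For $r<2$ the Hilbert lower bound loses the factor $R_-^{D/r-D/2}$. I would try to recover it from Lemma~\ref{lem:random-schatten-interpolation}, interpolating with the upper bound at some $q>2$ supplied by Proposition~\ref{prop:second-order-singular-profile}.

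\emph{Local threshold.} Take $L=Q=1$, so that only $l=q^{-1}$-type words with $lq^{-1}$ bounded contribute. The block variance is then
\[
\gtrsim\sum_{|x|\le N}\langle x\rangle^{-2\sigma}\langle x^{-1}z\rangle^{-2\sigma}\ \text{for bounded } z.
\]
By \eqref{eq:exact-word-growth} this is $\gtrsim\sum_{k\le\log N}2^{k(D-4\sigma)}$, which diverges as $N\to\infty$ when $\sigma\le D/4$. The block therefore has no finite limit, whatever $a,b$ are. This gives necessity of \eqref{eq:universal-local-singular-range}, and in particular the final assertion of the theorem.

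\emph{Smoothing inequalities.} Assume $D/4<\sigma\le D/2$. I would use the matching lower bound $H(z)\gtrsim\langle z\rangle^{-\gamma'}$ with $\gamma'=4\sigma-D$, or the logarithmic version at the endpoint. It follows from \eqref{eq:exact-word-growth} by restricting to $|x|\simeq\varepsilon|z|$ and $|x^{-1}z|\simeq|z|$, which gives $\gtrsim R^D$ choices.
\begin{itemize}
\item[$Q=1$, $L=R\to\infty$:] the variance is $\gtrsim R^{D}R^{D-4\sigma}=R^{2\beta}$, since $D-\gamma'=2\beta$. The single-column block has rank one, so every $\Sch_r$ norm equals its Hilbert norm, and we get $R^\beta\lesssim R^a$. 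Running this for each dyadic $R$ and summing the resulting orthogonal block contributions, a Cauchy sequence forces $\sum_R R^{2(\beta-a)}<\infty$, which gives strictness $a>\beta$. Symmetrically, $b>\beta$.
\item[$L=Q=R$:] restrict to $l,q$ with $|lq^{-1}|\simeq R$, which gives $\gtrsim R^{2D}$ pairs. The variance is then $\gtrsim R^{2D}R^{-\gamma'}=R^{D+2\beta}$. For $r\ge2$ the rank conversion yields $\norm{\cdot}_{\Sch_r}\gtrsim R^{\beta+D/r}$, so $a+b>\beta+D/r$ after summing over $R$.
\end{itemize}
At $\sigma=D/2$ the same computations produce logarithms, which still force strict inequalities.

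\emph{Main obstacle.} I expect the hardest step to be the $1\le r<2$ diagonal lower bound $R^{\beta+D/r}$. There the naive Hilbert estimate is too weak, and one needs that the block's singular values are genuinely spread over about $R^D$ directions. I would try Lemma~\ref{lem:random-schatten-interpolation} first: use the $\Sch_2$ lower bound together with the $\Sch_q$ upper bound $R^{\beta+D/q}$ from \eqref{eq:second-order-singular-profile}, which are compatible with the required exponent. A second delicate point is turning the dyadic lower bounds into strict inequalities. For this I would use that the orthogonal dyadic diagonal blocks are almost orthogonal pieces of $T$, via pinching contractivity on $\Sch_r$, so that boundedness forces summability of the block norms.
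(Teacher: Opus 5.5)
The overall architecture matches the paper. You use the identity coefficient for the local threshold, the one-column blocks $P_RT\delta_e$ and their adjoints for $a>\beta$ and $b>\beta$, rank conversion on the diagonal blocks for $r\ge2$, and Lemma~\ref{lem:random-schatten-interpolation} against the $\Sch_q$ bound of Proposition~\ref{prop:second-order-singular-profile} for $r<2$. Your strictness mechanism is uniform bounds on the cutoffs plus column orthogonality or pinching over dyadic scales. It is a valid substitute for the paper's use of the limit $T$, compactness and dominated convergence. For $r>2$ you still need reverse Minkowski in $L^{2/r}(\Omega)$, or the pathwise conclusion $\norm{P_RTP_R}_{\Sch_r}\to0$, to pass from $\sum_R\norm{P_RTP_R}_{\Sch_r}^r\le\norm{T}_{\Sch_r}^r$ to $L^2(\Omega)$ norms.

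There is a genuine gap at the logarithmic endpoint $\sigma=D/2$ for $r$ near $1$. Your diagonal Hilbert lower bound uses the far region $|lq^{-1}|\simeq R$, where $H\approx R^{-D}\log R$. It therefore only gives $\sum_{q,l\in A_R}H(lq^{-1})\gtrsim R^D\log R$. The $\Sch_q$ upper bound you interpolate against carries the full factor $\omega_\sigma(R)=\log(e+R)$, so the logarithms do not balance.

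The interpolation then yields $\norm{P_RTP_R}_{L^2(\Omega;\Sch_r)}\gtrsim R^{-a-b+D/r}(\log R)^{(\theta-1/2)/\theta}$. For $1<r<2$ you could still take $q>r/(r-1)$ to make $\theta>1/2$. At $r=1$, however, $\theta=(1/2-1/q)/(1-1/q)<1/2$ for every $q>2$. So at $a+b=D$ your per-block bound tends to zero, and the claim that the endpoint logarithms "still force strict inequalities" is unsupported exactly at the trace-class threshold the theorem asserts.

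Your dyadic summation can rescue this, but only with a deliberate choice $q\ge4$ and the square-summed pinching bound $\sum_R\E\norm{P_RT_NP_R}_{\Sch_1}^2\le\E\norm{T_N}_{\Sch_1}^2$. That choice makes $\sum_k k^{-(1-2\theta)/\theta}$ diverge. None of this is in your proposal.

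The clean fix is the paper's near-diagonal restriction. Take $q$ in a thick shell $E_R$ with $B(e,\varepsilon R)E_R\subset A_R$, and write $lq^{-1}=xy$ with $x,y\in B(e,\varepsilon R/2)$. This gives $\sum_{q,l\in A_R}H(lq^{-1})\ge\#E_R\bigl(\sum_{|x|\le\varepsilon R/2}\langle x\rangle^{-2\sigma}\bigr)^2\gtrsim R^D\omega_\sigma(R)^2$. The same $\omega_\sigma(R)$ then appears on both sides of the interpolation, so the output is $R^{-a-b+D/r}\omega_\sigma(R)$ for every $1\le r<2$. Equality is then excluded with no summation at all.
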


\begin{proof}
The exact asymptotic implies the upper growth estimate
\eqref{eq:group-growth}, so sufficiency follows from
Theorem~\ref{thm:second-order-singular-transfer}.  We prove necessity.

First suppose $\sigma\le D/4$.  Since both smoothing factors equal one at
the identity, the symmetric second-chaos kernel gives, for $N>M$,
\begin{align}
\label{eq:universal-local-obstruction}
 &\E\left|
 \ip{(T_{2,N}^{a,b}(c)-T_{2,M}^{a,b}(c))\delta_e}{\delta_e}
 \right|^2
 =2\sum_{M<|x|\le N}\langle x\rangle^{-4\sigma}.
\end{align}
Exact volume growth and annular summation show that the series diverges
polynomially if $4\sigma<D$ and logarithmically if $4\sigma=D$.  Thus the
cutoffs cannot be Cauchy.

Now suppose $D/4<\sigma\le D/2$ and that the cutoffs have a limit
$T\in L^2(\Omega;\Sch_r)$.  For $w\in\Gamma$, set
\begin{equation}
\label{eq:universal-coefficient-chaos}
 Z_w:=\sum_{xy=w}\langle x\rangle^{-\sigma}
                    \langle y\rangle^{-\sigma}\wick{g_xg_y},
 \qquad
 H(w):=\sum_{xy=w}\langle x\rangle^{-2\sigma}
                    \langle y\rangle^{-2\sigma}.
\end{equation}
Lemma~\ref{lem:polynomial-convolution} gives $H(w)<\infty$.  Thus, if
$Z_{w,N}$ denotes the corresponding finite-cutoff coefficient, then
$Z_{w,N}\to Z_w$ in $L^2(\Omega)$.  For
$f_w(x,y)=\langle x\rangle^{-\sigma}\langle y\rangle^{-\sigma}
\mathbf1_{\{xy=w\}}$, the Wiener--It\^o isometry and positivity give
\begin{equation}
\label{eq:universal-coefficient-variance-lower}
 \E|Z_w|^2
 =2\norm{\operatorname{Sym}f_w}_{\ell^2(\Gamma^2)}^2
 =\norm{f_w}_{\ell^2}^2+\ip{f_w}{f_w^{\mathsf T}}
 \ge H(w).
\end{equation}
Entrywise convergence identifies
\begin{equation}
\label{eq:universal-limit-entries}
 \ip{T\delta_q}{\delta_l}
 =\langle l\rangle^{-a}\langle q\rangle^{-b}Z_{lq^{-1}}.
\end{equation}

Fix $0<\varepsilon<1/8$ and set
\[
 E_R:=\{q:(1+2\varepsilon)R\le\langle q\rangle
                    <(2-2\varepsilon)R\}.
\]
The exact asymptotic and the triangle inequality give, for all large
dyadic $R$,
\begin{equation}
\label{eq:universal-thick-shell}
 \#E_R\ge cR^D,
 \qquad B(e,\varepsilon R)E_R\subset A_R.
\end{equation}
Indeed, up to the harmless shift in $\langle q\rangle=1+|q|$,
\[
 \#E_R
 =v\bigl((2-2\varepsilon)^D-(1+2\varepsilon)^D\bigr)R^D
  +o(R^D).
\]

Let $\alpha=2\sigma$ and $\beta=D-\alpha$.  Annular summation in
\eqref{eq:exact-word-growth} gives
\[
 \sum_{x\in B(e,\varepsilon R)}\langle x\rangle^{-\alpha}
 \gtrsim
 \begin{cases}
  R^\beta,&\alpha<D,\\
  \log(e+R),&\alpha=D,
 \end{cases}
 \qquad
 \sum_{y\in E_R}\langle y\rangle^{-\alpha}
 \gtrsim
 \begin{cases}
  R^\beta,&\alpha<D,\\
  1,&\alpha=D.
 \end{cases}
\]
Writing $v(x)=\langle x\rangle^{-\alpha}$, the inclusion in
\eqref{eq:universal-thick-shell} makes the restriction explicit:
\[
 \sum_{l\in A_R}H(l)
 \ge
 \left(\sum_{x\in B(e,\varepsilon R)}v(x)\right)
 \left(\sum_{y\in E_R}v(y)\right).
\]
It follows that
\begin{equation}
\label{eq:universal-one-sided-lower}
 \sum_{l\in A_R}H(l)
 \gtrsim
 \begin{cases}
  R^{2\beta},&D/4<\sigma<D/2,\\
  \log(e+R),&\sigma=D/2.
 \end{cases}
\end{equation}
For the two-sided bound, write $l=wq$, take $q\in E_R$, and sum the
convolution over $x,y\in B(e,\varepsilon R/2)$.  Since
$w=xy\in B(e,\varepsilon R)$,
\[
 \sum_{q,l\in A_R}H(lq^{-1})
 \ge \#E_R
 \left(\sum_{x\in B(e,\varepsilon R/2)}v(x)\right)^2.
\]
Therefore
\begin{equation}
\label{eq:universal-two-sided-lower}
 \sum_{q,l\in A_R}H(lq^{-1})
 \gtrsim R^D\omega_\sigma(R)^2.
\end{equation}

By \eqref{eq:universal-limit-entries} and
\eqref{eq:universal-one-sided-lower},
\[
 \norm{P_RT\delta_e}_{L^2(\Omega;\ell^2)}^2
 \gtrsim R^{-2a}\sum_{l\in A_R}H(l).
\]
The left side tends to zero.  Indeed, $P_R\to0$ strongly and it is
dominated by $\norm T_{\Sch_r}^2$.  Hence $a>\beta$ below the endpoint and
$a>0$ at the endpoint.  Applying the same argument to
$P_RT^*\delta_e$, using $H(w^{-1})=H(w)$, gives the strict condition on
$b$.

Finally let $B_R=P_RTP_R$.  Equations
\eqref{eq:universal-coefficient-variance-lower},
\eqref{eq:universal-limit-entries}, and
\eqref{eq:universal-two-sided-lower} give
\begin{equation}
\label{eq:universal-block-hilbert-lower}
 \norm{B_R}_{L^2(\Omega;\Sch_2)}
 \gtrsim R^{-a-b+D/2}\omega_\sigma(R).
\end{equation}
If $2\le r<\infty$, the finite-rank singular-value inequality and
$\rank B_R\lesssim R^D$ immediately imply
\begin{equation}
\label{eq:universal-block-necessity}
 \norm{B_R}_{L^2(\Omega;\Sch_r)}
 \gtrsim R^{-a-b+D/r}\omega_\sigma(R).
\end{equation}
If $1\le r<2$, fix $q>2$ and choose $0<\theta<1$ by
\eqref{eq:random-schatten-interpolation-parameter}.  Coefficientwise
$L^2$ convergence gives Hilbert--Schmidt convergence on the fixed
finite-dimensional space $P_R\ell^2(\Gamma)$, and
$\norm C_{\Sch_q}\le\norm C_{\Sch_2}$.  Thus $B_R$ is the fixed-block
$L^2(\Omega;\Sch_q)$ limit of
$P_RT_{2,N}^{a,b}(c)P_R$.  Passing to the limit in the $q$-block estimate
gives
\begin{equation}
\label{eq:universal-block-q-upper}
 \norm{B_R}_{L^2(\Omega;\Sch_q)}
 \lesssim_q R^{-a-b+D/q}\omega_\sigma(R).
\end{equation}
The rearranged estimate
\eqref{eq:random-schatten-interpolation-lower}, together with
\eqref{eq:universal-block-hilbert-lower} and
\eqref{eq:universal-block-q-upper}, this yields
\eqref{eq:universal-block-necessity}; the identity
$1/2=\theta/r+(1-\theta)/q$ gives precisely the exponent $D/r$, and the
same factor $\omega_\sigma(R)$ occurs on both sides of the interpolation.
Since $r<\infty$, every $\Sch_r$ operator is compact; finite-rank
approximation and dominated convergence imply that the left side tends to
zero.  This forces $a+b>\beta+D/r$ below the endpoint.  At
$\sigma=D/2$, the logarithm excludes equality and gives $a+b>D/r$.
\end{proof}

For the integer Heisenberg group $H_{2n+1}(\mathbb Z)$, where $D=2n+2$,
Theorem~\ref{thm:universal-singular-phase} gives the local threshold
$\sigma>(n+1)/2$ and the logarithmic endpoint $\sigma=n+1$.

\subsection{Sharp all-order transfer on integer lattices}

The preceding theorem is sharp on every polynomial-growth group at order
two.  At higher orders, mixed cuts on a nonabelian group retain internal
word order and are no longer governed by ordinary iterated convolution.
For the abelian lattice this obstruction disappears.  Fix an integer
$D\ge1$ and equip $\mathbb Z^D$ with its standard word length.  We estimate
every oriented cut, including its ultraviolet tail, before same-field
symmetrization.

\begin{lemma}[Iterated lattice convolution and cutoff tails]
\label{lem:lattice-iterated-convolution}
Let $m\ge2$, let
\[
 \frac{(m-1)D}{m}<\alpha\le D,
 \qquad v(x)=\langle x\rangle^{-\alpha},
 \qquad h_k=v^{*k}\quad(1\le k\le m),
\]
where convolution is on $\mathbb Z^D$.  Put
\[
 \rho=D-\alpha,
 \qquad \gamma_k=D-k\rho=k\alpha-(k-1)D>0.
\]
If $\alpha<D$, then
\begin{equation}
\label{eq:lattice-convolution-power}
 h_k(z)\le C\langle z\rangle^{-\gamma_k}.
\end{equation}
If $\alpha=D$, then
\begin{equation}
\label{eq:lattice-convolution-log}
 h_k(z)\le C\langle z\rangle^{-D}
                 \log(e+\langle z\rangle)^{k-1}.
\end{equation}
For $M\ge1$, let
\[
 h_{k,>M}(z):=
 \sum_{x_1+\cdots+x_k=z}
 \prod_{j=1}^kv(x_j)
 \mathbf1_{\{\max_j|x_j|>M\}}.
\]
If $\alpha<D$, then
\begin{equation}
\label{eq:lattice-convolution-tail-power}
 h_{k,>M}(z)\le
 C\max\{M,\langle z\rangle\}^{-\gamma_k}.
\end{equation}
At $\alpha=D$,
\begin{equation}
\label{eq:lattice-convolution-tail-log}
 h_{k,>M}(z)\le
 C\max\{M,\langle z\rangle\}^{-D}
 \log\!\bigl(e+\max\{M,\langle z\rangle\}\bigr)^{k-1}.
\end{equation}
Consequently, if $E\subset\mathbb Z^D$ and $\#E\le C_0R^D$, then
\begin{align}
\label{eq:lattice-convolution-rearranged}
 \sum_{z\in E}h_k(z)&\le C
 \begin{cases}
  R^{k\rho},&\alpha<D,\\
  \log(e+R)^k,&\alpha=D,
 \end{cases}\\
\label{eq:lattice-convolution-tail-rearranged}
 \sum_{z\in E}h_{k,>M}(z)&\le C
 \begin{cases}
  R^{k\rho}\min\{1,(R/M)^{\gamma_k}\},&\alpha<D,\\
  \log(e+R)^k\min\{1,(R/M)^{\vartheta}\},&\alpha=D,
 \end{cases}
\end{align}
where the endpoint estimate holds for every $0<\vartheta<D$, with a
constant depending on $\vartheta$.
\end{lemma}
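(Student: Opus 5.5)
We will argue by induction on $k$, using $h_{k}=h_{k-1}*v$ and the two-variable estimate already proved in Lemma~\ref{lem:polynomial-convolution} for the abelian group $\mathbb Z^D$. We need a slightly more general two-exponent form of it. For $\theta_1,\theta_2\in(0,D]$ with $\theta_1+\theta_2>D$, the sum $\sum_x\langle x\rangle^{-\theta_1}\langle z-x\rangle^{-\theta_2}$ is bounded by $\langle z\rangle^{-(\theta_1+\theta_2-D)}$ when both $\theta_i<D$. When one exponent equals $D$, the bound acquires one logarithm. The proof is the same three-region splitting used there: $|x|\le|z|/2$, $|z-x|\le|z|/2$, and the far region summed over dyadic shells. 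The polynomial rearrangement \eqref{eq:polynomial-rearrangement} controls each region.

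\textbf{Pointwise bounds.} Apply the two-exponent bound with $\theta_1=\gamma_{k-1}$ and $\theta_2=\alpha$. The hypotheses hold because $\gamma_{k-1}\le D$ and
\[
 \gamma_{k-1}+\alpha-D=\gamma_k>0 .
\]
The last inequality follows from $\gamma_k\ge\gamma_m=m\alpha-(m-1)D>0$, which is exactly the lower bound on $\alpha$. This gives \eqref{eq:lattice-convolution-power}. At $\alpha=D$ every $\gamma_k=D$. Each convolution step is then a critical convolution of $\langle\cdot\rangle^{-D}\log^{k-2}$ against $\langle\cdot\rangle^{-D}$. In the near regions the logarithmic factor is comparable to $\log\langle z\rangle$, and the free sum $\sum_{|x|\le|z|/2}\langle x\rangle^{-D}$ contributes one more logarithm. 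The far region is summable. Together these give \eqref{eq:lattice-convolution-log}.

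\textbf{Tails.} If $\max_j|x_j|>M$, then some index $j$ has $|x_j|>M$. By symmetry of the convolution on $\mathbb Z^D$, $h_{k,>M}(z)$ is at most $k$ times
\[
 \sum_{|x|>M}v(x)\,h_{k-1}(z-x).
\]
When $|z|\ge M/2$ the claim follows from the pointwise bound. When $|z|<M/2$ we have $\langle z-x\rangle\simeq\langle x\rangle$ on every shell $|x|\simeq 2^jM$. The sum is then dominated by
\[
 \sum_{|x|>M}\langle x\rangle^{-\alpha-\gamma_{k-1}}\lesssim M^{D-\alpha-\gamma_{k-1}}=M^{-\gamma_k},
\]
with the logarithmic powers carried along at the endpoint. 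This gives \eqref{eq:lattice-convolution-tail-power} and \eqref{eq:lattice-convolution-tail-log}.

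\textbf{Rearranged sums.} We apply \eqref{eq:polynomial-rearrangement} with $\theta=\gamma_k$. Below the endpoint,
\[
 \sum_{z\in E}h_k(z)\lesssim R^{D-\gamma_k}=R^{k\rho}.
\]
At the endpoint we use the shell argument behind \eqref{eq:critical-log-rearrangement}, now with the weight $\log^{k-1}$, which gives $\log(e+R)^k$. For the tail sum with $R\ge M$, the bound by $h_k$ suffices. For $R<M$ we split $E$ into $|z|\ge M/2$, which is empty unless $R\gtrsim M$, and the rest, where $h_{k,>M}\lesssim M^{-\gamma_k}$. This gives $R^DM^{-\gamma_k}=R^{k\rho}(R/M)^{\gamma_k}$. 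At the endpoint the analogous bound is $R^DM^{-D}\log(e+M)^{k-1}$. For fixed $\vartheta<D$, the surplus power $(R/M)^{D-\vartheta}$ absorbs $\log(e+M)^{k-1}$ relative to $\log(e+R)^k$, since $\log(e+M)\lesssim\log(e+R)+\log(e+M/R)$.

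\textbf{Main obstacle.} The delicate step is the endpoint bookkeeping of logarithms in the induction, in both the pointwise bound and the tail. In the critical convolution the near region $|z-x|\le|z|/2$ carries the weight $\log^{k-2}$ evaluated at small arguments. To close the induction we must check that summing $\langle y\rangle^{-D}\log(e+\langle y\rangle)^{k-2}$ over $|y|\le|z|/2$ yields $\log^{k-1}$, and not worse.
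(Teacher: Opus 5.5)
Your proposal is correct and follows essentially the same route as the paper: iterate the three-region two-weight lattice convolution bound, handle the tails by a union bound over the position of the large variable with the split $|z|<M/2$ versus $|z|\ge M/2$, and sum over $E$ by decreasing rearrangement, absorbing the logarithm at the endpoint by writing $M=tR$. One small correction: $E$ is constrained only in cardinality, so $E\cap\{|z|\ge M/2\}$ need not be empty when $R<M$; this is harmless, because your pointwise tail bound already gives $h_{k,>M}(z)\lesssim M^{-\gamma_k}$ for every $z$ (with the extra factor $\log(e+M)^{k-1}$ at $\alpha=D$), so the split is unnecessary.
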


\begin{proof}
The elementary two-weight lattice estimate is
\begin{equation}
\label{eq:lattice-two-weight-convolution}
 \sum_x\langle x\rangle^{-s}\langle z-x\rangle^{-t}
 \lesssim\langle z\rangle^{-(s+t-D)}
 \quad(0<s,t<D,\ s+t>D).
\end{equation}
It follows by splitting into $|x|\le |z|/2$,
$|z-x|\le |z|/2$, and dyadic shells in the remaining region.  Induction
with $s=\alpha$ and $t=\gamma_{k-1}$ proves
\eqref{eq:lattice-convolution-power}.  At $\alpha=D$, assume inductively
that
\[
 h_{k-1}(y)\lesssim\langle y\rangle^{-D}
                    \log(e+\langle y\rangle)^{k-2}.
\]
Splitting into $|x|\le |z|/2$, $|z-x|\le |z|/2$, and the remaining
dyadic shells gives
\[
 (v*h_{k-1})(z)\lesssim
 \langle z\rangle^{-D}\log(e+\langle z\rangle)^{k-1}.
\]
Indeed, each near region contributes the new logarithm, while the far
shells form a geometric series.  This proves
\eqref{eq:lattice-convolution-log}.

For the tail, use a union bound over its $k$ possible positions.  The case
$k=1$ is immediate.  If $k\ge2$ and $\langle z\rangle\le M/2$, then
$|z-x_1|\simeq|x_1|$ on $|x_1|>M$, and
\eqref{eq:lattice-convolution-power} gives
\[
 \sum_{|x_1|>M}v(x_1)h_{k-1}(z-x_1)
 \lesssim\sum_{|x_1|>M}|x_1|^{-\alpha-\gamma_{k-1}}
 \lesssim M^{-\gamma_k}.
\]
If $\langle z\rangle>M/2$, use the full bound for $h_k(z)$.  This proves
\eqref{eq:lattice-convolution-tail-power}.  At $\alpha=D$, insert the
critical inductive bound above.  The first region then contributes at most
$CM^{-D}\log(e+M)^{k-2}$, whereas the second is controlled by the full
convolution bound.  Hence
\[
 h_{k,>M}(z)\lesssim
 \max\{M,\langle z\rangle\}^{-D}
 \log\!\bigl(e+\max\{M,\langle z\rangle\}\bigr)^{k-1},
\]
which is \eqref{eq:lattice-convolution-tail-log}.

Decreasing rearrangement and lattice ball growth give, for $0<s<D$,
\[
 \sup_{\#E\le C_0R^D}
 \sum_{z\in E}\langle z\rangle^{-s}
 \lesssim R^{D-s}.
\]
At $s=D$, the additional factor
$\log(e+\langle z\rangle)^{k-1}$ gives $\log(e+R)^k$.  If $R<M$, the
critical tail sum is at most
\[
 CR^DM^{-D}\log(e+M)^{k-1}
 \le C_\vartheta\log(e+R)^k(R/M)^\vartheta,
 \qquad0<\vartheta<D;
\]
write $M=tR$ and absorb powers of $\log(e+t)$ into
$t^{D-\vartheta}$.  If $R\ge M$, use the full convolution bound.  The
power case is the same argument without logarithms.
\end{proof}

\begin{proposition}[All-order singular lattice profiles]
\label{prop:all-order-singular-lattice-profile}
Let $\Gamma=\mathbb Z^D$, $m\ge2$, and
\[
 \frac{(m-1)D}{2m}<\sigma\le\frac D2,
 \qquad c_x=\langle x\rangle^{-\sigma}.
\]
Put
\[
 \beta_m=m\left(\frac D2-\sigma\right),\qquad
 \gamma_m=2m\sigma-(m-1)D,
\]
and
\begin{equation}
\label{eq:lattice-singular-scale-weight}
 \Omega_{m,\sigma}(R)=
 \begin{cases}
  R^{\beta_m},&\sigma<D/2,\\
  \log(e+R)^{m/2},&\sigma=D/2.
 \end{cases}
\end{equation}
For $R_+=\max\{L,Q\}$ and $R_-=\min\{L,Q\}$,
\begin{equation}
\label{eq:all-order-singular-lattice-profile}
 \Prof_{m,r}(K_{m,N}^{L,Q}(c))
 \le C\Omega_{m,\sigma}(R_+)R_-^{D/r},
 \qquad2\le r\le\infty.
\end{equation}
If $N\ge M$ and $\sigma<D/2$, then
\begin{align}
\label{eq:all-order-singular-lattice-tail}
 &\Prof_{m,r}(K_{m,N}^{L,Q}(c)-K_{m,M}^{L,Q}(c))\notag\\
 &\quad\le C\Omega_{m,\sigma}(R_+)R_-^{D/r}
 \min\{1,(R_+/M)^{\gamma_m/2}\}.
\end{align}
At $\sigma=D/2$, for every $0<\vartheta<D$ the right side is replaced by
\begin{equation}
\label{eq:all-order-singular-lattice-tail-critical}
 C_\vartheta\Omega_{m,D/2}(R_+)R_-^{D/r}
 \min\{1,(R_+/M)^{\vartheta/2}\}.
\end{equation}
\end{proposition}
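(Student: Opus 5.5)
We follow the template of Proposition~\ref{prop:second-order-singular-profile}. We estimate every oriented cut of the \emph{ordered} kernel $\widetilde K_{m,N}^{L,Q}(c)$ at the Hilbert and operator endpoints, and interpolate between $\Sch_2$ and $\Sch_\infty$. We then pass to $K_{m,N}^{L,Q}(c)=\operatorname{Sym}_m\widetilde K_{m,N}^{L,Q}(c)$ and to cutoff differences by Lemma~\ref{lem:symmetrization-contractive}. Put $u(x)=\langle x\rangle^{-\sigma}$, $\alpha=2\sigma$, $v=u^2$, and $\rho=D-\alpha$. The hypothesis $\sigma>(m-1)D/(2m)$ is exactly $\alpha>(m-1)D/m$. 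So Lemma~\ref{lem:lattice-iterated-convolution} applies to $h_k=v^{*k}$ for every $1\le k\le m$, with $k\rho/2\le\beta_m$. Since $\mathbb Z^D$ is abelian, the coefficient at $(x_1,\dots,x_m)$ is $\prod_j u_N(x_j)\,P_L\lambda_{x_1+\cdots+x_m}P_Q$. All entries are nonnegative and dominated by those with $u_N$ replaced by $u$.

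\emph{Hilbert endpoint.} By Proposition~\ref{prop:group-schatten-profile}, every cut has
\[
\norm{\Flat_S}_{\Sch_2}^2=\sum_{q\in A_Q}\sum_{l\in A_L}h_m^{(N)}(l-q)\le\sum_{q\in A_Q}\sum_{l\in A_L}h_m(l-q).
\]
We sum first in $l$, over a translate of $A_L$, or first in $q$. Then \eqref{eq:lattice-convolution-rearranged} with $k=m$ gives the bound $\min\{Q^D\Omega_{m,\sigma}(L)^2,L^D\Omega_{m,\sigma}(Q)^2\}\le\Omega_{m,\sigma}(R_+)^2R_-^D$. For the cutoff difference, the entries are supported where $\max_j|x_j|>M$, so $h_m$ is replaced by $h_{m,>M}$. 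Then \eqref{eq:lattice-convolution-tail-rearranged} produces the extra factor $\min\{1,(R_+/M)^{\gamma_m}\}$, or $(R_+/M)^{\vartheta}$ at the endpoint, inside the square.

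\emph{Operator endpoint.} For $S=\emptyset$ and $S=[m]$, the images of distinct spatial basis vectors are orthogonal, exactly as in \eqref{eq:empty-cut-singular-values}--\eqref{eq:full-cut-singular-values}. Hence the squared norms are $\sup_q\sum_{l\in A_L}h_m(l-q)$ and $\sup_l\sum_{q\in A_Q}h_m(l-q)$. By \eqref{eq:lattice-convolution-rearranged} these are $\lesssim\Omega_{m,\sigma}(R_+)^2$, and the tail version again gives the factor $\min\{1,(R_+/M)^{\gamma_m}\}$.

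For a mixed cut with $k=|S|$ and $1\le k\le m-1$, the entry is
\[
\mathsf M_{(y,l),(x,q)}=\prod_{j\in S}u(x_j)\prod_{j\notin S}u(y_j)\,\mathbf 1\{l=q+\textstyle\sum x+\sum y\}.
\]
We apply the two-weight Schur test with weight $\prod_{j\in S}u(x_j)$ on inputs and $\prod_{j\notin S}u(y_j)$ on outputs. The row sum at fixed $(y,l)$ is $\prod u(y_j)\sum_{q\in A_Q}h_k(l-\sum y-q)\lesssim\prod u(y_j)\,\Omega_{k,\sigma}(Q)^2$. The column sum at fixed $(x,q)$ is likewise $\lesssim\prod u(x_j)\,\Omega_{m-k,\sigma}(L)^2$. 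Hence the norm is $\lesssim R_+^{(k\rho+(m-k)\rho)/2}=R_+^{\beta_m}$, with $\log$ powers $k/2+(m-k)/2=m/2$ at $\sigma=D/2$. No commutativity beyond $\mathbb Z^D$ is needed, and any $S$ is treated identically. Interpolating the Hilbert and operator endpoints then gives \eqref{eq:all-order-singular-lattice-profile}.

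\emph{Tail bound; main obstacle.} If $R_+>M/8$, the tail bound follows from the uniform bound after enlarging the constant. So assume $R_+\le M/8$, whence $|l-q|\le M/2$. The difficulty is the mixed-cut operator endpoint. A single Schur factor gains only $(R_+/M)^{\gamma_k/2}$, and interpolation needs operator decay at least $\gamma_m/2$; this mirrors the check $2\sigma(1-2/r)+\gamma/r\ge\gamma/2$ at order two. My plan is to exploit that $\sum x+\sum y=l-q$ is small while some coordinate exceeds $M$, which forces a second coordinate of size $\gtrsim M/m$. I will split the tail matrix by the triangle inequality according to the positions $(j,j')$ of two large coordinates. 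For the large coordinates I use weights $u(x)^{1-\delta}$ in the Schur test, and freeze each large coordinate at scale $M$ on its side of the cut. Each such coordinate then contributes a power of $M^{-\sigma}$ to both Schur factors, beyond the $R_+^{\beta_m}$ coming from the remaining free variables, as in the two-large-variable argument of Proposition~\ref{prop:second-order-singular-profile}. The aim is an operator bound $\lesssim\Omega_{m,\sigma}(R_+)(R_+/M)^{\epsilon}$ with $\epsilon\ge\gamma_m/2$; it would suffice to show $\epsilon(1-2/r)+\gamma_m/r\ge\gamma_m/2$ for all $2\le r\le\infty$. At $\sigma=D/2$ I would accept any $\vartheta<D$ and absorb logarithms into $(M/R_+)^{D-\vartheta}$, as at the end of the proof of Lemma~\ref{lem:lattice-iterated-convolution}. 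If the two-large-variable Schur bookkeeping fails for some cut, the fallback is to treat that cut by nesting the order-two argument iteratively along the abelian sum.
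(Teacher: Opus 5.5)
Your uniform bound \eqref{eq:all-order-singular-lattice-profile} is correct and follows the paper's route. You use product Schur weights split by the cut, Lemma~\ref{lem:lattice-iterated-convolution} on each side, the Hilbert endpoint $\sum_{q\in A_Q}\sum_{l\in A_L}h_m(l-q)$, interpolation, and symmetrization. The paper also sends the empty and full cuts through the same Schur test, using the convention $h_0=\delta_0$, rather than exact orthogonality; that difference is cosmetic. Your Hilbert-endpoint tail and your empty/full-cut operator tails are also fine.

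The gap is the mixed-cut operator endpoint of the cutoff difference. You label it the main obstacle and offer only an unexecuted plan: two large coordinates, weights $u^{1-\delta}$, freezing at scale $M$, and an iterative fallback. So, as written, \eqref{eq:all-order-singular-lattice-tail} and \eqref{eq:all-order-singular-lattice-tail-critical} are not proved.

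The obstacle itself comes from misreading the exponents. Since $\gamma_j=D-j\rho$ with $\rho=D-2\sigma\ge0$, one has $\gamma_1\ge\gamma_2\ge\cdots\ge\gamma_m$. Hence $\gamma_k\ge\gamma_m$ and $\gamma_{m-k}\ge\gamma_m$ for every mixed cut, and the one-sided gain you dismiss is already enough. The paper's argument is as follows. Bound $\bigl|\prod_ju_N(x_j)-\prod_ju_M(x_j)\bigr|\le\sum_{h=1}^m\prod_ju(x_j)\mathbf 1_{\{|x_h|>M\}}$ and treat each $h$ separately with your original Schur weights.
\begin{itemize}
\item If $h\in S$, the row constant becomes a sum of $h_{k,>M}$ over a translate of $A_Q$. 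By \eqref{eq:lattice-convolution-tail-rearranged} it gains $\min\{1,(Q/M)^{\gamma_k}\}$, while the column constant is unchanged.
\item If $h\notin S$, the column constant gains $\min\{1,(L/M)^{\gamma_{m-k}}\}$ instead.
\end{itemize}
After the Schur square root, the operator-norm factor is at most $\min\{1,(R_+/M)^{\gamma_m/2}\}$. This is exactly the decay $\epsilon\ge\gamma_m/2$ required by your own interpolation criterion $\epsilon(1-2/r)+\gamma_m/r\ge\gamma_m/2$. At $\sigma=D/2$ all $\gamma_j$ equal $D$, and the $\vartheta$-version of the lemma gives $\vartheta/2$ in the same way.

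The two-large-coordinate device from the order-two proof is not needed. There it yields $2\sigma$, but the one-sided gain $\sigma=\gamma_1/2\ge(4\sigma-D)/2$ would already have sufficed. Your reduction to $R_+\le M/8$ is likewise superfluous, because the $\min$ is built into the tail lemma.
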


\begin{proof}
Write $u(x)=\langle x\rangle^{-\sigma}$, $v=u^2$, adopt the convention
$h_0=\delta_0$, and fix $S\subset[m]$, $k=\#S$.  In the input basis
$(\mathbf x_S,q)$ and output basis $(\mathbf x_{S^c},l)$, the absolute
matrix entries of the ordered flattening are
\[
 \left(\prod_{j=1}^mu_N(x_j)\right)
 \mathbf1_{\{l=q+x_1+\cdots+x_m\}}.
\]
Use the Schur weights
\[
 p_{\rm in}=\prod_{j\in S}u(x_j),
 \qquad p_{\rm out}=\prod_{j\notin S}u(x_j).
\]
For a fixed output row, the weighted row sum divided by $p_{\rm out}$ is
bounded by
\[
 \sum_{q\in A_Q}h_k\left(l-q-\sum_{j\notin S}x_j\right).
\]
The corresponding column sum is controlled by an $(m-k)$-fold
convolution over $l\in A_L$.  The translated shells are injective images
of sets of cardinality at most $CQ^D$ and $CL^D$.  Hence
Lemma~\ref{lem:lattice-iterated-convolution}, with the trivial
$k=0$ or $m-k=0$ interpretation, gives the row and column constants
\[
 A_S\lesssim
 \begin{cases}Q^{k(D-2\sigma)},&\sigma<D/2,\\
 \log(e+Q)^k,&\sigma=D/2,
 \end{cases}
 \quad
 B_S\lesssim
 \begin{cases}L^{(m-k)(D-2\sigma)},&\sigma<D/2,\\
 \log(e+L)^{m-k},&\sigma=D/2.
 \end{cases}
\]
The two-weight Schur test therefore gives the cut-specific endpoint
\begin{equation}
\label{eq:lattice-cut-specific-endpoint}
 \norm{\Flat_S\widetilde K_{m,N}^{L,Q}}_{\cL}
 \lesssim
 \begin{cases}
 Q^{k(D-2\sigma)/2}L^{(m-k)(D-2\sigma)/2},&\sigma<D/2,\\
 \log(e+Q)^{k/2}\log(e+L)^{(m-k)/2},&\sigma=D/2.
 \end{cases}
\end{equation}
Thus every one of the $2^m$ cuts is bounded by
$C\Omega_{m,\sigma}(R_+)$.  No identification of different cuts is used.

At the Hilbert endpoint every cut has the same norm, and
\[
 \norm{\Flat_S\widetilde K_{m,N}^{L,Q}}_{\Sch_2}^2
 \le\sum_{q\in A_Q}\sum_{l\in A_L}h_m(l-q).
\]
Summing first in either shell and using
\eqref{eq:lattice-convolution-rearranged} yields
\[
 \norm{\Flat_S\widetilde K_{m,N}^{L,Q}}_{\Sch_2}
 \lesssim\Omega_{m,\sigma}(R_+)R_-^{D/2}.
\]
Interpolation with \eqref{eq:lattice-cut-specific-endpoint} proves
\eqref{eq:all-order-singular-lattice-profile} for the ordered kernel.

For the cutoff difference, use
\[
 \left|\prod_ju_N(x_j)-\prod_ju_M(x_j)\right|^2
 \le\sum_{h=1}^m\prod_jv(x_j)\mathbf1_{\{|x_h|>M\}}.
\]
For the operator endpoint, the corresponding unsquared difference is
bounded by the sum of the $m$ nonnegative kernels obtained by imposing
$|x_h|>M$.  Thus the same position-by-position decomposition applies to
the Schur test.
If $h\in S$, so $k\ge1$, the row constant gains
$\min\{1,(Q/M)^{\gamma_k}\}$; the Schur square root therefore gives the
operator norm the factor
$\min\{1,(Q/M)^{\gamma_k/2}\}$.  If $h\notin S$, so $m-k\ge1$, the
column constant similarly gives
$\min\{1,(L/M)^{\gamma_{m-k}/2}\}$.  For the relevant nonempty side
$j\in\{k,m-k\}$ one has $\gamma_j\ge\gamma_m$, and $L,Q\le R_+$, so
both factors are bounded by
$\min\{1,(R_+/M)^{\gamma_m/2}\}$.  At the Hilbert endpoint, the tail
estimate for $h_m$ gives exponent $\gamma_m$, and taking the square root
again yields $\gamma_m/2$.  At $\sigma=D/2$, the same argument with
$0<\vartheta<D$ gives $\vartheta/2$.  Interpolation and
Lemma~\ref{lem:symmetrization-contractive} finish the proof.
\end{proof}

\begin{theorem}[Sharp all-order singular lattice transfer]
\label{thm:all-order-singular-lattice-transfer}
Let $\Gamma=\mathbb Z^D$, $m\ge2$, $0<\sigma\le D/2$, and
$c_x=\langle x\rangle^{-\sigma}$.  If
\begin{equation}
\label{eq:all-order-local-threshold}
 \sigma\le\frac{(m-1)D}{2m},
\end{equation}
then no $a,b\ge0$ make $(T_{m,N}^{a,b}(c))_N$ Cauchy in
$L^2(\Omega;\Sch_r)$ for any $1\le r<\infty$.

Suppose instead that $(m-1)D/(2m)<\sigma\le D/2$, and define
\[
 \beta_m:=m\left(\frac D2-\sigma\right),\qquad
 \gamma_m:=2m\sigma-(m-1)D.
\]
For
$1\le r<\infty$, the cutoffs are Cauchy in
$L^2(\Omega;\Sch_r)$ if and only if
\begin{equation}
\label{eq:all-order-singular-lattice-range}
 a>\beta_m,\qquad b>\beta_m,
 \qquad a+b>\beta_m+\frac Dr.
\end{equation}
Whenever these conditions hold, the cutoffs converge in every finite
$L^p(\Omega;\Sch_r)$ and almost surely in $\Sch_r$.
Denote their common limit by $T_m^{a,b}(c)$.

Set
\begin{equation}
\label{eq:all-order-singular-lattice-margin}
 \eta_*:=\min\left\{
  \frac{\gamma_m}{2},\ a-\beta_m,\ b-\beta_m,
  a+b-\beta_m-\frac Dr
 \right\}.
\end{equation}
When \eqref{eq:all-order-singular-lattice-range} holds, every
$0<\eta<\eta_*$ and $1\le p<\infty$ satisfy
\begin{equation}
\label{eq:all-order-singular-lattice-lp-rate}
 \norm{T_{m,N}^{a,b}(c)-T_m^{a,b}(c)}_{L^p(\Omega;\Sch_r)}
 \le C_\eta(p+r)^{m/2}N^{-\eta}.
\end{equation}
For a common realization and $t\ge1$,
\begin{equation}
\label{eq:all-order-singular-lattice-probability-rate}
 \Prob\!\left(
  \norm{T_{m,N}^{a,b}(c)-T_m^{a,b}(c)}_{\Sch_r}
  >\mathrm e C_\eta(t+r)^{m/2}N^{-\eta}
 \right)\le\mathrm e^{-t},
\end{equation}
and
\begin{equation}
\label{eq:all-order-singular-lattice-as-rate}
 \norm{T_{m,N}^{a,b}(c)-T_m^{a,b}(c)}_{\Sch_r}
 =O_\omega(N^{-\eta})
 \qquad\text{almost surely}.
\end{equation}
At $\sigma=D/2$, one has $\gamma_m=D$, so
\[
 \eta_*=\min\left\{\frac D2,a,b,a+b-\frac Dr\right\}.
\]
The constants in the rate estimates may depend on $\eta$ and on all fixed
parameters in the theorem.
\end{theorem}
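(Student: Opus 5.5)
The plan mirrors the proofs of Theorem~\ref{thm:second-order-singular-transfer} and Theorem~\ref{thm:universal-singular-phase}, with Proposition~\ref{prop:all-order-singular-lattice-profile} replacing the second-order profile bound. It has three parts: sufficiency with rates, necessity of the three smoothing inequalities, and the local obstruction \eqref{eq:all-order-local-threshold}.

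\emph{Sufficiency.} For $2\le r<\infty$, I apply Theorem~\ref{thm:wick-estimate} to the block differences $P_L(\Phi_{m,N}-\Phi_{m,M})P_Q=I_m(K_{m,N}^{L,Q}-K_{m,M}^{L,Q})$. Together with \eqref{eq:all-order-singular-lattice-tail} this gives
\[
 \norm{P_L(\Phi_{m,N}-\Phi_{m,M})P_Q}_{L^p(\Sch_r)}\lesssim(p+r)^{m/2}\Omega_{m,\sigma}(R_+)R_-^{D/r}\min\{1,(R_+/M)^{\gamma_m/2}\}.
\]
At the endpoint $\sigma=D/2$ I use \eqref{eq:all-order-singular-lattice-tail-critical} with $\vartheta$ close to $D$. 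For $1\le r<2$, I apply the $r=2$ bound and then Lemma~\ref{lem:finite-rank-schatten-conversion}, since the block rank is at most $CR_-^D$. Next I insert the weights $L^{-a}Q^{-b}$ and bound $\min\{1,(R_+/M)^{\gamma_m/2}\}\le (R_+/M)^\eta$. The dyadic sum is then split into the regions $L\ge Q$ and $Q\ge L$, exactly as in \eqref{eq:singular-dyadic-inner-sum}, with $\beta$ replaced by $\beta_m$. The strict margins in \eqref{eq:all-order-singular-lattice-margin} absorb the endpoint logarithms, including $\log(e+R)^{m/2}$. Absolute block reconstruction (Proposition~\ref{prop:block-reconstruction}) then gives a Cauchy bound $C(p+r)^{m/2}M^{-\eta}$, which yields the $L^p$ limit and the rate \eqref{eq:all-order-singular-lattice-lp-rate}. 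The tail bound follows from Markov's inequality with $p=t$. The almost-sure rate follows from Borel--Cantelli with $t=2\log N$ and an intermediate exponent $\eta_1\in(\eta,\eta_*)$.

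\emph{Local obstruction.} The entry at $(\delta_0,\delta_0)$ equals $\sum_{x_1+\cdots+x_m=0}\prod_j c^{(N)}_{x_j}\wick{g_{x_1}\cdots g_{x_m}}$, and both smoothing factors equal one there. By the Wiener--It\^o isometry and positivity of the symmetrized kernel, the variance of the $N$ versus $M$ difference is at least the $\ell^2$ mass of $\prod_j\langle x_j\rangle^{-\sigma}\mathbf 1_{\sum x_j=0}$ over configurations with some $|x_j|>M$. This mass is dominated below by configurations in which all $|x_j|\simeq R$ for dyadic $R>M$, which contribute $\gtrsim R^{(m-1)D-2m\sigma}$. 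This is not summable when $2m\sigma\le (m-1)D$. Since $|\ip{T\delta_0}{\delta_0}|\le\norm T_{\Sch_r}$, the cutoffs cannot be Cauchy.

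\emph{Necessity of the smoothing inequalities.} I copy the proof of Theorem~\ref{thm:universal-singular-phase}, with $Z_w$ replaced by the order-$m$ coefficient chaos $\sum_{x_1+\cdots+x_m=w}\prod_j\langle x_j\rangle^{-\sigma}\wick{g_{x_1}\cdots g_{x_m}}$. Its variance is bounded below by $h_m(w)$, the $m$-fold convolution of $\langle\cdot\rangle^{-2\sigma}$. I need the lower bound $\sum_{l\in A_R}h_m(l)\gtrsim R^{2\beta_m}$ (respectively $\log(e+R)^m$ at $\sigma=D/2$). I get it by restricting $x_1,\ldots,x_{m-1}$ to $B(0,\varepsilon R)$ and $x_m$ to a thick shell. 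For the two-sided sum, I restrict all $x_j$ to $B(0,\varepsilon R/m)$ and $q$ to the thick shell $E_R$, which gives $\sum_{q,l\in A_R}h_m(l-q)\gtrsim R^D\Omega_{m,\sigma}(R)^2$. With these bounds, the columns $P_RT\delta_0$ and $P_RT^*\delta_0$ force $a>\beta_m$ and $b>\beta_m$. The diagonal blocks $P_RTP_R$, combined with Lemma~\ref{lem:random-schatten-interpolation} and the upper bound from the sufficiency part when $r<2$, force $a+b>\beta_m+D/r$.

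The main obstacle is the lower bounds on $h_m$ over shells, because I must make sure the restricted configurations actually land in $A_R$ and retain the full $\Omega_{m,\sigma}^2$ growth. On $\mathbb Z^D$ this follows from the triangle inequality and exact lattice ball counts.
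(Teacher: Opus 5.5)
Your proposal is correct and follows essentially the paper's proof. Sufficiency and the rates come from the same Wick-plus-profile block-tail summation, with finite-rank conversion for $r<2$. Necessity uses the same coefficient-chaos column and diagonal-block lower bounds, including the interpolation step for $r<2$. The local threshold uses the identity coefficient; summing your dyadic families, which have bounded overlap, replaces the paper's disjoint lacunary scales, and this difference is inessential.

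One slip to fix: at $\sigma=D/2$ your restriction gives $\sum_{l\in A_R}h_m(l)\gtrsim\log(e+R)^{m-1}$, not $\log(e+R)^m$. The larger bound is actually false, since $h_m\lesssim R^{-D}\log(e+R)^{m-1}$ on $A_R$. The correct bound still forces $a>0$ because $m\ge2$.
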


\begin{proof}
Assume first that the local threshold is satisfied strictly.  The
same-field Wick estimate and
Proposition~\ref{prop:all-order-singular-lattice-profile} give the block
tail bound with moment factor $(p+r)^{m/2}$ when $2\le r<\infty$.
For $1\le r<2$, apply that estimate at $r=2$ and then
Lemma~\ref{lem:finite-rank-schatten-conversion}.  The spatial block has
rank at most $CR_-^D$, so the resulting bound has exactly the factor
$R_-^{D/r}$, the same ultraviolet tail factor, and moment factor
$(p+2)^{m/2}\lesssim_m(p+r)^{m/2}$.  Thus the same block tail bound holds
throughout $1\le r<\infty$.  If $\sigma<D/2$, use
\[
 \min\{1,(R_+/M)^{\gamma_m/2}\}
 \le M^{-\eta}R_+^\eta
 \qquad(0<\eta<\gamma_m/2).
\]
At $\sigma=D/2$, first choose $2\eta<\vartheta<D$ and use
\eqref{eq:all-order-singular-lattice-tail-critical}.  In the region
$L\ge Q$, after inserting the smoothing factors the dyadic majorant is
\[
 M^{-\eta}L^{-a+\beta_m+\eta}Q^{-b+D/r},
\]
with the additional factor $\log(e+L)^{m/2}$ when $\sigma=D/2$.
Applying \eqref{eq:singular-dyadic-inner-sum}, with the same elementary
three-case calculation for the extra logarithmic power, shows that all
endpoint logarithms are absorbed by the strict power margins.  The region
$Q\ge L$ is symmetric.  Summing the two regions gives exactly
\[
 \eta<a-\beta_m,\qquad \eta<b-\beta_m,
 \qquad \eta<a+b-\beta_m-D/r.
\]
This proves the uniform $L^p$ Cauchy estimate and
\eqref{eq:all-order-singular-lattice-lp-rate}.  Markov's inequality with
$p=t$ proves \eqref{eq:all-order-singular-lattice-probability-rate}; use a
slightly larger rate exponent and $t=2\log N$ to obtain
\eqref{eq:all-order-singular-lattice-as-rate} by Borel--Cantelli.

We prove necessity.  For $w\in\mathbb Z^D$, put
\[
 Z_w:=\sum_{x_1+\cdots+x_m=w}
       \prod_{j=1}^m\langle x_j\rangle^{-\sigma}
       \wick{g_{x_1}\cdots g_{x_m}},
 \qquad
 H_m(w):=\sum_{x_1+\cdots+x_m=w}
       \prod_{j=1}^m\langle x_j\rangle^{-2\sigma}.
\]
Lemma~\ref{lem:lattice-iterated-convolution}, with $\alpha=2\sigma$ and
$k=m$, gives $H_m(w)<\infty$.  Hence the finite-cutoff coefficient sums
$Z_{w,N}$ converge in $L^2(\Omega)$ to the displayed $Z_w$.
The coefficient kernel is symmetric under every permutation, so the
Wiener--It\^o isometry gives the exact identity
\begin{equation}
\label{eq:lattice-coefficient-variance}
 \E|Z_w|^2=m!H_m(w).
\end{equation}
If the cutoffs converge to $T$ in $L^2(\Omega;\Sch_r)$, entrywise
convergence gives
\begin{equation}
\label{eq:lattice-limit-entries}
 \ip{T\delta_q}{\delta_l}
 =\langle l\rangle^{-a}\langle q\rangle^{-b}Z_{l-q}.
\end{equation}

Choose a thick $E_R\subset A_R$ with $\#E_R\gtrsim R^D$ and
$B(0,\varepsilon R)+E_R\subset A_R$, and write
$v(x)=\langle x\rangle^{-2\sigma}$.  Restricting $m-1$ variables to
$B(0,\varepsilon R/(m-1))$ and the last one to $E_R$ gives the product
bound
\[
 \sum_{l\in A_R}H_m(l)
 \ge
 \left(
  \sum_{x\in B(0,\varepsilon R/(m-1))}v(x)
 \right)^{m-1}
 \sum_{y\in E_R}v(y).
\]
Consequently,
\begin{equation}
\label{eq:lattice-one-sided-lower}
 \sum_{l\in A_R}H_m(l)\gtrsim
 \begin{cases}
  R^{2\beta_m},&\sigma<D/2,\\
  \log(e+R)^{m-1},&\sigma=D/2.
 \end{cases}
\end{equation}
Thus
\[
 \norm{P_RT\delta_0}_{L^2(\Omega;\ell^2)}^2
 \gtrsim R^{-2a}\sum_{l\in A_R}H_m(l).
\]
Since $P_R\to0$ strongly and
$\norm{P_RT\delta_0}_{\ell^2}\le\norm T_{\Sch_r}$, dominated convergence
shows that the left side tends to zero.  This forces $a>\beta_m$ below the
endpoint and $a>0$ at the endpoint.  Applying the same argument to $T^*$
forces the condition on $b$.

For the joint condition, restrict $q$ to $E_R$, put $l=w+q$, and
restrict all variables in $H_m(w)$ to $B(0,\varepsilon R/m)$.  Summing
first over these variables gives
\[
 \sum_{q,l\in A_R}H_m(l-q)
 \ge \#E_R
 \left(
  \sum_{x\in B(0,\varepsilon R/m)}v(x)
 \right)^m.
\]
Hence
\begin{equation}
\label{eq:lattice-two-sided-lower}
 \sum_{q,l\in A_R}H_m(l-q)\gtrsim
 \begin{cases}
  R^{D+2\beta_m},&\sigma<D/2,\\
  R^D\log(e+R)^m,&\sigma=D/2.
 \end{cases}
\end{equation}
Let $B_R=P_RTP_R$.  Equations \eqref{eq:lattice-coefficient-variance},
\eqref{eq:lattice-limit-entries}, and
\eqref{eq:lattice-two-sided-lower} imply
\begin{equation}
\label{eq:lattice-block-hilbert-lower}
 \norm{B_R}_{L^2(\Omega;\Sch_2)}
 \gtrsim R^{-a-b+D/2}\Omega_{m,\sigma}(R).
\end{equation}
If $2\le r<\infty$, the finite-rank singular-value inequality and
$\rank B_R\lesssim R^D$ give
\begin{equation}
\label{eq:lattice-block-necessity}
 \norm{B_R}_{L^2(\Omega;\Sch_r)}
 \gtrsim R^{-a-b+D/r}\Omega_{m,\sigma}(R).
\end{equation}
If $1\le r<2$, fix $q>2$ and choose $\theta$ by
\eqref{eq:random-schatten-interpolation-parameter}.  Coefficientwise
$L^2$ convergence gives Hilbert--Schmidt convergence on the fixed
finite-dimensional space $P_R\ell^2(\mathbb Z^D)$, hence also
$L^2(\Omega;\Sch_q)$ convergence because
$\norm C_{\Sch_q}\le\norm C_{\Sch_2}$.  Thus $B_R$ is the fixed-block
$L^2(\Omega;\Sch_q)$ limit of the cutoffs.  The $q$-block estimate
therefore gives
\begin{equation}
\label{eq:lattice-block-q-upper}
 \norm{B_R}_{L^2(\Omega;\Sch_q)}
 \lesssim_q R^{-a-b+D/q}\Omega_{m,\sigma}(R).
\end{equation}
Applying \eqref{eq:random-schatten-interpolation-lower} to
\eqref{eq:lattice-block-hilbert-lower} and
\eqref{eq:lattice-block-q-upper} proves
\eqref{eq:lattice-block-necessity}.  In particular, the critical factor
$\Omega_{m,D/2}(R)=\log(e+R)^{m/2}$ is retained without loss.  Almost surely,
$T\in\Sch_r$ is compact and
$\norm{P_RTP_R}_{\Sch_r}\to0$ by finite-rank approximation.  Domination by
$\norm T_{\Sch_r}$ therefore gives convergence to zero in $L^2(\Omega)$,
and \eqref{eq:lattice-block-necessity} proves the strict joint condition.

Finally suppose \eqref{eq:all-order-local-threshold} holds.  Fix once and
for all a sufficiently small $\varepsilon>0$, depending only on $m$.  For
every scale $R$, let
\[
 \mathcal B_R:=\left\{z\in\mathbb Z^D:
 R\le z_1\le(1+\varepsilon)R,\quad
 |z_j|\le\varepsilon R\ (2\le j\le D)\right\}.
\]
Choose
$x_1,\ldots,x_{m-1}\in\mathcal B_R$ and set
$x_m=-(x_1+\cdots+x_{m-1})$.  Then $|x_j|\simeq R$ for every $j$, and
there are $\gtrsim R^{(m-1)D}$ such tuples.  Taking a sufficiently
lacunary sequence, for instance with each scale larger than a fixed
$m$-dependent multiple of its predecessor, makes the corresponding tuple
families disjoint.  Their contribution at scale $R$ is
\[
 R^{(m-1)D-2m\sigma}
\]
to $H_m(0)$.  It is bounded below by a constant at the threshold and
grows below it.  Summing over the disjoint lacunary scales therefore makes
$H_m(0)$ diverge.  If $H_{m,N}(0)$ denotes the same sum with
$\max_j|x_j|\le N$, then
\[
 \E\left|\ip{T_{m,N}^{a,b}(c)\delta_0}{\delta_0}\right|^2
 =m!H_{m,N}(0),
 \qquad H_{m,N}(0)\uparrow H_m(0)=\infty.
\]
Thus the identity coefficient is not $L^2$ Cauchy, regardless of the
spatial smoothing.
\end{proof}

\subsection{Singular Wick multiplication on the torus}
\label{subsec:torus-wick-multipliers}

The lattice theorem has a direct Fourier-analytic interpretation.  Under
the Fourier transform from $\ell^2(\mathbb Z^D)$ to $L^2(\mathbb T^D)$,
group shifts become multiplication by characters and the word-length
operator becomes a Japanese-bracket multiplier.  Thus the group-dual
cutoff is precisely a spatially sandwiched Wick multiplication operator.
The following theorem records its sharp Schatten range for finite $r$, the
mean-square regularity of the Wick distribution, and the exact compactness
threshold for general two-sided sandwiches.

\begin{theorem}[Sharp singular Wick multipliers on the torus]
\label{thm:torus-singular-wick-multipliers}
Let $m\ge2$ and $(m-1)D/(2m)<\sigma\le D/2$, and set
$\beta_m:=m(D/2-\sigma)$.  On
$\mathbb T^D=(\mathbb R/2\pi\mathbb Z)^D$ with normalized Haar measure,
define
\[
 X_N(\theta):=\sum_{|x|\le N}g_x\langle x\rangle^{-\sigma}
 e^{\mathrm i x\cdot\theta},
\]
and define its order-$m$ Wick power by
\[
 V_{m,N}(\theta):=\wick{X_N(\theta)^m}
 =\sum_{x_1,\ldots,x_m}
 \wick{g_{x_1}\cdots g_{x_m}}
  \prod_{j=1}^m\bigl(\langle x_j\rangle^{-\sigma}
  \mathbf1_{\{|x_j|\le N\}}\bigr)
 e^{\mathrm i(x_1+\cdots+x_m)\cdot\theta}.
\]
Here the real isonormal process is extended complex-linearly; thus $X_N$
is generally complex-valued, and no relation between $g_x$ and $g_{-x}$
is imposed.  In particular, this is the complexified real-isonormal model,
not the conjugate-symmetric real Fourier model.
Let $J$ be the Fourier multiplier
$Je^{\mathrm iq\cdot\theta}=\langle q\rangle e^{\mathrm iq\cdot\theta}$.
Write $\norm{f}_{H^t}:=\norm{J^tf}_{L^2}$.
\begin{enumerate}
\item[(i)] Fix $1\le r<\infty$ and $a,b\ge0$.  Then
$J^{-a}M_{V_{m,N}}J^{-b}$ is Cauchy in
$L^2(\Omega;\Sch_r(L^2(\mathbb T^D)))$ if and only if
\[
 a>\beta_m,\qquad b>\beta_m,\qquad
 a+b>\beta_m+\frac Dr,
\]
which is precisely \eqref{eq:all-order-singular-lattice-range}.  Under these
conditions
it converges in every finite $L^p$, converges almost surely in $\Sch_r$,
and satisfies all rates in
Theorem~\ref{thm:all-order-singular-lattice-transfer}.
In particular, trace-norm convergence holds exactly when
\[
 a>\beta_m,\qquad b>\beta_m,\qquad a+b>\beta_m+D.
\]

\item[(ii)] For every $\varepsilon>0$,
\begin{equation}
\label{eq:torus-wick-sobolev-regularity}
 V_{m,N}\longrightarrow V_m
 \quad\text{in}\quad
 \begin{cases}
  L^2(\Omega;H^{-\beta_m-\varepsilon}(\mathbb T^D)),&\sigma<D/2,\\
  L^2(\Omega;H^{-\varepsilon}(\mathbb T^D)),&\sigma=D/2.
 \end{cases}
\end{equation}
These Sobolev indices are mean-square sharp: the cutoffs are not Cauchy in
$L^2(\Omega;H^{-\beta_m})$ below the endpoint or in
$L^2(\Omega;L^2(\mathbb T^D))$ at the endpoint.

\item[(iii)] Fix $a,b\ge0$.  The cutoffs
$J^{-a}M_{V_{m,N}}J^{-b}$ converge in
$L^2(\Omega;\cL(L^2(\mathbb T^D)))$ to a random compact operator if and
only if
\begin{equation}
\label{eq:torus-general-compactness-phase}
 a>\beta_m,\qquad b>\beta_m.
\end{equation}

\item[(iv)] Fix $1\le r<\infty$.  For the balanced sandwich
$a=b=s/2$, $s\ge0$, the exact
finite-$r$ Schatten phase diagram is
\begin{equation}
\label{eq:torus-balanced-sandwich-phase}
 s>\max\left\{2\beta_m,\ \beta_m+\frac Dr\right\}
 \quad(\sigma<D/2),
 \qquad
 s>\frac Dr
 \quad(\sigma=D/2).
\end{equation}
In this balanced case, the compactness condition in part~(iii) reduces to
$s>2\beta_m$, equivalently to $s>0$ when $\sigma=D/2$.
The exact trace-class threshold is $s>\beta_m+D$.
Indeed, the local hypothesis gives $\beta_m<D/2$, so this condition already
implies $s>2\beta_m$.
\end{enumerate}
\end{theorem}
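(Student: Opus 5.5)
The whole theorem will be reduced to the lattice results by the Fourier unitary $\mathcal U:\ell^2(\mathbb Z^D)\to L^2(\mathbb T^D)$, $\delta_x\mapsto e^{\mathrm ix\cdot\theta}$. Under $\mathcal U$ the shift $\lambda_x$ becomes multiplication by $e^{\mathrm ix\cdot\theta}$, and $\Lambda$ becomes $J$. Consequently $M_{V_{m,N}}=\mathcal U\Phi_{m,N}(c)\mathcal U^{-1}$ and $J^{-a}M_{V_{m,N}}J^{-b}=\mathcal U T_{m,N}^{a,b}(c)\mathcal U^{-1}$. Unitary conjugation preserves every Schatten and operator norm. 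Part~(i) is then a restatement of Theorem~\ref{thm:all-order-singular-lattice-transfer}, including the rates and the almost sure convergence. The trace-class statement is the case $r=1$. We only need to check that the local hypothesis is the strict one assumed there.

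For part~(ii) we compute directly. The Fourier coefficient of $V_{m,N}$ at $w$ is $Z_{w,N}$. The symmetric-kernel isometry \eqref{eq:lattice-coefficient-variance} gives
\[
\E\norm{V_{m,N}-V_{m,M}}_{H^{-t}}^2=m!\sum_w\langle w\rangle^{-2t}\,h_{m,>M}^{\le N}(w),
\]
where the truncated $m$-fold convolution is dominated by the tail $h_{m,>M}$ of Lemma~\ref{lem:lattice-iterated-convolution} with $\alpha=2\sigma$. We split the $w$-sum into dyadic shells of scale $R$. The rearranged tail bound \eqref{eq:lattice-convolution-tail-rearranged} then gives the shell contribution
\[
R^{-2t}R^{2\beta_m}\min\{1,(R/M)^{\gamma_m}\},
\]
with a logarithmic version when $\sigma=D/2$. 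For $t=\beta_m+\varepsilon$ this sums to $O(M^{-\eta})$, which gives convergence.

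Sharpness in (ii) uses the one-sided lower bound \eqref{eq:lattice-one-sided-lower}. The limiting shell masses are $\sum_{w\in A_R}H_m(w)\gtrsim R^{2\beta_m}$, and $\log(e+R)^{m-1}$ at the endpoint. At $t=\beta_m$, respectively $t=0$, these masses are not summable over dyadic $R$. If the sequence were Cauchy, its limit would have coefficients $Z_w$ by coefficientwise convergence, and Fatou would give a finite sum, which is a contradiction.

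For part~(iii), sufficiency (Step 1): if $a,b>\beta_m$, choose $r$ large so that $a+b>\beta_m+D/r$. Part~(i) then gives convergence in $L^2(\Omega;\Sch_r)$, hence in $L^2(\Omega;\cL)$, and the limit is in $\Sch_r$ and therefore compact.

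For necessity (Step 2), we reuse the column argument from the proof of Theorem~\ref{thm:all-order-singular-lattice-transfer}. Suppose $T$ is the limit in $L^2(\Omega;\cL)$ and is compact almost surely. Then $\norm{P_RT\delta_0}\le\norm{T}_{\cL}$ and $P_RT\delta_0\to0$, so dominated convergence applies. Combined with \eqref{eq:lattice-one-sided-lower}, this forces $a>\beta_m$, and $T^*$ gives $b>\beta_m$. This step needs $T$ compact only through $P_RT\delta_0\to0$, which holds for any bounded $T$ because $P_R\to0$ strongly. I expect the main subtlety to be the identification of the entries of the limit as in \eqref{eq:lattice-limit-entries}. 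This follows from operator-norm convergence, since $\ip{T_N\delta_q}{\delta_l}$ converges in $L^2(\Omega)$.

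Part~(iv) is arithmetic on (i) and (iii) with $a=b=s/2$. The conditions $a,b>\beta_m$ become $s>2\beta_m$, and the joint condition becomes $s>\beta_m+D/r$. At $\sigma=D/2$ we have $\beta_m=0$. The local hypothesis $\sigma>(m-1)D/(2m)$ gives $\beta_m<D/2$, so $\beta_m+D>2\beta_m$, and the trace-class threshold is $s>\beta_m+D$.
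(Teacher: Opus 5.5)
Your proposal is correct and follows essentially the same route as the paper: Fourier conjugation to Theorem~\ref{thm:all-order-singular-lattice-transfer} for (i), the Wiener--It\^o coefficient identity with the iterated-convolution bounds for (ii), and the one-sided shell lower bound \eqref{eq:lattice-one-sided-lower} for both the sharpness in (ii) and the necessity in (iii); sufficiency in (iii) comes from choosing a large finite $r$, and (iv) is arithmetic. The only minor variation is in (ii), where you bound the Cauchy differences by the rearranged tail estimate \eqref{eq:lattice-convolution-tail-rearranged} and so even obtain a rate $M^{-\eta}$, whereas the paper uses the pointwise bounds \eqref{eq:lattice-convolution-power}--\eqref{eq:lattice-convolution-log} to get a summable majorant and concludes by dominated convergence.
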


\begin{proof}
Let $\mathcal F:\ell^2(\mathbb Z^D)\to L^2(\mathbb T^D)$ be the Fourier
transform normalized by $\mathcal F\delta_q=e^{\mathrm iq\cdot\theta}$.
Then
\[
 \mathcal F\lambda_w\mathcal F^{-1}=M_{e^{\mathrm iw\cdot\theta}},
 \qquad \mathcal F\Lambda\mathcal F^{-1}=J,
 \qquad
 \mathcal F T_{m,N}^{a,b}(c)\mathcal F^{-1}
 =J^{-a}M_{V_{m,N}}J^{-b},
\]
so the Schatten assertions and their rates follow from
Theorem~\ref{thm:all-order-singular-lattice-transfer}.

Put
\[
 v_N(x):=\langle x\rangle^{-2\sigma}\mathbf1_{\{|x|\le N\}},
 \qquad h_{m,N}:=v_N^{*m}.
\]
The Fourier coefficient kernel is symmetric under all permutations, so the
Wiener--It\^o isometry gives
\[
 \E|\widehat{V}_{m,N}(w)|^2=m!h_{m,N}(w),
 \qquad h_{m,N}(w)\uparrow h_m(w).
\]
Define $\widehat{V}_m(w)$ as the $L^2(\Omega)$ limit of these nested
coefficient cutoffs.  Orthogonality of the omitted tuple supports gives
the exact difference identity
\begin{equation}
\label{eq:torus-fourier-coefficient-tail}
 \E|\widehat{V}_m(w)-\widehat{V}_{m,N}(w)|^2
 =m!\bigl(h_m(w)-h_{m,N}(w)\bigr).
\end{equation}
Equations
\eqref{eq:lattice-convolution-power}--\eqref{eq:lattice-convolution-log}
and $2\beta_m+\gamma_m=D$ make the weighted majorant summable at every
index displayed in \eqref{eq:torus-wick-sobolev-regularity}; coefficientwise
convergence, \eqref{eq:torus-fourier-coefficient-tail}, and dominated
summation prove that assertion.  When
$\sigma<D/2$, at the Sobolev endpoint $H^{-\beta_m}$,
\eqref{eq:lattice-one-sided-lower} shows that every large dyadic shell
contributes a positive constant to the mean-square $H^{-\beta_m}$ norm.
At $\sigma=D/2$,
\[
 \sum_w h_{m,N}(w)
 =\left(\sum_{|x|\le N}\langle x\rangle^{-D}\right)^m
 \longrightarrow\infty,
\]
which excludes mean-square $L^2$ convergence.

Suppose first that $a>\beta_m$ and $b>\beta_m$.  Choose a sufficiently
large finite $r$ so that
$a+b>\beta_m+D/r$.  Part~(i) then gives convergence in $\Sch_r$, hence
compact operator-norm convergence.  Conversely, suppose the torus cutoffs
are operator-norm Cauchy with limit
$T\in L^2(\Omega;\cL(L^2(\mathbb T^D)))$, and set
$\widetilde T:=\mathcal F^{-1}T\mathcal F$.  Operator-norm convergence
implies convergence of every matrix coefficient, so $\widetilde T$ has the
entries identified in \eqref{eq:lattice-limit-entries}.  Equation
\eqref{eq:lattice-one-sided-lower} gives
\[
 \E\norm{P_R\widetilde T\delta_0}_{\ell^2}^2\gtrsim
 \begin{cases}
  R^{-2a+2\beta_m},&\sigma<D/2,\\
  R^{-2a}\log(e+R)^{m-1},&\sigma=D/2.
 \end{cases}
\]
The left side tends to zero by dominated convergence, forcing
$a>\beta_m$.  The corresponding row estimate for $\widetilde T^*$ forces
$b>\beta_m$.  This proves part~(iii).  Finally, substituting
$a=b=s/2$ into \eqref{eq:all-order-singular-lattice-range} and
\eqref{eq:torus-general-compactness-phase} gives part~(iv).
\end{proof}

\begin{corollary}[Sharp Fourier--Galerkin approximation rates]
\label{cor:torus-fourier-galerkin-rate}
Under the hypotheses of
Theorem~\ref{thm:torus-singular-wick-multipliers}, let
$a,b>\beta_m$ and define
\[
 A_m^{a,b}:=\lim_{N\to\infty}J^{-a}M_{V_{m,N}}J^{-b}
 \quad\text{in }L^2(\Omega;\cL(L^2(\mathbb T^D))).
\]
For $R\in\Dya$, $R\ge2$, let
\[
 \Pi_Re^{\mathrm iq\cdot\theta}
 :=\mathbf1_{\{\langle q\rangle<R\}}e^{\mathrm iq\cdot\theta},
 \qquad q\in\mathbb Z^D,
\]
and put
\[
 \cE_R^{a,b}:=A_m^{a,b}-\Pi_RA_m^{a,b}\Pi_R,
 \qquad
 \delta_{a,b}:=\min\{a-\beta_m,b-\beta_m\}.
\]
Then
\begin{equation}
\label{eq:torus-fourier-galerkin-rate}
 \norm{\cE_R^{a,b}}_{L^2(\Omega;\cL)}\asymp
 \begin{cases}
  R^{-\delta_{a,b}},&\sigma<D/2,\\
  R^{-\delta_{a,b}}\log(e+R)^{(m-1)/2},&\sigma=D/2.
 \end{cases}
\end{equation}
The implicit constants are independent of $R$.  Moreover,
\begin{equation}
\label{eq:torus-fourier-galerkin-rank}
 \rank(\Pi_RA_m^{a,b}\Pi_R)\lesssim R^D.
\end{equation}
\end{corollary}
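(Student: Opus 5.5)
The plan is to transfer everything to $\ell^2(\mathbb Z^D)$ via $\mathcal F$, so that $A_m^{a,b}$ becomes the lattice limit $\widetilde T$ with entries $\langle l\rangle^{-a}\langle q\rangle^{-b}Z_{l-q}$, as in \eqref{eq:lattice-limit-entries}. There $\Pi_R$ becomes the coordinate projection onto $\{\langle q\rangle<R\}=\sum_{L<R}P_L$. The rank bound \eqref{eq:torus-fourier-galerkin-rank} is immediate, since $\rank\Pi_R\le\#\{\langle q\rangle<R\}\lesssim R^D$. For the error, write
\[
 \cE_R^{a,b}=(\Id-\Pi_R)A\Pi_R+\Pi_RA(\Id-\Pi_R)+(\Id-\Pi_R)A(\Id-\Pi_R),
\]
so that it is a sum of dyadic blocks $P_LAP_Q$ with $\max\{L,Q\}\ge R$ (up to the boundary block at $R$).

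For the upper bound I would use the block moment estimate at some large finite $r$, namely Proposition~\ref{prop:all-order-singular-lattice-profile} combined with Theorem~\ref{thm:wick-estimate} and passed to the limit in $N$. This gives
\[
 \norm{P_LAP_Q}_{L^2(\cL)}\le\norm{P_LAP_Q}_{L^2(\Sch_r)}\lesssim L^{-a}Q^{-b}\,\Omega_{m,\sigma}(R_+)R_-^{D/r}.
\]
Then I sum absolutely over the blocks with $R_+\ge R$. In the region $L\ge Q$ the sum is $\sum_{L\ge R}L^{-a}\Omega(L)\sum_{Q\le L}Q^{-b+D/r}$. Since $b>\beta_m\ge0$, I can choose $r$ so large that $b>D/r$, and the inner sum is then bounded. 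The outer sum gives $R^{-(a-\beta_m)}$. The symmetric region gives $R^{-(b-\beta_m)}$, and together they give $R^{-\delta_{a,b}}$.

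This naive route is lossy at the endpoint $\sigma=D/2$. There $\Omega=\log^{m/2}$, so it produces $R^{-\delta}\log(e+R)^{m/2}$ rather than the claimed $\log(e+R)^{(m-1)/2}$. I expect this to be the main obstacle. The remedy would be to use the cut-specific endpoint \eqref{eq:lattice-cut-specific-endpoint}. In the empty or full cut on a block with $Q\ll L$, the operator norm is $\log(e+L)^{m/2}$. I would instead apply a Schur/row–column bound directly to the limit operator, via $\E\norm{\cdot}_{\cL}$ and the Gaussian operator estimate of Corollary~\ref{cor:finite-cut-rank-operator-endpoint} with $r\sim\log R$. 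The off-diagonal region $Q\le L/8$ has $l-q$ at scale $L$, and the one-sided sum \eqref{eq:lattice-convolution-rearranged} restricted to $|z|\simeq L$ contributes $h_m(z)\lesssim L^{-D}\log^{m-1}$, hence $\log(e+L)^{m-1}$ per shell, i.e. a square root $\log^{(m-1)/2}$. The near-diagonal blocks $L\simeq Q\ge R$ carry the extra factor $L^{-a-b}$ with $b>0$ and are lower order. I would also need to check that the logarithmic factor from the finite-cut-rank bridge is absorbed by the margin; if it is not, I would settle for the upper bound with exponent $m/2$ and flag the gap.

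For the lower bound, $\norm{\cE_R}_{\cL}\ge\norm{\cE_R\delta_0}\ge\norm{(\Id-\Pi_R)A\delta_0}$, because $\delta_0\in\operatorname{ran}\Pi_R$ gives $\cE_R\delta_0=(\Id-\Pi_R)A\delta_0$. Then \eqref{eq:lattice-coefficient-variance} and \eqref{eq:lattice-one-sided-lower} give
\[
 \E\norm{(\Id-\Pi_R)A\delta_0}^2\ge\sum_{L\ge R}L^{-2a}\sum_{l\in A_L}m!H_m(l)\gtrsim R^{-2(a-\beta_m)},
\]
and at the endpoint $R^{-2a}\log(e+R)^{m-1}$. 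This is the exact claimed power, including $(m-1)/2$. Applying the same argument to the adjoint via $\langle\cdot\,\delta_0\rangle$ rows yields $R^{-(b-\beta_m)}$, so the maximum of the two is $R^{-\delta_{a,b}}$ (times the logarithm at the endpoint). This matches the upper bound.
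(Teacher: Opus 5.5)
\textbf{Verdict.} Your rank bound, your reduction of $\cE_R^{a,b}$ to the blocks with $\max\{L,Q\}\ge R$, and your upper bound for $\sigma<D/2$ are correct. So are your lower bounds, obtained by testing $\cE_R^{a,b}$ and its adjoint on $\delta_0$ and using \eqref{eq:lattice-coefficient-variance} with \eqref{eq:lattice-one-sided-lower}. All of this is essentially the paper's argument.

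\textbf{The gap.} The upper bound at $\sigma=D/2$ is not proved; you leave it conditional, and the remedy you sketch does not close it as stated. The target $\sum_{L\ge R}L^{-a}f(L)\lesssim R^{-a}\log(e+R)^{(m-1)/2}$ has no slack in the large scale $L$. The per-shell logarithmic power must therefore be exactly $(m-1)/2$. A Gaussian factor $(p+\log R)^{m/2}$ or $(p+\log L)^{m/2}$ from the finite-cut-rank bridge would destroy this exponent, and no margin absorbs it. Second, both the Wick estimate and the bridge consume the whole oriented profile (all $2^m$ cuts), but your sharper computation treats only the empty cut. Third, a Schur test has to be applied to the deterministic flattenings, not to the random limit operator. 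A smaller point: in \eqref{eq:lattice-cut-specific-endpoint} the full cut carries $\log(e+Q)^{m/2}$, not $\log(e+L)^{m/2}$; only the empty cut is offending.

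\textbf{The missing step} is a sharper separated-scale profile at one fixed finite exponent.
\begin{itemize}
\item Fix $r_0\ge2$ with $D/r_0<\min\{a,b\}$. For $L\ge8Q$, $l\in A_L$, $q\in A_Q$ one has $\langle l-q\rangle\simeq L$. Then \eqref{eq:lattice-convolution-log} gives $\sum_{l\in A_L}h_m(l-q)\lesssim\log(e+L)^{m-1}$.
\item Hence every cut has Hilbert--Schmidt norm squared at most a constant times $Q^D\log(e+L)^{m-1}$. By orthogonality of the columns, the empty cut has operator norm squared at most a constant times $\log(e+L)^{m-1}$.
\item For $k=\#S\ge1$, \eqref{eq:lattice-cut-specific-endpoint} already gives $\log(e+Q)^{k/2}\log(e+L)^{(m-k)/2}$, and $m-k\le m-1$.
\item Interpolating at $r_0$ gives, uniformly in $N$,
\[
\Prof_{m,r_0}(K_{m,N}^{L,Q}(c))\lesssim Q^{D/r_0}\log(e+L)^{(m-1)/2}\log(e+Q)^{m/2},\qquad L\ge8Q.
\]
\end{itemize}
The Gaussian factor is now the constant $(2+r_0)^{m/2}$. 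All $Q$-dependence is absorbed by $Q^{-b}$ because $b>D/r_0$. The blocks with $Q>L/8$ can keep the crude bound, since $L^{-b+D/r_0}$ absorbs the surplus $\log(e+L)^{1/2}$. Summing over $L\ge R$ then gives $R^{-a}\log(e+R)^{(m-1)/2}$, and the input tail is symmetric.

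\textbf{Salvaging your bridge route.} It can also be completed, but it needs this same cutwise estimate to get $A_\infty\lesssim\log(e+L)^{(m-1)/2}\log(e+Q)^{m/2}$. It also needs the observation
\[
R_{\mathrm{eff}}(K)\le\norm{\Flat_\emptyset K}_{\Sch_2}^2/\norm{\Flat_\emptyset K}_{\cL}^2\le\rank\Flat_\emptyset(K)\le\#A_Q.
\]
This places the Gaussian logarithm on the small scale $Q$, where $Q^{-b}$ absorbs it.
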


\begin{proof}
Write $P_L$ also for the Fourier conjugate of the lattice annular projection.
Then $\Pi_R=\sum_{L<R}P_L$.  Put
\[
 v(x):=\langle x\rangle^{-2\sigma},
 \qquad h_m:=v^{*m}.
\]
Fix a finite $r_0\ge2$ such that
\begin{equation}
\label{eq:torus-galerkin-r0}
 \frac D{r_0}<\min\{a,b\}.
\end{equation}
Since $a,b>\beta_m$, condition \eqref{eq:torus-galerkin-r0} also gives
$a+b>\beta_m+D/r_0$.  Thus Theorem~\ref{thm:torus-singular-wick-multipliers}
identifies $A_m^{a,b}$ with the ultraviolet limit in
$L^2(\Omega;\Sch_{r_0})$, so all of its annular blocks may be obtained by
passing to the limit in the finite cutoffs.
The profile estimate
\eqref{eq:all-order-singular-lattice-profile}, the same-field Gaussian
estimate, and passage to the ultraviolet limit give
\begin{equation}
\label{eq:torus-galerkin-block}
 \norm{P_LA_m^{a,b}P_Q}_{L^2(\Omega;\cL)}
 \lesssim L^{-a}Q^{-b}
 \Omega_{m,\sigma}(R_+)R_-^{D/r_0},
 \qquad R_+=\max\{L,Q\},\quad R_-=\min\{L,Q\}.
\end{equation}

Suppose first that $\sigma<D/2$.  Summing
\eqref{eq:torus-galerkin-block} over the output tail and separating
$Q\le L$ from $Q>L$ yields
\begin{align*}
 \norm{(I-\Pi_R)A_m^{a,b}}_{L^2(\Omega;\cL)}
 &\lesssim\sum_{L\ge R}\left(
 L^{-a+\beta_m}\sum_{Q\le L}Q^{-b+D/r_0}
 +L^{-a+D/r_0}\sum_{Q>L}Q^{-b+\beta_m}
 \right)\\
 &\lesssim R^{-a+\beta_m}.
\end{align*}
Here and below all scale sums are dyadic; we used
$b>D/r_0$ and $b>\beta_m$.  Interchanging the input and output scales gives
\[
 \norm{A_m^{a,b}(I-\Pi_R)}_{L^2(\Omega;\cL)}
 \lesssim R^{-b+\beta_m}.
\]
The identity
\begin{equation}
\label{eq:torus-galerkin-error-splitting}
 \cE_R^{a,b}
 =(I-\Pi_R)A_m^{a,b}
  +\Pi_RA_m^{a,b}(I-\Pi_R)
\end{equation}
therefore proves the upper bound below the endpoint.

At the critical endpoint, separated input and output scales give a sharper
block bound.  Write $\ell(t)=\log(e+t)$.  If $L\ge8Q$, then
$|l-q|\simeq L$ for $l\in A_L$ and $q\in A_Q$.  Hence
\eqref{eq:lattice-convolution-log} gives, for every cut $S\subset[m]$,
\begin{equation}
\label{eq:torus-separated-hilbert-endpoint}
 \norm{\Flat_S\widetilde K_{m,N}^{L,Q}}_{\Sch_2}^2
 \le\sum_{q\in A_Q}\sum_{l\in A_L}h_m(l-q)
 \lesssim Q^D\ell(L)^{m-1}.
\end{equation}
For the empty cut, columns with distinct $q$ are orthogonal, and therefore
\begin{equation}
\label{eq:torus-separated-empty-cut}
 \norm{\Flat_\varnothing\widetilde K_{m,N}^{L,Q}}_{\cL}^2
 \le\sup_{q\in A_Q}\sum_{l\in A_L}h_m(l-q)
 \lesssim\ell(L)^{m-1}.
\end{equation}
If $k=\#S\ge1$, the critical case of
\eqref{eq:lattice-cut-specific-endpoint} instead gives
\[
 \norm{\Flat_S\widetilde K_{m,N}^{L,Q}}_{\cL}
 \lesssim\ell(Q)^{k/2}\ell(L)^{(m-k)/2}.
\]
Interpolating these operator endpoints with
\eqref{eq:torus-separated-hilbert-endpoint} at $r_0$ gives, for the empty
cut,
\[
 \norm{\Flat_\varnothing\widetilde K_{m,N}^{L,Q}}_{\Sch_{r_0}}
 \lesssim Q^{D/r_0}\ell(L)^{(m-1)/2},
\]
whereas, for $k=\#S\ge1$,
\begin{align*}
 \norm{\Flat_S\widetilde K_{m,N}^{L,Q}}_{\Sch_{r_0}}
 &\lesssim Q^{D/r_0}
 \ell(Q)^{k/2-k/r_0}
 \ell(L)^{(m-k)/2+(k-1)/r_0}.
\end{align*}
Here
\[
 \frac k2-\frac{k}{r_0}\le\frac m2,
 \qquad
 \frac{m-k}{2}+\frac{k-1}{r_0}\le\frac{m-1}{2}
 \qquad(1\le k\le m),
\]
so, after symmetrization, we obtain
\begin{equation}
\label{eq:torus-separated-critical-profile}
 \Prof_{m,r_0}(K_{m,N}^{L,Q}(c))
 \lesssim Q^{D/r_0}\ell(L)^{(m-1)/2}\ell(Q)^{m/2},
 \qquad L\ge8Q.
\end{equation}
This estimate is uniform in $N$.

Passing to the limit and using
\eqref{eq:torus-galerkin-block} on the complementary scales now gives,
after splitting $L/8<Q\le L$ from $Q>L$,
\[
 \sum_{Q>L/8}L^{-a}Q^{-b}
 \ell(\max\{L,Q\})^{m/2}\min\{L,Q\}^{D/r_0}
 \lesssim L^{-a-b+D/r_0}\ell(L)^{m/2}.
\]

Consequently, for each $L$,
\begin{align*}
 \sum_Q\norm{P_LA_m^{a,b}P_Q}_{L^2(\Omega;\cL)}
 &\lesssim L^{-a}\ell(L)^{(m-1)/2}
   \sum_{Q\le L/8}Q^{-b+D/r_0}\ell(Q)^{m/2}\\
 &\quad+L^{-a-b+D/r_0}\ell(L)^{m/2}\\
 &\lesssim L^{-a}\ell(L)^{(m-1)/2}.
\end{align*}
The last step follows from \eqref{eq:torus-galerkin-r0}; its strict
power margin also absorbs the remaining factor $\ell(L)^{1/2}$ in the
second term.  Dyadic summation over $L\ge R$ proves
\[
 \norm{(I-\Pi_R)A_m^{a,b}}_{L^2(\Omega;\cL)}
 \lesssim R^{-a}\ell(R)^{(m-1)/2}.
\]

Applying the same argument with $L,Q$ and $a,b$ interchanged gives the
corresponding input-tail bound.
Together with \eqref{eq:torus-galerkin-error-splitting}, this proves the
critical upper bound.

For the matching lower bounds, let $e_q=e^{\mathrm iq\cdot\theta}$.
Since $\Pi_Re_0=e_0$ and $P_R\Pi_R=0$, the coefficient identity
\eqref{eq:lattice-limit-entries} and the Wiener--It\^o isometry give
\begin{align*}
 \norm{\cE_R^{a,b}}_{L^2(\Omega;\cL)}^2
 &\ge\E\norm{P_R\cE_R^{a,b}e_0}_{L^2}^2
  =\E\norm{P_RA_m^{a,b}e_0}_{L^2}^2\\
 &=m!\sum_{l\in A_R}\langle l\rangle^{-2a}h_m(l)\\
 &\gtrsim
 \begin{cases}
  R^{-2a+2\beta_m},&\sigma<D/2,\\
  R^{-2a}\ell(R)^{m-1},&\sigma=D/2,
 \end{cases}
\end{align*}
where the last line is \eqref{eq:lattice-one-sided-lower}.  Since $h_m$ is
even, applying the same argument to $(\cE_R^{a,b})^*$ exchanges $a$ and
$b$.  Taking square roots and the larger of the two bounds proves
\eqref{eq:torus-fourier-galerkin-rate}.  Finally,
$\rank(\Pi_RA_m^{a,b}\Pi_R)\le\rank\Pi_R\lesssim R^D$, which proves
\eqref{eq:torus-fourier-galerkin-rank}.
\end{proof}

\begin{corollary}[Cutoff Schatten threshold and approximation-number bounds]
\label{cor:torus-approximation-numbers}
Under the hypotheses of
Theorem~\ref{thm:torus-singular-wick-multipliers}, let $A_m^{a,b}$ be as in
Corollary~\ref{cor:torus-fourier-galerkin-rate}.  For $s>2\beta_m$, define
the balanced compact limit $A_{m,s}:=A_m^{s/2,s/2}$.
Within the range $1\le r<\infty$, the defining cutoffs converge in
$L^2(\Omega;\Sch_r)$ if and only if
\begin{equation}
\label{eq:torus-sharp-schatten-index}
 r>r_c(s):=\frac{D}{s-\beta_m}.
\end{equation}
For every such $r$ and every $1\le p<\infty$, define pointwise in $\omega$
the singular values
$\mu_1(A_{m,s})\ge\mu_2(A_{m,s})\ge\cdots$ and the approximation numbers
\[
 \mathfrak a_n(A_{m,s}):=\inf_{\rank F<n}\norm{A_{m,s}-F}_{\cL}
              =\mu_n(A_{m,s}).
\]
Then
\begin{equation}
\label{eq:torus-approximation-number-moment}
 \norm{\mathfrak a_n(A_{m,s})}_{L^p(\Omega)}
 \le C_{D,m,\sigma,s,r}(p+r)^{m/2}n^{-1/r},
 \qquad
 \norm{\mathfrak a_n(A_{m,s})}_{L^p(\Omega)}=o(n^{-1/r}),
\end{equation}
and $\mathfrak a_n(A_{m,s})=o_\omega(n^{-1/r})$ almost surely.
Consequently, with
\begin{equation}
\label{eq:torus-spectral-compressibility-exponent}
 \kappa_*(s):=\min\left\{1,\frac{s-\beta_m}{D}\right\},
\end{equation}
for every $0<\kappa<\kappa_*(s)$,
\begin{equation}
\label{eq:torus-spectral-compressibility}
 \mathfrak a_n(A_{m,s})=o_\omega(n^{-\kappa})
 \quad\text{and}\quad
 \norm{\mathfrak a_n(A_{m,s})}_{L^p(\Omega)}=o(n^{-\kappa})
 \quad(1\le p<\infty).
\end{equation}
If $s>\beta_m+D$, the endpoint $r=1$ is admissible: the cutoffs converge
in trace norm and both the $L^p$ and almost-sure little-$o$ conclusions
above hold with exponent $1$.
\end{corollary}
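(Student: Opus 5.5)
The plan is to derive everything from the balanced case of Theorem~\ref{thm:torus-singular-wick-multipliers}(i), combined with the Ky Fan/weak-type singular value inequality and a tail truncation for the little-$o$ statements.

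\textbf{Threshold.} Substituting $a=b=s/2$ into \eqref{eq:all-order-singular-lattice-range}, the conditions $a>\beta_m$ and $b>\beta_m$ become $s>2\beta_m$, which is assumed. The joint condition becomes $s>\beta_m+D/r$, that is, $r>D/(s-\beta_m)=r_c(s)$. Note that $s-\beta_m>\beta_m\ge0$, so $r_c(s)$ is well defined. The ``if and only if'' then follows directly from part~(i). The same part also gives convergence in every $L^p(\Omega;\Sch_r)$ for $r>r_c(s)$, together with a moment bound. Taking $N=1$ in \eqref{eq:all-order-singular-lattice-lp-rate} with any admissible $\eta$, or summing the block bounds of the lattice proof directly, yields
\[
\norm{A_{m,s}}_{L^p(\Omega;\Sch_r)}\le C(p+r)^{m/2}.
\]
The $\Sch_r$ limit agrees almost surely with the operator-norm limit $A_m^{s/2,s/2}$ of Corollary~\ref{cor:torus-fourier-galerkin-rate}, since $\Sch_r$ convergence implies operator-norm convergence.

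\textbf{Approximation numbers.} Pointwise, $n\mu_n^r\le\sum_{j\le n}\mu_j^r\le\norm{A}_{\Sch_r}^r$, so $\mathfrak a_n\le n^{-1/r}\norm{A}_{\Sch_r}$. Taking $L^p$ norms gives the first bound in \eqref{eq:torus-approximation-number-moment}. The identity $\mathfrak a_n=\mu_n$ is the standard Schmidt/Eckart--Young fact.

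For the little-$o$ statements, choose $r'$ with $r_c(s)<r'<r$, which is possible because $r>r_c(s)$ is an open condition. Then the same bound at $r'$ gives
\[
\norm{\mathfrak a_n}_{L^p}\lesssim n^{-1/r'}=o(n^{-1/r}).
\]
Almost surely, $A_{m,s}\in\Sch_{r'}$, hence $\mu_n=O_\omega(n^{-1/r'})=o_\omega(n^{-1/r})$. If $r=1$ is itself the minimal admissible index one cannot shrink it, but the endpoint $r=1$ is admissible only when $r_c(s)<1$, so the same argument works there. Alternatively, one can use the tail argument $n\mu_{2n}^r\le\sum_{n<j\le2n}\mu_j^r\to0$ pointwise, together with dominated convergence (domination by $\norm{A}_{\Sch_r}^r\in L^{p/r}$) for the $L^p$ little-$o$.

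\textbf{Compressibility exponent.} Given $0<\kappa<\kappa_*(s)$, set $r=1/\kappa$. Then $r>1$ and $r>D/(s-\beta_m)=r_c(s)$, since $\kappa<(s-\beta_m)/D$. Hence $r$ is admissible, and the previous step gives $o(n^{-\kappa})$ both almost surely and in $L^p$.

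\textbf{Trace class.} If $s>\beta_m+D$, then $r_c(s)<1$, so $r=1$ is admissible and the trace-norm convergence follows from part~(i). The little-$o$ conclusions with exponent $1$ follow from the tail argument above, or equivalently by applying the preceding steps with $r=1$.

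The main subtlety I expect is bookkeeping rather than analysis: ensuring the strict margin $r'<r$ for the little-$o$ claims, and identifying the $\Sch_r$ limit with $A_m^{s/2,s/2}$.
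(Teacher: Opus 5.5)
Your proposal is correct and follows the paper's argument. The threshold comes from substituting $a=b=s/2$ into \eqref{eq:all-order-singular-lattice-range}. The moment bound $\norm{A_{m,s}}_{L^p(\Omega;\Sch_r)}\lesssim(p+r)^{m/2}$ feeds the inequality $n\mu_n^r\le\norm{A_{m,s}}_{\Sch_r}^r$. The compressibility statement follows by choosing an admissible $r$ with $1/r\ge\kappa$.

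There are two small points.

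First, your ``$N=1$ in the rate'' route to the moment bound needs a separate estimate for $T_{m,1}^{s/2,s/2}$ itself. That estimate is easy, since it is finitely many Wick monomials times $\Lambda^{-a}\lambda_w\Lambda^{-b}$, which lies in $\Sch_r$ because $a+b>D/r$. The paper avoids it by summing the block bounds directly.

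Second, your primary little-$o$ argument shrinks to an admissible $r'<r$. It is valid for $r>1$ (take $r'\in(\max\{1,r_c(s)\},r)$, not merely $r'>r_c(s)$), and it even gives $O(n^{-1/r'})$. Your claim that ``the same argument works'' at $r=1$ is wrong, however. It would need an $\Sch_{r'}$ moment bound with $r'<1$, which lies outside the range $1\le r<\infty$ of Theorem~\ref{thm:torus-singular-wick-multipliers}(i): the block reconstruction there uses the triangle inequality, and the paper explicitly makes no claim for $0<r<1$.

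Your alternative tail argument closes this case. It combines $n\mu_{2n}^r\le\sum_{n<j\le 2n}\mu_j^r\to0$ pointwise with domination by $\norm{A_{m,s}}_{\Sch_r}$. This is exactly the paper's device, it works uniformly for every admissible $r\ge1$, and it is what you in fact invoke in your trace-class paragraph.
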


\begin{proof}
For the balanced choice $a=b=s/2$, the three conditions in
\eqref{eq:all-order-singular-lattice-range} reduce to
$s>2\beta_m$ and $s>\beta_m+D/r$.  This proves the equivalence
\eqref{eq:torus-sharp-schatten-index}.  For every admissible $r$, the
uniform block summation in the proof of
Theorem~\ref{thm:all-order-singular-lattice-transfer}, followed by passage
to the limit, gives
\[
 \norm{A_{m,s}}_{L^p(\Omega;\Sch_r)}
 \le C_{D,m,\sigma,s,r}(p+r)^{m/2}.
\]
The best rank-$(n-1)$ approximation identity and monotonicity of singular
values give
\[
 \mathfrak a_n(A_{m,s})=\mu_n(A_{m,s})
 \le n^{-1/r}\norm{A_{m,s}}_{\Sch_r},
\]
which proves the first part of
\eqref{eq:torus-approximation-number-moment}.  For every fixed
$A\in\Sch_r$,
\[
 \frac n2\mu_n(A)^r
 \le\sum_{j=\lceil n/2\rceil}^{n}\mu_j(A)^r\longrightarrow0.
\]
Hence $n^{1/r}\mathfrak a_n(A_{m,s})\to0$ almost surely.  It is dominated by
$\norm{A_{m,s}}_{\Sch_r}$, so dominated convergence proves the
$L^p$ little-$o$ estimate.  Finally, if
$\kappa<\kappa_*(s)$, choose an admissible $r\ge1$ with
$\kappa<1/r$ and apply the preceding conclusion.  The endpoint statement
is the case $r=1$.
\end{proof}

\begin{remark}[Fourier compression versus best-rank approximation]
\label{rem:scope-spectral-sharpness}
The two corollaries quantify different approximation problems.
Corollary~\ref{cor:torus-fourier-galerkin-rate} gives a two-sided rate for
the prescribed Fourier compression $\Pi_RA_m^{a,b}\Pi_R$; it does not imply
a lower bound for best finite-rank approximation.
Corollary~\ref{cor:torus-approximation-numbers} concerns the latter problem:
its exact statement is the $L^2$ cutoff threshold
\eqref{eq:torus-sharp-schatten-index} within $1\le r<\infty$, from which the
one-sided decay \eqref{eq:torus-spectral-compressibility} follows.
No almost-sure critical lower bound or claim for $0<r<1$ is made.
\end{remark}

\section{Outlook}
\label{sec:outlook}

Simultaneous oriented profiles propagate through noncommutative algebraic
operations while remaining explicit enough to yield sharp group-dual
thresholds.  Two obstructions delimit the scope of the present results.
In Peter--Weyl transfer, closed fusion diagrams require categorical-trace
estimates beyond loop-free cut factorization, while non-Kac applications
require quantitative control of modular weights in both orientations.  At
higher orders on nonabelian polynomial-growth groups, mixed oriented cuts
retain the internal order of the group word and require additional
noncommutative convolution estimates.

\end{document}